\pgfplotsset{compat=newest}
\theoremstyle{plain}
\newtheorem*{thm*}{Theorem}
\newtheorem{thm}{Theorem}[section]
\newtheorem{cor}[thm]{Corollary}
\newtheorem*{cor*}{Corollary}
\newtheorem{prop}[thm]{Proposition}
\newtheorem*{prop*}{Proposition}
\newtheorem{lem}[thm]{Lemma}
\theoremstyle{definition}
\newtheorem{defn}[thm]{Definition}
\newtheorem{ex}[thm]{Example}
\theoremstyle{remark}
\newtheorem{rmk}[thm]{Remark}
\newcommand{\jint}[1]{\textrm{Int}\left({#1}\right)}
\newcommand{\jext}[1]{\textrm{Ext}\left({#1}\right)}
\newcommand{\D}{\mathcal{D}} 
\newcommand{\J}{\mathcal{J}} 
\newcommand{\tc}[2]{\textrm{TC}_{#1}\left(#2\right)}
\newcommand{\zcl}[2]{\textrm{zcl}_{#1}\left(#2\right)}
\newcommand{\ls}[1]{\textrm{cat}\left(#1\right)}
\newcommand{\zz}{\mathbb{Z}}
\newcommand{\rr}{\mathbb{R}}
\newcommand{\op}[1]{{#1}^{\textrm{op}}} 
\newcommand{\dist}[3]{d_{#1}\left(#2,#3\right)} 
\newcommand{\set}[1]{\left\{#1\right\}}
\newcommand{\abs}[1]{\left|#1\right|}
\newcommand{\paren}[1]{\left(#1\right)}
\newcommand{\simp}[1]{\mathcal{K}\left(#1\right)}
\newcommand{\greal}[1]{\left|\simp{#1}\right|}
\newcommand{\cots}[1]{\left[#1\right]}
\newcommand{\img}[1]{\textrm{im}\left(#1\right)}
\newcommand{\five}{
\foreach \x in {1,2,3,4,5}
	{
		\draw[lightgray, thin] (+\x,1)--(\x,5);
		\draw[lightgray, thin] (1,\x)--(5,\x);
	}
\foreach \x in {1,3,5}
	{
		\draw[lightgray, thin] (\x,1)--(5,6-\x);
		\draw[lightgray, thin] (\x,5)--(5,\x);
		\draw[lightgray, thin] (1,\x)--(6-\x,5);
		\draw[lightgray, thin] (\x,1)--(1,\x);
	}

\foreach \x in {1,3,5}
	\foreach \y in {1,3,5}
	{
		\draw (\x,\y) node [shape=circle,fill=white,draw]{};
	}
\foreach \x in {2,4}
	\foreach \y in {2,4}
	{
		\draw (\x,\y) node [shape=rectangle,fill=white,draw]{};
	}
\foreach \x in {2,4}
	\foreach \y in {1,3,5}
	{
		\draw (\x,\y) node [shape=circle,fill=black,scale=.5,draw]{};
		\draw (\y,\x) node [shape=circle,fill=black,scale=.5,draw]{};
	}
}	
\title{The Topological Complexity of Spaces of Digital Jordan Curves}
\author{Shelley Kandola}
\date{June 2019}
\begin{document}

\maketitle

\begin{abstract}
    This research is motivated by studying image processing algorithms through a topological lens.
    The images we focus on here are those that have been segmented by digital Jordan curves as a means of image compression.
    The algorithms of interest are those that continuously morph one digital image into another digital image.
    Digital Jordan curves have been studied in a variety of forms for decades now.
    Our contribution to this field is interpreting the set of digital Jordan curves that can exist within a given digital plane as a finite topological space.
    Computing the topological complexity of this space determines the minimal number of continuous motion planning rules required to transform one image into another, and determining the motion planners associated to topological complexity provides the specific algorithms for doing so.
    The main result of Section \ref{chap:DigJC} is that our space of digital Jordan curves is connected, hence, its topological complexity is finite.
    To build up to that, we use Section \ref{chap:COTS} to prove some results about paths and distance functions that are obvious in Hausdorff spaces, yet surprisingly elusive in $T_0$ spaces.
    We end with Section \ref{chap:app}, in which we study applications of these results.
    In particular, we prove that our interpretation of the space of digital Jordan curves is the only topologically correct interpretation.
\end{abstract}

\section{Preliminaries}

Understanding the topology of digital images has applications in a wide variety of fields.
Khalimsky, Kopperman, and Meyer's motivation in \cite{Khalimsky1990a} is image compression.
In \cite{Kong1992}, Kong et al. discuss isomorphisms of digital images as a means of thinning, border-finding, and rotating digital images.
Applications of image processing include document reading, image segmentation (e.g., recognizing separate components of an image), and even artificial intelligence \cite{Eckhardt1994}.
Studying the topology of Jordan curves is a natural starting point for tackling these problems.
In the Euclidean setting, a Jordan curve  is a non-self-intersecting continuous loop in the plane.
The Jordan curve theorem of \cite{Berg1975} states the following:
\begin{thm*}
Let C be a Jordan curve in the plane $\rr^2$. 
Then its complement, $\rr^2 - C$, consists of exactly two connected components. 
One of these components is bounded (the interior) and the other is unbounded (the exterior), and the curve $C$ is the boundary of each component.
\end{thm*}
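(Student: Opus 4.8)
The plan is to pass to the sphere $S^2 = \rr^2 \cup \{\infty\}$ and attack the problem with the tools of algebraic topology, since the hypothesis supplies only a topological embedding $\phi \colon S^1 \to \rr^2$, with no smoothness or piecewise-linear structure to exploit. Adjoining the point at infinity places it in the unbounded part of the complement, so the number of connected components of $\rr^2 \setminus C$ equals that of $S^2 \setminus C$; it therefore suffices to prove that $S^2 \setminus C$ has exactly two components, together with the boundary assertion, both of which transfer back unchanged.

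First I would establish two separation facts via Alexander duality, $\tilde{H}_i(S^2 \setminus A) \cong \check{H}^{1-i}(A)$ for compact $A \subseteq S^2$. For an arc $A$ (homeomorphic to a closed interval) the right-hand side vanishes, so $S^2 \setminus A$ is connected; I would record this as a lemma. For the curve itself, $C \cong S^1$ gives $\tilde{H}_0(S^2 \setminus C) \cong \check{H}^1(S^1) \cong \mathbb{Z}$, and since reduced zeroth homology detects one fewer than the number of path components, $S^2 \setminus C$ splits into exactly two components $U$ and $V$. Because $C$ has the homotopy type of a CW complex, Čech and singular cohomology agree here, so no pathology intervenes. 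The component containing $\infty$ is the exterior and is unbounded in $\rr^2$, while the other is the bounded interior.

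The main obstacle is the boundary statement $\partial U = \partial V = C$. Since $U$ and $V$ are open and disjoint, $\overline{U} \subseteq U \cup C$ and likewise for $V$, which yields the easy inclusions $\partial U, \partial V \subseteq C$. For the reverse, fix $x \in C$ and an open neighborhood $W$; I would split $C$ along $x$ into a short arc $A_1 \subseteq W$ with $x$ in its interior and the complementary arc $A_2$. By the arc lemma $S^2 \setminus A_2$ is connected and open, hence path connected, so a path $\alpha$ joins a chosen $p \in U$ to a chosen $q \in V$ while avoiding $A_2$. As $p$ and $q$ lie in different components of $S^2 \setminus C$, the path must meet $C$, and since it misses $A_2$ it must meet $A_1$. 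Letting $a$ and $b$ be the first and last points of $\alpha$ on $A_1$, the sub-path before $a$ stays in $U$ and the sub-path after $b$ stays in $V$ (each is connected, avoids $C$, and contains one of $p, q$); points of $\alpha$ just before $a$ and just after $b$ lie near $A_1 \subseteq W$, producing points of both $U$ and $V$ inside $W$. Hence $x \in \partial U \cap \partial V$, completing the equality.

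The crux of the argument is twofold. One must obtain the homological separation facts, for which I would either invoke Alexander duality directly or, preferring a self-contained route, prove them by induction through a Mayer--Vietoris argument that cuts each arc and circle into halves. One must then carry out the delicate point-chasing in the boundary step, where the care lies in verifying that the sub-paths flanking $a$ and $b$ genuinely remain within a single component and are close enough to $A_1$ to fall inside $W$.
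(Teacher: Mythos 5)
Your proposal is correct, but there is no proof in the paper to compare it against: this statement is the classical Euclidean Jordan curve theorem, quoted as background from \cite{Berg1975}, and the paper's own arguments concern only the digital analogue (Khalimsky's theorem on a finite $T_0$ digital plane), proved by combinatorial finite-space methods that never touch the Euclidean case. Your route --- compactify to $S^2$, apply Alexander duality to get $\tilde{H}_0(S^2 - C) \cong \check{H}^1(C) \cong \zz$ for $C \cong S^1$ together with the vanishing $\check{H}^1(A)=0$ for an arc $A$, then run the point-chasing argument for $\partial U = \partial V = C$ --- is the standard algebraic-topology proof, and it is sound. It is also genuinely different from the proof in the source the paper cites: Berg, Julian, Mines, and Richman prove a \emph{constructive} Jordan curve theorem by metric approximation arguments, deliberately avoiding the nonconstructive homological machinery you invoke; what duality buys you is brevity and conceptual clarity, while their approach buys explicit witnesses for the separation. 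Two spots in your sketch deserve the extra line they need: boundedness of the non-exterior component follows because the component of $S^2 - C$ containing $\infty$ is open and hence contains a basic neighborhood $\set{\abs{z}>R}\cup\set{\infty}$, trapping the other component in the disk $\abs{z}\leq R$; and in the boundary step the ``first and last'' points $a,b$ of $\alpha$ on $A_1$ exist because $\alpha^{-1}(A_1)=\alpha^{-1}(C)$ is a nonempty compact subset of the parameter interval, after which continuity of $\alpha$ at those parameters supplies the required points of $U\cap W$ and $V\cap W$. Both repairs are routine, so the proposal stands.
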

A digital image can be separated into simple loops, each of whose interiors are precisely one color.
Understanding these components could help differentiate the foregrounds of images from their backgrounds.
After segmenting an image by Jordan curves, one could store the color of the interior of each Jordan curve as a means of image compression.

In \cite{Rosenfeld2007}, Rosenfeld proves a digital Jordan curve theorem using a graph-theoretic approach.
In \cite{Kong1992}, Kong, Roscoe and Rosenfeld prove a digital Jordan curve theorem that uses continuous analogues of digital pictures in Euclidean space.
This article uses the Jordan curve theorem from \cite{Khalimsky1990a}, which they prove using an axiomatic approach that defines a digital plane as a finite topological space.
Their method makes no appeal to the graph-theoretic or continuous approaches, and is purely topological in nature.
Later on in \cite{Slapal2006}, \v{S}lapal proves a digital Jordan curve theorem for a topology on $\mathbb{Z}^2$ that is not the Khalimsky topology, and allows for digital Jordan curves that turn at $\frac{\pi}{4}$ angles (see Example \ref{ex:acute} for why this is not possible with the Khalimsky topology).

In 1990, Khalimsky et al. published \cite{Khalimsky1990a}, in which they developed a finite analog of the Jordan curve theorem.
Their Jordan curve theorem exists in the context of a digital plane, $\D$, which is a model of a computer screen as a finite rectangular array.
Such a finite rectangular array only admits one $T_2$ topology, which is the discrete topology.
Their digital plane, however, is a conntected $T_0$ space that is the product of two finite connected ordered topological spaces.
This construction allows for the defining of paths, arcs, and curves that are finite analogs of their Euclidean counterparts.

In Sections \ref{chap:COTS} and \ref{chap:DigJC}, we build the tools necessary for looking at the set of all digital Jordan curves that can exist in a given finite digital plane equipped with the Khalimsky topology.
We introduce parameterizations of Jordan curves that allow two digital Jordan curves to be homotopic to each other with respect to the order topology on the digital plane.
The topology that $Map(S^1,\D)$ inherits from the digital plane
lends itself to defining the set of all digital Jordan curves within Khalimsky's digital plane as a finite topological space, $\J$.
We end Section \ref{chap:DigJC} with the main result:
\begin{thm}\label{main}
The space of digital Jordan curves $\J$ is path-connected.
\end{thm}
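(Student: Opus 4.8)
The plan is to prove path-connectedness constructively: fix a reference curve and join every digital Jordan curve to it by an explicit path in $\J$, so that transitivity of the path relation connects any two curves. This constructive route is preferable to an abstract one because the motivating application is motion planning, where one wants the morphing algorithms themselves, not merely the fact that they exist.

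The starting observation is that $\J$ is, by construction, a finite $T_0$ space, so it carries a specialization partial order and the path results of Section~\ref{chap:COTS} apply. Chief among these is that two comparable points of a finite $T_0$ space are joined by a path (the map that is constant on $[0,1)$ and jumps at the endpoint is continuous), and hence any two points linked by a finite \emph{fence} --- a zigzag $C_0 \le C_1 \ge C_2 \le \cdots$ of comparabilities --- are joined by a path obtained by concatenation. Because the topology on $\J$ is inherited from $Map(S^1,\D)$, comparability of two parameterized curves reduces to a pointwise specialization relation in $\D$: after aligning parameterizations, $C \le C'$ holds when each point of $C$ specializes to the corresponding point of $C'$. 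Thus the entire problem collapses to the combinatorial statement that any two digital Jordan curves are connected by a fence in this pointwise order.

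First I would isolate a small repertoire of \emph{elementary moves} on a digital Jordan curve, each confined to a small window of the lattice: the prototype is adding or deleting a single unit bump along a straight run, together with the corresponding corner slide. For each move I would exhibit an intermediate parameterized loop --- typically passing through the open (or mixed) Khalimsky points created when the bump is partially formed --- so that the before-curve, the intermediate, and the after-curve form a short fence, and Section~\ref{chap:COTS} upgrades this fence to an honest path inside $\J$. The reparameterization freedom built into the definition of $\J$ is exactly what lets me arrange pointwise comparability: I keep the two parameterizations equal outside the window and differing by a single specialization step inside it. With the moves in hand, I would fix the smallest admissible digital Jordan square as a reference and argue by induction on a complexity measure --- say the number of lattice cells enclosed, or the total length --- that any non-minimal curve admits a complexity-decreasing elementary move while remaining in $\J$; iterating drives every curve down to the reference, and concatenating the finitely many elementary paths yields a path to it.

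The main obstacle is the combinatorial completeness of this move set coupled with the demand that admissibility never fails \emph{mid-path}. I must guarantee that a complexity-reducing move is always available, so that no non-minimal curve is locally rigid, and that every intermediate loop appearing in a fence is itself a legitimate digital Jordan curve. Here the Khalimsky topology is unforgiving: acute turns are prohibited (see Example~\ref{ex:acute}), so the windows in which moves are performed must respect the open/closed parity of the lattice points involved, and the pointwise-comparability alignment must be compatible with that parity uniformly over all curves. Establishing this uniform compatibility is the crux of the argument; once it is in place, the path-existence machinery of Section~\ref{chap:COTS} reduces the remainder to bookkeeping.
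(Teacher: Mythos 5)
Your strategy is the same one the paper follows---reduce an arbitrary curve to a minimal one by local, pointwise-comparable moves, then concatenate the resulting fences into paths---but as written it has a genuine gap, and it is exactly the step you flag as ``the crux'' and then defer. You assert that every non-minimal curve admits an admissible, complexity-decreasing elementary move whose intermediate stages are themselves digital Jordan curves, but you give no mechanism for producing such a move. This is not bookkeeping: it is the paper's main technical content. The paper's device is to fix a pure point $p\in\jint{J}$ (which exists by Lemma \ref{lem:nonminpureint}) and always delete a point $q\in\jint{J}$ at \emph{maximal} COTS-distance from $p$; Lemma \ref{lem:maxcon} then shows, using the metric machinery of Section \ref{chap:COTS} (in particular Proposition \ref{prop:pure_dist} and the uniqueness of shortest diagonal arcs, Proposition \ref{pure_unique}), that for such a $q$ the set $A(q)\cap J$ is connected and contains at least three points, which is precisely what makes the replacement of $A(q)\cap J$ (minus its endpoints) by $q$ produce a Jordan curve comparable with $J$ (Theorem \ref{big}). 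The author notes that this lemma was ``surprisingly elusive'': the Hausdorff intuition that a farthest interior point must hug the curve fails in $T_0$ spaces, so without the maximal-distance selection rule (or some substitute) your claim that no non-minimal curve is ``locally rigid'' is unsupported. Your parity concerns are real but secondary; the selection rule is the missing idea.

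A second, smaller gap: your induction terminates at \emph{some} minimal curve---the adjacency set $A(p)$ of whatever interior point survives---not at your fixed reference square. Since minimal curves carry minimal complexity, no complexity-decreasing move applies to them, so you additionally need complexity-\emph{preserving} moves that transport a minimal curve across the plane to the reference. This is the paper's Proposition \ref{prop:minconn}, which constructs explicit comparable parameterizations of $A(p)$ and $A(q)$ for adjacent $p,q$ in the three parity cases (pure--mixed, mixed--pure, pure--pure) and then chains these along a fence of points of $\D$. Your ``corner slide'' gestures at this, but it must be proved separately, since it does not follow from the bump moves or from the induction.
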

In Section \ref{chap:app}, we prove a satisfying justification:
\begin{thm}\label{thm:onlychoice}
Khalimsky's topology on $\D$ is the only topology for which $\J(\D)$ is path-connected.
\end{thm}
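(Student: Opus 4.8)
The plan is to exploit the finiteness of $\D$. A topology on a finite set is the same datum as a preorder on its points (its specialization relation), and continuity of maps out of $S^1$ together with path-connectedness of $\J(\D)$ translate into purely combinatorial conditions; so the ostensibly enormous collection of topologies on $\D$ becomes a finite search. I would organize the proof as a chain of necessary conditions on a topology $\tau$ for which $\J(\D)$ is path-connected, each condition pruning the search space, and then show that the sole survivor is Khalimsky's topology.

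First I would dispose of the degenerate extremes. If $\tau$ is the discrete topology, then since $S^1$ is connected every continuous map $S^1 \to (\D,\tau)$ is constant, so no digital Jordan curve exists and $\J(\D)$ is empty; hence it is not path-connected. At the opposite extreme, a topology too coarse to separate the successive points of a prospective curve fails the digital Jordan curve theorem, so the interior/exterior dichotomy collapses and $\J(\D)$ is no longer the space of genuine Jordan curves. Together these force $\tau$ to be $T_0$ and connected, i.e.\ a partial order on $\D$ whose Hasse diagram is connected.

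The heart of the argument is a \emph{local rigidity} statement: path-connectedness of $\J(\D)$ forces, at every point of $\D$, the Khalimsky pattern of open, closed, and mixed points. The mechanism is the machinery of Section \ref{chap:COTS}. An edge of a path in $\J(\D)$ is an elementary deformation that slides an edge of a curve or adds a bump, and by the results on paths and distance functions in $T_0$ spaces such a deformation is continuous only when the points it traverses alternate between open and closed in the specialization order, that is, form a finite COTS. Requiring that every admissible pair of digital Jordan curves be joinable by such deformations forces each horizontal and vertical slice of $\D$ to be a COTS, and the COTS classification identifies every slice with a Khalimsky line; assembling the slices through the product topology then recovers Khalimsky's topology uniquely (up to the reflection exchanging the roles of open and closed points, a symmetry the standing convention on the corners of $\D$ removes).

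I expect the main obstacle to be exactly this rigidity step: ruling out every non-Khalimsky $T_0$ topology that is locally \emph{almost} alternating. The real danger is a topology admitting some digital Jordan curves and even some deformations among them, but whose deviation from the alternating pattern at a handful of points silently splits $\J(\D)$ into separate components. To close this gap I would attach to each such deviation an explicit pair of Jordan curves together with a locally defined invariant, a parity- or winding-type quantity that is constant along every continuous deformation, and show it separates the two curves, thereby certifying that $\J(\D)$ is disconnected. Producing one invariant robust enough to detect all non-Khalimsky deviations at once, rather than arguing topology by topology, is the delicate part of the proof.
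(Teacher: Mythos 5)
Your plan correctly identifies the crux --- one needs a quantity invariant under homotopies in $\D^{S^1}$ that separates distinct Jordan curves whenever the topology is not Khalimsky's --- but the proposal stops exactly where the proof has to begin: the invariant is never constructed, and the ``local rigidity'' step (path-connectedness of $\J(\D)$ forces each slice of $\D$ to be a COTS, and the slices assemble into the product topology) is asserted rather than proved. Note also that even granting rigidity of the slices, a topology on a finite lattice whose rows and columns are COTS need not be the product of those COTS, so the final assembly step has its own hole. As it stands, the proposal is a program, not a proof.

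What the paper actually does is both narrower in scope and complete in mechanism. It does not search over all topologies: it takes $\tau$ to be one of the concrete alternative digital topologies of Section \ref{sec:ottop} (Marcus--Wyse, or \v{S}lapal's $w$ and $\hat{w}$), all of which are $T_\frac{1}{2}$, and it explicitly sets aside pretopologies and the discrete topology. The $T_\frac{1}{2}$ hypothesis is the engine: with no mixed points the Hasse diagram of $\D$ has height one, every point has at least two adjacent neighbors so $x^\downarrow-\set{x}$ and $x^\uparrow-\set{x}$ are empty or discrete and no point is beat; hence $(\D,\tau)$ is a minimal finite space, and by \cite{Stong1966} and \cite{Barmak2012} its realization $\greal{\D}$ is weakly equivalent to a wedge of circles. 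This single structural fact supplies, uniformly, the invariant you were seeking: fixing a spanning tree $T$, every Jordan curve must traverse an edge outside $T$, distinct Jordan curves traverse distinct such edges, and therefore their parameterizations represent distinct classes in $\pi_1\paren{\greal{\D}} \cong \pi_1\paren{\D}$; no homotopy, hence no path in $\J$, can join them. In other words, the paper's answer to your ``delicate part'' is that no case-by-case treatment of almost-alternating deviations is needed: $T_\frac{1}{2}$-ness alone forces the plane to be a graph, one-dimensional in the order-theoretic sense, and then the fundamental group of the wedge of circles detects all distinct curves at once. The price is that the theorem, as proved, establishes uniqueness only among these graph-like (height-one) digital topologies --- the paper's own remark afterward says as much --- not among all topologies on $\D$ as your plan, and the theorem's literal wording, would require.
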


Our proof of Theorem \ref{main} is a corollary of two original results.
First, we show that any digital Jordan curve can be continuously deformed to a minimal digital Jordan curve enclosing one of its interior points.
Second, we show that there exists a fence of homotopies between any two minimal digital Jordan curves in the digital plane.

Later in Section \ref{chap:DigJC}, we explore the shapes of these spaces for varying sizes of digital planes.
In particular, we will prove that for a connected ordered topological space $X$, the space of digital Jordan curves in the digital plane $X \times X$ is contractible for $|X|=3,4,5$.

Our motivation for understanding the shapes of these spaces lies in topological complexity, introduced by Farber in \cite{Farber2001}.
Topological complexity has its roots in robot motion planning;
i.e., it answers the question, ``What is the minimal number of continuous motion planning rules required to instruct a robot to move from one position into another position?''
Applying this concept to the space of digital Jordan curves (and, eventually, to the space of more complex digital images) is analogous to understanding the complexity of an algorithm for morphing one digital image into another digital image.

In the next three sections, we cover the basics of finite topology, topological complexity, and digital topology.

\subsection{Finite Topology}

A finite topological space is a topological space with finitely many points.
Finite spaces are also studied as Alexandroff spaces, first introduced in \cite{Alexandroff1937}.
An \textbf{Alexandroff space} (or $A$-space, as in \cite{McCord1966}) is a topological space in which the arbitrary intersection of open sets is open.
The \textbf{minimal open neighborhoods} $U_x$ are given by 
\[U_x = \bigcap \set{U \mid U \ni x \textrm{ is open}},\]
and the $U_x$ form a basis for the topology on $X$
These $U_x$ also give rise to a preorder.
A {\bf preorder}  $\leq$ is a binary relation that is reflexive ($x \leq x$) and transitive ($x \leq y, y\leq z \implies x \leq z$).
If $x \leq y$ and $y \leq x$ implies $x=y$, then $\leq$ is a \textbf{partial order}.
We say $x \leq y$ if and only if $U_x \subseteq U_y$, and the sets $U_x$ form a basis for the topology on $X$. \footnote{We include these definitions to avoid confusion about the preorder of the finite $T_0$ spaces mentioned in this article.
For example, in \cite{El-Atik2002} and \cite{Khalimsky1990}, $x \leq y$ if and only if $x$ is in the closure of $y$.
In works such as \cite{McCord1966},\cite{Stong1966}, and \cite{Barmak2012}, however, $x \leq y$ if and only if $x$ is in the minimal open neighborhood of $y$.
The latter approach is what we will use here.}
One way to visualize this data is with a Hasse diagram.
The \textbf{Hasse diagram} of a partially ordered finite topological space $X$ is a directed graph whose vertices are the points of $X$ and whose directed edges are \[E(X) = \set{x \to y \mid y < x \textrm{ and } y \leq z \leq x \implies y=z \textrm{ or }x=z}.\]
Typically, the graph will be displayed without the directed edges, with the implied direction being ``down.''
See Figure \ref{fig:J4x4} for an example.

It is common to use $U_x$ or $U(x)$ (see \cite{Stong1966}, \cite{McCord1966}, \cite{Barmak2012}, \cite{El-Atik2002}, and many others) or $N(x)$ (see \cite{Khalimsky1990} or \cite{Khalimsky1990a}) to denote the minimal open neighborhood of $x$.
Because of the shape of open sets in the Hasse diagram, and to free up the variable $U$ for later computations, we adopt the notation
\[x^\downarrow = U_x=U(x)=\set{y \in X \mid y \leq x}.\]
Similarly, $x^\uparrow = \{y \in X \mid y \geq x\}$ is the \textbf{closure} of $x$, often denoted $F_x$ or $F(x)$.
\begin{ex}
To see that the closure of $x$ is given by $x^\uparrow=\set{y\in X \mid y\geq x}$, recall that by definition, $U_x \subseteq U_y \iff x \in U_y \iff x \leq y$.
We seek to show that $y \geq x$ implies $y \in F_x$.
If $y \not\in F_x$, then $U_y-F_x$ is an open set containing $y$ that does not contain $x$.
Since $x \in U_y$, however, every open set containing $y$ must also contain $x$, a contradiction.
Hence, $y \geq x$ implies $y$ is in the closure of $x$, so $F_x = \set{y \in X \mid y \geq x}$.
\end{ex}
We call these sets \textbf{down-sets} and \textbf{up-sets}, respectively.
The {\bf adjacency set} of a point $x \in X$ is 
\[A(x) = \set{y \in X, y \neq x \mid x \leq y \textrm{ or }y \leq x}.\]
In this case, $x$ and $y$ are {\bf adjacent} to each other, that is, $x \in A(y)$ and $y \in A(x)$.
Notice that $x \not\in A(x)$.
If $Y \subseteq X$ is a subspace, we say $A(Y) = \bigcup \set{A(y)\mid y\in Y}$.
Later on, we will use the notation $x \lessgtr y$ to mean that either $x \leq y$, or $x \geq y$.

\begin{table}
    \centering
    \begin{tabular}{c|c}
         Separation Axiom& Condition  \\
         \hline
         $T_0$ & $x^\downarrow \neq y^\downarrow \iff x \neq y$\\
         $T_\frac{1}{2}$ & $x^\downarrow=\{x\}$ or $x^\uparrow = \{x\}$ $\forall x \in X$\\
         $T_1$ & $x^\uparrow = \{x\} \forall x \in X$\\
         $T_2$ & $x^\downarrow \cap y^\downarrow = \emptyset \forall x\neq y \in X$
    \end{tabular}
    \caption{Summary of separation axioms in terms of minimal open neighborhoods and closures}
    \label{tab:T}
\end{table}

In Table \ref{tab:T}, we list the first few separation axioms in terms of the down-sets and up-sets of points in a finite space.
If $X$ is $T_0$, then each of $x^\downarrow$ and $x^\uparrow$ have a unique maximal or minimal element, respectively, and so both $x^\downarrow$ and $x^\uparrow$ are contractible.
If $x^\downarrow-\{x\}$ has a unique maximal element or $x^\uparrow-\{x\}$ has a unique minimal element, then $x$ is {\bf down beat} or {\bf up beat}, respectively.\footnote{In \cite{Stong1966}, up beat points are called {linear}, and down beat points are called {colinear}.}
If $x$ is either down beat or up beat, it is simply called {\bf beat}.
If either $x^\downarrow-\{x\}$ or $x^\uparrow=\{x\}$ is contractible, then $x$ is {\bf weak}.
Removing the beat points yields a strong deformation retract, as shown in Proposition 1.3.4 of \cite{Barmak2012}.
\begin{prop*}
If $X$ is a finite $T_0$ space and $x \in X$ is beat, then $X-\{x\}$ is a strong deformation retract of $X$.
\end{prop*}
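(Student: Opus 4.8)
The plan is to build an explicit strong deformation retraction $H\colon X\times[0,1]\to X$, handling the down beat case directly and obtaining the up beat case by passing to the opposite space $\op{X}$, whose order reverses that of $X$. So suppose first that $x$ is down beat, and let $y$ be the unique maximal element of $x^\downarrow-\{x\}$; by definition $y<x$ and every $z<x$ satisfies $z\le y$. I would define $r\colon X\to X-\{x\}$ by $r(x)=y$ and $r(w)=w$ for $w\neq x$, and let $i\colon X-\{x\}\hookrightarrow X$ be the inclusion, so that $r$ is a retraction in the sense that $r\circ i=\mathrm{id}_{X-\{x\}}$.

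First I would check that $r$ is continuous. Since continuity of a map between finite spaces is equivalent to its being order-preserving, it suffices to verify $a\le b\implies r(a)\le r(b)$, and the only cases involving $x$ are immediate: if $a<x$ then $a\le y=r(x)$, while if $x\le b$ then $r(x)=y\le x\le b=r(b)$. Hence $r$ is monotone, and $i\circ r$ is a continuous self-map of $X$ with $i\circ r\le\mathrm{id}_X$ pointwise, since $r(x)=y<x$ and $r$ fixes every other point.

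Next I would produce the homotopy. Define $H\colon X\times[0,1]\to X$ by $H(w,t)=(i\circ r)(w)$ for $t<1$ and $H(w,1)=w$. The key point, and the one place the finite topology does real work, is the continuity of $H$: for an open set $V\subseteq X$ one computes $H^{-1}(V)=\big((i\circ r)^{-1}(V)\times[0,1)\big)\cup\big(V\times\{1\}\big)$, and I would show that each point $(w,1)$ with $w\in V$ has the open neighborhood $\big((i\circ r)^{-1}(V)\cap V\big)\times[0,1]$ contained in $H^{-1}(V)$. This uses that open sets in a finite space are down-closed, so $r(w)\le w\in V$ forces $r(w)\in V$ and hence $w$ lies in the open set $(i\circ r)^{-1}(V)\cap V$. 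Thus $H$ is a homotopy from $i\circ r$ to $\mathrm{id}_X$; and because $H(w,t)=w$ for every $w\neq x$ and all $t$, it is stationary on $X-\{x\}$, exhibiting $X-\{x\}$ as a strong deformation retract.

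Finally, for the up beat case I would apply the construction above to $\op{X}$: if $x^\uparrow-\{x\}$ has a unique minimal element, then $x$ is down beat in $\op{X}$, and a strong deformation retraction of $\op{X}$ onto $\op{X}-\{x\}$ is precisely one of $X$ onto $X-\{x\}$, since passing to the opposite order leaves the underlying space and its homotopies unchanged. The main obstacle is the continuity verification for $H$ along the slice $t=1$, where the interval and finite-space topologies interact; everything else reduces to the order-theoretic bookkeeping of beat points.
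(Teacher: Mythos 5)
Your down beat case is correct, and it is essentially the standard argument (the proof of Proposition 1.3.4 in Barmak's book, which the paper cites rather than reproves): the retraction $r$ sending $x$ to the unique maximal element $y$ of $x^\downarrow-\{x\}$ is order-preserving, $i\circ r\le \mathrm{id}_X$ pointwise, and the two-piece homotopy with $i\circ r$ on $X\times[0,1)$ and $\mathrm{id}_X$ on $X\times\{1\}$ is continuous precisely because open sets in this convention are down-sets. Your continuity verification there is complete.

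The up beat case, however, has a genuine gap. You assert that a strong deformation retraction of $\op{X}$ onto $\op{X}-\{x\}$ ``is precisely'' one of $X$ onto $X-\{x\}$ because dualizing ``leaves the underlying space and its homotopies unchanged.'' That is false: dualizing changes the topology, and a continuous map out of $\op{X}\times[0,1]$ need not be continuous as a map out of $X\times[0,1]$. Concretely, take $X=\{x,y\}$ with $x<y$, so that $x$ is up beat in $X$ and down beat in $\op{X}$. Your down beat construction applied to $\op{X}$ yields $H$ with $H(w,t)=r(w)$ for $t<1$ and $H(w,1)=w$, where $r(x)=y$. Viewed as a map $X\times[0,1]\to X$ this is not continuous: $\{x\}=x^\downarrow$ is open in $X$, yet $H^{-1}(\{x\})=\{(x,1)\}$, which is not open because every neighborhood of $(x,1)$ contains $\{x\}\times(1-\varepsilon,1]$, and those points map to $y$. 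So the specific homotopy does not transfer; only the \emph{existence} of a homotopy rel $X-\{x\}$ transfers, and that requires an argument you have not given (for instance the fence characterization of relative homotopy, which is order-theoretic and hence self-dual). The simplest repair stays inside your own framework: in the up beat case let $y$ be the unique minimal element of $x^\uparrow-\{x\}$, so that $\mathrm{id}_X\le i\circ r$, and define $H(w,t)=w$ for $t<1$ and $H(w,1)=r(w)$. The same continuity check then goes through with the roles reversed: for a point $(w,1)\in H^{-1}(V)$ one has $w\le r(w)\in V$, hence $w\in V$ since $V$ is a down-set, and $\bigl(V\cap r^{-1}(V)\bigr)\times[0,1]$ is the required open neighborhood. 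The rule your dualization shortcut violates is that the \emph{smaller} of the two comparable maps must occupy the open slice $X\times[0,1)$ and the larger the closed slice $X\times\{1\}$; in the up beat case these are opposite to what they were before.
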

Removing beat points from $X$ until none are left results in the {\bf core} of $X$.
In Corollary 4 of \cite{Stong1966}, Stong proves the following.
\begin{cor*}
A finite $T_0$ space $X$ is contractible if and only if the core of $X$ is a point.
\end{cor*}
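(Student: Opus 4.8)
The plan is to lean on the Proposition quoted just above, which says that deleting a single beat point yields a strong deformation retract. Since $X$ is finite, repeated beat-point deletion terminates at the core $C$, and a finite composite of strong deformation retracts is again a strong deformation retract; hence $C$ is a strong deformation retract of $X$, and in particular $X$ is homotopy equivalent to its core. With this in hand the reverse implication is immediate: if $C$ is a single point, then $X$ is homotopy equivalent to a point, i.e. $X$ is contractible. The entire content of the corollary therefore sits in the forward direction.

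So suppose $X$ is contractible. Because $C$ is homotopy equivalent to $X$ it is itself contractible, and since no beat points remain it suffices to prove that a minimal (beat-point-free) contractible finite $T_0$ space is a single point. The engine is the following rigidity lemma: if $C$ is minimal and $f\colon C \to C$ is order-preserving (equivalently, continuous) with $f(x)\le x$ for all $x$, or with $f(x)\ge x$ for all $x$, then $f=\mathrm{id}_C$. I would prove the case $f\le \mathrm{id}$ and recover the other by applying it to the opposite space $\op{C}$, which is again minimal and in which down-sets and up-sets, and the hypotheses $f\le\mathrm{id}$ and $f\ge\mathrm{id}$, interchange.

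To prove the lemma, assume $f\ne\mathrm{id}_C$ and choose $x$ minimal in the poset among the points with $f(x)\ne x$; then $f(y)=y$ for every $y<x$. Since $f(x)\le x$ and $f(x)\ne x$ we get $f(x)<x$, so $f(x)\in x^\downarrow-\{x\}$. For any $y\in x^\downarrow-\{x\}$ we have $y<x$, hence $f(y)=y$, and order-preservation together with $y\le x$ gives $y=f(y)\le f(x)$. Thus $f(x)$ is a maximum of $x^\downarrow-\{x\}$, making $x$ a down beat point and contradicting minimality of $C$. This beat-point extraction is the crux of the whole result, and I expect it to be the only genuinely delicate step; the rest is bookkeeping.

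Finally I would invoke the standard description of homotopy in finite spaces (see \cite{Stong1966}): two maps are homotopic exactly when they are joined by a finite fence of maps each comparable to the next in the pointwise order. Contractibility of $C$ supplies such a fence $\mathrm{id}_C=f_0,f_1,\dots,f_n=c$ from the identity to a constant map. Starting from $f_0=\mathrm{id}_C$ and applying the rigidity lemma to each comparable pair forces $f_1=\mathrm{id}_C$, then $f_2=\mathrm{id}_C$, and inductively $f_n=\mathrm{id}_C$. Hence the constant map $c$ equals $\mathrm{id}_C$, which is possible only if $C$ consists of a single point. Therefore the core of a contractible space is a point, completing the equivalence.
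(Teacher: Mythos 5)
Your proof is correct and is essentially the standard argument behind this statement, which the paper does not prove itself but simply quotes as Corollary 4 of \cite{Stong1966}: retract onto the beat-point-free core, establish the rigidity lemma that an order-preserving self-map of a minimal finite space comparable to the identity must equal the identity (via the down-beat-point extraction you give), and then propagate along the fence from $\mathrm{id}$ to the constant map. Since your reconstruction matches that canonical proof step for step, including the dualization trick for the $f \geq \mathrm{id}$ case, there is nothing to correct.
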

In the absence of beat points, removing weak points from a finite space yields a weak homotopy equivalence (see Proposition 4.2.4 of \cite{Barmak2012}).
\begin{prop*}
If $x$ is a weak point of a finite $T_0$ space $X$, then the inclusion map $\iota:X-\{x\} \hookrightarrow X$ is a weak homotopy equivalence.
\end{prop*}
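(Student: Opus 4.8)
The plan is to push the question through McCord's correspondence \cite{McCord1966} into the world of order complexes and then to exploit the weak-point hypothesis to show that a certain link is contractible. Concretely, McCord's theorem supplies natural weak homotopy equivalences $\greal{X}\to X$ and $\greal{X-\set{x}}\to X-\set{x}$; naturality makes the square comparing these maps with the two inclusions commute, so by the two-out-of-three property $\iota$ is a weak homotopy equivalence if and only if the induced inclusion of realizations $\greal{X-\set{x}}\hookrightarrow\greal{X}$ is. This converts the problem into a gluing question about the simplicial complexes $\simp{X}$ and $\simp{X-\set{x}}$.

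Next I would record the relevant combinatorics. Every chain of $X$ either avoids $x$, and so is a simplex of $\simp{X-\set{x}}$, or contains $x$, and so lies in the closed star $\overline{\mathrm{st}}(x)=\set{x}*\mathrm{lk}(x)$. Hence $\simp{X}=\simp{X-\set{x}}\cup\overline{\mathrm{st}}(x)$, the two pieces meet exactly in $\mathrm{lk}(x)$, and $\simp{X}$ is the pushout of $\simp{X-\set{x}}\leftarrow\mathrm{lk}(x)\to\overline{\mathrm{st}}(x)$. Moreover, a chain comparable to $x$ splits uniquely into its part in $x^\downarrow-\set{x}$ and its part in $x^\uparrow-\set{x}$, and since every element below $x$ is below every element above $x$, this identifies the link as a simplicial join $\mathrm{lk}(x)=\simp{x^\downarrow-\set{x}}*\simp{x^\uparrow-\set{x}}$.

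It then remains to prove that $\mathrm{lk}(x)$ is contractible and to assemble the conclusion. Because $x$ is weak, one of $x^\downarrow-\set{x}$ or $x^\uparrow-\set{x}$ is a contractible finite space, so by McCord its order complex is weakly contractible, hence contractible by Whitehead's theorem; a join having a contractible factor is itself contractible, being homotopy equivalent to a cone, so $\mathrm{lk}(x)$ is contractible. The star $\overline{\mathrm{st}}(x)$ is a cone and hence contractible as well, so $\mathrm{lk}(x)\hookrightarrow\overline{\mathrm{st}}(x)$ is a weak equivalence. Since $\mathrm{lk}(x)\hookrightarrow\simp{X-\set{x}}$ is a subcomplex inclusion, hence a cofibration, the gluing lemma promotes this to a weak equivalence $\simp{X-\set{x}}\hookrightarrow\simp{X}$, which by the first step finishes the argument.

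The step I expect to be the main obstacle is the careful translation between the finite-space meaning of ``contractible'' for $x^\downarrow-\set{x}$ or $x^\uparrow-\set{x}$ and honest contractibility of the realized link: one must route through McCord's weak equivalence, upgrade weak contractibility to contractibility via Whitehead at the CW level, and correctly recognize the link as a simplicial join so that a single contractible factor suffices. Once $\mathrm{lk}(x)$ is known to be contractible, the gluing step is routine.
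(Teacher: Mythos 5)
Your proposal is correct, and it takes essentially the same route as the source of this statement: the paper itself gives no proof here, quoting the result as Proposition 4.2.4 of \cite{Barmak2012}. Your argument---McCord naturality plus two-out-of-three to reduce to the order complexes, the decomposition $\simp{X}=\simp{X-\set{x}}\cup\overline{\mathrm{st}}(x)$ meeting in the link, the identification $\mathrm{lk}(x)=\simp{x^\downarrow-\set{x}}*\simp{x^\uparrow-\set{x}}$ so that the weak-point hypothesis makes the link contractible, and the gluing lemma---is precisely Barmak's proof of that proposition.
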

Recall that a map $f:X \to Y$ \textbf{weak homotopy equivalence} if it induces isomorphisms on all homotopy groups of $X$ and $Y$.
In this case, we may say $X \stackrel{we}{\simeq} Y$.

A function $f:X \to Y$ between two preordered sets is {\bf order-preserving} if $x \leq x'$ implies $f(x)\leq f(x')$ for all $x,x'\in X$.
In Proposition 7 of \cite{Stong1966}, Stong proves the following.
\begin{prop*}
A function $f:X \to Y$ between finite spaces is {continuous} if and only if it is order-preserving.
\end{prop*}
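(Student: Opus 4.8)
The plan is to prove both implications by translating continuity into the combinatorics of the order, relying on the fact that the minimal open neighborhoods $U_x$ form a basis. The key preliminary observation I would record is that $x \leq y$ if and only if $x \in U_y$: indeed, $U_x \subseteq U_y$ forces $x \in U_y$ since $x \in U_x$, and conversely, if $x \in U_y$ then $U_y$ is an open set containing $x$, so $U_x \subseteq U_y$ by minimality of $U_x$. A second useful reformulation is that a subset $V \subseteq X$ is open exactly when it is a down-set, i.e., $x \in V$ and $y \leq x$ imply $y \in V$. This follows because the sets $U_x = x^\downarrow$ form a basis, so $V$ is open if and only if $V = \bigcup_{x \in V} U_x$, which is precisely the down-closure condition since $U_x = \set{y \in X \mid y \leq x}$.

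For the forward direction, I would assume $f$ is continuous and $x \leq x'$, aiming to show $f(x) \leq f(x')$, equivalently $f(x) \in U_{f(x')}$. The idea is to pull back the minimal neighborhood of $f(x')$: since $U_{f(x')}$ is open in $Y$, its preimage $f^{-1}\paren{U_{f(x')}}$ is open in $X$ and contains $x'$. Being open and containing $x'$, it must contain $U_{x'}$ by minimality. Because $x \leq x'$ gives $x \in U_{x'}$, we conclude $x \in f^{-1}\paren{U_{f(x')}}$, hence $f(x) \in U_{f(x')}$, which is exactly $f(x) \leq f(x')$.

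For the reverse direction, I would assume $f$ is order-preserving and verify that preimages of open sets are open using the down-set characterization. Let $V \subseteq Y$ be open, take $x \in f^{-1}(V)$ and any $y \leq x$. Order-preservation gives $f(y) \leq f(x)$, and since $f(x) \in V$ with $V$ a down-set, we get $f(y) \in V$, so $y \in f^{-1}(V)$. Thus $f^{-1}(V)$ is a down-set and therefore open, establishing continuity.

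The argument is essentially formal once the two reformulations are in place, so I do not expect a serious obstacle; the one point requiring genuine care is keeping the convention straight. Since this paper takes $x \leq y \iff U_x \subseteq U_y$, open sets are down-sets rather than up-sets, and the minimal-neighborhood step in the forward direction must be carried out consistently with this choice. Getting either the inclusion $U_x \subseteq U_y$ or the direction of the induced order backwards would silently swap ``order-preserving'' for ``order-reversing,'' so the main discipline is to verify each step against the stated definition $x^\downarrow = U_x = \set{y \in X \mid y \leq x}$.
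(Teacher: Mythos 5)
Your proof is correct in all details. The paper itself does not prove this statement --- it is quoted as Proposition 7 of Stong's \emph{Finite topological spaces} --- but your argument is the standard one: characterize opens as down-sets via the basis of minimal open neighborhoods $U_x = x^\downarrow$, pull back $U_{f(x')}$ and invoke minimality of $U_{x'}$ for the forward direction, and check the down-set condition on preimages for the converse. You also correctly flagged and handled the one real pitfall, namely that under this paper's convention ($x \leq y \iff U_x \subseteq U_y$) open sets are down-sets rather than up-sets, so nothing silently turns into order-reversal.
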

A {\bf fence} in $X$ is a sequence $x_0, x_1,\hdots , x_n$
of points such that any two consecutive points are comparable.
If there exists a fence between any two points in $X$, then $X$ is {\bf order-connected}.
In a finite space $X$, the notions of order-connected, connected, and path-connected are all equivalent (see Proposition 1.2.4 of \cite{Barmak2012}).
If $x$ and $y$ are comparable points in a finite space $X$, then there exists a path $\gamma:I \to X$ such that $\gamma(0)=x$ and $\gamma(1)=y$.

Consider $Y^X = Map(X,Y)$ with the compact-open topology.
When $X$ and $Y$ are finite, we may also consider the {\bf pointwise order} on $Y^X$: given two $f,g \in Y^X$ $f \leq g$ if $f(x) \leq g(x)$ for all $x \in X$.
By Proposition 9 of \cite{Stong1966} and Proposition 1.2.5 of \cite{Barmak2012}, we have the following.
\begin{prop*}
Let $X$ and $Y$ be two finite $T_0$ spaces.
Then the pointwise order on $Y^X$ corresponds to the compact-open topology.
\end{prop*}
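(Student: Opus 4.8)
The plan is to exploit the fact that, since $X$ and $Y$ are finite, the mapping space $Y^X$ is itself a finite space, hence an Alexandroff space carrying its own preorder determined by minimal open neighborhoods. Writing $g^\downarrow$ for the minimal open neighborhood of a map $g \in Y^X$ in the compact-open topology, the associated preorder reads $f \leq g \iff f \in g^\downarrow$, exactly as in the definitions above. The whole proposition therefore reduces to a single computation: I would show that
\[g^\downarrow = \set{f \in Y^X \mid f(x) \leq g(x) \textrm{ for all } x \in X},\]
i.e. that the minimal open neighborhood of $g$ is precisely the set of maps lying pointwise below $g$. Once this identity is in hand, $f \leq g$ in the compact-open preorder is literally the statement that $f \leq g$ in the pointwise order, and the two orders coincide.

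To compute $g^\downarrow$ I would recall the standard subbasis for the compact-open topology, consisting of the sets $M(K,U) = \set{f \in Y^X \mid f(K) \subseteq U}$ with $K \subseteq X$ compact and $U \subseteq Y$ open. Because $X$ is finite, every subset is compact, so in particular I may take singletons $K = \set{x}$. For a fixed $x$, the smallest open set of $Y$ containing $g(x)$ is $g(x)^\downarrow$, so $M\paren{\set{x}, g(x)^\downarrow} = \set{f \mid f(x) \leq g(x)}$ is an open set containing $g$. Intersecting these finitely many open sets over all $x \in X$ produces the open set $\set{f \mid f(x) \leq g(x) \ \forall x}$, which contains $g$; since $g^\downarrow$ is the smallest open set containing $g$, this yields the inclusion $g^\downarrow \subseteq \set{f \mid f \leq g \textrm{ pointwise}}$.

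For the reverse inclusion I would use that open sets of a finite space are down-sets: if $U \subseteq Y$ is open, $y \in U$, and $y' \leq y$, then $y' \in y^\downarrow \subseteq U$. Suppose $f$ lies pointwise below $g$ and let $M(K,U)$ be any subbasic open set containing $g$. For each $x \in K$ we have $g(x) \in U$ and $f(x) \leq g(x)$, whence $f(x) \in U$; thus $f(K) \subseteq U$ and $f \in M(K,U)$. Hence $f$ belongs to every subbasic open set containing $g$, and since the subbasic sets generate the topology, $f$ belongs to every open set containing $g$, giving $f \in g^\downarrow$. This establishes $\set{f \mid f \leq g \textrm{ pointwise}} \subseteq g^\downarrow$, and combining the two inclusions proves the displayed identity.

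I do not expect a serious obstacle here; the argument is essentially bookkeeping once the subbasis is fixed. The only points requiring care are keeping the order convention straight — under the convention of this paper, open sets are down-sets and $f \leq g$ means $f \in g^\downarrow$ — and checking that every set I intersect is genuinely an open subset of $Y^X$ rather than of the space of all set maps, so that all comparisons stay within the continuous maps comprising $Y^X$. The finiteness of $X$ is what makes the argument clean, as it lets me replace an arbitrary compact $K$ by singletons when detecting the pointwise order and guarantees that the relevant intersections are finite, and hence open.
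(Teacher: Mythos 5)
Your proof is correct. The paper does not actually prove this proposition — it quotes it from Proposition 9 of \cite{Stong1966} and Proposition 1.2.5 of \cite{Barmak2012} — and your argument is essentially the standard proof given in those references: the finitely many singleton subbasic sets $M\paren{\{x\},g(x)^\downarrow}$ force $g^\downarrow \subseteq \set{f \mid f \leq g \textrm{ pointwise}}$, while the fact that open subsets of a finite space are down-sets shows every map pointwise below $g$ lies in every subbasic (hence every) open set containing $g$, giving the reverse inclusion.
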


By Corollary 3 of \cite{Stong1966}, if $f\leq g$, then they are \textbf{homotopic}, denoted $f\simeq g$.
A {\bf fence} in $Y^X$ is a sequence $f_0, f_1, \hdots, f_n$ of functions $f_i\in Y^X$ such that either $f_i \leq f_{i+1}$ or $f_i \geq f_{i+1}$ for $0 \leq i < n$.
In Proposition 14 of \cite{Stong1966}, Stong shows that $X$ need not always be finite.
If $f,g:X \to Y$ are maps from any topological space $X$ to a finite space $Y$, $f \leq g$ implies $f \simeq g$ in the traditional sense:
If $f \leq g$ for $f,g:X\to Y$, there exists a map $H:X \times I \to Y$ such that $H(x,0)=f(x)$ and $H(x,1)=g(x)$.

A finite $T_0$ space $X$ generates a simplicial complex called the \textbf{order complex} $\mathcal{K}(X)$, whose simplices are chains in $X$.
The $n$-simplices of $\simp{X}$ are determined by the $n$-chains $x_0 < x_1 < \hdots < x_n$ in $X$.
The points in the {\bf geometric realization} $\greal{X}$ are of the form
\[\alpha = \sum_{t=0}^n t_ix_i\]
where $x_0<x_1<\hdots<x_n$ is an $n$-chain in $X$, $t_i>0$ for $0\leq i \leq n$, and $\sum_{i=0}^n t_i=1$.
There exists a weak homotopy equivalence \[\mu_X: |\mathcal{K}(X)| \to X\] called the {\bf $\mathcal{K}$-McCord map} that sends a point $\alpha \in |\mathcal{K}(X)| $ to $\min(\textrm{support}(\alpha)) \in X$.
For $\alpha$ as written above, $\mu_X(\alpha)=x_0 \in X$.

Given a finite space $X$, its \textbf{dual} $\op{X}$ is the space whose topology is given by the closed subspaces of $X$.
That is, \[\op{X} \supseteq x^\downarrow = x^\uparrow \subseteq X\] for all $x \in X$.

\begin{prop}\label{prop:dualx}
Dualizing a finite space distributes across products.
That is,
\[\op{(X \times Y)} = \op{X} \times \op{Y}.\]
\end{prop}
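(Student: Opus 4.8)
The plan is to exploit the fact that the topology of a finite space is completely determined by its associated preorder, so that proving an equality of finite spaces reduces to proving an equality of preorders on a common underlying set. Both $\op{(X \times Y)}$ and $\op{X} \times \op{Y}$ have $X \times Y$ as their underlying set, so I would argue it suffices to show that their minimal open neighborhoods (equivalently, their down-sets $(x,y)^\downarrow$) coincide at every point, and then invoke that in finite spaces the collection of the $z^\downarrow$ forms a basis.

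First I would record how the three operations act on orders. By the definition of the dual, the down-set of $x$ in $\op{X}$ is the up-set $x^\uparrow$ of $x$ in $X$; in order-theoretic terms this says $a \leq_{\op{X}} x \iff x \leq_X a$, i.e. dualization reverses the order. Next, for a product of finite spaces I would check that the minimal open neighborhood of $(x,y)$ is the product $x^\downarrow \times y^\downarrow$, since a basic open set of the product topology is a product of opens and $x^\downarrow \times y^\downarrow$ is the smallest such set containing $(x,y)$. Hence $(a,b) \leq_{X \times Y} (x,y) \iff a \leq_X x \textrm{ and } b \leq_Y y$, so the product of finite spaces carries the product order.

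With these two facts in hand, the computation is immediate. On the left, reversing the product order gives $(a,b) \leq_{\op{(X \times Y)}} (x,y) \iff (x,y) \leq_{X \times Y} (a,b) \iff x \leq_X a \textrm{ and } y \leq_Y b$. On the right, forming the product of the two dualized orders gives $(a,b) \leq_{\op{X} \times \op{Y}} (x,y) \iff a \leq_{\op{X}} x \textrm{ and } b \leq_{\op{Y}} y \iff x \leq_X a \textrm{ and } y \leq_Y b$. The two conditions are identical, so the preorders agree, and therefore so do the topologies.

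I do not expect any deep obstacle here; the only point requiring genuine care is the claim that the minimal open neighborhood in a finite product is the product of the minimal open neighborhoods, which is exactly what transports ``product topology'' into ``product order.'' Once that is pinned down, the result is a one-line comparison of reversed product orders. It is worth noting that the argument never uses the $T_0$ hypothesis, since it goes through verbatim for arbitrary finite (Alexandroff) spaces regarded as preorders, so the proposition could be stated at that level of generality.
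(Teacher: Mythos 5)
Your proposal is correct and matches the paper's own proof in substance: both arguments reduce the claim to a pointwise comparison of minimal open neighborhoods, using that $(x,y)^\downarrow = x^\downarrow \times y^\downarrow$ in a finite product and that dualizing swaps down-sets with up-sets (i.e., reverses the order). The paper phrases this as a chain of set equalities between $(x,y)^\downarrow$ and $(x,y)^\uparrow$ while you phrase it order-theoretically, but the two computations are identical.
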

\begin{proof}
Recall the product topology: if $X$ and $Y$ are finite spaces, open sets of the form $(x,y)^\downarrow = x^\downarrow \times y ^\downarrow$ form a basis for the topology on $X \times Y$ for $x \in X$ and $y \in Y$.
Let $(x,y) \in \op{(X \times Y)}$. 
Then:
\begin{eqnarray*}
\op{(X \times Y)} \supseteq (x,y)^\downarrow &=& (x,y)^\uparrow \subseteq X \times Y\\
&=& x^\uparrow \times y^\uparrow \subseteq X \times Y\\
&=&x^\downarrow \times y^\downarrow \subseteq \op{X} \times \op{Y}\\
&=& (x,y)^\downarrow \subseteq \op{X} \times \op{Y}.
\end{eqnarray*}
Hence, the open sets of $\op{(X \times Y)}$ are the open sets of $\op{X} \times \op{Y}$, so $\op{(X \times Y)} = \op{X} \times \op{Y}$.
\end{proof}

\subsection{Topological Complexity}

\subsubsection{Farber's Topological Complexity}
In \cite{Farber2001}, Farber introduces the notion of topological complexity as it relates to motion planning in robotics. 
Informally, the topological complexity $\tc{}{X}$ of a path-connected space $X$ is the minimal number of ``motion planning rules'' required to move from one point in the configuration space to another. 
Despite the seminal paper being published in 2003, topological complexity draws from the Schwarz' genus of a fiber space, described in \cite{Schwarz1958} in 1958.

\begin{defn}
The {\bf Schwarz genus} $\mathfrak{g}(p)$ of a fibration $p:E \to B$ is the minimal number $k$ such that there exists an open covering $U_1,\hdots,U_k$ of $B$ where each set $U_i$ admits a local $p$-section.
That is, each $U_i$ has an associated map $s_i$ such that $p \circ s_i \simeq 1_B$.
\end{defn}

\begin{figure}
\begin{diagram}
X&\rTo^\alpha&X^{[0,1]}\\
 &\rdTo_\Delta&\dTo_\pi\\
&&X \times X
\end{diagram}
\caption{The commutative diagram for $\tc{}{X}$}
\label{TC2}
\end{figure}

Consider Figure \ref{TC2}.
Here, $\alpha$ is the map that takes a point $x \in X$ and maps it to the constant path at $x$ in $X^{[0,1]}$, which is the path space of $X$.
The map $\Delta$ is the diagonal map sending $x \mapsto (x,x)$, and
$\pi$ is the fibrant replacement of $\Delta$ that sends a path $\varphi \in X^{[0,1]}$ to the ordered pair $(\varphi(0),\varphi(1))$, thereby recording its endpoints.

\begin{defn}
Formally, the \textbf{topological complexity} of a space $X$ is given by $\tc{}{X}=\mathfrak{g}(\pi)$, where $X$ is a path-connected space.
If $\tc{}{X}=k$, there exist $k$ open subsets covering $X \times X = U_1 \cup U_2 \cup\hdots \cup U_{k}$ such that for any $i \in \{1,2,\hdots,k\}$, there exists a continuous section $s_i:U_i \to X^{[0,1]}$ satisfying $\pi \circ s_i(u) = u$. The $U_i$ are called local domains, and the $s_i$ are called local rules, and $(U_i,s_i)$ is a local motion planner on $X$.
\end{defn}

\begin{rmk}
In Farber's original paper, he defines $\tc{}{X} = \mathfrak{g}(\pi)$. It has since become common practice to take $\tc{}{X}=\mathfrak{g}(\pi)-1$, in order to make bounds of topological complexity behave nicely.
In this article we use Farber's original unreduced definition.
\end{rmk}

Less abstractly, we are interested in the configuration space $X$ of a mechanical system, such as the arm of a robot. 
For example, if our robot has one jointless arm with a circular range of motion 
(e.g., a security camera whose range of motion is $360^\circ$), 
we compute the topological complexity of $S^1$ to give us the minimal number of rules required to get from one point on the circle to any other. 
Such a motion planner is described in \cite{Farber2001} and \cite{Gonzalez2010}:

\begin{ex}\label{s1mp}
It is well-known that $\tc{}{S^1}=2$, meaning that a motion planner for $S^1$ requires two sets covering $S^1 \times S^1$ that each admit local $\pi$-sections.
One example for the local domains of a motion planner for $S^1$ is given by
\begin{align*}
U_1&=\set{(x,-x) \in S^1 \times S^1}\\
U_2&=\set{(x,y) \in S^1 \times S^1\mid x\neq-y}.
\end{align*}
It is easiest to see a motion planning rule on $U_2$:
if $y\neq -x$, there exists a unique shortest arc connecting $x$ to $y$ in $X$, so $s_2(x,y)$ is the path at constant speed along that arc.
If $y=-x$, then no shortest arc exists, so  $s_1:U_1 \to \left(S^1\right)^{[0,1]}$ is the path at constant speed from $x$ to $-x$, along the semicircle determined by $V_x$, where $V$ is some fixed nonzero tangent vector field of $S^1$.
Hence $\tc{}{S^1}=2$.
\end{ex}

\begin{rmk}
It may appear that this construction does not satisfy the definition of topological complexity since $U_1$ is not open, however, it follows from Proposition 2.2 of \cite{Rudyak2010} that the open sets covering $X \times X$ may be replaced with (not necessarily open) Euclidean neighborhood retracts.
\end{rmk}

In general, topological complexity is difficult to compute, and explicit motion planners are rare.
In practice, the most successful way to determine the topological complexity of a space is by upper and lowerbounds.
The most simple bounds are given by the Lusternik-Schnirelmann category, first defined in \cite{LS34}.

\begin{defn}
The \textbf{Lusternik-Schnirelmann category} of a space $X$, denoted $\ls{X}$, is the minimal number $k$ such that $X$ can be covered in $k$ open sets $U_i \subseteq X$ whose inclusion maps $\iota_i: U_i \hookrightarrow X$ are nullhomotopic.
\end{defn}

For finite spaces, the Lusternik-Schnirelmann category provides the following bounds:
\[\ls{X} \leq \tc{}{X} \leq \ls{X\times X} \leq \ls{X}^2.\]
The inequality $\ls{X}\leq\tc{}{X}\leq \ls{X\times X}$ is proven in \cite{Farber2001}, and is true for all path-connected spaces $X$.
For finite path-connected spaces, the inequality $\ls{X\times X} \leq \ls{X}^2$ is proven independently in \cite{Kandola2018} and \cite{Tanaka2018}.
If $X$ is compact and $T_2$, the inequality is improved:
\[\ls{X} \leq \tc{}{X} \leq 2\ls{X}-1,\] as shown in Theorem 5 of \cite{Farber2001}.\footnote{In \cite{Farber2001}, Farber states Theorem 5 to be true for path-connected paracompact spaces. All finite spaces are compact and therefore paracompact, however, it is common in literature to define paracompact spaces such that they are always Hausdorff.}

A more precise cohomological lowerbound for topological complexity comes from Definition 6 of \cite{Farber2001}.
The map $\Delta:X \to X \times X$ in Figure \ref{TC2} induces a map in cohomology, $\Delta^*: H^*(X \times X) \cong H^*(X)\bigotimes H^*(X) \to X^*(X)$.
The longest non-vanishing product of nontrivial elements of $\ker(\Delta^*)$ is called the {\bf zero divisors cup length} and denoted $\zcl{}{X}$.
By Theorem 7 of \cite{Farber2001}, this provides a strict lowerbound for topological complexity.

\begin{thm*}
Let $X$ be a CW-complex and $k$ a field. Then $\zcl{}{X}<\tc{}{X}$.
\end{thm*}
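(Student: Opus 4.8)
The plan is to treat $\tc{}{X}=\mathfrak{g}(\pi)$ as a Schwarz-genus computation and to recover the zero-divisors cup length as the standard cohomological lower bound for the genus of the path fibration $\pi:X^{[0,1]}\to X\times X$. The first step is to identify the relevant kernel. Since the map $\alpha:X\to X^{[0,1]}$ sending a point to the constant path at it is a homotopy equivalence satisfying $\pi\circ\alpha=\Delta$ (see Figure \ref{TC2}), we obtain $\Delta^*=\alpha^*\circ\pi^*$ with $\alpha^*$ an isomorphism, so $\ker(\pi^*)=\ker(\Delta^*)$. By the K\"unneth isomorphism over the field $k$, this kernel is exactly the ideal of \emph{zero-divisors} in $H^*(X\times X;k)\cong H^*(X;k)\otimes H^*(X;k)$, whose longest nonvanishing product has length $\zcl{}{X}$.

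Next I would fix a minimal motion planner. Writing $g=\tc{}{X}=\mathfrak{g}(\pi)$, choose an open cover $X\times X=U_1\cup\cdots\cup U_g$ with local sections $s_i:U_i\to X^{[0,1]}$ satisfying $\pi\circ s_i\simeq\iota_i$, where $\iota_i:U_i\hookrightarrow X\times X$ is the inclusion. The key observation is that every zero-divisor restricts to zero on each local domain: for $u\in\ker(\pi^*)$ we have $\iota_i^*(u)=(\pi\circ s_i)^*(u)=s_i^*\bigl(\pi^*(u)\bigr)=s_i^*(0)=0$, so $u|_{U_i}=0$ and consequently $u$ lifts to a relative class $\bar u_i\in H^*(X\times X,U_i;k)$ under the long exact sequence of the pair.

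The heart of the argument is the relative cup-product lemma: the external product of relative classes furnishes a homomorphism $H^*(X\times X,U_1;k)\otimes\cdots\otimes H^*(X\times X,U_g;k)\to H^*\bigl(X\times X,\,U_1\cup\cdots\cup U_g;k\bigr)$ that is compatible with the absolute cup product under the restriction maps. Applying it to zero-divisors $u_1,\dots,u_g$ and their chosen lifts $\bar u_1,\dots,\bar u_g$ produces a lift of the product $u_1\cup\cdots\cup u_g$ into $H^*\bigl(X\times X,\,U_1\cup\cdots\cup U_g;k\bigr)=H^*\bigl(X\times X,\,X\times X;k\bigr)=0$. Hence any product of $g$ zero-divisors vanishes, so the longest nonvanishing product of zero-divisors has at most $g-1$ factors; that is, $\zcl{}{X}\le g-1=\tc{}{X}-1<\tc{}{X}$.

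The routine inputs are the K\"unneth splitting (where the field hypothesis on $k$ is used) and the fact that the CW structure makes the relative cohomology and cup-product machinery behave well. I expect the one step requiring genuine care to be the relative cup-product lemma: one must verify that the external products of the relative lifts $\bar u_i$ assemble coherently into a class of the union $U_1\cup\cdots\cup U_g$ and that this class maps to $u_1\cup\cdots\cup u_g$ under the canonical map to absolute cohomology. Once this naturality is in hand, the strict inequality follows immediately.
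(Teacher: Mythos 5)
Your proposal is correct, and it is essentially the same argument as the one behind this statement: the paper itself gives no proof (the theorem is quoted from Theorem 7 of \cite{Farber2001}), and Farber's proof is precisely the Schwarz-genus cohomological bound you describe --- zero-divisors lie in $\ker(\pi^*)=\ker(\Delta^*)$, hence restrict to zero on each local domain, lift to relative classes, and their $g$-fold relative cup product lands in $H^*\bigl(X\times X, X\times X;k\bigr)=0$, forcing $\zcl{}{X}\leq \tc{}{X}-1$. The one step you flag as delicate, the relative cup product $H^*(Z,A;k)\otimes H^*(Z,B;k)\to H^*(Z,A\cup B;k)$ for open subsets $A,B$ and its compatibility with the absolute product, is standard and goes through exactly as you need.
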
 

Let $X$ be a finite $T_0$ space.
Because $\mu_X:\greal{X}\to X$ is a weak homotopy equivalence, there is an isomorphism of homology groups $\mu_{X*}:H_n(\greal{X}) \to H_n(X)$.
By the universal coefficient theorem, this also induces an isomorphism in cohomology groups (see Theorem 3.2 of \cite{Hatcher2002}).
Then $\zcl{}{\greal{X}}=\zcl{}{X}$.
From this and Corollary 4.10 of \cite{Tanaka2018}, we have the following inequality:
\[\zcl{}{X}=\zcl{}{\greal{X}}<\tc{}{\greal{X}}\leq \tc{}{X}.\]
Because of this strict inequality, many papers adopt a reduced definition of topological complexity such that $\zcl{}{X}\leq \tc{}{X}$.
Because $\zcl{}{X}$ is not more useful than $\tc{}{\greal{X}}$ for a finite space $X$ when computing $\tc{}{X}$, we use the unreduced notion of topological complexity.
In part, this is because there are currently no known finite spaces $X$ such that $\tc{}{X}$ is known and $\tc{}{\greal{X}}$ is not.

\subsubsection{Topological Complexity of Discretized Spaces}
\label{subsec:dtc}

Studying the topological complexity of discretized topological spaces may be more useful in real-life applications.
Consider Example \ref{s1mp} as it relates to a rotating security camera.
Rotating $n^\circ$ is indistinguishable from rotating $(n+\varepsilon)^\circ$ for some sufficiently small $\varepsilon \in S^1$.
Realistically, the camera can only rotate into finitely many positions.
In this section, we review three notions of topological complexity that have been adapted to discretized spaces.
The first two notions, simplicial complexity and discrete topological complexity, we will only cover in brief since they are not used elsewhere in this article.

In \cite{Gonzalez2017}, Gonz\'{a}lez defines an analog of topological complexity for simplicial complexes, called {\bf simplicial complexity}\footnote{The definition of simplicial complexity in \cite{Gonzalez2017} is reduced, and we use unreduced values in this article.} and denoted $\text{SC}(K)$ for a simplicial complex $K$.
The computation of simplicial complexity depends on taking repeated barycentric subdivisions of the space.
Gonz\'{a}lez' definition is adapted from Iwase and Sakai's intepretation of topological complexity as a fibrewise Lusternik-Schnirelmann category, introduced in \cite{Iwase2012}.
Their notion agrees with Farber's topological complexity of the geometric realization of $K$, as proven in Theorem 1.6 of \cite{Gonzalez2017}:
\[SC(K)=\tc{}{|K|}\]

As a consequence, $\text{SC}(K)=2$ for any complex whose realization has the homotopy type of an odd sphere.
In particular, this includes $K$ such that $|K|\simeq S^1$.
They demonstrate this in Section 3 of \cite{Gonzalez2017} with $S^1$ modeled by the 1-skeleton of the 2-dimensional simplex $\Delta^2$, denoted $S_1$.
The open sets of $S_1 \times S_1$ admitting continuous motion planning rules follow Farber's construction closely. One collapses to $\set{(x,x) \in S_1\times S_1}$, and one to $\set{(x,-x) \in S_1 \times S_1}$.
While Gonz\'{a}lez' definition of topological complexity for simplicial complexes involves taking repeated barycentric subdivisions, the definition of {\bf discrete topological complexity}\footnote{The definition of discrete topological complexity in \cite{Fernandez-Ternero2018} is reduced, and we use the unreduced values in this article.} (DTC) in \cite{Fernandez-Ternero2018} is defined in purely combinatorial terms.
Fern\'{a}ndez-Ternero, et al. prove in Example 4.9 of \cite{Fernandez-Ternero2018} that $\textrm{DTC}\paren{S_1}=3$.
For larger simplicial models of $S^1$, Theorem 5.6 of that paper proves the discrete topological complexity drops back down to 2.

Tanaka introduces {\bf combinatorial complexity}\footnote{The definition of combinatorial complexity in \cite{Tanaka2018} is unreduced, and no changes have been made from the values in that paper.} (CC) in \cite{Tanaka2018} as an analog of topological complexity for finite spaces.
Tanaka's notion is most useful to us because it can be applied to finite spaces that do not have an underlying simplicial structure.
The definition of combinatorial complexity differs from Farber's topological complexity in that they consider finite models of the interval in place of $I$.
Denote by $J_m$ a finite space with $m+1$ points whose order is given by
\[0<1>2<\hdots \lessgtr m.\]
In the next section, we will show why this is an appropriate model of a line interval.
For example, the finite space in Figure \ref{cots} would be referred to as $J_8$ in \cite{Tanaka2018}.
\begin{defn}
Given a finite $T_0$ space $X$, $\textrm{CC}_m(X)$ is the smallest positive integer $n$ such that $X \times X$ can be covered in $n$ open sets $U_i$ such that each $U_i$ admits a continuous section $s_i:U_i \subseteq X\times X \to X^{J_m}$. 
The {\bf combinatorial complexity} of $X$ is given by \[\textrm{CC}(X):=\lim_{m \to \infty} \textrm{CC}_m(X).\]
\end{defn}

Theorem 3.2 of \cite{Tanaka2018} proves the following:
\[\textrm{It holds that $\tc{}{X} = \text{CC}(X)$ for any connected finite space $X$.}\]
Because connected finite spaces are path-connected by Proposition 1.2.4 of \cite{Barmak2012}, this is sufficient for defining a notion of topological complexity.
In Example 4.5 of that paper, Tanaka proves that for the minimal finite model $\mathbb{S}^1$ of $S^1$, $\tc{}{\mathbb{S}^1}=4$; this is the result that motivated \cite{Kandola2018}.

\subsection{Digital Topology}
\label{sec:digtop}

Digital topology arose as a topological tool in image processing.
Every digital topology includes some model of a computer screen.
These models can be graphs, imbeddings into $\rr^2$, or axiomatic (see \cite{Rosenfeld2007}, \cite{Kong1992}, and \cite{Khalimsky1990a}, respectively).
While most authors call this model a ``digital plane'' (see \cite{Kovalevsky1989}, \cite{Khalimsky1990a}, \cite{Eckhardt1994}, \cite{Kiselman2000}, \cite{Slapal2006}), it is also called a ``digital picture space'' (see \cite{Kong1992}).
Many papers approaching digital topology from a computer science perspective treat the digital plane as $\zz^2$ with prescribed adjacencies (see \cite{Marcus1970},\cite{Kong1992}, \cite{Eckhardt2003},\cite{Boxer2005}, \cite{Slapal2006}, for example).
In this next section, we focus on the digital plane of Khalimsky, Kopperman, and Meyer in \cite{Khalimsky1990a}, followed by a review of the other models.

\subsubsection{The Khalimsky Topology}

Because digital planes are discretized, we need discretized analogs of common structures in topology.
Much like a line segment, one can think of a finite connected ordered topological space as a finite topological space that has two endpoints, each with one neighbor, and all other points each have two neighbors.
The notion of COTS-arcs and COTS-paths was first brought up in \cite{Khalimsky1990a}.

\begin{defn}
A {\bf COTS} (connected ordered topological space) is a connected topological space $X$ with the following property:
if $Y \subseteq X$ is a 3-point subset, then there exists a $y \in Y$ such that $Y-\{y\}$ has two nonempty components.
Colloquially, every 3-point subset $Y$ of $X$ has one point that separates the other two.
\end{defn}

When a COTS is finite, the points alternate between open and closed, as shown in Lemma 2.8 of \cite{Khalimsky1990a}.
We provide an alternative explanation here.
This can also be seen from the Hasse diagram of a COTS, an example of which is shown in Figure \ref{fig:Hcots}.
Notice how $x_0$ and $x_8$ have exactly one neighbor in the space, and all other $x_i$ have exactly two neighbors.

\begin{figure}
    \centering
     \begin{tikzpicture}[mixed/.style={shape=circle,fill=black,scale=.5,draw}]
    \foreach \x in {1,3,5,7}{
    \node[mixed,label=above:{$x_{\x}$}] (x\x) at (\x,1){};
    }
    \foreach \x in {0,2,4,6,8}{
    \node[mixed,label=below:{$x_{\x}$}] (x\x) at (\x,0){};
    }
    \draw (x0)--(x1)--(x2)--(x3)--(x4)--(x5)--(x6)--(x7)--(x8);
        
    \end{tikzpicture}
    \caption{The Hasse diagram of a COTS with 9 points}
    \label{fig:Hcots}
\end{figure}
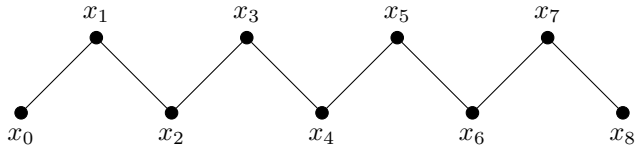

\begin{prop}
Every point of a COTS is either open or closed, and no two points of the same type are adjacent.
\end{prop}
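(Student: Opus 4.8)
The plan is to translate both claims into order-theoretic statements about the partial order underlying the finite $T_0$ space $X$, and to invoke the COTS property only where it is genuinely needed. Recall that a singleton $\{x\}$ is open exactly when $x^\downarrow = \{x\}$, i.e. $x$ is a minimal element, and $\{x\}$ is closed exactly when $x^\uparrow = \{x\}$, i.e. $x$ is maximal. Thus ``$x$ is open'' means ``$x$ is minimal'' and ``$x$ is closed'' means ``$x$ is maximal,'' so the entire proposition becomes a statement about minimal and maximal elements. I would first record the elementary fact that a two-point subspace $\{p,q\}$ of $X$ is connected precisely when $p$ and $q$ are comparable (in which case it is a Sierpi\'nski space), and is split into two singleton components precisely when $p$ and $q$ are incomparable; this is immediate from intersecting the basic open sets $p^\downarrow$ and $q^\downarrow$ with $\{p,q\}$.

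For the first claim I would argue by contradiction: suppose some $x \in X$ is neither open nor closed. Then $x$ is neither minimal nor maximal, so there exist $a < x$ and $b > x$; by antisymmetry $a, x, b$ are three distinct points forming a chain $a < x < b$. Now apply the COTS condition to the three-point subset $Y = \{a,x,b\}$. Using the two-point observation above, deleting any single point of $Y$ leaves a comparable pair — $\{a,b\}$, $\{x,b\}$, or $\{a,x\}$ — each of which is connected, so no point of $Y$ separates the other two. This contradicts the defining property of a COTS. Hence no point can be simultaneously non-minimal and non-maximal, so every point is minimal or maximal, i.e. open or closed. I expect this step — verifying that a $3$-chain admits no separating point — to be the crux of the argument and the only place the COTS hypothesis is used; the one subtlety is remembering that a comparable pair is glued together (Sierpi\'nski, connected) rather than split, so even the ``middle'' point $x$ fails to separate $a$ from $b$.

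For the second claim I would note that it is purely order-theoretic and requires no appeal to the COTS axiom. By the definition of $A(x)$, two points are adjacent only if they are comparable. If $x$ and $y$ are distinct open (hence minimal) points, then $x \le y$ would force $x < y$, contradicting minimality of $y$, and symmetrically $y \le x$ is impossible; thus distinct minimal points are incomparable and therefore non-adjacent. The identical argument with ``minimal'' replaced by ``maximal'' handles distinct closed points. Finally, to see that ``open'' and ``closed'' are genuinely distinct types, observe that a point which is both would be a clopen singleton, impossible in a connected space with more than one point (the degenerate case $|X| \le 1$ being trivial). Assembling these observations yields the proposition.
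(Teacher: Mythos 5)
Your proof is correct, and it takes a genuinely different route from the paper's. The paper argues by case analysis on subspace topologies: connectedness of $X$ produces a connected triple $Y=\{x,y,z\}$ whose middle point separates the other two; the candidate open sets of $Y$ are $\{x\}$ or $\{x,y\}$ together with $\{z\}$ or $\{y,z\}$; two of the four combinations are discarded because they disconnect $Y$; and the two surviving cases exhibit the open/closed alternation. You instead translate everything into the specialization order (open means minimal, closed means maximal, adjacent means comparable), reduce the first claim to the impossibility of a $3$-chain $a<x<b$ --- no point of a chain separates the other two, since every two-element subset of a chain is comparable, hence Sierpi\'nski, hence connected --- and observe that the second claim is the purely order-theoretic fact that distinct minimal (or distinct maximal) points are incomparable, with no use of the COTS axiom at all. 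Your decomposition buys two things the paper's sketch leaves implicit: it establishes openness or closedness in $X$ itself rather than in the subspace topology of one particular triple (so nothing remains to transfer from ``open in $Y$'' to ``open in $X$,'' and every point of $X$ is covered), and it isolates exactly where the COTS hypothesis enters; what the paper's approach buys is a direct picture of the local structure (a middle point flanked by two points of the opposite type), which is what later arguments about the Khalimsky plane actually use. Two standing assumptions are shared by both arguments and are worth making explicit: identifying open with minimal and closed with maximal requires the Alexandroff (e.g., finite) setting, without which the proposition is false (the COTS $\mathbb{R}$ has no open or closed points), and your appeal to antisymmetry presumes $X$ is $T_0$ --- harmless here, since a connected COTS with at least three points can be shown to be $T_0$ (topologically indistinguishable points defeat the separation axiom), but that deserves a sentence.
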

\begin{proof}
To see this, recall that for any three-point subset $Y$ of a COTS $X$, there is a $y \in Y$ such that $Y$ intersects with two connected components of $X-\{y\}$.
Since $X$ is connected, there exists a connected 3-point subset $Y:=\{x,y,z\}$ of $X$
such that $y$ is the point that separates the other two.
Then $Y \cap (X-\{y\}) = \{x,z\} = Y-\{y\}$,
where $x$ includes into one component of $X-\{y\}$,
and $z$ includes into another component of $X-\{y\}$.
As subspaces of $Y-\{y\}$, each of $\{x\}$ and $\{z\}$ are open.
Within $Y$, the open sets are
\[\{x\}\textrm{ or }\{x,y\}\]
along with
\[\{z\}\textrm{ or }\{y,z\}.\]
Note that $\{x\},\{y\}$, and $\{z\}$ cannot all be open sets of $Y$, or else $Y$ would have the discrete topology, and $Y$ would not be connected.
So there are four cases to consider.

If the open sets of $Y$ are $\{x\}$ and $\{y,z\}$, then $x^\downarrow \not\subseteq y^\downarrow, x^\downarrow \not\subseteq z^\downarrow$, and $y^\downarrow = z^\downarrow \not\subseteq x^\downarrow$, so $x$ is not connected to any point of $Y$, a contradiction. 
Similarly, if the open sets of $Y$ are $\{x,y\}$ and $\{z\}$, then $z$ is not connected to any point of $Y$, a contradiction.

If the open sets of $Y$ are $\{x\}$ and $\{z\}$, then $y$ is a closed point and $x$ and $z$ are each open points.
If the open sets of $Y$ are $\{x,y\}$ and $\{y,z\}$, then $\{x,y\} \cap \{y,z\} = \{y\}$ is an open point, and $x$ and $z$ are each closed points.
In the latter two cases, the points alternate between open and closed, as shown in Figure \ref{cots}.
\end{proof}

Notice that the definition of a COTS does not specify that a COTS must be finite.
It is easy to see, for example, that $\rr$ is a COTS: for any three-point, one of the points separates the other two.
In this document, however, we will only be concerned with finite COTS.
Example 1.6 of \cite{El-Atik2002} lists, up to homeomorphism, all nine topologies on a three-point topological space.
The topology $\tau_7$ of that example is given by the set $\set{x,y,z}$ with minimal open sets $\set{x,y}$, $\set{y,z}$, and $\set{y}$.
That is the only topology on three points that satisfies the definition of a COTS.
Figure \ref{cots} displays a COTS with nine points whose Hasse diagram is shown in Figure \ref{fig:Hcots}.
In the context of digital topology, we adopt the convention of \cite{Khalimsky1990a} and \cite{Khalimsky1990} to use squares for closed points and circles for open points.
Later on, we use solid black dots to represent points that are neither open nor closed.

\begin{figure}
\centering
\begin{tikzpicture}
\draw (0,0) ellipse (40pt and 10pt);
\draw (2,0) ellipse (40pt and 10pt);
\draw (4,0) ellipse (40pt and 10pt);
\draw (6,0) ellipse (40pt and 10pt);
\draw (-1,0) circle (10pt);
\draw (1,0) circle (10pt);
\draw (3,0) circle (10pt);
\draw (5,0) circle (10pt);
\draw (7,0) circle (10pt);

\draw (-1,0)--(7,0);
\path (-1,0) node[shape=circle,fill=white,draw]{}
(0,0) node[shape=rectangle,fill=white,draw]{}
(1,0) node[shape=circle,fill=white,draw]{}
(2,0) node[shape=rectangle,fill=white,draw]{}
(3,0) node[shape=circle,fill=white,draw]{}
(4,0) node[shape=rectangle,fill=white,draw]{}
(5,0) node[shape=circle,fill=white,draw]{}
(6,0) node[shape=rectangle,fill=white,draw]{}
(7,0) node[shape=circle,fill=white,draw]{};
\end{tikzpicture}
\caption{cots A finite COTS $X$ with the minimal open set of each point circled}
\label{cots}
\end{figure}
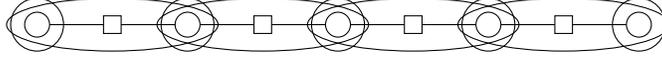

A Hausdorff representation of a computer screen might be in the form of $[a,b]\times[c,d]$, for some $a<b$ and $c<d$ with $a,b,c,d \in \mathbb{R}$.
A finite COTS is a representation of a line segment, and it can be used similarly to define a finite representation of a rectangle.

\begin{defn}
If $X$ and $Y$ are finite COTS with $|X|\geq 3$ and $|Y| \geq 3$, then a space $X \times Y$ equipped with the product topology is called a {\bf digital plane}.
Throughout this paper, $\D$ will refer to a digital plane that is sufficiently large, unless otherwise specified.
\end{defn}

Because the COTS $X$ and $Y$ are finite and $T_0$, they yield a partial order whose product gives rise to a partial order on $\D$.
If $X = \set{x_0,x_1,\hdots,x_{|X|-1}}$ and $Y=\set{y_0,y_1,\hdots,y_{|Y|-1}}$, each point of $\D$ is of the form $(x_i,y_j)$ for some integer $0 \leq i < |X|$ and $0\leq j < |Y|$.
For computational purposes, we will refer to points $(x_i,y_j)$ by their indices $(i,j)$, which can also be thought of as integer coordinates in $\zz^2$.

If $|X|=|Y|=3$ and $X=Y$, then $X \times Y$ is the digital plane shown in either Figure \ref{fig:pure} or Figure \ref{fig:purec}. 
If $|X|=|Y|=5$ and $X=Y$, then $X \times Y$ is either Figure \ref{fig:plane} or its dual, depending on whether the endpoints of $X$ are open or closed.
For example, if $X$ is the COTS in Figure \ref{cots}, then $\op{X}$ is also a COTS comprising nine points, however, it has five closed points and four open points.
The lines in these figures are not part of the digital planes themselves; they indicate which points are adjacent according to the topology on the space.

\begin{figure}
\centering
\begin{tikzpicture}
\foreach \x in {1,2,3}
	{
		\draw[lightgray, thin] (\x,1)--(\x,3);
		\draw[lightgray, thin] (1,\x)--(3,\x);
	}
		\draw[lightgray, thin] (1,1)--(3,3);
		\draw[lightgray, thin] (1,3)--(3,1);
	\draw (2,2) node [shape=circle,fill=white,draw]{};

	\draw (1,1) node [shape=rectangle,fill=white,draw]{};
	\draw (1,3) node [shape=rectangle,fill=white,draw]{};
	\draw (3,1) node [shape=rectangle,fill=white,draw]{};
	\draw (3,3) node [shape=rectangle,fill=white,draw]{};

		\draw (1,2) node [shape=circle,fill=black,scale=.5,draw]{};
		\draw (2,1) node [shape=circle,fill=black,scale=.5,draw]{};
		\draw (2,3) node [shape=circle,fill=black,scale=.5,draw]{};
		\draw (3,2) node [shape=circle,fill=black,scale=.5,draw]{};	
\end{tikzpicture}
\caption{An open point and its adjacency set}
\label{fig:pure}
\end{figure}
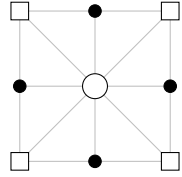

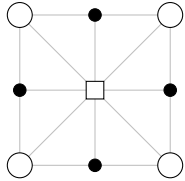
\begin{figure}
\centering
\begin{tikzpicture}
\foreach \x in {1,2,3}
	{
		\draw[lightgray, thin] (\x,1)--(\x,3);
		\draw[lightgray, thin] (1,\x)--(3,\x);
	}
		\draw[lightgray, thin] (1,1)--(3,3);
		\draw[lightgray, thin] (1,3)--(3,1);
	\draw (2,2) node [shape=rectangle,fill=white,draw]{};

	\draw (1,1) node [shape=circle,fill=white,draw]{};
	\draw (1,3) node [shape=circle,fill=white,draw]{};
	\draw (3,1) node [shape=circle,fill=white,draw]{};
	\draw (3,3) node [shape=circle,fill=white,draw]{};

		\draw (1,2) node [shape=circle,fill=black,scale=.5,draw]{};
		\draw (2,1) node [shape=circle,fill=black,scale=.5,draw]{};
		\draw (2,3) node [shape=circle,fill=black,scale=.5,draw]{};
		\draw (3,2) node [shape=circle,fill=black,scale=.5,draw]{};	
\end{tikzpicture}
\caption{A closed point and its adjacency set}
\label{fig:purec}
\end{figure}

\begin{figure}
\centering
\begin{tikzpicture}

	\draw[lightgray, thin] (2,1)--(1,2)--(2,3)--(3,2)--(2,1)--(2,3)--(1,2)--(3,2);
	\draw (2,2) node [shape=circle,fill=black,scale=.5,draw]{};
	\draw (1,2) node [shape=circle,fill=white,draw]{};
	\draw (3,2) node [shape=circle,fill=white,draw]{};
	\draw (2,1) node [shape=rectangle,fill=white,draw]{};
	\draw (2,3) node [shape=rectangle,fill=white,draw]{};

\end{tikzpicture}
\caption{A mixed point and its adjacency set}
\label{fig:amixed}
\end{figure}
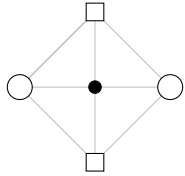

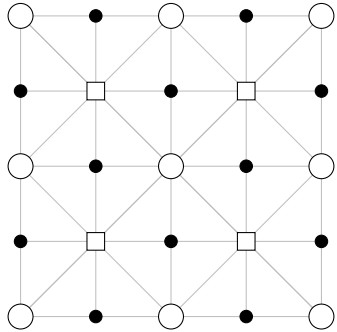
\begin{figure}
\centering
\begin{tikzpicture}
\foreach \x in {1,2,3,4,5}
	{
		\draw[lightgray, thin] (\x,1)--(\x,5);
		\draw[lightgray, thin] (1,\x)--(5,\x);
	}
\foreach \x in {1,3,5}
	{
		\draw[lightgray, thin] (\x,1)--(5,6-\x);
		\draw[lightgray, thin] (\x,5)--(5,\x);
		\draw[lightgray, thin] (1,\x)--(6-\x,5);
		\draw[lightgray, thin] (\x,1)--(1,\x);
	}

\foreach \x in {1,3,5}
	\foreach \y in {1,3,5}
	{
		\draw (\x,\y) node [shape=circle,fill=white,draw]{};
	}
\foreach \x in {2,4}
	\foreach \y in {2,4}
	{
		\draw (\x,\y) node [shape=rectangle,fill=white,draw]{};
	}
\foreach \x in {2,4}
	\foreach \y in {1,3,5}
	{
		\draw (\x,\y) node [shape=circle,fill=black,scale=.5,draw]{};
		\draw (\y,\x) node [shape=circle,fill=black,scale=.5,draw]{};
	}

\end{tikzpicture}
\caption{A $5 \times 5$ digital plane}
\label{fig:plane}
\end{figure}

To account for the parity of the points, we present the following:

\begin{thm} 
\label{thm:dswap}
Consider a digital plane $\D$.
Its dual space $\op{\D}$ is also a digital plane, whose open and closed points have been swapped.
\end{thm}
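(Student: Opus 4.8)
The plan is to reduce the statement to a fact about one–dimensional factors and then read off the behaviour of the open and closed points. Writing $\D = X \times Y$ for finite COTS $X,Y$ with $|X|,|Y|\geq 3$, Proposition \ref{prop:dualx} gives $\op{\D} = \op{(X\times Y)} = \op{X}\times\op{Y}$. Hence it suffices to show that the dual of a finite COTS is again a finite COTS with at least three points, since the product of two such spaces is by definition a digital plane. The core of the argument is the observation that dualizing replaces the partial order $\leq$ by its opposite, which leaves the symmetric comparability relation $\lessgtr$ (and therefore every adjacency set $A(x)$) unchanged.

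I would first handle the structural requirements. The underlying set is unchanged, so $|\op{X}| = |X| \geq 3$. Because connectedness of a finite space coincides with order-connectedness, and a fence refers only to comparability of consecutive points, $\op{X}$ is connected exactly because $X$ is. For the defining separation property I would use the fact that a two-point subspace $\{a,c\}$ has two nonempty components precisely when $a$ and $c$ are incomparable (if $a\lessgtr c$ the subspace is the connected Sierpi\'nski space). Thus for a $3$-point subset $\{a,b,c\}$ the COTS condition says that some point $y$ can be removed to leave two incomparable points; since incomparability of a pair is unaffected by reversing the order, this condition holds in $\op{X}$ exactly when it holds in $X$. The same reasoning applies to $\op{Y}$, so $\op{\D} = \op{X}\times\op{Y}$ is a digital plane.

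It then remains to identify which points change type. A point $p$ is open iff $p^\downarrow = \{p\}$, i.e. $p$ is minimal, and closed iff $p^\uparrow = \{p\}$, i.e. $p$ is maximal; dualizing interchanges $p^\downarrow$ and $p^\uparrow$ and hence interchanges minimal and maximal points. In the product, a point is (pure) open iff both coordinates are open and (pure) closed iff both are closed, whereas a mixed point has one coordinate of each type. Dualizing each factor therefore carries pure open points to pure closed points and vice versa, while sending mixed points to mixed points, which is exactly the asserted swap.

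I expect the genuinely delicate step to be the separation property: the argument hinges on recognizing that ``$Y-\{y\}$ has two nonempty components'' is detected solely by the comparability relation on the two surviving points, so that it is manifestly invariant under order reversal. Once that translation is made explicit, the connectedness claim and the open/closed bookkeeping are routine.
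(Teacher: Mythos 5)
Your proof is correct and follows essentially the same route as the paper's: decompose $\D = X \times Y$, apply Proposition \ref{prop:dualx} to identify $\op{\D} = \op{X} \times \op{Y}$, and observe that dualizing exchanges open and closed points. The only difference is that you actually prove that the dual of a finite COTS is again a finite COTS (via the order-reversal invariance of incomparability and of fences), a step the paper merely asserts by pointing to its earlier example of Figure \ref{cots}; your argument correctly fills in that detail.
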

\begin{proof}
Consider two COTS $X,Y$ and their resulting digital plane $\D=X \times Y$.
As mentioned above, $\op{X}$ and $\op{Y}$ are also COTS, so $\op{X}\times\op{Y}$ is also a digital plane.
By Proposition \ref{prop:dualx}, $\op{X}\times\op{Y} = \op{(X \times Y)}=\op{\D}$.
By the definition of dualizing, the open sets of $\D$ are the closed sets of $\op{\D}$, so the open points of one are the closed points of the other.
\end{proof}

Just as paths and arcs in a space $X$ are the images of maps from $I$ to $X$, we have analogs for finite spaces.

\begin{defn}
\label{def:COTSpa}
If $Y$ is a topological space, a {\bf COTS-arc} in $Y$ is a homeomorphic image of a COTS in $Y$, 
and a {\bf COTS-path} is a continuous mapping of a COTS into $Y$.
\end{defn}

\begin{ex}\label{ex:acute}
Figure \ref{COTSpath} shows a mapping of a finite COTS into $\D$ whose image is not a COTS-arc, however, it is a COTS-path.
Recall that in a COTS, any point that is not an endpoint has precisely two neighbors.
Enumerating the points from left to right $c_0,\hdots c_6$, observe that $c_2$ has four neighbors in the COTS-path: $c_1, c_3,c_4,c_5$.
In Proposition \ref{prop:path-arc}, we prove that any COTS-path contains a COTS-arc as a subset.
Applying this proposition to the COTS-path in Figure \ref{COTSpath} would yield the COTS-arc $\{c_0,c_1,c_2,c_5,c_6\}$, shown in Figure \ref{COTSarc}.
Note how the $45^\circ$ turn at $c_3$ in Figure \ref{COTSpath} is not possible for a COTS-arc.

\begin{figure}
\centering
\begin{minipage}{.5\textwidth}
\centering
\begin{tikzpicture}
\foreach \x in {1,2,3,4,5}
	{
		\draw[lightgray, thin] (\x,1)--(\x,5);
		\draw[lightgray, thin] (1,\x)--(5,\x);
	}
\foreach \x in {1,3,5}
	{
		\draw[lightgray, thin] (\x,1)--(5,6-\x);
		\draw[lightgray, thin] (\x,5)--(5,\x);
		\draw[lightgray, thin] (1,\x)--(6-\x,5);
		\draw[lightgray, thin] (\x,1)--(1,\x);
	}

\foreach \x in {1,3,5}
	\foreach \y in {1,3,5}
	{
		\draw (\x,\y) node [shape=circle,fill=white,draw]{};
	}
\foreach \x in {2,4}
	\foreach \y in {2,4}
	{
		\draw (\x,\y) node [shape=rectangle,fill=white,draw]{};
	}
\foreach \x in {2,4}
	\foreach \y in {1,3,5}
	{
		\draw (\x,\y) node [shape=circle,fill=black,scale=.5,draw]{};
		\draw (\y,\x) node [shape=circle,fill=black,scale=.5,draw]{};
	}

\draw[red, ultra thick] (1,4)--(2,4)--(3,3)--(4,4)--(4,3)--(4,1);
\end{tikzpicture}
  \caption{A COTS-path}
  \label{COTSpath}
\end{minipage}%
\begin{minipage}{.5\textwidth}
  \centering
  \begin{tikzpicture}
\foreach \x in {1,2,3,4,5}
	{
		\draw[lightgray, thin] (\x,1)--(\x,5);
		\draw[lightgray, thin] (1,\x)--(5,\x);
	}
\foreach \x in {1,3,5}
	{
		\draw[lightgray, thin] (\x,1)--(5,6-\x);
		\draw[lightgray, thin] (\x,5)--(5,\x);
		\draw[lightgray, thin] (1,\x)--(6-\x,5);
		\draw[lightgray, thin] (\x,1)--(1,\x);
	}

\foreach \x in {1,3,5}
	\foreach \y in {1,3,5}
	{
		\draw (\x,\y) node [shape=circle,fill=white,draw]{};
	}
\foreach \x in {2,4}
	\foreach \y in {2,4}
	{
		\draw (\x,\y) node [shape=rectangle,fill=white,draw]{};
	}
\foreach \x in {2,4}
	\foreach \y in {1,3,5}
	{
		\draw (\x,\y) node [shape=circle,fill=black,scale=.5,draw]{};
		\draw (\y,\x) node [shape=circle,fill=black,scale=.5,draw]{};
	}

\draw[blue, ultra thick] (1,4)--(2,4)--(4,2)--(4,1);
\end{tikzpicture}
  \caption{A COTS-arc that is a subset of the COTS-path in Figure \ref{COTSpath}}
  \label{COTSarc}
\end{minipage}
\end{figure}
\end{ex}

The notion of COTS-arcs lends itself to that of COTS-Jordan curves, which is a digital Jordan curve in the Khalimsky plane.
(See \cite{Kiselman2000}, for example.)

\begin{defn}
If $J \subseteq \D$ has $|J|\geq 4$, and $J-\{j\}$ is a COTS-arc for all $j \in J$, then $J$ is a {\bf COTS-Jordan curve}.
\end{defn}

The main theorem of \cite{Khalimsky1990a} is that the complement of a digital Jordan curve which does not meet the border of a digital plane has two components: the component that touches the border is called the outside or {\bf exterior}, and the other component is called the inside or {\bf interior}, which we denote $\jext{J}$ and $\jint{J}$, respectively. 
Note that $\jext{J} \cap \jint{J} = \emptyset$, and $\D = \jext{J} \sqcup J \sqcup \jint{J}$.
The notion of ``interior'' used in this article is not the traditional definition of interior.
For example, if $c \in \D$ is a closed point of the Khalimsky digital plane, then $A(c)$ is a Jordan curve by Lemma 5.2(b) of \cite{Khalimsky1990a}.
Then the interior $\jint{A(c)}=\{c\}$ is a closed subset of $\D$.
Furthermore, COTS-Jordan curves are not closed subsets of $\D$, as is true in the Euclidean case.

\subsubsection{Other Digital Topologies}
\label{sec:ottop}

Jordan curve theorems have been defined for more than just Khalimsky's digital plane.
In \cite{Slapal2006}, \v{S}lapal explores Jordan curves in digital planes that have topologies different from that in \cite{Khalimsky1990a}.
The Jordan curve theorem has been proven many times in a variety of digital settings (See \cite{Kong1992}, \cite{Rosenfeld2007}, \cite{Kiselman2000}, \cite{Slapal2006}).
Kong et al. prove in \cite{Kong1992} that the digital fundamental groups of digital Jordan curves are actually isomorphic to the fundamental groups of their continuous counterparts.

Digital planes can also be interpreted as subsets of $\zz^2$.
To see how this aligns with the Khalimsky topology, one can treat $\zz$ as a COTS of infinite length whose minimal open sets are given by
\[
x^\downarrow = \left\{\begin{array}{ll}
\{x-1,x,x+1\}, & x\textrm{ is even} \\
\{x\},  & \textrm{otherwise.}
\end{array}\right.
\]
By this interpretation, for example, $\{0\} \in \zz$ is a closed point of $\zz$ with the COTS topology.
Taking the product $\zz \times \zz$ with this topology yields the Khalimsky topology on $\zz^2$.

Given two distinct points $\set{(x_1,y_1),(x_2,y_2)}\in \zz^2$, they are \textbf{4-adjacent} if exactly one of the following hold:
\begin{itemize}
    \item $\abs{x_2-x_1}=0$ and $\abs{y_2-y_1}=1$, or
    \item $\abs{y_2-y_1}=0$ and $\abs{x_2-x_1}=1$.
\end{itemize}
That is, $(x_2,y_2)$ is either above, below, left, or right of $(x_1,y_1)$.
They are \textbf{8-adjacent} if:
\begin{itemize}
    \item $\abs{x_2-x_1}\in\{0,1\}$ and $\abs{y_2-y_1} \in \{0,1\}$, and
    \item $\abs{x_2-x_1}\neq 0$ or $\abs{y_2-y_1}\neq 0$.
\end{itemize}
Equivalently, $(x_2,y_2)$ is either a 4-neighbor or a diagonal neighbor of $(x_1,y_1)$.
Given a topology $\tau$ on a digital plane $\zz^2$ and a $\kappa \in \{4,8\}$, a set $X \subset (\zz^2,\tau)$ is \textbf{$\kappa$-connected} if for all $x,y\in X$, there exists a sequence of points connecting $x$ to $y$ such that each consecutive pair of points is both $\kappa$-adjacent and adjacent with respect to $\tau$.
Such a sequence is called a $\kappa$-path.
Notice that the points in Khalimsky's plane can be either $4$-connected or $8$-connected.
If $x$ and $y$ are $\kappa$-adjacent, we may also say $x \in A_\kappa(y)$ and $y \in A_\kappa(x)$.
This notion can be generalized, as in \cite{Boxer2017a}: 
for points $p$ and $q$ in an $n$-dimensional integer lattice $\zz^n$, $p=(p_1,\hdots,p_n)$ and $q=(q_1,\hdots,q_n)$ are $c_u$-adjacent if there are at most $u$ coordinates $i$ such that $\abs{p_i-q_i}=1$, and $\abs{p_j-q_j}=0$ for all $j\neq i$.
In \cite{Eckhardt2003}, they list two conditions that summarize general intuition about ``nearness'' in a digital plane:
\begin{enumerate}
    \item If a set in $\zz^2$ is 4-connected, then it is topologically connected.
    \item If a set in $\zz^2$ is not 8-connected, then it is not topologically connected.
\end{enumerate}

In Section 4 of \cite{Chassery1979}, they show that there is no topology on $\zz^2$ such that every connected set is 8-connected.
Indeed, the Jordan curve theorem of \cite{Rosenfeld1979} is only true when the Jordan curve and its complement have different topologies:
if the Jordan curve is $\kappa$-connected, then its complement consists of two $4+(\kappa \bmod{8})$-connected components.
In Theorem 3.1 of \cite{Eckhardt2003}, they prove that the Khalimsky topology defined above and the Marcus-Wyse topology defined below are the only topologies on $\zz^2$ that satisfy these intuitive conditions.

Perhaps the first digital topology was described in \cite{Marcus1970}.
In 1970, Marcus proposed the following question in The American Mathematical Monthly: ``Is it possible to topologize the integers in such a way that the connected sets are the sets of consecutive integers? Generalize to the lattice points of $n$-space.''
In this context, two points in $\zz^n$ are ``consecutive'' if they differ by 1 in one coordinate and agree on all other coordinates.
The Cleveland State University Problem Solving Group, advised by Frank Wyse, devised the following solution.
The resulting basis for the topology on $\zz^2$ is of the form
\[(x,y)^\downarrow = \left\{\begin{array}{ll}
     \{(x,y), (x\pm1,y),(x,y\pm1)\},& x+y \textrm{ is even} \\
     \{(x,y)\},& \textrm{else.} 
\end{array}\right.,
\]
where $(x,y) \in \zz^2$.
A portion of $\zz^2$ with the {\bf Marcus-Wyse topology} is shown in Figure \ref{fig:marcus}. 
The squares are the closed points, and the circles are the open points.
Notice that every connected set in $\zz^2$ with the Marcus-Wyse topology is $4$-connected.
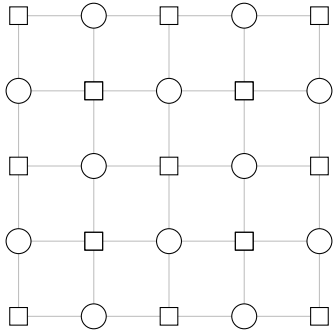
\begin{figure}
    \centering
    \begin{tikzpicture}
    \draw[lightgray, thin] (0,0) grid (4,4);
    \foreach \x in {0,2,4}{
        \foreach \z in {0,2,4}{
            \draw (\x,\z) node [shape=rectangle,fill=white,draw]{};
        }
        \foreach \y in {1,3}{
            \draw (\x,\y) node [shape=circle,fill=white,draw]{}; 
            \draw (\y,\x) node [shape=circle,fill=white,draw]{};
            \foreach \w in {1,3}{
                \draw (\y,\w) node [shape=rectangle,fill=white,draw]{};
            }
        }
    }
    \end{tikzpicture}
    \caption{A subset of $\zz^2$ with the Marcus-Wyse topology}
    \label{fig:marcus}
\end{figure}

In \cite{Slapal2006}, \v{S}lapal introduces topologies on $\zz^2$ that allow Jordan curves to have features that Khalimsky's cannot.
In particular, Jordan curves in \cite{Slapal2006} may turn at an angle of $\frac{\pi}{4}$.
\v{S}lapal denotes one such space as $(\zz^2,w)$, where $w: \mathcal{P}(\zz^2) \to \mathcal{P}(\zz^2)$ is the Kuratowski closure operator that maps a set to its closure.
A tile of $(\zz^2,w)$ is shown in Figure \ref{fig:Zw}.
In this figure, let us denote the bottom-left point by $(0,0) \in \zz^2$ such that the top-right point is $(4,4)\in\zz^2$.
See, for example, that $C:=\{(1,0),(0,0),(1,1)\}$ is a COTS-arc in $(\zz^2,w)$, because each of $(1,0)$ and $(1,1)$ have exactly one neighbor in $C$, and $(0,0)$ has exactly two neighbors in $C$.
\v{S}lapal defines a second topology on the digital plane denoted $(\zz^2,\hat{w})$, a tile of which is shown in Figure \ref{fig:Zw'}.
In both of Figures \ref{fig:Zw} and \ref{fig:Zw'}, the squares represent closed points and the circles reperesent open points.

\begin{figure}
    \centering
    \begin{tikzpicture}
    \draw[lightgray, thin] (0,0)--(0,4)--(4,4)--(4,0)--(0,0)--(1,1)--(3,1)--(3,3)--(1,3)--(1,1)--(4,4);
    \draw[lightgray, thin] (0,4)--(4,0);
    \draw[lightgray, thin] (0,1)--(1,2)--(0,3);
    \draw[lightgray, thin] (4,1)--(3,2)--(4,3);
    \draw[lightgray, thin] (1,4)--(2,3)--(3,4);
    \draw[lightgray, thin] (1,0)--(2,1)--(3,0);
        \foreach \x in {1,3}{
        \draw (\x,2) node [shape=circle,fill=white,draw]{}; 
        \draw (2,\x) node [shape=circle,fill=white,draw]{};
            \foreach \y in {0,1,3,4}{
            \draw (\x,\y) node [shape=rectangle,fill=white,draw]{};
            \draw (\y,\x) node [shape=rectangle,fill=white,draw]{};
            }
        }
        \foreach \x in {0,2,4}{
        \foreach \y in {0,2,4}{
            \draw (\x,\y) node [shape=circle,fill=white,draw]{};
            \draw (\y,\x) node [shape=circle,fill=white,draw]{}; 
        }}
    \end{tikzpicture}
    \caption{A tile of the connectedness graph of $(\zz^2,w)$}
    \label{fig:Zw}
\end{figure}
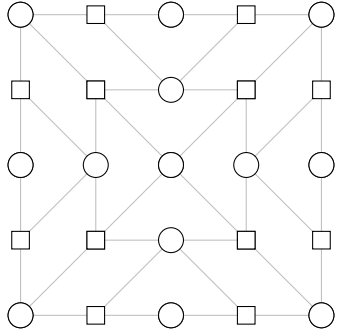

\begin{figure}
    \centering
    \begin{tikzpicture}
    \draw[lightgray, thin,step=2] (0,0) grid (4,4);
    \draw[lightgray, thin] (0,0)--(4,4);
    \draw[lightgray, thin] (0,4)--(4,0);
    \draw[lightgray, thin] (0,2)--(2,4)--(4,2)--(2,0)--(0,2);
        
        \foreach \x in {1,3}{
            \foreach \y in {0,1,...,4}{
            \draw (\x,\y) node [shape=circle,fill=white,draw]{};
            \draw (\y,\x) node [shape=circle,fill=white,draw]{};
            }
        }
        \foreach \x in {0,2,4}{
        \foreach \y in {0,2,4}{
            \draw (\x,\y) node [shape=rectangle,fill=white,draw]{};
            \draw (\y,\x) node [shape=rectangle,fill=white,draw]{}; 
        }}
    \end{tikzpicture}
    \caption{A tile of the connectedness graph of $(\zz^2,\hat{w})$}
    \label{fig:Zw'}
\end{figure}
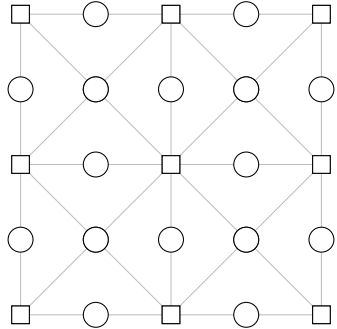

It is also worth noting that in \cite{Slapal2009}, \v{S}lapal discusses a pretopology $\paren{\zz^2,u}$ in which every cycle is a Jordan curve.
Recall that for a {\bf pretopology} on $X$, $p:\mathcal{P}(X) \to \mathcal{P}(X)$, the following are fulfilled:
\begin{enumerate}
    \item $p\emptyset=\emptyset$
    \item $A \subseteq pA$ for all $A \subseteq X$
    \item $p(A\cup B) = pA \cup p(B)$ for all $A,B \subseteq X$
\end{enumerate}
The pretopology $\paren{\zz^2,u}$ is given by the following, for any $z=(x,y) \in \zz^2$.
\[u(z) = \left\{ \begin{array}{rl}
    A_4(z), & (x\bmod{2})=(y\bmod{2})=1 \textrm{ or } (x,y)=(4k+2\ell,2\ell+2), k,\ell \in \zz\\
    A_8(z), & (x,y)=(4k+2\ell,2\ell), k,\ell \in \zz \\
    \{z\}, & \textrm{otherwise.}
    \end{array}
\right.\]
This fails to be a topology because $(7,4) \not\in u(6,2) \neq u(u(6,2))\ni (7,4)$, for example.

In \cite{Ptak1997} and \cite{Eckhardt2003}, they discuss the possibility of a $6$-adjacency structure on $\zz^2$.
In such a structure, every point of $\zz^2$ has 6 neighbors, as shown in Figure \ref{fig:6a}.
Ptak, et al. prove in Theorem 4 of \cite{Ptak1997} that there is no topology on $\zz^2$ that is compatible with 6-adjacency.

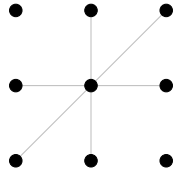
\begin{figure}
    \centering
    \begin{tikzpicture}
    \draw[lightgray, thin] (1,1)--(3,3);
    \draw[lightgray, thin] (2,1)--(2,3);
    \draw[lightgray, thin] (1,2)--(3,2);
        \foreach \x in {1,2,3}{
        \foreach \y in {1,2,3}{
            \draw (\x,\y) node [shape=circle, fill=black,scale=.5,draw]{};
        }}
    \end{tikzpicture}
    \caption{The 6-neighbors of a point in $\zz^2$}
    \label{fig:6a}
\end{figure}

For each of the digital planes described above, digital Jordan curves all have some similar properties.
In particular, they are all cycles in the graph determined by the underlying topology.
The converse, however, is not true; there are many cycles in each of Figures \ref{fig:marcus}, \ref{fig:Zw}, and \ref{fig:Zw'} that do not divide the plane into interior and exterior regions.
In Section \ref{sec:diftop}, we will prove that none of these topologies are suitable for our research.

It is worth noting that there is a notion of {\bf digital topological complexity}, defined in \cite{Karaca2018}.
The digital topology used in that paper is that of \cite{Boxer1999}.
Their digital topology is not truly a topology; it is defined by its adjacency relations, such as the $\kappa$-adjacency described earlier in this section.
Furthermore, that paper describes the topological complexity of digital images themselves, and not a space of digital images, which is the focus of this article.

\section{Distance Functions in Finite Spaces}
\label{chap:COTS}

There are very few results about metrics (i.e., distance functions) in finite topological spaces.\footnote{There are distance functions in graph theory, however, graphs make up a small portion of finite spaces.}
For the most part, this is because finite metric spaces are discrete.
To see this, consider the following.
In a metric space $X$ with distance function $d$, open sets of the form
\[D(x,\varepsilon) = \set{y \in x \mid d(x,y)<\varepsilon}\]
form a basis for the topology on $X$, where $\varepsilon \in \rr_{>0}$.
Suppose $X$ is a connected finite topological space, and let $x\in X$.
Take $\varepsilon = \min\set{d(x,y)\mid y \in A(x)}$.
Then the only point in $D\paren{x,\frac{\varepsilon}{2}}$ is $x$ itself.
Hence, every point of $X$ is open, and it has the discrete topology.

\subsection{Paths and Arcs in Finite Spaces}

Recall from Definition \ref{def:COTSpa} the definitions of COTS, COTS-paths, and COTS-arcs.
Throughout this document, any COTS, COTS-path, or COTS-arc $C$ denoted $\set{c_0,\hdots,c_n}$ is assumed to have a preorder in that there is a fence from $c_0$ to $c_n$, e.g., $c_i \lessgtr c_{i+1}$ and $c_i,c_j$ are not comparable for $|i-j|>1$. 
The endpoints of the $C$ are $c_0$ and $c_n$. 
When $C$ is already defined, a subinterval $C' \subseteq C$ given by
\[\cots{c_i,c_j} = \set{c_\ell \mid i \leq \ell \leq j}\]
 is the unique COTS/COTS-path/COTS-arc whose endpoints are $c_i$ and $c_j$ (see Theorem 3.2(b) of \cite{Khalimsky1990a}).
The sub-interval inherits the subspace topology.
Figure \ref{cots} depicts a COTS of length nine.
Adhering to the visuals used in \cite{Khalimsky1990a}, the circles are open points, and the squares are closed points.

In \cite{Tanaka2018}, they prove that $\textrm{CC}(X) = \tc{}{X}$ for a finite space $X$.
For this reason, many of our computations will use the interval $\cots{0,1}$ in place of a finite fence (i.e., a finite COTS).

\begin{prop}\label{COTSI}
Every COTS admits a parameterization by $[0,1]$.
\end{prop}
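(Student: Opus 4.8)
The plan is to build an explicit continuous surjection $\gamma:[0,1]\to C$ for a finite COTS $C=\set{c_0,\hdots,c_n}$, using the structure established earlier: by the Proposition just proved, the points of $C$ alternate between open and closed, and the preorder is a fence $c_0 \lessgtr c_1 \lessgtr \hdots \lessgtr c_n$ in which nonconsecutive points are incomparable. I would first partition $[0,1]$ into $2n$ subintervals by choosing rational breakpoints $0 = t_0 < t_1 < \hdots < t_{2n} = 1$, assigning to each vertex $c_i$ the single point $t_{2i}$ and to each ``edge'' between $c_i$ and $c_{i+1}$ the open interval $\paren{t_{2i},t_{2i+2}}$. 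The idea is that $\gamma$ should send each closed preimage to its vertex and spread each open interval across the adjacent pair, sending the comparable-pair structure of the fence into the order structure of $C$.

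The key step is to \emph{exploit the open/closed alternation} to decide, on each edge interval, which point absorbs the open subinterval. Since exactly one of $c_i, c_{i+1}$ is open in $C$ (equivalently, one satisfies $c^\downarrow=\set{c}$ and the other has the larger down-set), I would define $\gamma$ on the half-open pieces so that the preimage of each \textbf{open} point of $C$ is an \emph{open} subset of $[0,1]$ and the preimage of each \textbf{closed} point is a \emph{closed} subset. Concretely, for the edge between $c_i$ (say open) and $c_{i+1}$ (closed), I send the open interval to $c_i$ and the breakpoint $t_{2i+2}$ to $c_{i+1}$; the roles reverse when the parities reverse. This makes $\gamma^{-1}(c^\downarrow)$ open for every $c$, which by the order-theoretic characterization of continuity between a space and a finite space (Stong's order-preserving criterion stated in the excerpt, in its topological form) is exactly what continuity requires. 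The map is manifestly surjective since every vertex is hit.

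The main obstacle I expect is \textbf{verifying continuity cleanly at the breakpoints}, where the value jumps between an open point and the adjacent closed point: one must check that the preimage of every basic open set $c_j^\downarrow$ is open in $[0,1]$, and the delicate cases are the junctions $t_{2i}$ where two edge-intervals meet a vertex. Here the alternation is precisely what rescues continuity — a closed vertex $c_j$ has $c_j^\downarrow=\set{c_j}\cup(\text{its two open neighbors' contributions})$, so $\gamma^{-1}(c_j^\downarrow)$ comes out to an \emph{open} interval around $t_{2j}$, whereas an open vertex has $\gamma^{-1}(c_j^\downarrow)=\set{t_{2j}}$ together with the surrounding open material, again open. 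I would record this as a short case check on the two parities rather than a computation, and note that the construction is essentially the standard identification of a finite COTS with the McCord-style realization of its order complex (the map $\mu_C$ of the excerpt) restricted to the $1$-dimensional nerve, so continuity is inherited from that general machinery if a hands-on check is not desired.
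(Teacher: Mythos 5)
Your construction is correct and is essentially the paper's own proof: both arguments partition $[0,1]$ into consecutive pieces ordered along the fence, arrange that the preimage of each open point is open and of each closed point is closed (using the open/closed alternation), and verify continuity by checking preimages of the basic open sets $c^\downarrow$. The only difference is cosmetic — the paper assigns each closed point a closed subinterval of positive length, $f^{-1}(c_i)=\left[\frac{i}{n+1},\frac{i+1}{n+1}\right]$, where you collapse it to a single breakpoint — and this changes nothing in the continuity check.
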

\begin{proof}
Let $C=\set{c_0,c_1,\hdots,c_n}$ be a COTS such that 
\[A(c_i) = \set{c_{i-1},c_{i+1}\}\cap C\textrm{ for }i \in \{0,1,\hdots,n}.\]
We start to define a path $f:[0,1]\to C$ on its midpoints $\cots{c_1,c_{n-1}}$:

\[f^{-1}(c_i):=
\left\{\begin{array}{rl}
    \paren{\frac{i}{n+1},\frac{i+1}{n+1}},&c_i \textrm{ is open, and}\\
    \cots{\frac{i}{n+1},\frac{i+1}{n+1}},&c_i \textrm{ is closed.} 
\end{array}
\right.
\]
For the endpoints $c_0$ and $c_n$,
\[
f^{-1}(c_0):=
\left\{\begin{array}{lr}
    \left[0,\frac{1}{n+1}\right),&c_0 \textrm{ open}\\
    \cots{0,\frac{1}{n+1}},&c_0 \textrm{ closed} 
\end{array}
\right.
\textrm{ and }
f^{-1}(c_n):=
\left\{\begin{array}{rr}
    \left(\frac{n}{n+1},1\right],&c_n \textrm{ open}\\
    \cots{\frac{n}{n+1},1},&c_n \textrm{ closed} 
\end{array}
\right.
.\]
By construction, $f$ is defined on all of $[0,1]$, and the preimages of open points of $C$ are open in $[0,1]$.
\end{proof}

Our intuition about continuous paths extends to COTS-paths, as will be shown throughout the next few sections.

\begin{prop}\label{prop:concat-paths}
The union of two COTS-paths that share an endpoint is a COTS-path.
\end{prop}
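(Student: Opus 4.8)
The plan is to take two COTS-paths $f : C \to Y$ and $g : D \to Y$ that share an endpoint, and to produce a single COTS $E$ together with a continuous map $h : E \to Y$ whose image is the union of the two images. Write $C = \set{c_0, \ldots, c_m}$ and $D = \set{d_0, \ldots, d_n}$, and suppose the shared endpoint means $f(c_m) = g(d_0)$ (after reparametrizing/reversing so the matched endpoints are the last of $C$ and the first of $D$; note reversing a COTS is again a COTS). The natural candidate for the domain is the concatenation $E = \set{e_0, \ldots, e_{m+n}}$ obtained by gluing $c_m$ to $d_0$, i.e. $e_i = c_i$ for $i \leq m$ and $e_{m+j} = d_j$ for $j \geq 0$, with the obvious fence order inherited from the two pieces.

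First I would check that $E$, as defined, is actually a COTS. The fence structure is immediate since consecutive points in each of $C$ and $D$ are comparable and the gluing point $e_m = c_m = d_0$ sits between $e_{m-1}$ and $e_{m+1}$; the only thing to verify is that $E$ carries a well-defined finite $T_0$ topology in which every interior point has exactly two neighbors and the two endpoints $e_0, e_{m+n}$ have one each. This reduces to confirming that the open/closed parity of the glued point is consistent: by the earlier proposition that the points of a COTS alternate between open and closed, $c_m$ and $d_0$ each have a type, and since they are identified they must agree. Because both were \emph{endpoints}, each is free to be either type in its own COTS, so one can always arrange (by appealing to the dual, via Theorem \ref{thm:dswap}, if necessary) that the parities match, after which $E$ alternates correctly and is a genuine COTS.

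Next I would define $h : E \to Y$ by $h|_{\set{e_0,\ldots,e_m}} = f$ and $h|_{\set{e_m,\ldots,e_{m+n}}} = g$; this is well-defined precisely because $f(c_m) = g(d_0)$ at the overlap. To see $h$ is continuous, I would invoke the characterization (Stong, quoted above) that a map between finite spaces is continuous if and only if it is order-preserving. Since $f$ and $g$ are each order-preserving on their pieces, and the order on $E$ away from the seam is just the order on $C$ or on $D$, the only comparabilities to check are those involving $e_m$ with its two neighbors $e_{m-1}$ and $e_{m+1}$ — and these are handled by the order-preservation of $f$ and $g$ respectively. Hence $h$ is order-preserving, so continuous, and its image is $\img{f} \cup \img{g}$, exhibiting the union as a COTS-path.

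The main obstacle I anticipate is the parity-matching at the glued endpoint: a priori the shared point $f(c_m)=g(d_0)$ is a single point of $Y$, but the abstract domain points $c_m$ and $d_0$ carry independent open/closed types in $C$ and $D$, and a COTS forces a rigid alternation. If the two types clash, naive concatenation fails to be a COTS. The resolution is that endpoints of a COTS are unconstrained in type, so I can reparametrize each COTS-path so its matched endpoint has whichever parity is needed — or, more cleanly, argue that continuity of $f$ and $g$ only constrains the image and leaves me free to choose the domain type at an endpoint. Making this argument airtight, rather than the routine order-preservation check, is where the real content of the proof lies.
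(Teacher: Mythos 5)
Your gluing construction is sound in the case where the two domain endpoints happen to have the same open/closed type---that is exactly one of the two cases in the paper's proof---but the parity-clash case, which you yourself flag as the crux, is not resolved by what you propose. The claim that ``continuity of $f$ and $g$ only constrains the image and leaves me free to choose the domain type at an endpoint'' is false. By Stong's criterion, continuity ties the order on the domain to the order on the image: if, say, $f(c_{m-1})$ is strictly below $f(c_m)$ in $Y$ (e.g.\ $f(c_{m-1})=1$, an open point of the Khalimsky line, and $f(c_m)=2$, a closed one), then the domain must satisfy $c_{m-1}<c_m$, forcing $c_m$ to be the closed point; with the domain types reversed, continuity would demand $2\leq 1$, which fails. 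Moreover, since the types in a COTS alternate, ``flipping'' the type of the endpoint means flipping the whole chain, i.e.\ passing to $\op{C}$, and the appeal to Theorem \ref{thm:dswap} does not rescue this: dualizing preserves continuity only as a map $\op{C}\to\op{Y}$, whereas here the codomain is fixed, so $f$ viewed on $\op{C}$ is continuous only if $f$ was also order-reversing, which is false in general.

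The correct resolution---and this is what the paper does---is to give up on identifying $c_m$ with $d_0$ when their types clash. A COTS-path need not be injective, so one takes a domain $E$ with one \emph{extra} point and sends two adjacent points $e_m$ and $e_{m+1}$ both to the common image point $f(c_m)=g(d_0)$; adjacent domain points with equal images preserve order trivially, and the parities on either side of this doubled pair now line up with those of $C$ and of $D$, so continuity follows piecewise from order-preservation of $f$ and $g$. Equivalently, the ``reparametrization'' your outline needs is to append a duplicate endpoint to one of the two domains before gluing---a concrete move that your proposal gestures toward (``endpoints are unconstrained in type'') but never supplies, and which cannot be replaced by merely re-declaring the type of an existing endpoint.
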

\begin{proof}
Let $\alpha,\beta \subseteq X$ be COTS-paths such that the end point of $\alpha$ is the start point of $\beta$.
Since $\alpha$ is a COTS-path, it is the image of a continuous map $f:C \to X$ where $C = \set{c_0,\hdots,c_n}$ is a COTS.
Suppose without loss of generality that $c_0$ is an open point of $C$.
Similarly, $\beta$ is the image of a continuous map $g:D \to X$, where $D = \set{d_0,\hdots,d_m}$ is a COTS.
Note that $f(c_n) = g(d_0)$ by assumption.

If $c_n$ and $d_0$ do not have the same parity, 
take \[E:=\set{e_0,\hdots,e_{n+m+1}}\] to be a COTS whose first point is open.
Then $e_{n+1}$ has the same parity as $d_0$.
By abuse of notation, we use $f \circ g$ for concatenation of functions, rather than composition.
We define $f \circ g : E \to X$ by:
$$
f \circ g(e_i) = \left\{
\begin{array}{ll}
f(c_i) & 0\leq i \leq n\\
g\left(d_{i-(n+1)}\right) & n+1 \leq i \leq n+m+1
\end{array}
\right..
$$

If $c_n$ and $d_0$ have the same parity, 
take $E:=\cots{e_0,\hdots,e_{n+m}}$ to be a COTS whose first point is open.
Then $e_{n+1}$ has the same parity as $d_1$.

We define $f \circ g : E \to X$ by:
$$
f \circ g(e_i) = \left\{
\begin{array}{ll}
f(c_i) & 1\leq i \leq n\\
g(d_{i-n}) & n+1\leq i \leq n+m
\end{array}
\right..
$$

Each construction is a continuous mapping from a COTS into the digital plane whose image is $\alpha \cup \beta \subseteq X$.
In the former case, the last point of $C$ and the first point of $D$ do not have the same parity.
Since the points of a COTS must alternate, we concatenate $C$ and $D$ to create $E$ such that $E$ is still a COTS whose points alternate between open and closed.
In the latter case, the last point of $C$ and the first point of $D$ have the same parity.
This implies that the penultimate point of $C$ and the next point of $D$ also have the same parity.
Then we can create $E = C \cup D/(c_n \sim d_0)$ such that the resulting space is still a COTS.
\end{proof}

In Lemma 1 of \cite{Eckhardt1994}, Eckhardt proves that every $\kappa$-path in $\zz^2$ contains a $\kappa$-arc as a subset, where $\kappa\in \{4,8\}$.
While the proof of that lemma is graph-theoretic, we can use a similar approach to generalize this result to any finite topological space.

\begin{prop}
\label{prop:path-arc}
Every COTS-path contains, as a subset, a COTS-arc with the same start and end points.
\end{prop}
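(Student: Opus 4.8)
The plan is to mimic Eckhardt's graph-theoretic argument for $\kappa$-arcs, replacing $\kappa$-adjacency by the comparability relation that defines adjacency in a finite $T_0$ space, so that ``walk'' becomes a sequence of pairwise-comparable points and ``arc'' becomes an induced fence. The key preliminary observation I would isolate is that a subset $A=\set{a_0,\hdots,a_\ell}\subseteq X$ is a COTS-arc precisely when it is an \emph{induced} fence in the comparability graph of $X$: that is, $A$ is connected, consecutive points are comparable, and non-consecutive points are incomparable. Indeed, transitivity forbids a monotone $3$-chain (if $a_{i-1}<a_i<a_{i+1}$ then $a_{i-1}<a_{i+1}$, contradicting incomparability of the non-consecutive pair), so the comparabilities must alternate direction; and for the defining COTS separation property one checks that in any $3$-point subset $\set{a_i,a_j,a_k}$ with $i<j<k$ the middle point $a_j$ separates the other two, since $a_i,a_k$ are incomparable and hence $\set{a_i,a_k}$ is a two-point discrete, thus disconnected, subspace. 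This makes $A$ a COTS whose endpoints are $a_0$ and $a_\ell$.

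With that in hand, I would set up the combinatorics. Let $f:C\to X$ be the COTS-path, $C=\set{c_0,\hdots,c_n}$, and put $S=\img{f}$. Because $f$ is continuous, hence order-preserving, each consecutive pair $f(c_i),f(c_{i+1})$ is either comparable or equal, so the sequence $f(c_0),\hdots,f(c_n)$ is a walk in the comparability graph $G$ of $X$ that stays inside $S$ and joins $f(c_0)$ to $f(c_n)$. In particular $f(c_0)$ and $f(c_n)$ lie in the same connected component of the induced subgraph $G[S]$.

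To extract the arc I would choose a shortest path $q_0=f(c_0),q_1,\hdots,q_\ell=f(c_n)$ in $G[S]$. A geodesic has distinct vertices and no chords, since a chord $q_iq_j$ with $j\geq i+2$ would yield a strictly shorter path; hence it is an induced path in $G[S]$, and because induced subgraphs compose, $A=\set{q_0,\hdots,q_\ell}$ is also an induced fence in $G$. By the preliminary observation, $A\subseteq S\subseteq\img{f}$ is therefore a COTS-arc with endpoints $f(c_0)$ and $f(c_n)$, as required. The degenerate case $f(c_0)=f(c_n)$ is handled separately: the arc is the single point $\set{f(c_0)}$.

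The genuinely topological content—and the step I would write most carefully—is the preliminary claim that an induced fence in the comparability graph really is a COTS as a subspace; this is where the finite-topology axioms, rather than pure graph theory, do the work. The remaining surgery, passing from an arbitrary walk to a geodesic and noting that geodesics are chordless, is routine, so I expect the main obstacle to be the clean translation between the combinatorial ``induced path'' condition and the topological separation axiom defining a COTS, together with verifying that the geodesic's endpoints coincide with the degree-one vertices of the resulting fence.
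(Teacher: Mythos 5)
Your proposal is correct, and it takes a genuinely different route from the paper's proof. The paper mimics Eckhardt's lemma directly as iterated surgery on the traversal sequence: it finds the lowest index $i$ at which $c_i$ has excess adjacencies in $C$, takes the highest index $j$ with $c_j\in A(c_i)\cap C$, excises everything strictly between them, and repeats, with termination guaranteed because each step deletes at least one point; the arc it produces is a subsequence of the original parameterized path. You instead isolate an explicit characterization --- an induced fence in the comparability graph, i.e.\ a chordless path, is a COTS-arc, with the alternation of comparabilities forced by transitivity --- and then make a single extremal choice: a geodesic from $f(c_0)$ to $f(c_n)$ in the comparability graph restricted to $\img{f}$. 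Chordlessness of geodesics replaces the paper's iterative excision entirely, avoids its index bookkeeping and termination check, and yields an arc of minimal cardinality among all arcs in the image with these endpoints; what the paper's version buys instead is that its arc sits inside the path as a sub-traversal obtained by deleting loops, which is exactly the structure the author hopes (in the remark following the proof) could be upgraded to a homotopy by an Algorithm~\ref{alg:shrink}-type procedure.

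One step needs tightening. When you verify the three-point separation axiom for the fence $A=\set{a_0,\hdots,a_\ell}$, you only observe that $\set{a_i,a_k}$ is a discrete two-point subspace. That matches the paper's literal wording of the COTS definition, but that weak reading is too permissive: a closed point together with three of its open neighbors would then qualify as a COTS, and it is not an arc. Khalimsky's intended axiom asks that $a_i$ and $a_k$ lie in \emph{different components of} $A-\set{a_j}$. For an induced fence this is still immediate: chordlessness makes every point of $\set{a_0,\hdots,a_{j-1}}$ incomparable to every point of $\set{a_{j+1},\hdots,a_\ell}$, so these two connected subfences are distinct components of $A-\set{a_j}$ separating $a_i$ from $a_k$. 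With that sentence added, your key lemma, and hence the whole argument, is sound.
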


\begin{proof}
Let $C = \set{c_0,\hdots,c_n} \subseteq X$ be a COTS-path that is the continuous image of a finite COTS into the digital plane.
Let $i \in \{0,\hdots,n-1\}$ be the lowest index such that if $|A(c_i)\cap C| > 1$ then $i=0$, or $|A(c_i) \cap C|>2$.
This index $i$ marks the start of a ``loop'' at $c_i$.
Note that loop cannot start at $c_n$, or else $|C| > n+1$.
Let $j \in \{i+1,\hdots,n\}$ be the highest index such that $c_j \in A(c_i)\cap C$.
This index $j$ marks the end of the loop at $c_i$.
We eliminate the extra points between $c_i$ and $c_j$ to form $C'$.
Take $C':=\cots{c_1,c_i} \cup \cots{c_j,c_n}$.
If $i=0$, then $|A(c_0) \cap C'| = |\{c_j\}|=1$, and if $i>1$,
then $|A(c_i)\cap C'| = |\{c_{i-1},c_j\}| = 2$, so the loop at $c_i$ has been removed.
Repeating the process for all subsequent values of $i$ such that $|A(c_i)\cap C|>2$ yields a COTS-arc.

Note also that at each step, we remove at least one point from $C$, so $|C'| \leq |C|$.
\end{proof}

We suspect that a technique similar to that used in Algorithm \ref{alg:shrink} could yield a homotopy between parameterizations of $C$ and $C'$, however, we are unaware of such an algorithm at this time.

\subsubsection{The COTS-distance Function}

The distance function we have defined for finite spaces is very natural, and is based on the length of the shortest COTS-arc connecting two points.

\begin{defn}
Given $x,y \in X$ for $X$ a finite topological space and $A \subseteq X$ a subspace, we assign a {\bf COTS-distance function} $d_A:X \times X \to \mathbb{Z}_{\geq 0}$ as one less than the magnitude of a shortest COTS-arc in $A$ whose endpoints are $x$ and $y$.
If $x,y$ are not connected by a path in $A$, set $\dist{A}{x}{y} = \infty$.
If the $A$ is omitted from notation, then $A=X$.
Note that the shortest COTS-arc between two points may not be unique.

Given two subsets $A,B \subseteq X$ and $x \in X$, we define
\[d_X(x,A) = \max_{a \in A}\set{\dist{X}{x}{a}},\] 
and 
\[\dist{X}{A}{B} = \max_{a \in A, b \in B}\set{\dist{X}{a}{B},\dist{X}{b}{A}}.\]
\end{defn} 

These definitions are natural, as they mimic the definitions of distances between sets in Hausdorff spaces.

\begin{prop}\label{metric}
The COTS-distance function $d$ is a metric.
\end{prop}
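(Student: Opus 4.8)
The plan is to verify the four metric axioms for $d = d_X$ directly from the definition in terms of shortest COTS-arc lengths. Throughout I work on a fixed finite connected space $X$ so that $\dist{X}{x}{y}$ is always finite (if $X$ were disconnected one would either restrict to a component or allow the value $\infty$, in which case the axioms still hold with the usual conventions on $\infty$). Recall that $\dist{X}{x}{y}$ is defined as one less than the cardinality of a shortest COTS-arc with endpoints $x$ and $y$.

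First I would dispatch the two easy axioms. \emph{Non-negativity and identity of indiscernibles:} any COTS-arc has at least one point, so $\dist{X}{x}{y}\geq 0$; and $\dist{X}{x}{y}=0$ exactly when a shortest arc has a single point, i.e.\ when $x=y$ (for $x\neq y$, a connecting arc has at least the two endpoints, so the distance is at least $1$). \emph{Symmetry:} a COTS-arc from $x$ to $y$ is, by reversing its enumeration $c_0,\dots,c_n \mapsto c_n,\dots,c_0$, a COTS-arc from $y$ to $x$ of the same length, since the COTS structure (each interior point having exactly two neighbors, endpoints one) is visibly invariant under reversal; hence $\dist{X}{x}{y}=\dist{X}{y}{x}$.

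The substantive step is the \emph{triangle inequality} $\dist{X}{x}{z}\leq \dist{X}{x}{y}+\dist{X}{y}{z}$. Here the natural move is to take a shortest COTS-arc $\alpha$ from $x$ to $y$ and a shortest COTS-arc $\beta$ from $y$ to $z$, and concatenate them. By Proposition \ref{prop:concat-paths}, the union $\alpha\cup\beta$ (which share the endpoint $y$) is a COTS-\emph{path} from $x$ to $z$. I would then invoke Proposition \ref{prop:path-arc}, which guarantees that this COTS-path contains a COTS-arc with the same endpoints $x$ and $z$; this gives us an honest COTS-arc to measure, so $\dist{X}{x}{z}$ is well-defined and bounded by the length of that sub-arc. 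The key quantitative point is the length bookkeeping: the concatenated path has at most $(\abs{\alpha}) + (\abs{\beta}) - 1$ points (the two copies of $y$ are identified), so its arc-length is at most $\dist{X}{x}{y}+\dist{X}{y}{z}$, and by the final remark in the proof of Proposition \ref{prop:path-arc} the extracted sub-arc has no more points than the path. Combining these yields the inequality.

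The main obstacle I anticipate is \emph{parity bookkeeping in the concatenation}. As the proof of Proposition \ref{prop:concat-paths} makes explicit, when the last point of the first arc and the first point of the second arc have the same open/closed parity one identifies them (producing a path on $n+m$ points rather than $n+m+1$), and when they differ one must interpolate. Since $\alpha$ and $\beta$ both meet at the single point $y$, which has a fixed parity in $X$, the ``same parity'' case of that proposition applies and the identification $c_n\sim d_0$ gives exactly the $-1$ in the count above; I would need to confirm carefully that this identification does not accidentally lengthen the arc and that the resulting object is genuinely a COTS-path so that Proposition \ref{prop:path-arc} is applicable. Once the length estimate $\abs{\alpha\cup\beta}\leq \abs{\alpha}+\abs{\beta}-1$ is nailed down, the triangle inequality follows immediately, completing the verification that $d$ is a metric.
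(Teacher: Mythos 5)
Your proposal is correct and follows essentially the same route as the paper's proof: the easy axioms are handled directly, and the triangle inequality is obtained by concatenating shortest arcs via Proposition \ref{prop:concat-paths} and extracting a sub-arc via Proposition \ref{prop:path-arc}. If anything, your bookkeeping $\abs{\alpha\cup\beta}\leq\abs{\alpha}+\abs{\beta}-1$ (the two arcs share the point $y$) is sharper than the paper's stated bound $\abs{E}\leq\abs{C}+\abs{D}$, which taken literally is off by one and yields the inequality only after accounting for the shared endpoint, exactly as you do.
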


\begin{proof}

Let $x,y,z \in X$ a finite topological space equipped with distance function $d$ such that $x,y$, and $z$ are path-connected.

\begin{enumerate}
\item As a subset of $X$, $\{x\}$ is a COTS-arc of length one in $X$ and $1-1=0$, so $d(x,x)=0$.

\item If $C = \{c_0, c_1\hdots,c_n\} \subset X$ is a shortest COTS-arc whose endpoints are $x$ and $y$, then it is also a shortest COTS-arc whose endpoints are $y$ and $x$, so $d(x,y) = d(y,x) = n$.

\item Let $C = \{x, c_1,\hdots,c_{n-1},y\}$ be a COTS-arc with endpoints $x,y$ such that $|C|=n+1$.
Similarly, let $D = \{y,d_2,\hdots,d_{m-1},z\}$ be a COTS-arc of length with endpoints $y$ and $z$ such that $|D|=m+1$.
By Proposition \ref{prop:concat-paths}, $C \cup D$ is a COTS-path.
By Proposition \ref{prop:path-arc}, $C \cup D$ contains a COTS-arc $E$ as a subset with $|E| \leq |C|+|D|$.
Hence, $d(x,z) \leq d(x,y)+d(y,z)$.
\end{enumerate}

Hence $d$ is a metric.
\end{proof}

\begin{defn}
Given a finite $T_0$ space $X$ with COTS-distance function $d$, take
\[S_X(x,n) = \{y \in X \mid d_X(x,y)=n\}\]
to be a finite analog of the hollow {\bf sphere of radius $n$}, and 
\[D_X(x,n) = \{y \in X \mid d_X(x,y) \leq n\}\]
to be a finite analog of the solid {\bf disk of radius $n$}.
Futhermore, 
\[\textrm{diam}(X) = \max_{x,y \in X}\left\{d_X(x,y)\right\}\]
is the {\bf diameter} of $X$.
\end{defn}

\begin{prop}
\label{cont_dist}
Let $X$ be a finite topological space.
If $C=\{c_0,c_1,\hdots,c_n\} \subseteq X$ is a shortest COTS-arc containing $c_0$ and $c_n$, then $d(c_0,c_i)=i$ for $i \in \{0,1,\hdots,n\}$.
\end{prop}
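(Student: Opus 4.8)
The plan is to establish the two inequalities $d(c_0,c_i)\le i$ and $d(c_0,c_i)\ge i$ separately; the upper bound is essentially immediate, while the lower bound is where the minimality of $C$ does the work. For the upper bound I would use the subinterval fact recorded earlier (Theorem 3.2(b) of \cite{Khalimsky1990a}): the subinterval $[c_0,c_i]=\{c_0,c_1,\hdots,c_i\}$ is itself a COTS-arc with endpoints $c_0$ and $c_i$. Since this arc has magnitude $i+1$, the definition of the COTS-distance immediately gives $d(c_0,c_i)\le (i+1)-1=i$.

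For the lower bound I would argue by contradiction. Suppose $d(c_0,c_i)=k<i$ for some $i$. Then there is a COTS-arc $A\subseteq X$ with endpoints $c_0$ and $c_i$ and magnitude $|A|=k+1$. Let $T:=[c_i,c_n]=\{c_i,\hdots,c_n\}$ be the tail of $C$, a COTS-arc with endpoints $c_i,c_n$ and magnitude $n-i+1$. Since the endpoint of $A$ coincides with the start point of $T$ (both equal $c_i$), Proposition \ref{prop:concat-paths} shows $A\cup T$ is a COTS-path from $c_0$ to $c_n$. As $A$ and $T$ share at least the point $c_i$, its image has at most $|A|+|T|-1=(k+1)+(n-i+1)-1=n+1+(k-i)$ points. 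By Proposition \ref{prop:path-arc}, $A\cup T$ contains a COTS-arc $E$ with the same endpoints $c_0$ and $c_n$, and the closing remark of that proof guarantees $|E|\le |A\cup T|$. Since $k<i$ we have $k-i\le -1$, so $|E|\le n+1+(k-i)\le n<n+1=|C|$. This contradicts the hypothesis that $C$ is a shortest COTS-arc with endpoints $c_0$ and $c_n$. Hence $d(c_0,c_i)\ge i$, and combining with the upper bound gives $d(c_0,c_i)=i$.

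The main obstacle is bookkeeping the magnitudes correctly so that the strict inequality $|E|<|C|$ survives. I must ensure that passing from $A$ and $T$ to the concatenated path $A\cup T$, and then to the sub-arc $E$, never inflates the point count beyond $|A|+|T|-1$: the parity-matching cases of Proposition \ref{prop:concat-paths} and the ``remove at least one point'' guarantee of Proposition \ref{prop:path-arc} are precisely what control this. A subtle point in my favor is that $A$ and $T$ may intersect in points other than $c_i$, but such extra overlap only decreases $|A\cup T|$ and therefore only strengthens the contradiction.
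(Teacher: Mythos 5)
Your proposal is correct and follows essentially the same route as the paper: both argue that if $d(c_0,c_i)<i$, then replacing the prefix $\cots{c_0,c_i}$ of $C$ by a shorter arc produces a connection from $c_0$ to $c_n$ with fewer than $n+1$ points, contradicting minimality of $C$ (the prefix itself giving the upper bound $d(c_0,c_i)\le i$). The only difference is that you explicitly invoke Propositions \ref{prop:concat-paths} and \ref{prop:path-arc} to justify that the union yields a competing COTS-arc, a step the paper's proof performs implicitly by just counting points of the union.
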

\begin{proof}
Suppose for sake of contradiction that $d(c_0,c_i)=j$ for some $j \neq i$.
Then there exists a shortest COTS-arc $C'$ containing $c_0$ and $c_i$ such that $|C'|=j+1$.
Because $|\{c_0,c_1,\hdots,c_i\}|=i+1$, $j \leq i$ or else $C'$ would not be minimal.
Consider $D:=C' \cup \{c_{i+1},c_{i+2},\hdots,c_n\}$.
If $j < i$, then $|D|=j+1+(n-i) < n+1 =|C|$, a contradiction.
Hence $d(c_0,c_i)=i$.
\end{proof}

\begin{prop}
\label{sub_dist} 
Let $X$ be a finite topological space and $A \subseteq X$ a subspace.
Given $x,y \in A \subseteq X$, $d_A(x,y) \geq d_X(x,y)$.
\end{prop}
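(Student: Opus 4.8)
The plan is to reduce everything to a single observation: the pool of COTS-arcs one may use inside the subspace $A$ is a subset of the pool available in all of $X$, and minimizing a length over a smaller pool can only make the answer larger. Since $\dist{A}{x}{y}$ is, by definition, one less than the cardinality of a shortest COTS-arc in $A$ with endpoints $x,y$ (and likewise for $\dist{X}{x}{y}$), the inequality will follow from the order-reversing behaviour of a minimum under set inclusion.

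The one step that requires genuine justification is that a COTS-arc living in $A$ is automatically a COTS-arc living in $X$. Here I would invoke transitivity of the subspace topology: because $A$ carries the subspace topology from $X$, any $S \subseteq A$ inherits the same topology whether one regards it as a subspace of $A$ or as a subspace of $X$. Hence if $S \subseteq A$ is homeomorphic (in its $A$-subspace topology) to a COTS, it is also homeomorphic (in its $X$-subspace topology) to that same COTS, with identical endpoints and identical cardinality $\abs{S}$. Thus every competitor in the $A$-minimization is simultaneously a competitor in the $X$-minimization.

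With this in hand the argument concludes quickly. Let $\mathcal{A}$ denote the set of COTS-arcs $S \subseteq A$ with endpoints $x,y$ and $\mathcal{X}$ the analogous set of arcs in $X$; the previous paragraph gives $\mathcal{A} \subseteq \mathcal{X}$. If $\mathcal{A} = \emptyset$ then $x,y$ are not joined by a COTS-arc in $A$, so $\dist{A}{x}{y} = \infty \geq \dist{X}{x}{y}$ trivially. Otherwise, since $X$ is finite both minima are attained, and $\min_{S \in \mathcal{A}} \abs{S} \geq \min_{S \in \mathcal{X}} \abs{S}$ because $\mathcal{A} \subseteq \mathcal{X}$. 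Subtracting one from both sides yields $\dist{A}{x}{y} \geq \dist{X}{x}{y}$, as claimed. The only place where care is needed is the subspace-topology transitivity of the second paragraph; the rest is bookkeeping about infima over nested index sets.
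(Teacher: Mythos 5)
Your proposal is correct and follows essentially the same route as the paper's proof: both rest on the single observation that a COTS-arc contained in $A$ is also a COTS-arc in $X$ with the same endpoints and cardinality, so the minimum over arcs in $X$ cannot exceed the minimum over arcs in $A$. The paper packages this as a proof by contradiction (a shorter arc in $A$ would contradict minimality of $d_X(x,y)$) while you state it directly as monotonicity of a minimum over nested sets, and you additionally spell out the transitivity-of-subspace-topology justification that the paper leaves implicit; these are presentational differences, not different arguments.
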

\begin{proof}
Suppose for sake of contradiction that $d_A(x,y) = m$ and $d_X(x,y)=n$ with $m < n$.
Let $C \subseteq A$ and $D\subseteq X$ be COTS-arc realizing those distances, respectively.
If $C \subseteq A \subseteq X$, then $C$ is a COTS-arc in $X$ of length $m$ whose endpoints are $x$ and $y$, contradicting the assumption that $d_X(x,y) >d_A(x,y)$.

Furthermore, if $d_X(x,y)=\infty$, then $x$ and $y$ are in different components of $X$, so they are in different components of $A$, so $d_A(x,y)=\infty$ as well.
\end{proof}

\subsubsection{COTS-paths in the Khalimsky Plane}
\label{sec:COTS-paths}

In this section we prove some properties of COTS-paths that are specific to Khalimsky's digital plane.

\begin{prop}
\label{n-ball}
Let $\D$ be a sufficiently large Khalimsky digital plane, and let $p=(i,j)\in\D$ be a pure point represented by integer coordinates.
If $d(p,q):= d_{\D}(p,q)\leq n$ for some $q =(k,\ell )\in \D$, then $|k-i|\leq n$, and $|\ell-j| \leq n$.
\end{prop}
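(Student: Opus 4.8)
The plan is to reduce the statement to a single geometric fact—that two \emph{adjacent} points of the Khalimsky plane can differ by at most $1$ in each coordinate—and then run a coordinatewise triangle inequality along a shortest COTS-arc from $p$ to $q$.

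First I would record the adjacency relation explicitly. Since $\D$ is a product of two finite COTS, its minimal open sets factor as $(x,y)^\downarrow = x^\downarrow \times y^\downarrow$, and (as in the description of the Khalimsky topology in Section~\ref{sec:ottop}) each one-dimensional down-set $x^\downarrow$ is contained in the three consecutive indices $\set{x-1,x,x+1}$, being all three when $x$ is even and just $\set{x}$ when $x$ is odd. Hence every minimal open set satisfies $(x,y)^\downarrow \subseteq \set{x-1,x,x+1}\times\set{y-1,y,y+1}$. Consequently, if $u=(a,b)$ and $v=(c,d)$ are comparable—so that $u\in v^\downarrow$ or $v\in u^\downarrow$—then $\abs{a-c}\leq 1$ and $\abs{b-d}\leq 1$. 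The point to check is that this holds uniformly for open, closed, \emph{and} mixed points; the mixed case is the one with the genuinely smaller neighborhood, and it is exactly where I would verify the inclusion above does not fail. This is the only real content of the argument, and the explicit formula for $x^\downarrow$ makes it routine.

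Next, let $C=\set{c_0,c_1,\dots,c_m}$ be a shortest COTS-arc with $c_0=p$ and $c_m=q$, so that $m=d(p,q)\leq n$. By the standing convention on COTS-arcs, consecutive points satisfy $c_t \lessgtr c_{t+1}$, hence are comparable and therefore adjacent. Writing $c_t=(a_t,b_t)$, the adjacency bound gives $\abs{a_{t+1}-a_t}\leq 1$ and $\abs{b_{t+1}-b_t}\leq 1$ for every $t$.

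Finally I would telescope and apply the triangle inequality:
\[
\abs{k-i}=\abs{a_m-a_0}\leq \sum_{t=0}^{m-1}\abs{a_{t+1}-a_t}\leq m\leq n,
\]
and identically $\abs{\ell-j}=\abs{b_m-b_0}\leq m\leq n$, which is the claim. I expect the adjacency bound to be the only step requiring care (in particular the mixed-point case), while the rest is a direct computation. Note that purity of $p$ is never used in this direction, so it is harmless here; I would expect it to matter only for the sharper statements about the shapes of disks and spheres.
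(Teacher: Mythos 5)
Your proposal is correct and takes essentially the same route as the paper: the paper's proof also reduces to the fact that adjacent points differ by at most $1$ in each coordinate and then walks along a shortest COTS-arc from $p$ to $q$, merely packaging your telescoping sum as an induction on $n$ (using Proposition \ref{cont_dist} to apply the inductive hypothesis to $c_{n-1}$). Your direct summation sidesteps the appeal to Proposition \ref{cont_dist}, and your observation that purity of $p$ is never needed is consistent with the paper's argument, which likewise never uses it.
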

\begin{proof}
We will prove this via induction.
If $d(p,q)=0$, then $p=q=(i,j)$ by Proposition \ref{metric}.
If $d(p,q)=1$, then $p$ and $q$ are adjacent, so $|k-i|\leq 1$ and $|\ell-j|\leq 1$.
This can also be seen explicitly in Lemma 4.2 of \cite{Khalimsky1990a}, and in Figures \ref{fig:pure} and \ref{fig:purec}.

Suppose that $d(p,q) \leq n-1$ implies $|k-i|\leq n-1$ and $|\ell-j| \leq n-1$.
Consider $q'=(k',\ell')$ such that $d(p,q')=n$.
Then there exists a (not necessarily unique) shortest COTS-arc $C=\{p,c_1,\hdots,c_{n-1},q'\}$ connecting $p$ and $q'$.
By Proposition \ref{cont_dist}, $d(p,c_{n-1})=n-1$, so $c_{n-1}=(x,y)$ satisfies $|x-i|\leq n-1$, and $|y-j|\leq n-1$ by the inductive hypothesis.
Since $c_n$ and $q'$ are adjacent, $|k'-x|\leq 1$ and $|\ell'-y|\leq 1$.
Then
\begin{eqnarray*}
|k'-i|&=&|k'-x+x-i|\\
&=&|(k'-x) + (x-i)|\\
&\leq &|k'-x| + |x-i|\\
&\leq& 1 + (n-1)\\
&=&n.
\end{eqnarray*}
Similarly, $|\ell'-j| \leq n$.
\end{proof}

In \cite{Chassery1979}, the author points out that the $d_1$ metric can be used on $4$-connected digital planes, and the $d_\infty$ metric can be used on $8$-connected digital planes.
In this context, given two points $p=(i,j)$ and $q=(k,\ell)$ in $\zz^2$, $d_1(p,q) = \abs{k-i}+\abs{\ell-j}$ and $d_\infty(p,q)=\max\set{\abs{k-i},\abs{\ell-j}}$.
Since Khalimsky's digital plane is not homogeneously connected, the distance between points is not immediately calculable.

\begin{prop}
\label{pure_dist_max}
If $p=(i,j)$ and $p'=(i',j')$ are pure points in $\D$ represented by integer coordinates, then $\dist{\D}{p}{p'} = \max\set{|i'-i|,|j'-j|}$.
\end{prop}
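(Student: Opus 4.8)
The plan is to sandwich $\dist{\D}{p}{p'}$ between the two sides by setting $n := \max\set{\abs{i'-i},\abs{j'-j}}$ and showing that $n$ is both a lower and an upper bound. The lower bound $\dist{\D}{p}{p'} \ge n$ is essentially free: writing $N = \dist{\D}{p}{p'}$, Proposition \ref{n-ball} applied with this $N$ yields $\abs{i'-i} \le N$ and $\abs{j'-j} \le N$, hence $n \le N$. Note this is exactly where we use that $p$ is a pure point, since Proposition \ref{n-ball} is stated for pure $p$.

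For the upper bound I would exhibit a COTS-path from $p$ to $p'$ consisting of exactly $n+1$ points; Proposition \ref{prop:path-arc} then extracts a COTS-arc with the same endpoints and at most $n+1$ points, giving $\dist{\D}{p}{p'} \le n$. By reflecting in the coordinate axes and, if necessary, interchanging the two coordinates (all symmetries of the Khalimsky plane), I may assume $0 \le j'-j \le i'-i = n$. The structural fact I would invoke, visible in Figures \ref{fig:pure} and \ref{fig:purec}, is that a pure point of $\D$ is adjacent to all eight of its neighbours, so a diagonal move sends a pure point to an adjacent pure point of opposite parity. I would first take $j'-j$ diagonal steps $(i,j)\to(i+1,j+1)\to\cdots\to(i+(j'-j),\,j')$, and then $n-(j'-j)$ horizontal unit steps at constant height $j'$, ending at $(i',j')$.

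Two bookkeeping points make this work. First, since $p$ and $p'$ are pure we have $i\equiv j$ and $i'\equiv j'\pmod 2$, so $(i'-i)-(j'-j)$ is even; thus the horizontal leg uses an even number of unit steps and therefore begins and ends on pure points, its interior points being the intervening mixed points. Second, every single step of the sequence joins comparable (adjacent) points, so by iterating Proposition \ref{prop:concat-paths} the whole sequence is a genuine COTS-path. Counting steps, the diagonal leg contributes $j'-j$ and the horizontal leg contributes $n-(j'-j)$, for a total of $n$ steps, i.e.\ $n+1$ points.

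The only delicate part is the upper-bound construction: one must check that the diagonal moves are legal, which relies on the eightfold adjacency of pure points (special to the Khalimsky topology, and false for mixed points), and that the parity argument genuinely forces the horizontal leg to terminate at $p'$ rather than at a mixed point. Everything else — the lower bound, the reduction by symmetry, and the passage from COTS-path to COTS-arc — is routine given the results already established.
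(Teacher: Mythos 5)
Your proof is correct and follows essentially the same route as the paper's: the lower bound comes from Proposition \ref{n-ball} (the paper phrases it as a contradiction, you state it directly), and the upper bound uses the same diagonal-then-axis-parallel path construction, glued by Proposition \ref{prop:concat-paths} and reduced to a COTS-arc by Proposition \ref{prop:path-arc}. The normalization by symmetry and the explicit parity bookkeeping are cosmetic additions; the underlying argument is the paper's.
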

The proof idea here is to travel from $p$ to $p'$ as far as possible while only travelling diagonally, and then to move either horizontally or vertically to make up the remaining distance.
\begin{proof}
Let $m :=\min\{|i'-i|,|j'-j|\}$ and $m':=\max\{|i'-i|,|j'-j|\}$.
We can construct a COTS-arc $C=\set{c_0,c_1,\hdots,c_m}$ with start-point $p=c_0$ given in integer coordinates by
\[c_k = \paren{i_k,j_k} = \paren{i+\frac{(i'-i)k}{|i'-i|},j+\frac{(j'-j)k}{|j'-j|}},\]
for $k \in \{0,1,\hdots,m\}$.
At $k=m$, at least one of either $i'=i_m$ or $j'=j_m$.
Suppose without loss of generality that $i'=i_m$.
If $m=m'$, then $j'=j_m$ and the construction of a COTS connecting $p$ and $p'$ stops here; notice that all of the points in $C$ are pure and alternate between open and closed.

Otherwise, assume $j' \neq j_m$. 
We can construct a second COTS-arc \[C' = \set{c'_0,c'_1,\hdots,c'_{m'-m}}\] with start-point $c_m =c_0'$ and end-point $p'=c_{m'-m}'$ given by
\[c'_k = \left(i', j_m + \frac{(j'-j)k}{|j'-j|}\right),\]
for $k \in \{0,1,\hdots,m'-m\}$.
Since $c'_0=c_m$ and $c_{m'-m}'=(i',j')$, $C \cup C'$ exhibits a COTS-arc of length $m'$ connecting $p$ and $p'$.
By Proposition \ref{prop:path-arc}, this yields a COTS-arc with endpoints $p$ and $p'$ whose length is less-than-or-equal-to $m'$.
Hence, $d(p,p') \leq m'$.

We will show next that this length is necessary.
If $d(p,p') < m'$, then $|i'-i| \leq m'-1$ and $|j' - j| \leq m'-1$ by Proposition \ref{n-ball}.
By the definition of $m'$, either $|i'-i|= m'$, or $|j'-j|= m'$, but neither of these values are in the ranges mentioned above, a contradiction.
\end{proof}

Note that result of Proposition \ref{pure_dist_max} may not be true for two pure points $p,q \in A \subseteq \D$, where $A$ is a subspace.
Futhermore, if $p$ and $q$ are mixed, and $C=\set{p,c_1,c_2,\hdots,c_{n-1},q}$ is a shortest COTS-arc connecting them, $c_1$ and $c_{n-1}$ must be pure, since no two mixed points are adjacent.
Then $\dist{\D}{p}{q} = \dist{\D}{c_1}{c_{n-1}}+2$ by Proposition \ref{cont_dist}.
This is a considerably better upperbound than the one given by the $d_1$ metric.

\begin{prop}
\label{prop:pure_dist}
Let $p \in \D$ be pure.
For all $q \in \D$ open and all $r \in \D$ closed, $d(p,q) \neq d(p,r)$.
\end{prop}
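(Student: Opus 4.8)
The plan is to turn the claim into a parity computation and feed it through Proposition \ref{pure_dist_max}. The first thing I would record is that $q$ and $r$ are themselves pure points: in the Khalimsky plane an open point has both integer coordinates of one fixed parity and a closed point has both coordinates of the opposite parity. Hence Proposition \ref{pure_dist_max} applies to both pairs, and writing $p=(p_1,p_2)$, $q=(q_1,q_2)$, $r=(r_1,r_2)$ I get
\[
d(p,q) = \max\set{|q_1-p_1|,|q_2-p_2|}, \qquad d(p,r)=\max\set{|r_1-p_1|,|r_2-p_2|}.
\]

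Next I would exploit that a pure point $u=(u_1,u_2)$ satisfies $u_1 \equiv u_2 \pmod 2$. For two pure points the coordinatewise differences therefore share a parity, so $|u_1-v_1|$ and $|u_2-v_2|$ are congruent mod $2$, and the parity of their maximum is unambiguous. Denoting by $\epsilon_x \in \set{0,1}$ the common parity of the coordinates of a pure point $x$, this gives
\[
d(p,q)\equiv \epsilon_q-\epsilon_p \pmod 2, \qquad d(p,r)\equiv \epsilon_r-\epsilon_p \pmod 2.
\]

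To close the argument I would use that open and closed points have opposite coordinate parity, i.e. $\set{\epsilon_q,\epsilon_r}=\set{0,1}$, so $d(p,q)$ and $d(p,r)$ differ by $\epsilon_q-\epsilon_r \equiv 1 \pmod 2$. Thus the two distances lie in different parity classes and cannot be equal, whether $p$ is itself open or closed.

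There is no deep obstacle once Proposition \ref{pure_dist_max} is available; the only step needing genuine care is the claim that $\max\set{|a|,|b|}$ has a well-defined parity, which is exactly where purity of all three points is used (if either $q$ or $r$ were mixed, the two coordinate differences could have opposite parities and the parity of the maximum would be indeterminate). I would also remark that degenerate configurations, such as $q=p$ or a vanishing coordinate difference, are harmless, since both the distance formula and the parity bookkeeping remain valid in those cases.
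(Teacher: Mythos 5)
Your proof is correct, but it takes a genuinely different route from the paper's. The paper never invokes the distance formula: it observes that a shortest COTS-arc realizing $d(p,q)$ is the homeomorphic image of a COTS $c_0 \geq c_1 \leq \hdots \lessgtr c_n$, whose points alternate between open and closed; since $p$ is closed (say), $c_0$ is a closed point of that COTS, so the parity of $n$ dictates whether $c_n$ is open or closed, and an open point of $\D$ can only be the image of an open point of the COTS (it is minimal in the arc), while a closed point of $\D$ can only be the image of a closed one. Hence an arc ending at open $q$ forces $n$ odd and an arc ending at closed $r$ forces $n$ even --- the same parity dichotomy you derive, but read off from the order structure of the arc rather than from coordinates. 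The trade-offs are real: your argument is shorter once Proposition \ref{pure_dist_max} is available, but it is tied to the full plane, since Proposition \ref{pure_dist_max} can fail for distances $d_A$ computed in a subspace $A \subseteq \D$ (as the paper remarks immediately after proving it), whereas the paper's order-theoretic argument applies verbatim to any $d_A$; this extra robustness is actually used later, when the proof of Lemma \ref{lem:maxcon} invokes the present proposition for $d_{\jint{J}}$. One small caveat in your bookkeeping: whether a pure point has both coordinates of equal parity depends on how $\D = X \times Y$ is indexed (if $X$ begins with an open point and $Y$ with a closed one, pure points have coordinates of opposite parities). What holds in every case, and is all your argument needs, is that for pure $u,v$ the differences of corresponding coordinates share a parity, and that the first coordinates of an open point and a closed point have opposite parities; with that restatement your proof goes through unchanged.
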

\begin{proof}
Without loss of generality, suppose $p$ is closed.
Suppose there exist pure points $q,r \in \D$ such that $q$ is open and $r$ is closed, and that $d(p,q)= d(p,r)=n$.
Because $d(p,q) = d(p,r)=n$, the COTS-arc between each pair of points is the image of a COTS $c_0 \geq c_1 \leq \hdots \lessgtr c_n$.
Note that $c_0$ must be closed since $p$ is closed.
Then $c_n$ is open if $n$ is odd and closed if $n$ is even.
Since $q$ is open and $r$ is closed, they cannot both be the image of $c_n$, a contradiction.
\end{proof}

\begin{prop}
\label{pure_unique}
If $p,q\in\D$ are on the same diagonal, then the shortest COTS-arc $C$ connecting $p$ and $q$ is unique.
Furthermore, the points of $C$ are a subset of the diagonal containing $p$ and $q$.
\end{prop}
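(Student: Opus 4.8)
The plan is to show that any shortest COTS-arc from $p$ to $q$ is pinned down point-by-point to the diagonal segment joining them, so that uniqueness and the diagonal-containment claim fall out of the very same computation. I work under the natural reading that the diagonal through $p$ and $q$ is a \emph{pure} diagonal, so that $p=(i,j)$ and $q=(i',j')$ are both pure points; this is automatic, since moving along a diagonal by $(\pm 1,\pm 1)$ preserves the parity of $i+j$, and a diagonal of mixed points contains no adjacent pairs (so there is nothing to connect along it). Since $p,q$ lie on a common diagonal we have $|i'-i|=|j'-j|=:m$, and by adjusting the signs in the coordinate relations I may assume $i'-i=j'-j=m\geq 0$; the remaining sign patterns are handled by the identical inequality manipulation below. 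By Proposition \ref{pure_dist_max}, $d(p,q)=\max\{|i'-i|,|j'-j|\}=m$, so every shortest COTS-arc $C=\{c_0,\ldots,c_m\}$ with $c_0=p$, $c_m=q$ has exactly $m+1$ points.

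Next I would locate each intermediate point $c_k=(x_k,y_k)$. Proposition \ref{cont_dist} applied to $C$ gives $d(p,c_k)=k$, and applied to the reversed arc $\{c_m,\ldots,c_0\}$ (which is a shortest arc from $q$ to $p$ by symmetry of $d$, Proposition \ref{metric}) gives $d(q,c_k)=m-k$. Now I invoke Proposition \ref{n-ball} twice, using that both endpoints are pure. Centering at $p$ with radius $k$ yields $|x_k-i|\leq k$ and $|y_k-j|\leq k$; centering at $q$ with radius $m-k$ yields $|x_k-i'|\leq m-k$ and $|y_k-j'|\leq m-k$.

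The heart of the argument is a squeeze. From the first bound $x_k\leq i+k$, while the second bound together with $i'=i+m$ gives $x_k\geq i'-(m-k)=i+k$; hence $x_k=i+k$, and the identical computation forces $y_k=j+k$. Thus every point of every shortest arc satisfies $c_k=(i+k,j+k)$, which at once shows that $C$ is unique and that each of its points lies on the diagonal through $p$ and $q$.

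The only real subtlety to flag is the purity hypothesis, which is exactly what licenses the double application of Proposition \ref{n-ball} (stated for a pure center point) at both $p$ and $q$; as noted, on a pure diagonal this is free, and on a mixed diagonal the statement is vacuous for connectivity reasons. Beyond verifying that the sign variants of the squeeze all collapse to $c_k=(i\pm k,\,j\pm k)$, I anticipate no further obstacle, since the whole argument reduces to the coordinate squeeze once the two distance identities $d(p,c_k)=k$ and $d(q,c_k)=m-k$ are in hand.
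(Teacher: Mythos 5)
Your proof is correct, and it takes a genuinely different route from the paper's. The paper proves Proposition \ref{pure_unique} by induction on the diagonal distance: after normalizing $p=(0,0)$, $q=(n,n)$, it supposes a second shortest arc exists and analyzes the possible penultimate points $c_{n-1}\in A(q)$ case by case --- off-diagonal pure neighbors are excluded because Proposition \ref{pure_dist_max} makes them too far from $p$, and a mixed $c_{n-1}$ is excluded because COTS-arcs cannot turn at mixed points, forcing $c_{n-2}$ into positions whose distances (again via Proposition \ref{pure_dist_max}) are wrong. Your argument replaces the induction and the neighbor case analysis with a two-sided squeeze: the identities $d(p,c_k)=k$ and $d(q,c_k)=m-k$ from Proposition \ref{cont_dist} (with symmetry from Proposition \ref{metric}), fed into Proposition \ref{n-ball} centered at each of the two pure endpoints, pin every point of every shortest arc to the exact coordinates $(i+k,j+k)$. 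This buys a shorter, non-inductive proof that delivers uniqueness and diagonal containment in a single computation, isolates the only geometric input really needed (the coordinate bound of Proposition \ref{n-ball}), and avoids invoking the no-turning-at-mixed-points rule; it is essentially a geodesic-rigidity argument that would transfer to any setting admitting such two-sided ball estimates, whereas the paper's proof stays closer to the local combinatorics of Khalimsky adjacency. Your handling of the purity issue also matches the paper's intent: the statement does not say ``pure,'' but the paper's proof immediately assumes it, and your parity remark justifies that reading --- correctly so, since for two mixed points on a common diagonal uniqueness genuinely fails (e.g.\ adjacent diagonal mixed points are joined by two distinct three-point shortest arcs through their two common pure neighbors), so the restriction to pure diagonals is necessary, not merely convenient.
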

\begin{proof}
Let $p,q \in \D$ be pure points on the same diagonal.
Translate and reflect the coordinate system on $\D$ such that $p=(0,0)$ and $q=(n,n)$ for some positive integer $n$.
By Proposition \ref{pure_dist_max}, $d(p,q)=n$.
We will prove via induction that the shortest COTS-arc connecting $p$ and $q$ is unique.

If $q=(1,1)$, then $d(p,q)=1$.
Any COTS-arc from $p$ to $q$ must contain both $p$ and $q$.
Since $q \in A(p)$, $\{p,q\}$ is the smallest set containing them, and it is unique.\footnote{It is worth noting that the definition of COTS from \cite{Khalimsky1990a} does not mention two-point COTS, however, the COTS-arc $\set{p,q}$ is still a finite model of a line segment.}

Suppose that the shortest COTS-arc $C$ between two pure points of the form $(0,0)$ and $(n-1,n-1)$ is unique and only contains points on the same diagonal, that is, $C=\{(i,i)\}_{i=0}^{n-1}$.

Let $q=(n,n)$ be a pure point of $\D$.
If $q$ is closed, consider the neighbors of $q$ in Figure \ref{fig:Aq}.
(If $q$ is open, the parity of the points will be flipped, and their coordinates will remain the same.)
By Proposition \ref{pure_dist_max}, $d(p,r_0)=n-1$, and the shortest COTS-arc containing $p$ and $r_0$ is unique by the inductive hypothesis.
If $C=\{(i,i)\}_{i=0}^n$ is not the unique COTS-arc containing $p=(0,0)$ and $q=(n,n)$, then there exists another shortest COTS-arc $C'=\{p,c_1,c_2,\hdots,c_{n-1},q\}$.

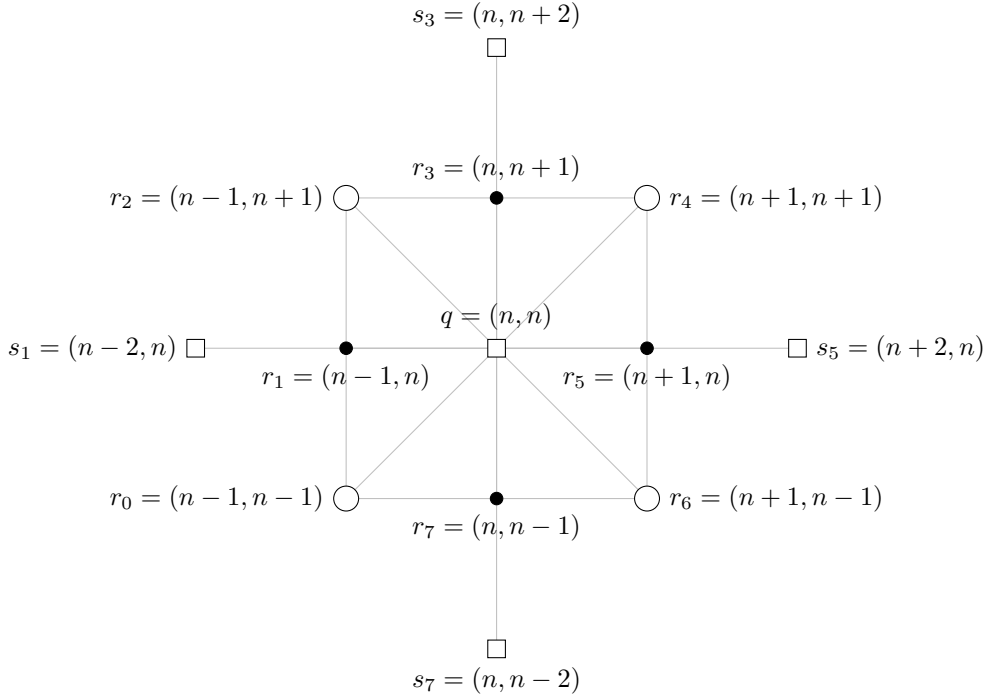
\begin{figure}
\centering
\begin{tikzpicture}[open/.style={shape=circle,fill=white},mixed/.style={shape=circle,fill=black,scale=.5}]

\draw[lightgray, thin, step=2.0] (-2,-2) grid (2,2);
\draw[lightgray, thin] (-2,-2)--(2,2);
\draw[lightgray, thin] (-2,2)--(2,-2);
\draw[lightgray, thin] (0,-4)--(0,4);
\draw[lightgray, thin] (-4,0)--(4,0);

\draw (0,0) node [shape=rectangle,fill=white,label={$q=(n,n)$},draw]{};

\draw (-2,-2) node [open,label=left:{$r_0=(n-1,n-1)$},draw]{};
\draw (-2,2) node [open,label=left:{$r_2=(n-1,n+1)$},draw]{};
\draw (2,2) node [open,label=right:{$r_4=(n+1,n+1)$},draw]{};
\draw (2,-2) node [open,label=right:{$r_6=(n+1,n-1)$},draw]{};

\draw (-2,0) node [mixed,label=below:{$r_1=(n-1,n)$},draw]{};
\draw (0,2) node [mixed,label=above:{$r_3=(n,n+1)$},draw]{};
\draw (2,0) node [mixed,label=below:{$r_5=(n+1,n)$},draw]{};
\draw (0,-2) node [mixed,label=below:{$r_7=(n,n-1)$},draw]{};

\draw (-4,0) node [shape=rectangle,fill=white,label=left:{$s_1=(n-2,n)$},draw]{};
\draw (0,4) node [shape=rectangle,fill=white,label={$s_3=(n,n+2)$},draw]{};
\draw (4,0) node [shape=rectangle,fill=white,label=right:{$s_5=(n+2,n)$},draw]{};
\draw (0,-4) node [shape=rectangle,fill=white,label=below:{$s_7=(n,n-2)$},draw]{};

\end{tikzpicture}
\label{fig:Aq}
\caption{Neighbors of $q$ and their coordinates}
\end{figure}

If $C \neq C'$, then $c_{n-1} \in \{r_1,r_2,\hdots,r_7\}$, as shown in Figure \ref{fig:Aq}.
By Proposition \ref{cont_dist}, $d(p,c_{n-1})=n-1$.
If $c_{n-1}$ is pure, then $c_{n-1} \in \{r_2,r_4,r_6\}$.
By Proposition \ref{pure_dist_max}, $d(p,r_2)=d(p,r_4)=d(p,r_6)=n+1 > n-1 = d(p,r_0)$, a contradiction.
If $c_{n-1}$ is mixed, $c_{n-2} \in \{s_1,s_3,s_5,s_7\}$ because COTS-arcs cannot turn at mixed points.
By Proposition \ref{cont_dist}, $d(p,c_{n-2})=n-2$, however, $d(p,s_i) \in \{n,n+2\}$ for $i \in \{1,4,5,7\}$ by Proposition \ref{pure_dist_max}, a contradiction.
\end{proof}

In the next section, we use the above results to learn more about the interiors of Jordan curves.

\section{Digital Jordan Curves with the Khalimsky Topology}

\label{chap:DigJC}

\subsection{Properties of Jordan curves}

The goal of this section is to prove the results we need to show that the space of digital Jordan curves is connected.
Throughout this section, ``Jordan curve'' will mean COTS-Jordan curve, unless otherwise specified.

\begin{prop}\label{JC-even} 
Every COTS-Jordan curve comprises an even number of points.
\end{prop}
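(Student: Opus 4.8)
The plan is to recognize the adjacency structure of a COTS-Jordan curve $J$ as a cycle graph and then to exhibit a proper $2$-coloring of that cycle given by the open/closed dichotomy; since only even cycles admit such a coloring, $|J|$ must be even. Throughout I would work with the induced partial order on $J$, recalling that two points of $J$ are comparable exactly when they are adjacent, that open points are the minimal elements and closed points the maximal ones, and that (by the earlier proposition on COTS) every point of a COTS-arc is open or closed with adjacent points of opposite type.

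First I would show that every point of $J$ has exactly two neighbors in $J$, so that $J$ is a single cycle. Fixing $p \in J$, the subspace $J - \{p\}$ is a COTS-arc, hence a path in which the two endpoints have one neighbor and all interior points have two; summing neighbor counts over $J$ and comparing the totals obtained by deleting different points forces every vertex of $J$ to have the same constant degree, which a short count pins down as $2$. Since $J$ is connected (any two of its points lie together in some deleted arc $J - \{q\}$, which is connected), it is a cycle, and I may write $J = \{j_0, \dots, j_{n-1}\}$ cyclically with $j_i$ adjacent to $j_{i \pm 1}$ modulo $n$ and $|J| = n$.

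Next I would check that the open/closed type of each $j_i$ is intrinsic to $J$. Because the only points of $J$ comparable to $j_i$ are its two neighbors $j_{i-1}, j_{i+1}$, and because $|J| \geq 4$ guarantees a point $j_k \notin \{j_{i-1}, j_i, j_{i+1}\}$, deleting $j_k$ leaves all order relations at $j_i$ untouched; as $j_i$ is open or closed in the COTS-arc $J - \{j_k\}$, it is open or closed in $J$. Adjacent points then have opposite type: if $j_i < j_{i+1}$, then $j_i$ is not maximal and $j_{i+1}$ is not minimal, so $j_i$ is open and $j_{i+1}$ is closed. This makes the open/closed labeling a proper $2$-coloring of the cycle on $J$, which is therefore bipartite; since an odd cycle is not bipartite, $n = |J|$ is even.

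I expect the main obstacle to be the second step, namely arguing that a point's open/closed status is a genuine feature of $J$ rather than an artifact of which point one deletes to expose a COTS-arc. Once the type is seen to be well defined and to alternate along adjacencies, the conclusion is simply the failure of odd cycles to be bipartite.
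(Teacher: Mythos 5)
Your proof is correct, and it takes a genuinely more self-contained route than the paper's, even though both rest on the same parity phenomenon: open/closed types alternate along adjacencies, so the curve must have even length. The paper's proof fixes a single point $j$, cites Lemma 5.2(c) of \cite{Khalimsky1990a} to decompose $J$ into the two arcs $J-\{j\}$ and $\{j^-,j,j^+\}$, and, assuming $|J|$ odd, plays the types of the endpoints of the even-length arc $J-\{j\}$ against the types forced by the three-point arc. You never invoke that lemma: your degree count over the deleted arcs recovers $2$-regularity (hence the cycle structure) directly from the definition, and you finish with bipartiteness rather than a pointwise contradiction. The most valuable difference is your second step. The paper implicitly compares the type of $j^-$ computed in the subspace $J-\{j\}$ with its type computed in the subspace $\{j^-,j,j^+\}$, treating ``open'' and ``closed'' as intrinsic labels; but a mixed point of $\D$ can a priori be minimal in one sub-arc and maximal in another, so this agreement requires justification. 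Your observation --- that deleting a point $j_k$ away from $j_i$ preserves every comparability at $j_i$, so minimality or maximality in the arc transfers to $J$ --- supplies exactly that missing justification, and it does so purely order-theoretically, so your argument applies verbatim in any finite $T_0$ digital plane, consistent with the paper's remark that the proposition holds for $T_\frac{1}{2}$ planes as well. The trade-off is only length: the paper's proof is shorter because it outsources structure to \cite{Khalimsky1990a}, while yours re-derives it.
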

\begin{proof}
For sake of contradiction, suppose that $J \subseteq \D$ is a COTS-Jordan curve with $|J|$ odd.
By definition, $J-\{j\}$ is a COTS-arc for all $j \in J$.
Fix a $j \in J$.
Denote $A(j) \cap J$ as $\{j^-,j^+\}$.
By Lemma 5.2(c) of \cite{Khalimsky1990a}, this determines exactly two COTS-arcs with endpoints $j^-,j^+$ that are subsets of $J$.
These are $\set{j^-,j,j^+}$ and $J-\{j\}=\set{j^+,j^{++},\hdots,j^{--},j^-}$.
Because $|J|$ is odd, $|J-\{j\}|$ is even, and as a COTS-arc, it is the homeomorphic image of a COTS, $\varphi:C\to \D$.
Without loss of generality, suppose $\varphi^{-1}\paren{j^-}$ is an open point of $C$, and $\varphi^{-1}\paren{j^+}$ is a closed point of $C$.
Because $\set{j^-,j,j^+}$ is a COTS-arc, it is the image of a COTS $\varphi':C' \to \D$.
Then either $\varphi'^{-1}(j)$ is open, in which case $\{j^-,j\}$ is not connected,
or $\varphi'^{-1}(j)$ is closed, in which case $\{j,j^+\}$ is not connected.
Hence, $|J|$ is even.
\end{proof}

It is important to note that Proposition \ref{JC-even} is true for any digital Jordan curve in a $T_\frac{1}{2}$ digital plane.
Proposition 5 of \cite{Slapal2006} states the following:

\begin{prop*}
A finite subset $C$ of a topological space $(X,u)$ is a simple closed curve in the space if and only if, in the connectedness graph of $(X,u)$, for each point $x \in C$ there are precisely two points of $C$ adjacent to $x$.
\end{prop*}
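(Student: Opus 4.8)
The plan is to prove both directions, reading ``simple closed curve'' as this paper's COTS-Jordan curve---a connected $C \subseteq \D$ with $\abs{C} \geq 4$ such that $C - \set{x}$ is a COTS-arc for every $x \in C$---and reading the ``connectedness graph'' as the graph on $X$ whose edges join comparable (adjacent) points, so that the neighbors of $x$ inside $C$ are exactly $A(x) \cap C$. Let $G$ denote this graph restricted to $C$. The single fact driving everything is that the connectedness graph of a COTS-arc is a simple path: consecutive points are adjacent and non-consecutive points are incomparable by the COTS convention of Section~\ref{chap:COTS}, and the alternation of open and closed points forbids any further edges. Since comparability is intrinsic to $X$, passing to a subspace does not change which pairs are adjacent; hence the connectedness graph of $C - \set{x}$ is exactly $G$ with the vertex $x$ deleted.

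For the forward direction I would argue by counting. For each $x \in C$ the subspace $C - \set{x}$ is a COTS-arc, so $G - x$ is a path on $\abs{C}-1$ vertices and therefore has $\abs{C}-2$ edges. Deleting $x$ removes exactly $\deg(x)$ edges, so $\abs{E(G)} - \deg(x) = \abs{C} - 2$ for every $x$; thus all vertices share a common degree $d = \abs{E(G)} - \abs{C} + 2$. Feeding this into the handshake identity $\abs{C}\,d = 2\abs{E(G)}$ and solving the resulting linear relation forces $d = 2$ whenever $\abs{C} > 2$, which holds since $\abs{C} \geq 4$. Hence every point of $C$ has precisely two neighbors.

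For the reverse direction I would assume the degree-two condition together with connectedness of $C$. A connected $2$-regular graph is a single chordless cycle, so $G$ is a cycle $x_0 x_1 \cdots x_{n-1} x_0$, and deleting any vertex leaves the induced path on the survivors. I then verify that the path-subspace $C - \set{x}$ is a COTS directly from the definition: given any three of its points, order them along the path; the middle one separates the other two, because those outer points lie at path-distance at least $2$ and so are incomparable, making their two-point subspace disconnected. This establishes the $3$-point separation property, so $C - \set{x}$ is a COTS-arc for every $x$, and $C$ is a COTS-Jordan curve.

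The hard part is conceptual rather than computational: reconciling \v{S}lapal's general $(X,u)$ formulation with the COTS framework and excluding degenerate cycles. The bare conditions ``connected and $2$-regular'' also permit a transitive triple $a<b<c$ (mutually comparable, hence a triangle in the comparability graph), whose deletions are two-point arcs; such a configuration is a contractible arc rather than a circle and is excluded only by the requirement $\abs{C} \geq 4$. Once $\abs{C} \geq 4$ is imposed, the degree-two condition itself forbids any chord---a chord at $x$ would be a third element of $A(x) \cap C$---so the induced comparability graph is automatically a chordless cycle, exactly as needed for the $3$-point separation argument. The analogous safeguard in the forward direction is the alternation of open and closed points along a COTS-arc, which is what guarantees that its connectedness graph is a genuine path carrying no long-range adjacencies.
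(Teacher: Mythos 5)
The paper never proves this statement, so there is no in-paper argument for yours to match: it is quoted as Proposition 5 of \cite{Slapal2006} and used purely as a black box, to observe that Proposition \ref{JC-even} extends to digital Jordan curves in any $T_\frac{1}{2}$ digital plane. Your blind argument is therefore, by necessity, a genuinely different route --- an actual self-contained proof under the reading ``simple closed curve $=$ COTS-Jordan curve'' and ``connectedness graph $=$ comparability graph'' --- and under that reading it is essentially correct. The forward counting argument works: adjacency is intrinsic to subspaces of finite $T_0$ spaces, so each hypothesis ``$C-\set{x}$ is a COTS-arc'' yields $\abs{E(G)}-\deg(x)=\abs{C}-2$, and the handshake identity then forces every degree to equal $2$ once $\abs{C}>2$. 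The reverse direction is also sound: a connected $2$-regular graph is a chordless cycle, and chordlessness is exactly what makes each vertex-deleted path satisfy the three-point axiom, both in the form the paper literally states (the outer two points of a triple are incomparable) and in Khalimsky--Kopperman--Meyer's stronger form (they lie in different components once the middle point is removed), since with no chords the two subpaths flanking the middle point carry no edges between them. Your two caveats are real, not pedantry: without connectedness, a disjoint union of two cycles satisfies the right-hand side; and without $\abs{C}\geq 4$, a $3$-chain such as open $<$ mixed $<$ closed in the Khalimsky plane is connected and $2$-regular in the comparability graph yet is no Jordan curve. Both caveats vanish precisely in the setting where \v{S}lapal states, and the paper applies, the result: in a $T_\frac{1}{2}$ plane a $3$-chain would force its middle point to be neither open nor closed, so no triangles exist in the connectedness graph there. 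The one imprecision is your aside that ``the alternation of open and closed points forbids any further edges'' in a COTS-arc --- alternation only excludes edges between points of equal parity; what rules out all long-range edges is the COTS structure itself (the paper's convention that $c_i$ and $c_j$ are incomparable for $\abs{i-j}>1$), which you also invoke, so the proof stands.
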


In a $T_\frac{1}{2}$ space, two points of the same type (e.g., two open points) cannot be adjacent.
If a simple closed curve were to have an odd number of points, then two consecutive points would have to be of the same type, which is not possible.

\begin{defn}
A digital Jordan curve is called {\bf minimal} if it the adjacency set of a point in the digital plane.
\end{defn}

Consider Figures \ref{fig:pure}, \ref{fig:purec}, or \ref{fig:amixed}. Each of these figures displays $x \cup A(x)$ for $x$ open, closed, and mixed, respectively. Deleting the central point $x$ in each figure gives the adjacency set of this point. It is easy to check that these sets satisfy the definitions of a digital Jordan curve. 
Furthermore, these are the only three minimal Jordan curves, up to rotation and translation.
The unique Jordan curve in $\D$ with $\abs{\jint{J}}$ maximal is the adjusted border of the digital plane, which is a Jordan curve by Lemma 5.2(b) of \cite{Khalimsky1990a}.
By ``adjusted border,'' we mean the border of $\D$ such that any mixed cornerpoints have been deleted.
Unless otherwise specified, $B \subset \D$ will represent this maximal Jordan curve.

\begin{lem}\label{lem:nonminpureint}
Every non-minimal Jordan curve $J$ contains at least one pure point in its interior.
\end{lem}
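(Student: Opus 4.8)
The plan is to prove the contrapositive: if the interior $\jint{J}$ contains no pure point, then $J$ is one of the three minimal Jordan curves. The engine of the argument is the fact, used in the discussion following Proposition \ref{pure_dist_max}, that no two mixed points of $\D$ are adjacent; equivalently, the adjacency set of a mixed point consists entirely of pure points. A consequence I would isolate first is that any connected subset of $\D$ consisting only of mixed points must be a single point, since a fence between two distinct points of such a subset would have to contain two consecutive (hence adjacent) mixed points.

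First I would record that $\jint{J}$ is nonempty and connected: by the Khalimsky Jordan curve theorem recalled above, $\D \setminus J$ has exactly two components, so the interior is one honest connected component. Now assume for contradiction that $\jint{J}$ has no pure point, so every point of $\jint{J}$ is mixed. Combining connectedness with the observation of the previous paragraph forces $\jint{J} = \set{m}$ for a single mixed point $m$.

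Next I would show that this pins down $J$ exactly as the minimal mixed curve $A(m)$. For the inclusion $A(m) \subseteq J$: any neighbor of $m$ lies outside $\jint{J} = \set{m}$, and if such a neighbor belonged to $\jext{J}$ it would be an exterior point adjacent to the interior point $m$, placing the two in the same component of $\D \setminus J$, which is impossible; hence every neighbor of $m$ lies on $J$. For the reverse inclusion I would use that $A(m)$ is itself a Jordan curve (the adjacency set of a mixed point, cf.\ Figure \ref{fig:amixed}), so by the characterization of simple closed curves quoted after Proposition \ref{JC-even} each $p \in A(m)$ has precisely two neighbors within $A(m)$, and these are already neighbors of $p$ in $J$. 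Since $J$ is a Jordan curve, $p$ has precisely two neighbors in $J$ as well, and $m \notin J$ is not among them; therefore $p$ has no neighbor in $J \setminus A(m)$. Thus there are no edges between $A(m)$ and $J \setminus A(m)$, and as $J$ is connected this forces $J \setminus A(m) = \emptyset$, i.e.\ $J = A(m)$, contradicting non-minimality.

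The step I expect to be the main obstacle is the rigidity implication $\jint{J} = \set{m} \Rightarrow J = A(m)$: it is tempting but insufficient to establish only the inclusion $A(m) \subseteq J$, and the delicate point is excluding any extra points of $J$. The clean way around this is to exploit that each point of a Jordan curve has exactly two curve-neighbors, so that the ``adjacency budget'' of the pure points of $A(m)$ is already exhausted inside $A(m)$, leaving no room to attach further points while keeping $J$ connected.
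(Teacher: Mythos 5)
Your proof is correct, and it rests on the same two load-bearing facts as the paper's: the interior of a Jordan curve is a single connected component of $\D - J$, and no two mixed points of $\D$ are adjacent, so a connected set consisting only of mixed points is a singleton. Where you genuinely diverge is in the rigidity step, i.e., handling a one-point interior. The paper argues in the opposite order: it first shows a non-minimal curve must have $\abs{\jint{J}}\geq 2$, invoking Lemma 5.2(a) of \cite{Khalimsky1990a} (for each $j \in J$, the set $A(j)-J$ has exactly two components, one in $\jint{J}$ and one in $\jext{J}$) to manufacture a second interior point whenever $\jint{J}=\set{q}$ but $J \neq A(q)$; it then disposes of the all-mixed case by noting that mixed interior points are open in the subspace $\jint{J}$, so an all-mixed interior with two points would be discrete and hence disconnected. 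You instead collapse the all-mixed interior to $\set{m}$ first, and then prove the structural identity $J=A(m)$ by a double inclusion: $A(m)\subseteq J$ because interior and exterior are distinct components of $\D - J$, and $J\subseteq A(m)$ by playing the exactly-two-curve-neighbors property of $J$ against that of the Jordan curve $A(m)$, together with connectedness of $J$. Your route buys independence from Khalimsky's Lemma 5.2(a), replacing it with the degree-two characterization (the \v{S}lapal criterion quoted after Proposition \ref{JC-even}, or Lemma 5.2(c) of \cite{Khalimsky1990a}) plus the fact that $A(m)$ is itself a Jordan curve (Figure \ref{fig:amixed}); it also yields the sharper conclusion that the only Jordan curve whose interior is a single mixed point is the diamond $A(m)$, rather than a bare contradiction. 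The paper's route, granted Khalimsky's lemma, is shorter and treats a singleton interior of either parity (pure or mixed) uniformly, sidestepping the reverse inclusion that you correctly identify as the delicate point.
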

\begin{proof}
First we show that if $J$ is not minimal, then $|\jint{J}| \geq 2$.
If $J$ is not minimal, then $J \neq A(p)$ for some $p \in \D$ that does not touch the border.
Because $\jint{J}\neq \emptyset$, $\abs{\jint{J}}\geq 1$.
Suppose for sake of contradiction that $\abs{\jint{J}}=1$ such that $\jint{J}=\set{q}$ for some $q \in \D$.
Since $J \neq A(q)$, there exists some $j \in J$ such that $j \not\in A(q)$.
By Lemma 5.2(a) of \cite{Khalimsky1990a}, $A(j)- J$ has exactly two components $A$ and $A'$ such that $A \subseteq \jint{J}$ and $A' \subseteq \jext{J}$.
Since $A \subseteq A(j)$ but $j \not \in A(q)$, it must be the case that $q \not\in A$.
Since $A$ is nonempty, there exists an $r \in A \subseteq \jint{J}$ such that $r \neq q$.
Then $\set{q,r}\subseteq \jint{J}$, so $\abs{\jint{J}}\geq 2$ when $J$ is not minimal.

Consider the open subsets $q^\downarrow$ and $r^\downarrow$ of $\D$.
If we assume that $q$ and $r$ are both mixed for sake of contradiction, it must be the case that $q^\downarrow \cap \jint{J}=\{q\}$ and $r^\downarrow\cap \jint{J}=\{r\}$, or else $\jint{J}$ would contain a pure point.
Then $q$ and $r$ are both open points of $\jint{J}$ with the subspace topology.
Since $q$ and $r$ are both open points of $\jint{J}$ and $\D$ is $T_0$, $\set{q,r}$ is a discrete set, so $\jint{J}$ is not one connected component, a contradiction.

Hence, $\jint{J}$ must contain at least one pure point.
\end{proof}

Furthermore, every non-minimal Jordan curve contains at least one mixed point in its interior, or else the Jordan curve enclosing it would turn at a mixed point, which is forbidden by Definition 4.1(iii) of \cite{Khalimsky1990a}.
To see this, consider the following example:

\begin{ex}\label{ex:1mix}
Suppose $\set{p,q}\subseteq \jint{J}$ is a connected set of $\jint{J}$ comprising one open point $p$ and one closed point $q$.
Consider one of the mixed points $r\in A(p)\cap A(q) \subset \D$, as shown in Figure \ref{fig:4pts}.
The dashed lines of this figure connect points through which $J$ could pass.
Since $r \not\in \jint{J}$, $r \in J$ or else $\jint{J} \cup \jext{J}$ would be connected.
Let $A(r)\cap J=\set{r^-,r^+}$ be the two-point discrete subset guaranteed by Lemma 5.2(a) of \cite{Khalimsky1990a}.
Consider $r^-\in \set{r_1,r_2,r_3,r_4}$.
If $r^-,r^+\in\set{r_1,r_4}$, then $\set{r^-,r,r^+}$ is not connected, a contradiction.
Hence $r^-,r^+$ must be pure, so $\set{r^-,r^+}=\set{r_2,r_3}$.
Then $r^- \in A(r^+)$, so $\set{r^-,r,r^+}$ is not a COTS-arc, a contradiction.
Hence every non-minimal Jordan curve must also contain at least one mixed point.

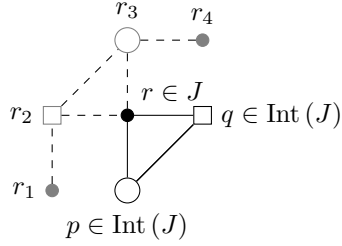
\begin{figure}
    \centering
    \begin{tikzpicture}
    \node[shape=circle, label=below:{$p\in\jint{J}$},draw] (p) at (0,0){};
    \node[shape=rectangle, label=right:{$q\in\jint{J}$},draw] (q) at (1,1){};
    \node[shape=circle,scale=.5,fill=black, label=above right:{$r\in J$},draw] (r) at (0,1){};
    \draw[] (p)--(r)--(q)--(p)--(q);
    
    \node[shape=rectangle, color=gray,label=left:{$r_2$}, draw] (r1) at (-1,1){};
    \node[shape=circle, color=gray,label=above:{$r_3$},draw] (r2) at (0,2){};
    \node[shape=circle,scale=.5,color=gray,fill=gray,label=above:{$r_4$},draw] (r3) at (1,2){};
    \node[shape=circle,scale=.5,color=gray,fill=gray,label=left:{$r_1$},draw] (r4) at (-1,0){};
    \draw[dashed] (r4)--(r1)--(r)--(r2)--(r3);
    \draw[dashed] (r1)--(r2);
    \end{tikzpicture}
    \caption{A 3-point connected subset of $\D$ and some neighbors, for use in Example \ref{ex:1mix}}
    \label{fig:4pts}
\end{figure}

\end{ex}

\begin{lem}\label{lem:nodd} 
Let $J$ be a non-minimal Jordan curve and $p \in \jint{J}$ pure such that $A(p) \cap J \neq \emptyset$.
If $\{j_1,\hdots,j_n\}$ is a connected component of $A(p) \cap J$, then $n$ is odd.
\label{lem:odd}
\end{lem}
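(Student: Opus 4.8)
The plan is to exploit the rigid local structure of $A(p)$ together with the fact that a COTS-arc (hence a Jordan curve) cannot turn at a mixed point. Since $p$ is pure, $A(p)$ is the minimal Jordan curve surrounding $p$ (Figures \ref{fig:pure} and \ref{fig:purec}): an $8$-point cycle whose points alternate between the four pure diagonal neighbors of $p$ and the four mixed horizontal/vertical neighbors of $p$. A connected component of $A(p)\cap J$ is therefore an arc of consecutive points in this cycle, and because the cycle alternates in parity, such an arc has odd length precisely when its two endpoints have the same parity. So the whole lemma reduces to showing that both endpoints of the component are pure.

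First I would record that every point of $A(p)$ lies in $J\cup\jint{J}$: any such point is adjacent to $p$, so if it is not on $J$ it lies in the same component of $\D-J$ as $p$, which is $\jint{J}$ (using $\D=\jext{J}\sqcup J\sqcup\jint{J}$). I would also use non-minimality here: if all of $A(p)$ lay on $J$, then each point of $A(p)$ would have its two cycle-neighbors as its only $J$-neighbors, forcing $A(p)$ to be a component of $J$ and hence $J=A(p)$, contradicting that $J$ is non-minimal. Thus at least one point of $A(p)$ lies in $\jint{J}$, so the component $\set{j_1,\dots,j_n}$ is a proper arc whose two flanking cycle-neighbors lie in $\jint{J}$.

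The heart of the argument is to show that a mixed point of $A(p)$ can never be an endpoint of such an arc. Let $m\in A(p)\cap J$ be mixed. As a point of the Jordan curve $J$, it has exactly two $J$-neighbors $m^-,m^+$, and $\set{m^-,m,m^+}$ is a COTS-arc; since a COTS-arc cannot turn at a mixed point (Definition 4.1(iii) of \cite{Khalimsky1990a}, as in Example \ref{ex:acute}), $m^-$ and $m^+$ must be the two pure neighbors of $m$ that are collinear through $m$. Now the four pure neighbors of $m$ split into two opposite (collinear) pairs: one pair contains $p$, whose opposite point lies at distance $2$ from $p$ and so is not in $A(p)$, while the other pair consists precisely of the two cycle-neighbors of $m$ inside $A(p)$. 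Because $p\in\jint{J}$ is not on $J$, the collinear pair realized by $J$ cannot be the one containing $p$; hence $\set{m^-,m^+}$ is exactly the pair of cycle-neighbors of $m$, and both of them lie on $J$. Therefore $m$ is flanked on both sides by points of $A(p)\cap J$ and cannot be an endpoint of its component.

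It follows that the endpoints $j_1$ and $j_n$ of the component are pure (a single-point component is itself pure, giving $n=1$). Since $A(p)$ alternates pure and mixed around the cycle, an arc beginning and ending at pure points reads $\textrm{pure},\textrm{mixed},\textrm{pure},\dots,\textrm{pure}$ and so contains an odd number of points, giving that $n$ is odd. I expect the main obstacle to be the bookkeeping in the previous paragraph, namely pinning down that the collinear $J$-pair through a mixed $m$ is forced to be its $A(p)$-cycle-neighbors rather than the pair through $p$; this is exactly where the hypothesis $p\in\jint{J}$ (so $p\notin J$) and the no-turning-at-mixed-points property do the essential work.
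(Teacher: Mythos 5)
Your proof is correct and follows essentially the same route as the paper's: both arguments rest on the alternating pure/mixed structure of the cycle $A(p)$ and the fact that $J$ cannot turn at a mixed point, which forces both endpoints of the component to be pure and hence $n$ to be odd. Your write-up is somewhat more explicit than the paper's — in particular, your analysis of the two collinear pairs through a mixed point (excluding the pair containing $p$ since $p \in \jint{J}$), and your use of non-minimality to rule out $A(p) \subseteq J$, spell out details the paper leaves implicit.
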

\begin{proof}
Let $J$ be a Jordan curve and $p \in \jint{J}$ pure such that $A(p) \cap J \neq \emptyset$.
Let $\{j_1,\hdots,j_n\}$ be a connected component of $A(p) \cap J$.
Since $p$ is pure (and, without loss of generality, closed), $j_1, \hdots, j_n$ alternate between mixed and open.
So if $n$ is even, either $j_1$ or $j_n$ is mixed.
Without loss of generality, suppose $j_n$ is mixed.
Then the point following $j_n$ in $J$ is not in $A(p)$,
so $J$ turns at a mixed point, a contradiction.

Furthermore, $j_1$ and $j_n$ must both be pure, or else $J$ would turn at a mixed point.
\end{proof}

\begin{lem}\label{lem:components} 
If $C$ is a proper subset of a Jordan curve $J$, then $C$ and $J-C$ have the same number of components.
\end{lem}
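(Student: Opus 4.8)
The plan is to reduce the statement to a purely combinatorial fact about cycles. First I would establish that the comparability (adjacency) structure of a Jordan curve $J$ is a single cycle. Since $J$ is connected and, by the characterization quoted from Proposition 5 of \cite{Slapal2006}, every point $x \in J$ has exactly two points of $J$ adjacent to it, the graph on $J$ whose edges join adjacent (comparable) points is a connected $2$-regular graph, hence a single cycle $j_0, j_1, \hdots, j_{2m-1}$ with indices read cyclically modulo $2m$ (here $2m=|J|$ is even by Proposition \ref{JC-even}). The key consequence is that two points of $J$ are adjacent in $\D$ \emph{if and only if} they are cyclically consecutive in this enumeration, since a point's only $J$-neighbors are its two cyclic neighbors.

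Next I would translate the word ``components'' into this combinatorial language. Because $J$ is a finite $T_0$ space, any subset inherits the subspace topology, and by the equivalence of order-connectedness, connectedness, and path-connectedness (Proposition 1.2.4 of \cite{Barmak2012}), the components of any $S \subseteq J$ are exactly the equivalence classes of $S$ under the relation ``joined by a fence of consecutive adjacent points.'' By the previous paragraph, within $J$ adjacency means cyclic consecutiveness, so the components of $S$ are precisely the maximal runs of cyclically consecutive indices contained in $S$. Thus the lemma becomes: in the cyclic index set $\zz/2m\zz$, a nonempty proper subset $C$ and its complement $J-C$ have the same number of maximal runs of consecutive indices. (I take $C$ nonempty and proper, so that both $C$ and $J-C$ are nonempty; the empty case is degenerate.)

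The combinatorial core is an alternation/parity argument. I would count the ``transitions'' around the cycle, i.e.\ the cyclically consecutive pairs $(j_i, j_{i+1})$ with exactly one of the two lying in $C$. Traversing the cycle once and returning to the start, membership in $C$ must change an even number of times, and these transitions alternate between ``entering $C$'' (passing from $J-C$ into $C$) and ``leaving $C$.'' Each maximal run of $C$ begins with exactly one entering transition, and each maximal run of $J-C$ begins with exactly one leaving transition; since both $C$ and $J-C$ are nonempty, the entering and leaving transitions strictly interleave, so there are equally many of each. Hence the number of runs of $C$ equals the number of runs of $J-C$, which by the translation above is exactly the asserted equality of component counts.

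I expect the main obstacle to be the bookkeeping that identifies topological components with cyclic runs: one must verify that two points lying in distinct cyclic runs of $C$ can never be adjacent in $\D$. This is precisely what the $2$-regularity of $J$ delivers, since points of different runs are separated by at least one point of $J-C$ and therefore are not cyclically consecutive, hence not adjacent, hence in different components. Once the cycle structure and this identification are in hand, the transition count is routine. A clean alternative to the transition count is induction on the number of runs, peeling off one run of $C$ together with an adjacent run of $J-C$ at each step, but the alternation argument is the most transparent.
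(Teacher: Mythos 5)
Your proposal is correct and takes essentially the same route as the paper's proof: both enumerate $J$ as a cycle (each point having exactly two $J$-neighbors), identify the components of $C$ and of $J-C$ with maximal runs of cyclically consecutive points, and exploit the fact that these runs must alternate around the cycle. The only difference is bookkeeping --- you count entering/leaving transitions directly, while the paper arranges the components alternately as $C_1, D_1, \ldots, C_n, D_n$ and derives a contradiction from a leftover component of $J-C$; your direct count, together with your explicit verification that runs coincide with topological components, is a cleaner write-up of the same idea.
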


\begin{proof}
Let $C \subset J$ with components $C_1,C_2,\hdots,C_n$.
Let $D:=J-C$ with components $D_1,D_2,\hdots, D_m$ for some $m\neq n$.
Suppose without loss of generality that $m > n$ (if $m<n$, we may swap the sets $C$ and $D$).
Enumerate the points of $J$ clockwise as $J=\set{j_0,j_1,\hdots,j_{|J|-1}}$ such that $A(j_i)\cap J = \set{j_{(i-1)\bmod{|J|}},j_{(i+1)\bmod{|J|}}}$.
Consider $C_1$, and rotate the enumeration of $J$ such that $j_0$ is the counter-clockwise-most point of $C_1$ and $C_1$ can be written as $\set{j_0,j_1,\hdots,j_{|C_1|-1}}$.
Since $C_1$ cannot be adjacent to another component of $C$, there exists a component $D_1 \subseteq D$ such that $j_{|C_1|}\in D_1$.
Repeating this process gives an ordering such that 
\[C_1 \cup D_1 \cup C_2 \cup D_2 \cup \hdots \cup C_n \cup D_n\]
is a connected subset of $J$.
Take $n':=\sum_{i=1}^n\abs{C_i\cup D_i}$.
Then $j_{n'} \in D_j$ for some $n<j\leq m$.
so $D_{n} \cup D_j$ is a connected subset of $J$, a contradiction.
\end{proof}

\begin{prop}\label{prop:jintk} 
Let $J \subseteq \D$ be a Jordan curve.
If $K \subseteq J\cup\jint{J}$ is a Jordan curve, then $\jint{K} \subseteq \jint{J}$.
\end{prop}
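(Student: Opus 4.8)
The plan is to argue by contradiction using the Jordan curve theorem of \cite{Khalimsky1990a} applied to \emph{both} $J$ and $K$. First I would observe that $K$ does not meet the border of $\D$: since $K \subseteq J \cup \jint{J}$ and neither $J$ nor $\jint{J}$ touches the border, $K$ avoids it entirely. Hence the Jordan curve theorem applies to $K$, giving the decomposition $\D = \jext{K} \sqcup K \sqcup \jint{K}$, with $\jext{K}$ and $\jint{K}$ the two connected components of $\D - K$, and with the border contained in $\jext{K}$.

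The first key step is to show that $\jext{J} \subseteq \jext{K}$. Because $K \subseteq J \cup \jint{J}$ and $\jext{J}$ is disjoint from both $J$ and $\jint{J}$, we have $\jext{J} \cap K = \emptyset$, so $\jext{J} \subseteq \D - K = \jint{K} \sqcup \jext{K}$. Since $\jext{J}$ is connected, it lies entirely in $\jint{K}$ or entirely in $\jext{K}$. The border of $\D$ lies in both $\jext{J}$ and $\jext{K}$ (by the definition of the exterior in each case) and is nonempty, so $\jext{J}$ must be the piece meeting $\jext{K}$; hence $\jext{J} \subseteq \jext{K}$, and in particular $\jint{K} \cap \jext{J} = \emptyset$. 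Combined with $\D = \jext{J} \sqcup J \sqcup \jint{J}$, this already gives $\jint{K} \subseteq J \cup \jint{J}$.

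It remains to rule out $\jint{K} \cap J \neq \emptyset$, which I expect to be the crux. Suppose $x \in \jint{K} \cap J$. Applying Lemma 5.2(a) of \cite{Khalimsky1990a} to the curve $J$ at the point $x$, the set $A(x) - J$ has a component contained in $\jext{J}$; choose a point $y$ in it, so $y \in A(x)$ and $y \in \jext{J}$. By the previous step $y \in \jext{K}$. But then $x \in \jint{K}$ and $y \in \jext{K}$ are both points of $\D - K$ that are adjacent in $\D$, which forces them into the same component of $\D - K$ --- contradicting that $\jint{K}$ and $\jext{K}$ are distinct components. Therefore $\jint{K} \cap J = \emptyset$, and together with $\jint{K} \subseteq J \cup \jint{J}$ this yields $\jint{K} \subseteq \jint{J}$.

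The main obstacle is this last step: translating ``an interior point of $K$ lying on $J$'' into an impossibility. The decisive observation is that a point of $J$ always has a neighbor in $\jext{J}$ (Lemma 5.2(a)), and once we know $\jext{J} \subseteq \jext{K}$ this neighbor witnesses an adjacency across the two components of $\D - K$, which cannot happen. One subtlety to verify along the way is that the border is genuinely connected and hence wholly contained in the exterior of each of $J$ and $K$; this is exactly what pins down $\jext{J} \subseteq \jext{K}$ rather than $\jext{J} \subseteq \jint{K}$.
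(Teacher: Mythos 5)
Your proof is correct in the setting where the cited Jordan curve theorem actually applies, i.e., when $J$ (and hence $K$) is disjoint from the border of $\D$, and in that setting it is essentially the paper's own argument in different packaging. The paper also works by contradiction: given $k \in \jint{K}-\jint{J}$, so that $k \in J \cup \jext{J}$, it uses Lemma 5.2(a) of \cite{Khalimsky1990a} to produce a neighbor of $k$ in $\jext{J}$ and then concatenates a path inside $\jext{J}$ out to a border point; since $K \subseteq J \cup \jint{J}$ is disjoint from $\jext{J}$, this path never meets $K$, contradicting the fact that every path from a point of $\jint{K}$ to the border must cross $K$. Your two steps --- first $\jext{J} \subseteq \jext{K}$ (from connectedness of $\jext{J}$ and the border lying in both exteriors), then the impossibility of adjacent points of $\D - K$ lying in the two distinct components --- are precisely the component-language version of that path construction; the inputs (the two-component decomposition, the exterior's connection to the border, Lemma 5.2(a)) are identical. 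What your version buys is the explicitly isolated containment $\jext{J}\subseteq\jext{K}$, which the paper never states.

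The one substantive divergence is at the border. The paper's proof includes the case $k \in J \cap B$, i.e., it does not assume $J$ misses the border of $\D$, whereas you assert outright that neither $J$ nor $\jint{J}$ touches it. Your assumption matches the formal definition of interior (Khalimsky's theorem is stated for curves missing the border), but the paper later applies this proposition to curves that do meet the border: in Theorem \ref{big}, the input to Algorithm \ref{alg:shrink} can be the adjusted border $B$, whose interior the paper treats as meaningful, and for such $J$ both your first step and the application of the Jordan curve theorem to $K$ (which may then also meet the border) break down. To carry all the weight the paper puts on the proposition, you would need to dispose of that case separately, as the paper does in one (admittedly terse) line.
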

\begin{proof}
Suppose for sake of contradiction there there exists a $k \in \jint{K}$ such that $k \not\in \jint{J}$.
Then $k \in J \cup \jext{J}$.
Because $k \in \jint{K}$, every path from $k$ to the boundary $B$ of $\D$ must pass through $K$.
If $k \in J\cap B$ (i.e., $J$ meets the border of $\D$), then $K \not\subseteq \D$, a contradiction.
Otherwise, $k$ is adjacent to some point $p_1 \in \jext{J}$.
Because $\jext{J}$ meets the border $B\subset \D$, there exists a connected component of $\jext{J}$ containing both $p_1$ and some $b \in B\cap\jext{J}$; call this component $A$.
Since $A$ is connected and finite and therefore path-connected, there exists a path $\alpha:=\{k,p_1,p_2,\hdots,p_{n-1},b\} \subset \{k\}\cup \jext{J}$ from $k$ to $b$ such that each of the $p_i$ are in $\jext{J}$.
Since $K \subset J \cup \jint{J}$ and $(J \cup \jint{J})\cap\jext{J}=\emptyset$, however, no point of $\alpha$ can also be in $K$, contradicting the assumption that every path from $k$ to $B$ runs through $K$.
\end{proof}

\begin{lem}\label{lem:maxcon}
Let $J \subseteq \mathcal{D}$ be a non-minimal Jordan curve.
Fix any pure point $p \in \jint{J}$, and choose $q \in \jint{J}$ such that $d_{\jint{J}}(p,q)$ is maximal.
Then:
\begin{enumerate}
\item[(a)] $A(q)\cap J$ is connected,
\item[(b)] $|A(q)\cap J| \geq 3$ if $q$ is pure, and
\item[(c)] $|A(q)\cap J| \geq 3$ if $q$ is mixed and there are no pure points of maximal distance. 
\end{enumerate}
\end{lem}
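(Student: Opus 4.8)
The plan is to study the minimal Jordan curve $A(q)$ surrounding $q$, split its points according to whether they lie in $\jint{J}$ or on $J$, and use the maximality of the distance to force at least three of them onto $J$. To begin, I would record that because $J$ separates $\D$ into $\jint{J}$ and $\jext{J}$, no neighbor of an interior point can lie in $\jext{J}$; otherwise a single edge would join the two components of the complement of $J$. Hence $A(q)=I\sqcup B$, where $I:=A(q)\cap\jint{J}$ and $B:=A(q)\cap J$. I would also note that $A(q)$ is itself a minimal Jordan curve---the $8$-cycle of Figures \ref{fig:pure} and \ref{fig:purec} when $q$ is pure, or the $4$-cycle of Figure \ref{fig:amixed} when $q$ is mixed. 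A shortest arc in $\jint{J}$ from $p$ to $q$ reaches $q$ through a neighbor lying in $\jint{J}$, so $I\neq\emptyset$; and $B\neq A(q)$, since $A(q)\subseteq J$ would force each point of $A(q)$ to have its two $J$-neighbors along the cycle, making $J=A(q)$ and contradicting that $J$ is non-minimal. Thus $B$ is a proper subset of the Jordan curve $A(q)$.

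For part (a) I would invoke Lemma \ref{lem:components} with the Jordan curve $A(q)$ and its proper subset $B$: it yields that $B$ and $A(q)-B=I$ have the same number of connected components. It therefore suffices to show that $I$ is a single arc of the cycle $A(q)$. This is where maximality does the work. Writing $d=\dist{\jint{J}}{p}{q}$, a shortest arc $p=c_0,\dots,c_d=q$ arrives at $q$ from a neighbor $c_{d-1}$ with $\dist{\jint{J}}{p}{c_{d-1}}=d-1$ by Proposition \ref{cont_dist}. I would argue that the neighbors of $q$ lying outward from this incoming direction must be on $J$: were such a neighbor $w$ in $\jint{J}$, I would show that every $\jint{J}$-path from $p$ to $w$ is forced through $q$, giving $\dist{\jint{J}}{p}{w}=d+1>d$ and contradicting the maximality of $d$. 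Consequently the interior neighbors occupy one contiguous arc of $A(q)$ (the side facing $p$) and $B$ occupies the complementary arc, so both are connected; in particular $B\neq\emptyset$.

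For part (b), $q$ is pure and, by (a), $A(q)\cap J$ is connected, hence a single connected component of $A(q)\cap J$; applying Lemma \ref{lem:odd} with center $q$ then forces $|B|$ to be odd with pure endpoints. Since $I\neq\emptyset$ we have $|B|\in\{1,3,5,7\}$, and it remains only to exclude $|B|=1$: a lone curve-neighbor would leave $q$ flanked by interior neighbors on both outward sides, and the same extension argument produces an interior neighbor at distance $d+1$, a contradiction. Hence $|A(q)\cap J|\geq 3$. For part (c), $q$ is mixed and all four of its neighbors are pure. The hypothesis that no pure point attains the maximal distance upgrades each interior neighbor of $q$ to distance exactly $d-1$ (it is at most $d-1$ by the hypothesis and at least $d-1$ by adjacency and the triangle inequality of Proposition \ref{metric}). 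I would then check that at most one neighbor can be interior: two interior neighbors of the $4$-cycle are either antipodal or adjacent, and in either configuration the outward extension argument again yields a point of distance $d+1$. Thus three of the four neighbors lie on $J$, and their connectedness is already supplied by (a).

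The principal obstacle is the ``outward neighbor lies on $J$'' step used throughout, namely making rigorous that an interior neighbor on the far side of $q$ from $p$ is strictly farther than $q$. The subtlety is that appending one step to a geodesic only gives the bound $\dist{\jint{J}}{p}{w}\le d+1$, so the real work is to rule out a shorter detour to $w$ that avoids $q$. I would handle this by a careful local analysis of the $8$- or $4$-cycle $A(q)$ in the Khalimsky plane, using the coordinate control of Proposition \ref{n-ball} together with Proposition \ref{sub_dist} (which compares the $\jint{J}$-distance to the $\D$-distance) and the separation property of $J$ to show that any such detour must re-enter through a curve-neighbor and hence cannot exist. This is the one place where the specific geometry of $\D$, rather than purely combinatorial cycle arguments, is essential, and where care is needed to avoid circularity between parts (a), (b), and (c).
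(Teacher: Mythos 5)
There is a genuine gap, and you have correctly located it yourself: the ``outward neighbor lies on $J$'' claim. Everything in your proposal---non-emptiness and contiguity of $A(q)\cap J$ in (a), exclusion of $|A(q)\cap J|=1$ in (b), exclusion of two interior neighbors in (c)---is funneled through the single assertion that an interior neighbor $w$ of $q$ on the far side from $p$ would satisfy $\dist{\jint{J}}{p}{w}=d+1$, i.e.\ that every path in $\jint{J}$ from $p$ to $w$ passes through $q$. But this assertion cannot be established by ``careful local analysis of the $8$- or $4$-cycle $A(q)$'' together with Proposition \ref{n-ball} and Proposition \ref{sub_dist}: those propositions only bound coordinates against $\D$-distance and compare $d_{\jint{J}}$ to $d_{\D}$; they say nothing about how a geodesic in $\jint{J}$ may wind globally through the interior and re-enter $A(q)$ from a direction other than the one facing $p$. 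Locally, a detour from $p$ to $w$ avoiding $q$ is indistinguishable from the configurations you want to permit, so no analysis confined to $A(q)$ can rule it out. This is exactly why the paper's proof is global: for (a) it takes two interior neighbors $a_{n-1}$ and $b$ of $q$ in different components of $A(q)-J$, builds a Jordan curve $K$ out of an arc $\cots{j_0,j_i}\subset J$ together with $\{q\}$, and uses the decomposition $\jint{J}=\jint{K}\sqcup\{q\}\sqcup(\jext{K}\cap\jint{J})$ to contradict the existence of a path from $a_{n-1}$ to $b$ avoiding $q$; for (b) it extends a diagonal COTS-arc $C$ from $q$ through $\jint{J}$ until it nears $J$, forms a separating Jordan curve from $C$ and an arc of $J$, and derives a contradiction from Proposition \ref{pure_unique} (uniqueness of shortest COTS-arcs between pure points on a common diagonal). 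The fact that the paper needs this machinery---in particular the specifically Khalimsky-geometric Proposition \ref{pure_unique}---to dispose of the cases $|A(q)\cap J|\in\{0,1\}$ (its Figures \ref{fig:0} and \ref{fig:1}) is strong evidence that your local claim is not available as a shortcut.

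A secondary problem: your part (a) concludes ``in particular $B\neq\emptyset$,'' which is stronger than what (a) asserts. The paper treats $A(q)\cap J=\emptyset$ as vacuously connected in (a), and non-emptiness is only established in (b), for $q$ pure, by the hard argument above; for $q$ mixed with a pure point also attaining the maximal distance, the lemma makes no claim at all about $|A(q)\cap J|$, so a proof of (a) that forces $B\neq\emptyset$ in that generality is attempting to prove something the paper does not claim and may well be false. Your reduction of (a) via Lemma \ref{lem:components}, your parity bookkeeping via Lemma \ref{lem:odd}, and your use of Proposition \ref{cont_dist} for the incoming geodesic are all sound and agree with the paper; what is missing is the entire separating-curve mechanism that makes the maximality hypothesis bite.
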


The proof of this result was surprisingly elusive.
Let us consider its counterpart in the Hausdorff setting.
Let $J \subset \rr^2$ be a simple closed loop in the plane that satisfies the Jordan curve theorem.
Fix any point $x \in \jint{J} \subset \rr^2$.
Let $n = \sup_{y\in\jint{J}}\set{d(x,y)}$.
Our intuition tells us that if there exists a $y \in \jint{J}$ and an $\varepsilon>0$ such that $D(y,\varepsilon)\cap J = \emptyset$, then $d(x,y)\neq n$.
Since that is not the case in $T_0$ spaces, we present the following.

\begin{proof}[Proof of Lemma \ref{lem:maxcon}]
For the sake of brevity and by abuse of notation, we will drop the subscript from $d_{\jint{J}}(p,p')$ and write $d_{p'}$, where $p$ is the fixed pure point and $p'\in\jint{J}$ is any other point of $\jint{J}$.

\begin{itemize}
\item[(a)] $A(q)\cap J$ is connected:

If $A(q)\cap J = \emptyset$, then it is vacuously connected.
Otherwise, if $d_q=n$ is maximal, and since $\jint{J}$ is arcwise-connected, there exists a (not necessarily unique) shortest COTS-arc $\alpha = \{p,a_1,a_2,\hdots,a_{n-1},q\} \subseteq \D$ from $p$ to $q$.
Suppose $A(q) \cap J$ is nonempty and disconnected.
Then each of $A(q)  -  J$ and $A(q) \cap J$ have the same number of components, which is at least two each, by applying Lemma \ref{lem:components} to the Jordan curve $A(q)$.
Because $\alpha \subseteq \jint{J}$, $a_{n-1} \in A(q)\cap\jint{J}=A(q)-J$.
Let $b$ be a point in a component of $A(q)  -  J$ through which $\alpha$ does not run.
If the shortest arc from $p$ to $b$ runs through $q$, this contradicts the maximality of the distance from $p$ to $q$ by Proposition \ref{cont_dist}.

If the shortest path from $p$ to $b$ does not run through $q$, call that path $\beta'$.
We can construct a second arc, $\beta:= \beta' \cup \{q\}$, that runs from $p$ to $q$ through $b$. 
Note that $\{p,q\} \subseteq \alpha \cap \beta$, 
and $\alpha \cup \beta - \{q\}$ is a COTS-path from $b$ to $a_{n-1}$ that does not run through $\{q\}$ by Proposition \ref{prop:concat-paths}.
For example, we may consider Figure \ref{fig:AqJdis}, where the blue line represents $J \cap A(q)$, and the red line will be defined in the next paragraph.
The points $a_{n-1}$ and $b$ are both of distance $n-1$ from $p$, and are each in a different component of $A(q)-J$.

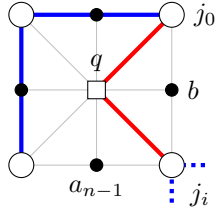
\begin{figure}
    \centering
      \begin{tikzpicture}
\foreach \x in {1,2,3}
	{
		\draw[lightgray, very thin] (\x,1)--(\x,3);
		\draw[lightgray, very thin] (1,\x)--(3,\x);
	}
		\draw[lightgray, very thin] (1,1)--(3,3);
		\draw[lightgray, very thin] (1,3)--(3,1);

	\draw[red, ultra thick] (3,3)--(2,2)--(3,1);
	\draw[blue, ultra thick] (1,1)--(1,2)--(1,3)--(2,3)--(3,3);
	\draw[blue, ultra thick, dotted] (3,0.5)--(3,1)--(3.5,1);

	\draw (2,2) node [shape=rectangle,fill=white,draw,label={$q$}]{};

	\draw (1,1) node [shape=circle,fill=white,draw]{};
	\draw (1,3) node [shape=circle,fill=white,draw]{};
	\draw (3,1) node [shape=circle,fill=white,draw,label=below right:{$j_i$}]{};
	\draw (3,3) node [shape=circle,fill=white,draw,label=right:{$j_0$}]{};

		\draw (1,2) node [shape=circle,fill=black,scale=.5,draw]{};
		\draw (2,1) node [shape=circle,fill=black,scale=.5, label=below:{$a_{n-1}$},draw]{};
		\draw (2,3) node [shape=circle,fill=black,scale=.5,draw]{};
		\draw (3,2) node [shape=circle,fill=black,scale=.5,label=right:{$b$},draw]{};	
\end{tikzpicture}
    \caption{An example of $A(q)\cap J$ being disconnected for $q$ of maximal distance}
    \label{fig:AqJdis}
\end{figure}

Next, we will construct a Jordan curve $K \subset J\cup\jint{J}$ such that $b \in \jint{K}$ and $a_{n-1} \in \jext{K}\cap\jint{J}$.
For example, in Figure \ref{fig:AqJdis}, the new segment of $K$ is depicted by the red line.
Fix a clockwise ordering of $J$.
Let $j_0$ be the clockwise-most point of some component of $A(q)\cap J$.
If $q$ is pure, $j_0$ is pure by Lemma \ref{lem:nodd}.
If $q$ is mixed, $j_0$ is pure because $A(q)$ comprises exclusively pure points.
Since $A(q)\cap J$ is disconnected, there exists a minimal index $i$ such that $j_i \in A(q) \cap J$, and that $j_0$ and $j_i$ are in different components of $A(q)\cap J$.
By the minimality of $i$, $j_i$ is also pure.
Consider $K:=\{j_0,\hdots,j_i\} \cup \{q\}$; we will show that $K$ is a Jordan curve.
Because $\{j_0,\hdots,j_i\}$ is a connected subset of $J$, it is a COTS-arc $\cots{j_0,j_i}$, so $|A(j)\cap \cots{j_0,j_i}|=2$ for $j \in \cots{j_1,j_{i-1}}$, and $|A(j)\cap \cots{j_0,j_i}|=1$ for $j \in \{j_0,j_i\}$.
Consequently, $|A(j)\cap K|=2$ for $j \in \cots{j_1,j_{i-1}}$.
It remains to be shown that $|A(j)\cap K|=2$ for $j \in \{q,j_0,j_i\}$.
In $J$, $A(j_0)\cap J = \set{j_{|J|-1},j_1}$.
We know that $j_{|J|-1}\not\in A(j_i)\cap J$, or else $\set{j_i,j_{|J|-1},j_0}$ would be a connected set of $J$.
Similarly, $j_1 \not\in A(j_i)\cap J$.
Hence, $j_{|J|-1}\not\in A(j_0)\cap K$, so $\abs{A(j_0)\cap K}=\abs{\set{q,j_1}}=2$.
By a symmetrical argument, $\abs{A(j_i)\cap K} = \abs{\set{j_{i-1},q}}=2$.
Lastly, by the minimality of $i$, $|A(q) \cap K| = |\{j_0,j_i\}|=2$, so $K$ is a Jordan curve.

Since $K$ is a Jordan curve, the arc $\{j_0,q,j_i\}\subset K$ determines three components of $A(q)$, each belonging to one of $\jext{K}$, $K$, and $\jint{K}$.
One of these components must contain $b$.
Notice that
\[\bigcup \begin{array}{l}
\set{\textrm{the component of $A(q)\cap J$ containing }j_0}\\
\set{\textrm{the component of $A(q) -J$ belonging to }\jint{K}}\\
\set{\textrm{the component of $A(q)\cap J$ containing }j_i}
\end{array}\]
is a connected subset of the Jordan curve $A(q)$.
To see that there exists a unique component of ($A(q)-J)\cap \jint{K}$, suppose for sake of contradiction that there exists a decomposition of $(A(q)-J)\cap \jint{K}$ into distinct nonempty connected components $K_1,\hdots, K_\ell$.
By Lemma 5.2(a) of \cite{Khalimsky1990a}, $A(q)\cap\jint{K}$ is connected.
Since $K_1 \cup \hdots \cup K_\ell \subset (A(q)-J)\cap\jint{K}$ is disconnected, there exists a subset $J'\subset J$ such that $K_1 \cup \hdots \cup K_\ell \cup J'$ is a connected subset of $A(q)\cap \jint{K}$.
This implies that there exists a point $j \in J \cap \jint{K} \subset J \cap \jint{J}=\emptyset$, a contradiction.
By performing this construction for every choice of $j_0$, we will eventually arrive at a choice of $j_0$ such that $b \in (A(q)-J)\cap \jint{K}$.

Since $K$ is a Jordan curve,
\begin{eqnarray*}
\D &=& \jint{K} \sqcup K \sqcup \jext{K}\\
\D \cap \jint{J} &=& \left( \jint{K} \sqcup K \sqcup \jext{K} \right) \cap \jint{J}\\
\jint{J} &=& (\jint{K} \cap \jint{J}) \sqcup (K \cap \jint{J}) \sqcup (\jext{K} \cap \jint{J})\\
 &=& \jint{K} \sqcup \{q\} \sqcup (\jext{K} \cap \jint{J}).
\end{eqnarray*}

Since $p \neq q$, either $p \in \jint{K}$ or $p \in \jext{K}\cap \jint{J}$.
Recall that by construction, $b \in \jint{K}$ and $a_{n-1} \in \jext{K} \cap \jint{J}$.
Since $a_{n-1}$ and $b$ are in different components of $\jint{J}$, there is no path from $a_{n-1}$ to $b$ that does not run through $q$.
This contradicts that $\alpha \cup \beta - \{q\}$ gives a path from $a_{n-1}$ to $b$.
Hence, if $q$ is of maximal distance from $p$ in $\jint{J}$ and $A(q)\cap J \neq \emptyset$, then $A(q) \cap J$ is connected.

\item[(b)] $|A(q)\cap J| \geq 3$ if $q$ is pure:

Suppose $q$ is pure.
By part (a), $A(q)\cap J$ is one connected component, and by Lemma \ref{lem:nodd}, $|A(q)\cap J|$, is odd, so we only need to prove $|A(q)\cap J| \not\in \{0,1\}$.
If $|A(q) \cap J| \leq 1$ and $q$ is pure, $q$ is one of either Figure \ref{fig:1} or \ref{fig:0}, where the blue line highlights part of $J$ (only in Figure \ref{fig:1}), and the red line highlights part of a new Jordan curve $K$ that will divide $\jint{J}$ into two disjoint components.
If $d(p,q)=n$, the pure points of $A(q)\cap\jint{J}$ must be of distances $n-1$ or $n+1$ by Proposition \ref{prop:pure_dist}.
If any of them are of distance $n+1$, then $q$ is not of maximal distance, a contradiction.
Assume that all pure points in $A(q) \cap \jint{J}$ are of distance $n-1$.

\begin{figure}
  \begin{tikzpicture}
\foreach \x in {1,2,3}
	{
		\draw[lightgray, thin] (\x,1)--(\x,3);
		\draw[lightgray, thin] (1,\x)--(3,\x);
	}
		\draw[lightgray, thin] (1,1)--(3,3);
		\draw[lightgray, thin] (1,3)--(3,1);

	\draw[dotted, blue, ultra thick] (.5,3)--(1,3)--(1,3.5);	
	\draw[dotted, blue, ultra thick] (.5,2.5)--(1,3)--(1.5,3.5);

	\draw[red, ultra thick] (2,2)--(3,1);
	\draw[dotted, red, ultra thick] (3,1)--(3.5,.5);

	\draw (2,2) node [shape=rectangle,fill=white,draw,label={$q$}]{};

	\draw (1,1) node [shape=circle,fill=white,draw,label=left:{$e$}]{};
	\draw (1,3) node [shape=circle,fill=white,draw,label=below right:{$f$}]{};
	\draw (3,1) node [shape=circle,fill=white,draw,label=right:{$c$}]{};
	\draw (3,3) node [shape=circle,fill=white,draw,label=right:{$d$}]{};

		\draw (1,2) node [shape=circle,fill=black,scale=.5,draw]{};
		\draw (2,1) node [shape=circle,fill=black,scale=.5,draw]{};
		\draw (2,3) node [shape=circle,fill=black,scale=.5,draw]{};
		\draw (3,2) node [shape=circle,fill=black,scale=.5,draw]{};	
\end{tikzpicture}
\caption{$|A(q) \cap J|=1$ and $q$ is pure}
\label{fig:1}
\end{figure}%
\begin{figure}
    \centering
\begin{tikzpicture}
\foreach \x in {1,2,3}
	{
		\draw[lightgray, thin] (\x,1)--(\x,3);
		\draw[lightgray, thin] (1,\x)--(3,\x);
	}
		\draw[lightgray, thin] (1,1)--(3,3);
		\draw[lightgray, thin] (1,3)--(3,1);

	\draw[red, ultra thick] (1,3)--(3,1);
	\draw[dotted, red, ultra thick] (.5,3.5)--(3.5,.5);

	\draw (2,2) node [shape=rectangle,fill=white,draw,label={$q$}]{};

	\draw (1,1) node [shape=circle,fill=white,draw,label=left:{$e$}]{};
	\draw (1,3) node [shape=circle,fill=white,draw,label=left:{$f$}]{};
	\draw (3,1) node [shape=circle,fill=white,draw,label=right:{$c$}]{};
	\draw (3,3) node [shape=circle,fill=white,draw,label=right:{$d$}]{};

		\draw (1,2) node [shape=circle,fill=black,scale=.5,draw]{};
		\draw (2,1) node [shape=circle,fill=black,scale=.5,draw]{};
		\draw (2,3) node [shape=circle,fill=black,scale=.5,draw]{};
		\draw (3,2) node [shape=circle,fill=black,scale=.5,draw]{};	
\end{tikzpicture}
\caption{$|A(q) \cap J|=0$ and $q$ is pure}
\label{fig:0}
\end{figure}
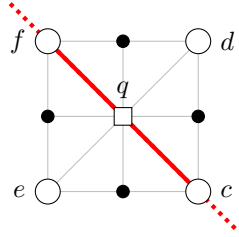  

Label the four pure points of $A(q)$ as  $c,d,e$, and $f$ such that $c$ and $f$ are on the same diagonal, and $d$ and $e$ are on the same diagonal.
(If $|A(q)\cap J|=1$ as in Figure \ref{fig:1}, then $f=A(q)\cap J$.)
Consider a COTS-arc $C=\{c_{-m_1}, \hdots, c_{-1},q,c_1,\hdots,c_{m_2}\}$ extending through $\jint{J}$ whose length is minimal such that $A(J)\cap C = \{c_{-m_1},c_{m_2}\}$.
In the case of $|A(q)\cap J|=1$, the construction of $C$ starts at $q=c_0=c_{-1}=\hdots = c_{-m_1}$.
In the case of $|A(q)\cap J|=0$, $c_{-1}=f$ and $c_1=c$.
Enumerate the points of $J$ as $\{j_0,j_1,\hdots,j_{|J|-1}\}$ such that $j_0$ is the clockwise-most point of $A(c_{-m_1})\cap J$.
If $|A(q)\cap J|=1$, $j_0=f$.
Choose $m'\in \set{1,\hdots,|J|-1}$ minimal such that $|A(j_{m'})\cap C|>0$.
Clearly, $C$ and $\set{j_0,j_1,\hdots,j_{m'}}\subset J$ are each COTS-arcs whose endpoints are adjacent.
By Proposition \ref{prop:path-arc}, $C \cup\{j_1,j_2,\hdots,j_{m'}\}$ contains a COTS-arc $C'$ as a subset that starts at $j_1$ and ends at $q$.
Note the omission of $j_0$ from the construction of $C'$.
Since $C'$ is a COTS-arc, $|A(c) \cap C'|=1$ for $c\in\set{j_1,c_{-m_1}}$, and $|A(c)\cap C'|=2$ for $c \in \cots{j_1,c_{-m_1}}$.
Take $K:= C' \cup \{j_0\}$.
Notice that $A(j_0) \cap K = \{j_1,c_{-m_1}\}$, $A(c_{-m_1})\cap K = \{j_0,c_{-m_1+1}\}$ (or $\{j_0,c_1\}$ when $|A(q)\cap J|=1$), and $A(j_1) \cap K = \{j_0,j_2\}$, so $K$ is a Jordan curve by Lemma 5.2(a) of \cite{Khalimsky1990a}.
Since $K$ is a Jordan curve, it has an interior, $\jint{K}\subset \jint{J}$ by Proposition \ref{prop:jintk}.
We consider three cases: $p \in \jint{K}$, $p \in C$, and $p \in \jext{K}\cap\jint{J}$.

By Lemma 5.2(a) of \cite{Khalimsky1990a}, we know that $A(q)-K$ has exactly two components; let $d \in \jint{K}$ and $e \in \jext{K} \cap \jint{J}$.
Because $d_d=d_e=n-1$, let $D=\{p,d_1,\hdots,d_{n-1},q\}$\footnote{By abuse of notation, $d_i$ here is a point of $D$ for $1\leq i \leq n-1$, and not $d_{\jint{J}}\paren{p,i}$.} and $E = \{p,e_1,\hdots,e_{n-1},q\}$ be COTS-arcs in $\jint{J}$ from $p$ to $q$ such that $d_{n-1}=d$ and $e_{n-1}=e$.
If $p \in \jint{K}$, then the COTS-arc $E$ must intersect the COTS-arc $C$ at some pure point $c_\epsilon = e_{\epsilon'} \in C \cap E$ to traverse from $\jint{K}$ to $\jext{K}\cap\jint{J}$, where $1\leq \epsilon,\epsilon' < n-1$.
Consider the two COTS-arcs $\{c_\epsilon,c_{\epsilon-1},\hdots,c_1,q\}\subseteq C$ and $\{e_{\epsilon'},e_{\epsilon'+1},e_{\epsilon'+2},\hdots,e_{n-1},q\} \subseteq E$.
Each of these COTS-arcs have the same start and end points, however, they are distinct since $e_{n-1}=e \not\in C$.
This contradicts Proposition \ref{pure_unique}, which states that the shortest COTS-arc between two pure points on the same diagonal is unique.
By a similar argument, if $p \in \jext{K}\cap\jint{J}$, we can construct two distinct shortest COTS-arcs between pure points on the same diagonal, again contradicting Proposition \ref{pure_unique}.
Lastly, if $p \in C$, then $D$ and $E$ are distinct shortest COTS-arcs between two pure points on the same diagonal, a contradiction.
Hence $A(q)\cap J \geq 3$ if $q$ is pure.

\bigskip

\item[(c)] $|A(q)\cap J| \geq 3$ if $q$ is mixed and there are no pure points of maximal distance:

If $q$ is mixed and $|A(q)\cap J| < 3$, then $A(q)\cap \jint{J}$ contains at least one open point $o$ and one closed point $c$.
By Proposition \ref{prop:pure_dist}, $d_o\neq d_c$.
Since there are no pure points of maximal distance, $d_o<n$ and $d_c < n$.
Then one of either $o$ or $c$ must be of distance $n-1$ and the other must be of distance $n-2$.
Then $q$ is adjacent to a point of distance $n-2$, so $d(p,q)=n-1 < n$, a contradiction.
Hence $|A(q)\cap J| \geq 3$.

\end{itemize}
\end{proof}

Let us examine an application of this lemma to a Jordan curve:

\begin{ex}
Consider Figure \ref{fig:maxconapp}.
Highlighted in blue is a Jordan curve $J$ such that there exists a point $q \in \jint{J}$ with $\abs{A(q)\cap J}=\abs{j_0}=1$.
Highlighted in red is the COTS-arc $C$ that extends diagonally from $q=c_0$ until it nears another point of $J$, in this case, $c_{m_2}\in A(j_{m'})$.
Highlighted in magenta is the Jordan curve $K$ constructed from the union $C \cup \cots{j_0,j_{m'}}$.
In this example, $p \in \jint{K}$.
If it is to be true that $d(p,e)=d(p,c)=n-1$, then a COTS-arc $E$ of length $n-1$ (shown in green and not necessarily unique) must cross through $C$ to get from $p$ to $e$; let $c_\epsilon$ be a point in this intersection.
By assumption, this creates two COTS-arcs of the same length starting at $c_\epsilon$ and ending at $q$.
Since $q$ and $c_\epsilon$ are pure points on the same diagonal, however, the shortest path between them should be unique.
\begin{figure}
    \centering
    \begin{tikzpicture}
    
    \path[color=blue, ultra thick, draw] (1,1)--(1,9)--(3,9)--(3,11)--(11,11)--(11,1)--(1,1);
    
    \path[color=red, ultra thick, draw] (4,8)--(10,2);
    
    \path[color=magenta, ultra thick, draw] (3.25,9)--(3.25,10.75)--(10.75,10.75)--(10.75,3)--(10,2.25)--(3.25,9);
    
    \path[color=green, ultra thick, draw] (4,8)--(3,7)--(6,4)--(10,4);

    \foreach \x in {1,3,5,7,9,11}{
        \foreach \y in {1,3,5,7,9,11}{
            \draw (\x,\y) node [shape=circle, color=gray, fill=white,draw]{};
        }}
    \foreach \x in {2,4,6,8,10}{
        \foreach \y in {2,4,6,8,10}{
            \draw (\x,\y) node [color=gray,shape=rectangle, fill=white,draw]{};
        }}
    \foreach \x in {1,3,5,7,9,11}{
        \foreach \y in {2,4,6,8,10}{
            \draw (\x,\y) node [shape=circle, color=gray, fill=gray, scale=.5,draw]{};
            \draw (\y,\x) node [shape=circle, color=gray, fill=gray, scale=.5,draw]{};
        }}
    \draw (3,9) node [shape=circle, draw, color=black, fill=white, label=above left:{$f=j_0$}]{};
    \draw (3,10) node [shape=circle, color=black, fill=black, scale=.5, label=left:{$j_1$},draw]{};
    \draw (11,3) node [shape=circle, draw, color=black, fill=white, label=right:{$j_{m'}$}]{};
    
    \draw (4,8) node [shape=rectangle, draw, color=black, fill=white, label=above right:{$q=c_0$}]{};
    \draw (5,7) node [shape=circle, draw, color=black, fill=white, label=above right:{$c$}]{};
    \draw (10,2) node [shape=rectangle, draw, color=black, fill=white, label=below:{$c_{m_2}$}]{};
    \draw (8,4) node [shape=rectangle, draw, color=black, fill=white, label=above right:{$c_{\epsilon}$}]{};

    \draw (3,7) node [shape=circle, draw, color=black, fill=white, label=above:{$e=e_{n-1}$}]{};
    
    \draw (10,4) node [shape=rectangle, draw, color=black, fill=white, label=right:{$p$}]{};
  
    \end{tikzpicture}
    \caption{An application of Lemma \ref{lem:maxcon} to a Jordan curve (in blue), with the adjacency lines of the digital plane omitted}
    \label{fig:maxconapp}
\end{figure}
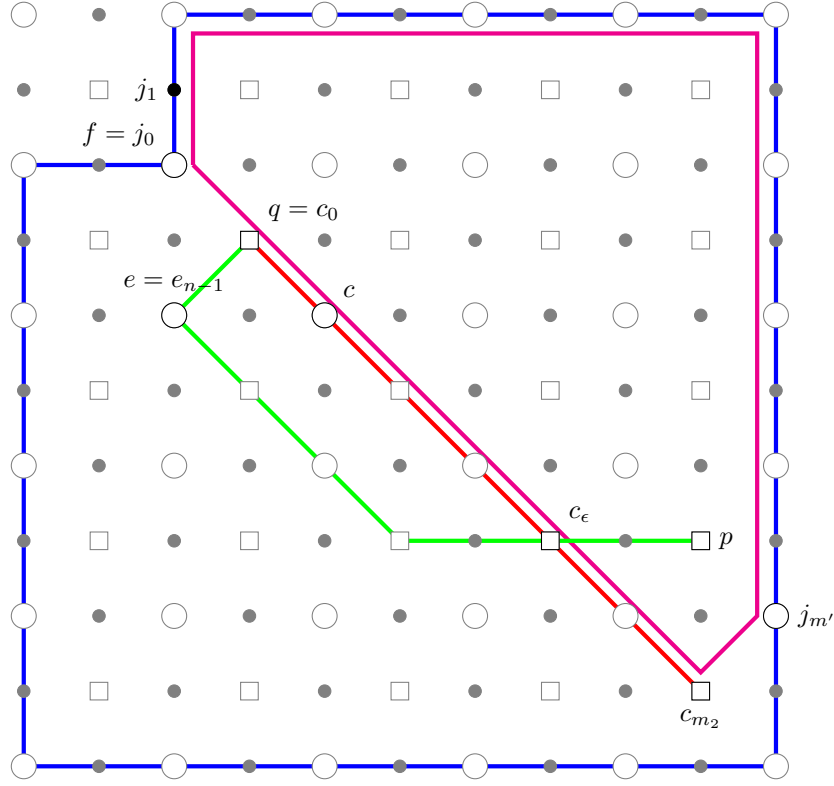
\end{ex}

\begin{cor}\label{cor:weak} 
If $p \in \jint{J}$ and $A(p) \cap J$ is a proper nonempty connected subset of $A(p)$, then $p$ is a weak point of $\jint{J}$.
\end{cor}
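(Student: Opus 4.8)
The plan is to verify the weak-point condition head on. By definition $p$ is a weak point of $\jint{J}$ exactly when one of the two subspaces $\paren{p^\downarrow\cap\jint{J}}-\set{p}$ or $\paren{p^\uparrow\cap\jint{J}}-\set{p}$ is contractible (computed inside $\jint{J}$). Writing $A:=A(p)\cap\jint{J}$, these two subspaces are precisely the points of $A$ lying below $p$ and the points of $A$ lying above $p$, and they partition $A$. So the first step is to reduce the whole statement to understanding the subspace $A$ together with the splitting induced by comparing points to $p$.

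Next I would identify $A$ geometrically. Since $A(p)$ is one of the three minimal Jordan curves, it is a cycle in the comparability structure of $\D$, with consecutive points comparable and non-consecutive points incomparable. Because $p\in\jint{J}$, no neighbour of $p$ can lie in $\jext{J}$ (an interior and an exterior point are never adjacent, as that would connect the two components of $\D-J$), so $A(p)\cap\jext{J}=\emptyset$ and hence $A=A(p)-J$. The hypothesis says $A(p)\cap J$ is a proper, nonempty, connected subset of the cycle $A(p)$, i.e. a single arc; removing one proper nonempty arc from a cycle leaves a single arc, so $A$ is a nonempty connected sub-arc of $A(p)$, and as a finite $T_0$ space it is a fence.

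The third step is a short case split on the type of $p$. If $p$ is pure, every point of $A(p)$ lies strictly above $p$ (when $p$ is open) or strictly below $p$ (when $p$ is closed), so one link is empty and the other is all of $A$; a nonempty fence is contractible, since one can peel off a beat endpoint repeatedly (Proposition 1.3.4 of \cite{Barmak2012}) until a single point remains, so $p$ is weak. If $p$ is mixed, $A(p)$ is the $4$-cycle whose two open neighbours lie below $p$ and whose two closed neighbours lie above $p$, the colours alternating around the cycle. A connected sub-arc $A$ then alternates colours, so the smaller colour class has at most one point; that class is one of the two links. If it is a single point it is contractible, and if it is empty then $A$ is a single point of the other colour (two like-coloured vertices of the $4$-cycle are opposite, hence non-adjacent), so the other link is a single point. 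Either way a link is contractible and $p$ is weak, which assembles to the claim.

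The hard part will be resisting a purely combinatorial shortcut: in a $T_0$ space, contractibility of a finite model is strictly stronger than its order complex being a path, so the step that needs care is confirming that a nonempty sub-arc of a minimal Jordan curve is contractible \emph{as a finite space}, which is exactly where the beat-point peeling is used. The other delicate point is the bookkeeping of which neighbours of $p$ fall below versus above $p$, since this is what distinguishes $\paren{p^\downarrow\cap\jint{J}}-\set{p}$ from $\paren{p^\uparrow\cap\jint{J}}-\set{p}$; the connectedness hypothesis on $A(p)\cap J$ is precisely what keeps $A$ from breaking into two separate arcs, which is the scenario that would make both links fail to be contractible.
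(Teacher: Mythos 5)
Your proposal is correct and follows essentially the same route as the paper's proof: reduce to the links $\paren{p^\downarrow\cap\jint{J}}-\set{p}$ and $\paren{p^\uparrow\cap\jint{J}}-\set{p}$, split on whether $p$ is pure or mixed, and show the relevant link is a contractible arc (pure case) or a single point (mixed case). You are in fact somewhat more explicit than the paper at two points it leaves implicit --- justifying $A(p)\cap\jext{J}=\emptyset$ and spelling out why a fence is contractible via beat-point removal --- but the underlying argument is the same.
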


\begin{proof}
Without loss of generality, suppose $p$ is closed.
Then $p^\downarrow-\{p\} = A(p)$ is a Jordan curve.
Since $p \in \jint{J}$, $A(p)-J \subseteq \jint{J}$.
Since $A(p)-J$ is a connected subset of a Jordan curve, it is a COTS-arc, so it is contractible in $\jint{J}$, so it is a weak point.

If $p$ is mixed and $A(p) \cap J$ is a proper connected subset, then $|A(p) \cap J| = 1,2,$ or $3$. 
In each case, either $|(p^\downarrow-\{p\})\cap \jint{J}|=1$ or $|(p^\uparrow-\{p\})\cap \jint{J}|=1$, which is contractible since its core is a point.

Hence if $A(p) \cap J$ is connected, then $p$ is a weak point of $\jint{J}$.
\end{proof}

In the next section, we will use Corollary \ref{cor:weak} to prove the following:

\begin{cor}\label{cor:weakcontra}
For $J \subseteq \D$ a digital Jordan curve, $\jint{J}$ is weakly contractible.
\end{cor}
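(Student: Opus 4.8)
The plan is to induct on $\abs{\jint{J}}$, peeling off one weak point at a time: since $\jint{J}$ is a finite $T_0$ space (a subspace of the $T_0$ plane $\D$) and a single point is weakly contractible, it suffices to produce at each stage a weak point whose deletion is a weak homotopy equivalence (by the weak-point removal proposition, Proposition 4.2.4 of \cite{Barmak2012}) and again yields the interior of a Jordan curve with a strictly smaller interior. The base case is immediate: if $J$ is minimal, then $J=A(x)$ for some $x\in\D$ and $\jint{J}=\set{x}$ is a point, which is contractible and hence weakly contractible.

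For the inductive step, assume the claim for every Jordan curve whose interior has fewer than $\abs{\jint{J}}$ points, and let $J$ be non-minimal. By Lemma \ref{lem:nonminpureint} fix a pure point $p\in\jint{J}$, and choose $q\in\jint{J}$ with $d_{\jint{J}}(p,q)$ maximal, preferring a pure such $q$ whenever one exists. By Lemma \ref{lem:maxcon}(a) the set $A(q)\cap J$ is connected, and by parts (b)--(c) it satisfies $\abs{A(q)\cap J}\geq 3$; in particular it is nonempty, and it is a proper subset of $A(q)$, since $A(q)\cap J=A(q)$ would force $J=A(q)$ and hence $J$ minimal. Thus $A(q)\cap J$ is a proper nonempty connected subset of $A(q)$, so Corollary \ref{cor:weak} applies and $q$ is a weak point of $\jint{J}$. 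Consequently $\jint{J}-\set{q}\hookrightarrow\jint{J}$ is a weak homotopy equivalence, i.e. $\jint{J}\stackrel{we}{\simeq}\jint{J}-\set{q}$.

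It remains to realize $\jint{J}-\set{q}$ as the interior of a smaller Jordan curve. Write the connected arc $A(q)\cap J=\set{j_1,\dots,j_n}$ with endpoints $j_1,j_n$, which are pure (by Lemma \ref{lem:nodd} when $q$ is pure, and automatically when $q$ is mixed, since $A(q)$ then consists of pure points only). Since $3\leq\abs{A(q)\cap J}<\abs{A(q)}$, the endpoints $j_1,j_n$ are non-adjacent in the cycle $A(q)$, so $\set{j_1,q,j_n}$ is a COTS-arc. Reroute $J$ through $q$ by replacing the middle of the arc with $q$:
\[
J' := \paren{J-\set{j_2,\dots,j_{n-1}}}\cup\set{q}.
\]
Every point of $J'$ then has exactly two neighbours in $J'$ (the new point $q$ is adjacent only to $j_1$ and $j_n$; each of $j_1,j_n$ acquires $q$ in place of its deleted arc-neighbour; all remaining points are untouched), so $J'$ is a Jordan curve. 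As $J'\subseteq J\cup\jint{J}$, Proposition \ref{prop:jintk} gives $\jint{J'}\subseteq\jint{J}$, and $q\in J'$ forces $\jint{J'}\subseteq\jint{J}-\set{q}$. A count of the three regions $\D=\jint{J'}\sqcup J'\sqcup\jext{J'}$ shows the deleted points $j_2,\dots,j_{n-1}$ pass into $\jext{J'}$ while nothing else leaves $\jint{J}$, so $\jint{J'}=\jint{J}-\set{q}$ with $\abs{\jint{J'}}=\abs{\jint{J}}-1$. By the inductive hypothesis $\jint{J'}$ is weakly contractible, hence so is $\jint{J}\stackrel{we}{\simeq}\jint{J'}$.

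The main obstacle is this last reduction: verifying that the local ``push-in'' $J'$ is genuinely a simple closed curve and that its interior is exactly $\jint{J}-\set{q}$, with no additional interior points leaking out to $\jext{J'}$. This is precisely where the connectedness of $A(q)\cap J$ and the bound $\abs{A(q)\cap J}\geq 3$ from Lemma \ref{lem:maxcon}, together with the parity constraints of Lemma \ref{lem:nodd}, are indispensable, and it is the combinatorial heart of the argument.
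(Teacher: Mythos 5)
Your proof is correct and takes essentially the same route as the paper: the paper's own argument iterates Algorithm \ref{alg:shrink} (whose correctness is the content of Theorem \ref{big}), noting that each removed maximal-distance point $q$ has $A(q)\cap J$ connected, hence is weak by Corollary \ref{cor:weak}, and then invoking Proposition 4.2.4 of \cite{Barmak2012} at each step until one point remains, exactly as you do. Your induction on $\abs{\jint{J}}$ simply repackages that iteration, re-deriving inline the rerouted curve $\paren{J-\set{j_2,\hdots,j_{n-1}}}\cup\set{q}$ and the identity $\jint{K}=\jint{J}-\set{q}$ that the paper obtains from Theorem \ref{big} and its algorithm.
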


\subsection{Spaces of Digital Jordan Curves}

Our ultimate goal is to understand the set of digital Jordan curves within a finite digital plane as a topological space. How many are there? When are two digital Jordan curves adjacent? Is the space contractible, as it is in the real setting?

\begin{defn}
Given a digital plane $\D$, we define
\[\J(\D) = \left\{ J \subset \D \mid J \textrm{ is a Jordan curve} \right\}\]
to be the set of digital Jordan curves in a digital plane $\D$.
\end{defn}
When the choice of digital plane is obvious or irrelevant, the $\D$ may be dropped from notation.

Recall that for $m$ mixed, $|A(m)| = 4$, and for $p$ pure, $|A(p)|=8$, so not all Jordan curves comprise the same number of points. One might fear that two Jordan curves in $\J$ could only be homotopic if they comprise the same number of points, however, introducing parameterizations of Jordan curves solves this problem.

\begin{prop}\label{prop:pars1}
Every digital Jordan curve admits a parametrization by $S^1$.
\end{prop}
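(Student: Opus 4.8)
The plan is to imitate the construction in Proposition \ref{COTSI}, replacing the interval $[0,1]$ with the quotient $S^1 = [0,1]/(0\sim 1)$, and to use the evenness of $\abs{J}$ to make the open/closed pattern close up consistently around the circle.

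First I would fix a cyclic enumeration of the curve. Choosing any $j_0 \in J$, the set $J - \set{j_0}$ is a COTS-arc, so its points inherit an enumeration $j_1, \dots, j_{n-1}$ in which consecutive points are adjacent and whose endpoints $j_1, j_{n-1}$ are the two neighbours of $j_0$ in $J$. This yields a cyclic listing $j_0, j_1, \dots, j_{n-1}, j_0$ of $J$ in which $A(j_i)\cap J = \set{j_{(i-1)\bmod n}, j_{(i+1)\bmod n}}$, where $n = \abs{J}$. Because every point of a COTS is either open or closed with no two points of the same type adjacent, and because (since $\abs{J}\geq 4$) every adjacent pair of $J$ lies on a common COTS-arc $J-\set{j_m}$, the labels alternate strictly between open and closed as $i$ increases.

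Next I would cut $S^1$ into $n$ consecutive arcs by the division points $t_i = i/n$ for $0 \leq i \leq n$ (with $t_0 = t_n$), and assign to $j_i$ the arc between $t_i$ and $t_{i+1}$. Each division point $t_i$ separates the arc of $j_{i-1}$ from the arc of $j_i$; since these two labels have opposite parity, exactly one is closed, and I assign $t_i$ to that closed one. Concretely, I define $f: S^1 \to \D$ by sending the open arc $(t_i, t_{i+1})$ to $j_i$ and sending each $t_i$ to whichever of $j_{i-1}, j_i$ is closed, so that $f^{-1}(j_i)$ is the open arc $(t_i, t_{i+1})$ when $j_i$ is open and the closed arc $\cots{t_i, t_{i+1}}$ when $j_i$ is closed. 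The evenness of $n$, guaranteed by Proposition \ref{JC-even}, is exactly what makes this rule globally consistent: the assignment forcing $t_i$ to belong to its closed neighbour returns to its starting value after $n$ steps only when $n$ is even, and for odd $n$ the wrap-around point $t_0 = t_n$ would be claimed by two points of the same parity, so no such $f$ could exist. This global matching of parities is the one genuine obstacle, and Proposition \ref{JC-even} removes it.

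Finally I would check continuity, which by finiteness of $\D$ reduces to verifying that $f^{-1}$ of each minimal open set is open in $S^1$. For an open point $j_i$ the minimal open set in $J$ is $\set{j_i}$, whose preimage is the open arc $(t_i, t_{i+1})$; for a closed point $j_i$ the minimal open set in $J$ is $\set{j_{i-1}, j_i, j_{i+1}}$ (indices mod $n$), whose preimage is the open arc $(t_{i-1}, t_{i+2})$. Since $\img{f} = J$ by construction, this exhibits the desired parametrization, and every step apart from the parity count parallels Proposition \ref{COTSI}.
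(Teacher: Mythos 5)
Your proof is correct and follows essentially the same construction as the paper: invoke Proposition \ref{JC-even} for evenness, enumerate $J$ cyclically with alternating (relatively) open and closed points, subdivide $S^1=[0,1]/(0\sim 1)$ into $n$ equal arcs assigning open arcs to open points and closed arcs to closed points, and verify continuity on preimages of open sets. Your added justifications of the alternation, of why evenness is needed for the pattern to close up, and of the continuity check via minimal open sets simply make explicit what the paper's terser proof leaves implicit.
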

\begin{proof}
By Proposition \ref{JC-even}, we can suppose $|J|=n$ for some even integer $n$.
Denote the points of $J$ as $j_0,j_1,\hdots,j_{n-1}$ such that $A(j_i) \cap J = \{j_{i-1 \bmod{n}},j_{i+1\bmod{n}}\}$ for $0 \leq i <n$.
Without loss of generality, choose the ordering such that $j_1$ is the image of open point of a COTS and $j_n$ is the image of a closed point of a COTS.
Taking $S^1 \cong [0,1]/(0\sim1)$, a parameterization exists as follows:
\[
f^{-1}(j_\ell) = 
\left\{\begin{array}{lr}
\left(\frac{\ell-1}{n}, \frac{\ell}{n}\right), & \ell\textrm{ is odd}\\
\left[\frac{\ell-1}{n}, \frac{\ell}{n}\right], & \ell\textrm{ is even}
\end{array}
\right.
\]
Since $j_i$ is the preimage of an open (closed) point of a COTS for $i$ odd (even), the preimage under $f$ of open points in $J$ is open in $S^1$, so $f:S^1 \to \D$ is a map of the circle into the digital plane whose image is a Jordan curve.

Furthermore, we call this construction the standard parameterization.
\end{proof}

Since the points of a simple closed curve in a $T_\frac{1}{2}$ digital plane $(X,u)$ must alternate between open and closed (there are no mixed points), Proposition \ref{prop:pars1} holds for digital Jordan curves in any $T_\frac{1}{2}$ digital plane.

We will use these parameterizations to define a partial order on $\J$.

\begin{defn}
Given $J, J' \in \J$, we say $J \leq J'$ if and only if there exist parameterizations $f$ of $J$ and $f'$ of $J'$ such that $f(s) \leq f'(s)$ for all $s \in S^1$.
\end{defn}

This preorder then generates a topology on $\J$ whose open sets are generated by the down-sets $J^\downarrow=\set{J' \in \J \mid J' \leq J} \subset \J$.
If $J$ and $J'$ have parameterizations $f$ and $f'$ respectively such that $f(s)\leq f'(s)$ for all $s \in S^1$, then $f \leq f'$ with respect to the pointwise-order, so $f$ and $f'$ are homotopic by Proposition 14 of \cite{Stong1966}.
By abuse of notation, given two Jordan curves $J$ and $J'$ with parameterizations $f$ and $f'$, respectively, we may use $J \leq J'$ and $f\leq f'$ interchangably. 
It is of interest to note that this recovers the compact-open topology of $\D^{S^1}$.
That is, open sets of the compact-open topology are also open under the pointwise-order topology.

Consider $\D^{S^1}=Map(S^1,\D)$ to be the space of continuous maps from $S^1$ into $\D$, equipped with the traditional compact-open topology.
That is, the subbase for the topology is generated by sets of the form
\[S(K,W)=\set{f\in \D^{S^1}\mid f(K)\subseteq W},\]
where $K$ is a compact subset of $S^1$ and $W$ is an open set of $\D$.
We will show that $S(K,W)$ is open with respect to the partial order on $\D$.
Let $f:S^1 \to \D$ be a map in $S(K,W)$, and let $g \leq f$.
That is, $g(s)\leq f(s) \in W \subseteq \D$ for all $s \in K \subseteq S^1$.
Since $W \subseteq \D$ is open, it is a down-set, so $g(s)\leq f(s)\in W$ implies $g(s)\in W$.
Hence, $S(K,W)$ is open with respect to the partial order of $\D$.

To show that $\J$ is connected under this topology, we first show that every digital Jordan curve is homotopic to a minimal Jordan curve about one of its pure interior points, and that there exists a fence of homotopies between any Jordan curve and the boundary $B \subset \D$.

\begin{thm}\label{big} 
There is a fence of homotopies between any Jordan curve $J$ and the smallest Jordan curve about one of its pure interior points.
\end{thm}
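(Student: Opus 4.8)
The plan is to prove this by induction on the size of the interior $\abs{\jint{J}}$, exhibiting at each stage a single comparability step (hence a one-step homotopy) that strictly shrinks the interior while keeping a fixed pure point $p\in\jint{J}$ inside. Since comparable parameterizations are homotopic, concatenating these steps produces the required fence. For the base case, if $J$ is minimal then $J=A(p)$ for a pure interior point $p$ already, so the empty fence suffices.

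For the inductive step, I would fix a pure point $p\in\jint{J}$ (which exists by Lemma \ref{lem:nonminpureint}) and, using Lemma \ref{lem:maxcon}, choose $q\in\jint{J}$ of maximal distance $d_{\jint{J}}(p,q)=n$. That lemma guarantees $A(q)\cap J$ is a connected arc with at least three points, so near $q$ the curve makes a ``corner'' that can be folded inward. I would then build a new curve $J'$ by replacing this outer arc with the arc running through $q$ itself, so that $q$ passes from the interior onto the curve. By Proposition \ref{prop:jintk}, $\jint{J'}\subseteq\jint{J}$, and since at least $q$ is removed, $\abs{\jint{J'}}<\abs{\jint{J}}$ while $p\neq q$ remains interior. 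Because $p$ survives every step and the terminal minimal curve has a singleton interior, that terminal curve is forced to be $A(p)$, which is exactly the smallest Jordan curve about a pure interior point of the original $J$.

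The comparison step is where the order structure of $\D$ does the work. When $q$ is closed it is maximal in $\D$, and every point of the arc being folded away lies in $q^\downarrow$, i.e. is $\leq q$; choosing the standard parameterizations of Proposition \ref{prop:pars1} so that $J$ and $J'$ agree off the folded region and the parities of the replaced points match (the arc endpoints are pure by Lemma \ref{lem:nodd}, and the even-length guarantee of Proposition \ref{JC-even} keeps the alternation consistent), one gets $f(s)\leq f'(s)$ pointwise, hence $J\leq J'$. When $q$ is open the order is reversed and $J'\leq J$, and the mixed case is handled analogously using part (c) of Lemma \ref{lem:maxcon}. Thus each step is a genuine comparability and the resulting sequence alternates up and down, i.e. it is a fence.

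The main obstacle is turning the intuitive ``fold'' into an honest comparability, and this is exactly what forces the appeal to Lemma \ref{lem:maxcon}. If $A(q)\cap J$ has exactly three points the fold is clean: deleting its single interior (mixed) vertex and inserting $q$ yields a curve in which $q$ has precisely two neighbors, so it is again a Jordan curve, and the deleted vertex is comparable to $q$. For longer arcs, inserting $q$ directly would give $q$ too many neighbors to serve as a curve vertex, so I would instead push the finger inward one layer at a time---the role of the shrinking procedure in Algorithm \ref{alg:shrink}---each elementary push being a single comparability, and use maximality of the distance together with Proposition \ref{pure_unique} (uniqueness of shortest diagonal arcs, as already exploited in the proof of Lemma \ref{lem:maxcon}) to guarantee the process terminates at the minimal curve rather than stalling. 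Verifying that every intermediate configuration is a bona fide Jordan curve carrying a correctly parametrized, parity-matched $S^1$, and that the interior strictly decreases at each stage, is the delicate bookkeeping the argument must carry out.
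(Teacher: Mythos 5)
Your overall skeleton is the same as the paper's: fix a pure $p\in\jint{J}$, pick $q\in\jint{J}$ at maximal distance, invoke Lemma \ref{lem:maxcon} to get that $A(q)\cap J$ is a connected arc of odd length at least $3$, fold that arc onto $q$, observe the fold is a single pointwise comparability, and iterate until the interior is $\set{p}$. But there is a genuine error at the crucial step. In your final paragraph you claim that when $\abs{A(q)\cap J}\in\set{5,7}$, ``inserting $q$ directly would give $q$ too many neighbors,'' and you retreat to an undefined process of pushing the finger in ``one layer at a time.'' The claim is false, and the retreat is fatal. The correct move (and the one the paper's Algorithm \ref{alg:shrink} makes) deletes the \emph{entire} inner portion $J''$ of the arc $A(q)\cap J$ (everything except its two endpoints) and inserts $q$, producing $K=(J-J'')\cup\set{q}$. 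Since $A(q)\cap J$ is by definition the full set of neighbors of $q$ on $J$, after deleting $J''$ the neighbors of $q$ in $K$ are exactly the two endpoints of the arc, so $q$ has precisely two neighbors and $K$ is a Jordan curve whatever the arc's length ($3$, $5$, or $7$). Moreover this remains a \emph{single} comparability: if $q$ is closed, every point of $J''$ lies in $q^\downarrow$, so redirecting $f^{-1}(J'')$ to $q$ gives $g\geq f$ pointwise; a homotopy in a finite space may move many points simultaneously so long as pointwise comparability holds. Your misconception that only one point may be swapped per step is what creates the apparent obstacle.

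The fallback you propose cannot be repaired into a proof. If $q$ is inserted while any point of $J''$ remains on the curve, then $q$ has more than two neighbors in the resulting set, so no intermediate configuration of a ``partial fold through $q$'' is a Jordan curve; and one cannot cut a corner of the finger \emph{without} inserting $q$, because the would-be shortcut requires two mixed points to be adjacent, which never happens in the Khalimsky plane. So the ``elementary pushes'' you appeal to do not exist, and your induction is not carried out precisely in the cases $\abs{A(q)\cap J}\in\set{5,7}$ where the work lies. (Two smaller points: your appeal to Proposition \ref{pure_unique} for termination is misplaced --- termination follows simply from $\abs{\jint{K}}<\abs{\jint{J}}$ at each step, via Proposition \ref{prop:jintk} --- and, as in the paper, the choice of $q$ must prefer pure points of maximal distance, since Lemma \ref{lem:maxcon}(c) only applies to a mixed $q$ when no pure point attains the maximal distance.)
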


\begin{proof}
The proof idea is to collapse a Jordan curve to a minimal Jordan curve about one of its interior points by incrementally removing points from the interior of the Jordan curve until only one is left.
We present Algorithm \ref{alg:shrink}\footnote{Given an ordered set $J$, we use $J[i]$ to refer to the $i$th element of that set, where $0\leq i\leq |J|-1$.} that removes points from $\jint{J}$ in an order determined by how far they are from a pure fixed basepoint $p \in \jint{J}$.


\begin{figure}
\begin{algorithmic}[1]
\Procedure{Shrink}{$J,f,p$}
\If{$|\jint{J}|=1$} \State \Return $(J,f,p)$ \EndIf
\State $n \gets \max\{d_{\jint{J}}(p,q)\}_{q \in \jint{J}}$
\State $S \gets S_{\jint{J}}(p,n)$ \Comment{The points in $\jint{J}$ of distance $n$ from $p$}
\State $S_8 \gets \{s \in S \mid |A(s)|=8\}$ \Comment{The pure points of distance $n$}
\State $S_4 \gets \{s \in S \mid |A(s)|=4\}$ \Comment{The mixed points of distance $n$}
\If{$S_8 \neq \emptyset$}
	\State $q \gets S_8[0]$
\Else
	\State $q \gets S_4[0]$
\EndIf
	\State $J' \gets A(q)\cap J$ \Comment{The neighbors of $q$ in $J$, ordered clockwise}
	\State $J'' \gets J'-(J'[0] \cup J'[|J'|-1])$ \Comment{Remove the endpoints}
	\For{$t \in S^1$}
		\If{$t \in f^{-1}(J'')$}
			\State $g(t) \gets q$
		\Else
			\State $g(t) \gets f(t)$
		\EndIf
	\EndFor
	\State $K \gets (J-J'')\cup\{q\}$
	\State \Return $(K,g,p)$
\EndProcedure
\end{algorithmic}
\caption{Algorithm for shrinking a Jordan curve such that its interior has one less point}\label{alg:shrink}
\end{figure}

\begin{figure}
    \centering
\begin{tikzpicture}
\foreach \x in {1,2,3}
	{
		\draw[lightgray, thin] (\x,1)--(\x,3);
		\draw[lightgray, thin] (1,\x)--(3,\x);
	}
		\draw[lightgray, thin] (1,1)--(3,3);
		\draw[lightgray, thin] (1,3)--(3,1);

	\draw[blue, ultra thick, dashed] (1,1)--(1,3)--(3,3)--(3,1);	

	\draw[red, ultra thick] (1,1)--(2,2)--(3,1);

	\draw (2,2) node [shape=rectangle,fill=white,draw,label={$q$}]{};

	\draw (1,1) node [shape=circle,fill=white,draw]{};
	\draw (1,3) node [shape=circle,fill=white,draw]{};
	\draw (3,1) node [shape=circle,fill=white,draw]{};
	\draw (3,3) node [shape=circle,fill=white,draw]{};

		\draw (1,2) node [shape=circle,fill=black,scale=.5,draw]{};
		\draw (2,1) node [shape=circle,fill=black,scale=.5,draw]{};
		\draw (2,3) node [shape=circle,fill=black,scale=.5,draw]{};
		\draw (3,2) node [shape=circle,fill=black,scale=.5,draw]{};	
\end{tikzpicture}
\caption{$|A(q) \cap J|=7$}
\label{fig:7}
\end{figure}
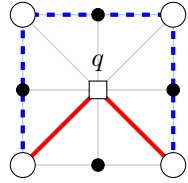

\begin{figure}
    \centering
 \begin{tikzpicture}
\foreach \x in {1,2,3}
	{
		\draw[lightgray, thin] (\x,1)--(\x,3);
		\draw[lightgray, thin] (1,\x)--(3,\x);
	}
		\draw[lightgray, thin] (1,1)--(3,3);
		\draw[lightgray, thin] (1,3)--(3,1);

	\draw[blue, dashed, ultra thick] (1,3)--(3,3)--(3,1);	

	\draw[red, ultra thick] (1,3)--(3,1);

	\draw (2,2) node [shape=rectangle,fill=white,draw,label={$q$}]{};

	\draw (1,1) node [shape=circle,fill=white,draw]{};
	\draw (1,3) node [shape=circle,fill=white,draw]{};
	\draw (3,1) node [shape=circle,fill=white,draw]{};
	\draw (3,3) node [shape=circle,fill=white,draw]{};

		\draw (1,2) node [shape=circle,fill=black,scale=.5,draw]{};
		\draw (2,1) node [shape=circle,fill=black,scale=.5,draw]{};
		\draw (2,3) node [shape=circle,fill=black,scale=.5,draw]{};
		\draw (3,2) node [shape=circle,fill=black,scale=.5,draw]{};	
\end{tikzpicture}
\caption{$|A(q) \cap J|=5$}
\label{fig:5}
\end{figure}
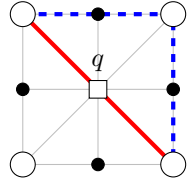

\begin{figure}
    \centering
 \begin{tikzpicture}
\foreach \x in {1,2,3}
	{
		\draw[lightgray, thin] (\x,1)--(\x,3);
		\draw[lightgray, thin] (1,\x)--(3,\x);
	}
		\draw[lightgray, thin] (1,1)--(3,3);
		\draw[lightgray, thin] (1,3)--(3,1);

	\draw[blue, dashed, ultra thick] (1,3)--(3,3);	

	\draw[red, ultra thick] (1,3)--(2,2)--(3,3);

	\draw (2,2) node [shape=rectangle,fill=white,draw,label={$q$}]{};

	\draw (1,1) node [shape=circle,fill=white,draw]{};
	\draw (1,3) node [shape=circle,fill=white,draw]{};
	\draw (3,1) node [shape=circle,fill=white,draw]{};
	\draw (3,3) node [shape=circle,fill=white,draw]{};

		\draw (1,2) node [shape=circle,fill=black,scale=.5,draw]{};
		\draw (2,1) node [shape=circle,fill=black,scale=.5,draw]{};
		\draw (2,3) node [shape=circle,fill=black,scale=.5,draw]{};
		\draw (3,2) node [shape=circle,fill=black,scale=.5,draw]{};	
\end{tikzpicture}
\caption{$|A(q) \cap J|=3$ and $q$ is pure}
\label{fig:3b}
\end{figure}

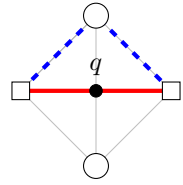
\begin{figure}
    \centering
\begin{tikzpicture}
	\draw[lightgray, thin] (2,1)--(1,2)--(2,3)--(3,2)--(2,1)--(2,3)--(1,2)--(3,2);

	\draw[blue, dashed, ultra thick] (1,2)--(2,3)--(3,2);

	\draw[red, ultra thick] (1,2)--(3,2);
	
	\draw (2,2) node [shape=circle,fill=black,scale=.5,draw,label={$q$}]{};
	\draw (1,2) node [shape=rectangle,fill=white,draw]{};
	\draw (3,2) node [shape=rectangle,fill=white,draw]{};
	\draw (2,1) node [shape=circle,fill=white,draw]{};
	\draw (2,3) node [shape=circle,fill=white,draw]{};

\end{tikzpicture}
\caption{$|A(q)\cap J|=3$ and $q$ is mixed}
\label{fig:3a}
\end{figure}


Figures \ref{fig:7}, \ref{fig:5}, \ref{fig:3b}, and \ref{fig:3a} depict the four possible moves made in Algorithm \ref{alg:shrink}, up to rotation, translation, and parity of the points.
Within each figure, the dashed blue line represents $A(q)\cap J$, and the solid red line depicts part of the Jordan curve output from $\textsc{Shrink}(J,f,p)$ such that $q$ has been removed from its interior.
Of the triple $(K,g,p)=\textsc{Shrink}(J,f,p)$, we need to check that $K$ is a Jordan curve, $f \simeq g$, and $|\jint{K}|<|\jint{J}|$.

\begin{enumerate}
\item $K$ is a Jordan curve:

It is sufficient to check that $|A(k)\cap K|=2$ for all $k \in K$.
By Lemma \ref{lem:maxcon}(a), $J':=A(q)\cap J$ is a connected subset of $J$, and $|J'|$ is odd by Lemma \ref{lem:nodd}.
Let $|J'|=2\ell+1$ for some $\ell \in \{1,2,3\}$.
(We know $\ell \neq 0$ by Lemma \ref{lem:maxcon}(b)-(c).)
By Lemma \ref{lem:components}, $J-J'$ is also a connected subset of $J$.

Assign to $J$ a clockwise ordering $\{j_0,j_1,\hdots,j_{|J|-1}\}$ such that 
\[A(j_i)\cap J = \set{j_{i-1\bmod |J|},j_{i+1\bmod{|J|}}}\]
for all $j_i \in J$ and that $j_0$ is the counter-clockwise-most point of $A(q) \cap J$.
Then $K = \set{j_0,q,j_{2\ell},j_{2\ell+1},\hdots,j_{|J|-1}}$.
To check the ``gluing points,'' see that $|A(j_0)\cap K| = \abs{\set{j_{|J|-1},q}}=2$, and $|A(j_{2\ell}) \cap K| = |\{q,j_{2\ell+1}\}|=2$.
Lastly, $|A(q) \cap K| = |\{j_0,j_{2\ell}\}|=2$.
Hence $K$ is a Jordan curve.

\item $g \simeq f$:

We need to check that either $f(t) \leq g(t)$ for all $t \in S^1$, or that $f(t) \geq g(t)$ for all $t \in S^1$.

If $q$ is pure, suppose without loss of generality that $q$ is closed.
Then $A(q) \cap J = q^\downarrow \cap J$, so $q \geq r$ for all $r \in A(q)\cap J$.
Then $g(t) \geq f(t)$ for all $t \in f^{-1}(J'')$, where $J''$ is $J'$ minus its endpoints.
Since $g(t) = f(t)$ for $t \in S^1-f^{-1}(J'')$, we have $g \geq f$, hence $g \simeq f$.

If $q$ is mixed, then $S_8=\emptyset$. 
By Lemma \ref{lem:maxcon}(c), $|A(q) \cap J|=3$.
Then $|J''|=|J'[1]|=1$.
Suppose without loss of generality that $J'[1]$ is closed such that $q \leq J'[1]$.
Then $g(t) \leq f(t)$ for $t \in f^{-1}(J'')$, and $g(t)=f(t)$ otherwise, so $g \simeq f$.

\item $|\jint{K}|<|\jint{J}|$:

Since $K \subset J \cup \jint{J}$, we have that $\jint{K} \subseteq \jint{J}$ by Proposition \ref{prop:jintk}.
Then $|\jint{K}| \leq |\jint{J}|$, but $q \not\in \jint{J}$, so $|\jint{K}| < |\jint{J}|$.
\end{enumerate}

Because $|\jint{K}| < |\jint{J}|$ after each iteration of the algorithm, it will terminate when $|\jint{K}|=1$.
This iteration process is given in Algorithm \ref{alg:min}.

\begin{figure}
\begin{algorithmic}[1]
\Procedure{Minimalize}{$J,f,p$}
\If{$|\jint{J}|=1$}
    \State \Return $(J,f,p)$
\EndIf
\State $(K,g,p) \gets (J,f,p)$
\While{$|\jint{K}|>1$}
	\State $(K,g,p) \gets \textsc{Shrink}(K,g,p)$
\EndWhile
\State \Return $(K,g,p)$
\EndProcedure
\end{algorithmic}
\caption{Algorithm for shrinking a Jordan curve until it is minimal}\label{alg:min}
\end{figure}

\end{proof}

Iterated applications of this theorem exhibit a fence of homotopies between any digital Jordan curve and some minimal digital Jordan curve about one of the pure points in its interior.
To move between minimal Jordan curves about pure and mixed points, we prove the Proposition \ref{prop:minconn}:

\begin{prop}\label{prop:minconn}
Given any two adjacent points $p,q\in \D$ such that $A(p)\subset \D$ and $A(q)\subset \D$, there exists a homotopy between the Jordan curves $A(p)$ and $A(q)$.
\end{prop}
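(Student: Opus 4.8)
The plan is to reduce to a small number of canonical local configurations and, in each, write down two parameterizations that are comparable in the pointwise order, so that the homotopy follows at once from Proposition 14 of \cite{Stong1966}. First I would note that adjacency of $p$ and $q$ means they are comparable, and that the Khalimsky structure forces exactly three possibilities up to translation, rotation, and reflection of $\D$: $p$ closed and $q$ mixed, $p$ open and $q$ mixed, or $p$ open and $q$ closed (diagonal); two pure points of the same type and two mixed points are never adjacent. By Theorem \ref{thm:dswap}, dualizing $\D$ interchanges the open--mixed case with the closed--mixed case and converts a comparison $f \le g$ into $f \ge g$, so it suffices to treat the closed--mixed and open--closed cases.

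For the closed--mixed case I would place $p=(0,0)$ (closed) and $q=(1,0)$ (mixed), so that $A(p)$ is the eight-point square of open corners and mixed edges while $A(q)=\set{(1,1),(2,0),(1,-1),(0,0)}$. I would build parameterizations $f$ of $A(p)$ and $g$ of $A(q)$ with $f \le g$ by sending the shared open corners $(1,1),(1,-1)$ to themselves, sending the mixed edge $(1,0)$ up to the closed point $(2,0)$ (legitimate since $(1,0)\le(2,0)$), and sending the remaining five points of $A(p)$---all of which lie in $U_{(0,0)}$---to the closed point $(0,0)$. This $g$ traverses $A(q)$ in the correct cyclic order and dominates $f$ everywhere, giving $A(p)\le A(q)$.

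The open--closed (diagonal) case is the delicate one. Take $p=(1,1)$ (open) and $q=(0,0)$ (closed), so $A(p)$ consists of closed corners and mixed edges, $A(q)$ consists of open corners and mixed edges, and the shared closed corner $(0,0)$ of $A(p)$ is the center of $A(q)$. Here I would aim for $g \le f$, with $g$ parameterizing $A(q)$ and $f$ parameterizing $A(p)$. The governing observation is that when $f$ rests at $(0,0)$ its minimal neighborhood $U_{(0,0)}$ is the full $3\times3$ block, which contains all of $A(q)$; hence $g$ is entirely unconstrained on that interval. I would therefore let $f$ sit at $(0,0)$ while $g$ sweeps the far arc $(-1,1),(-1,0),(-1,-1),(0,-1),(1,-1)$ of $A(q)$, and let $f$ traverse the rest of $A(p)$ while $g$ passes through the shared mixed points $(0,1),(1,0)$ and the open corner $(1,1)$. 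A direct check of the minimal neighborhoods of $(0,2),(0,1),(2,0),(1,0)$ shows these constrain $g$ to exactly the values it must take at those moments, so $g \le f$ can be threaded consistently around the whole circle, yielding $A(q)\le A(p)$.

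The main obstacle is bookkeeping rather than ideas: one must verify that the dominated loop really is a continuous parameterization of its Jordan curve---that open points pull back to open sets and the cyclic order is respected---and that the inequality holds at every transition point, not merely on the interiors of the subdivision arcs. The freedom to choose the lengths of the resting intervals, together with the fact that a parameterization sitting at a shared pure point imposes no constraint on the other, is precisely what makes this threading possible; chaining these comparisons along paths of adjacent points in $\D$ is what will ultimately feed into the connectivity of $\J$.
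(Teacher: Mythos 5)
Your proposal is correct and its engine is the same as the paper's: enumerate the adjacent pairs up to symmetry, exhibit explicit parameterizations of $A(p)$ and $A(q)$ that are comparable in the pointwise order, and conclude via Proposition 14 of \cite{Stong1966}. Your closed--mixed construction (the five far points of $A(p)$ collapse to $p$, the two shared open corners stay put, and $q$ is pushed out to the far pure point) is exactly the paper's first case, and your open--closed threading, though it subdivides the two circles differently than the paper does (the paper lets the dominating parameterization rest at the dominated curve's center while the dominated one sweeps its far three points, and conversely), turns on the identical observation that the minimal open neighborhood of a closed pure point is the full $3\times3$ block; I checked that your threading can indeed be completed consistently with all preimages of open points open. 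The one structural difference is your use of Theorem \ref{thm:dswap} to dispose of the open--mixed case, where the paper instead writes everything out with a parity ``without loss of generality'' (and in fact redundantly: its cases 1 and 2 establish the same comparison with the roles of the two points exchanged). Be careful with that step as you stated it: dualizing does not literally ``convert a comparison $f\le g$ into $f\ge g$,'' because a map $S^1 \to \D$ that is continuous for the topology of $\D$ is generally \emph{not} continuous for $\op{\D}$ (preimages of points open in $\D$ must be open arcs, and those points become closed in $\op{\D}$), so a pair of comparable parameterizations in $\op{\D}$ does not immediately yield one in $\D$. The repair is mechanical --- rerun your closed--mixed construction with every inequality reversed and the open/closed arc types in the parameterizations swapped, noting that for $p$ open the five far points of $A(p)$ lie in $p^\uparrow$ and the far pure point is now open, hence below $q$ --- so this is a soft spot in exposition rather than a failure of the argument, and it is no worse than the unjustified parity reductions in the paper's own proof.
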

\begin{proof}
We split this into three cases:
\begin{enumerate}
    \item $p$ is pure and $q$ is mixed,
    \item $p$ is mixed and $q$ is pure, and
    \item $p$ and $q$ are both pure.
\end{enumerate}

For each case (shown in Figures \ref{fig:adjpm}-\ref{fig:adjpp}), we show $A(p)$ with a dashed blue line, and $A(q)$ with a solid line.

\begin{enumerate}
    \item If $p$ is pure and $q$ is mixed, suppose without loss of generality that $p$ is closed.
If $p = (x,y)$, denote its adjacency set as 
\[\{x^-,x,x^+\}\times \{y^-,y,y^+\} - \{(x,y)\};\]
note that this set determines a Jordan curve.
If $q = (u,v)$ we can denote $A(q)$ as 
\[\{(u^-,v),(u,v^+),(u^+,v),(u,v^-)\}.\]
Figure \ref{fig:adjpm} displays $A(p)$ dashed and $A(q)$ solid, where $p$ is the closed point on the left, and $q$ is the mixed point in the middle.
Suppose without loss of generality that $q = (x^+,y)$ such that $p = (u^-,v)$.
Let $f:S^1 \to \D$ be the standard parameterization of $A(p)$ starting at $(x,y^-)$ and traveling clockwise.
We define a parameterization of $A(q)$ as follows:
\[\def\arraystretch{1}
g(t) = \left\{
\begin{array}{ll}
(u^-,v), & t \in f^{-1}((x,y^-)\cup(x^-,y^-)\cup(x^-,y)\cup(x^-,y^+)\cup(x,y^+))\\
(u,v^+), & t \in f^{-1}((x^+,y^+))\\
(u^+,v), & t \in f^{-1}(x^+,y)\\
(u,v^-), & t \in f^{-1}((x^+,y^-))
\end{array}
\right.\]
To see that $g \geq f$ in Figure \ref{fig:adjpm}, observe that
\begin{eqnarray*}
(u^-,v)^\downarrow & \supseteq &\set{(x,y^-),(x^-,y^-),(x^-,y),(x^-,y^+),(x,y^+)}\\
(u,v^+)&=&(x^+,y^+)\\
(u^+,v)& \geq& (x^+,y)\\
(u,v^+) &=& (x^+,y^-).
\end{eqnarray*}
Every point of $A(q)$ is greater than or equal to a point of $A(p)$, and a correspondence between these pairs of points is drawn by their parameterizations.
Hence $g(t)\geq f(t)$ and $A(q) \geq A(p)$ in $\J$.

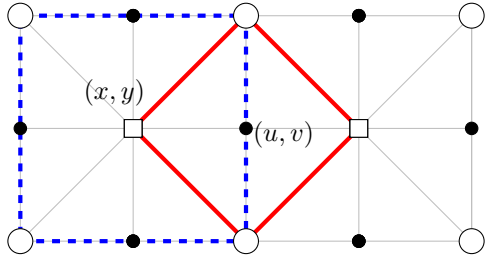
\begin{figure}
    \centering
    \begin{tikzpicture}[scale=1.5]
        \draw[lightgray, thin] (0,0) grid (4,2);
        \draw[lightgray, thin] (0,0)--(2,2)--(4,0)--(4,2)--(2,0)--(0,2);
        \draw[blue,dashed, ultra thick] (0,0)--(2,0)--(2,2)--(0,2)--(0,0);
        \draw[red, ultra thick] (1,1)--(2,0)--(3,1)--(2,2)--(1,1);
        \foreach \x in {0,2,4}{
            \draw (\x,1) node [shape=circle,fill=black,scale=.5,draw]{};
            \foreach \y in {0,2}{
                \draw (\x,\y) node [shape=circle,fill=white,draw]{};
                \draw (\y+1,1) node [shape=rectangle,fill=white,draw] (c\y) {};
                \draw (\y+1,0) node [shape=circle,fill=black,scale=.5,draw]{};
                \draw (\y+1,2) node [shape=circle,fill=black,scale=.5,draw]{};
            }
        }
        \node[label={[shift={(-0.25,0.05)}]{$(x,y)$}}] at (1,1){};
        \node[label={[shift={(0.5,0.-.5)}]{$(u,v)$}}] at (2,1){};
    \end{tikzpicture}
    \caption{The minimal Jordan curves about adjacent pure and mixed points}
    \label{fig:adjpm}
\end{figure}

    \item If $p$ is mixed and $q$ is pure, suppose without loss of generality that $q$ is closed.
    
    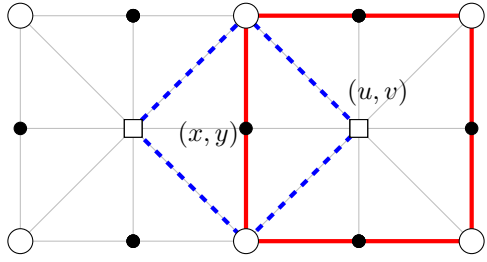
\begin{figure}
    \centering
    \begin{tikzpicture}[scale=1.5]
        \draw[lightgray, thin] (0,0) grid (4,2);
        \draw[lightgray, thin] (0,0)--(2,2)--(4,0)--(4,2)--(2,0)--(0,2);
        \draw[blue, dashed, ultra thick] (1,1)--(2,0)--(3,1)--(2,2)--(1,1);
        \draw[red, ultra thick] (2,0)--(4,0)--(4,2)--(2,2)--(2,0);
        \foreach \x in {0,2,4}{
            \draw (\x,1) node [shape=circle,fill=black,scale=.5,draw]{};
            \foreach \y in {0,2}{
                \draw (\x,\y) node [shape=circle,fill=white,draw]{};
                \draw (\y+1,1) node [shape=rectangle,fill=white,draw] (c\y) {};
                \draw (\y+1,0) node [shape=circle,fill=black,scale=.5,draw]{};
                \draw (\y+1,2) node [shape=circle,fill=black,scale=.5,draw]{};
            }
        }
        \node[label={[shift={(0.25,0.05)}]{$(u,v)$}}] at (3,1){};
        \node[label={[shift={(-0.5,0.-.5)}]{$(x,y)$}}] at (2,1){};
    \end{tikzpicture}
    \caption{The minimal Jordan curves about adjacent mixed and pure points}
    \label{fig:adjmp}
\end{figure}
    
Assume $p = (x,y)$ and $q = (u,v)$, and denote their adjacency sets as above.
Suppose without loss of generality that $q = (x^+,y)$ and consequently that $p = (u^-,v)$.
In Figure \ref{fig:adjmp}, the dashed diamond is the Jordan curve $A(p)$ about the central mixed point $p$, and the solid square is the Jordan curve $A(q)$ about the closed point on the right, $q$.
Let $f:S^1 \to \D$ be the standard parameterization of $A(p)$ starting at $(x^+,y)$ and traveling clockwise, as described in Proposition \ref{prop:pars1}.
We define a parameterization $g:[0,1]/0\sim1 \to A(q)$ as follows: 

\[\def\arraystretch{1}
g(t) = \left\{
\begin{array}{ll}
(u,v^+), & t \in \left[0,\frac{1}{20}\right]\subset f^{-1}((x^+,y))\\
(u^+,v^+), & t \in \left(\frac{1}{20},\frac{1}{10}\right)\subset f^{-1}((x^+,y))\\
(u^+,v), & t \in \left[\frac{1}{10},\frac{3}{20}\right]\subset f^{-1}((x^+,y))\\
(u^+,v^-), & t \in \left(\frac{3}{20}, \frac{1}{5}\right)\subset f^{-1}((x^+,y))\\
(u,v^-), & t \in \left[\frac{1}{5},\frac{1}{4}\right]\subset f^{-1}((x^+,y))\\
(u^-,v^-), & t \in f^{-1}((x,y^-))\\
(u^-,v), & t \in f^{-1}((x^-,y))\\
(u^-,v^+), & t \in f^{-1}((x,y^+))
\end{array}
\right.\]
As in the argument for (1), we may find a correspondence between points in $A(p)$ and the points of $A(q)$ to which they are ``sent.''
In this case, $g(t) \leq f(t)$ for all $t \in [0,1]/0\sim1$, so $g \simeq f$ and $A(q)\leq A(p)\in\J$.

    \item If $p$ and $q$ are both pure, suppose without loss of generality that $p$ is closed and $q \in A(p)$ is open.
If $p = (x,y)$, denote its adjacency set as above.
Define the adjacency set for the pure point $q=(u,v)$ similarly.
Suppose without loss of generality that $q=(x^+,y^+)$ such that $p=(u^-,v^-)$.
Let $f:[0,1]/0\sim1 \to A(p)$ be the standard parameterization of $A(p)$ starting at $(x^-,y^-)$, traveling clockwise.
See, for example, that $f^{-1}((x^-,y^-)) = \left(0,\frac{1}{8}\right)$.
We define a parameterization of $A(q)$ as follows:

\[\def\arraystretch{1}
g(t) = \left\{
\begin{array}{ll}
(u^-,v^-), &t \in f^{-1}((x,y^-)\cup(x^-,y^-)\cup(x^-,y))\\
(u^-,v),& t \in f^{-1}((x^-,y^+))\\
(u^-,v^+),& t \in f^{-1}((x,y^+))\\
(u,v^+), & t \in \left(\frac{1}{2},\frac{13}{24}\right) \subset f^{-1}((x^+,y^+))\\
(u^+,v^+),&t \in \left[\frac{13}{24},\frac{7}{12}\right]\subset f^{-1}((x^+,y^+))\\
(u^+,v),&t \in \left(\frac{7}{12}, \frac{5}{8}\right) \subset f^{-1}((x^+,y^+))\\
(u^+,v^-), & t \in f^{-1}((x^+,y))\\
(u,v^-), & t \in f^{-1}((x^+,y^-))
\end{array}
\right.\]
To see that $g(t)\geq f(t)$ here, consider the following comparison of points from Figure \ref{fig:adjpp}:
\begin{eqnarray*}
(u^-,v^-)^\downarrow &\supseteq & \set{(x,y^-),(x^-,y^-),(x^-,y)}\\
(u^-,v)&\geq&(x^-,y^+)\\
(u^-,v^+) & \geq & (x,y^+)\\
(u,v^+)&\geq&(x^+,y^+)\\
(u^+,v^+)&\geq&(x^+,y^+)\\
(u^+,v)&\geq&(x^+,y^+)\\
(u^+,v^-)&\geq &(x^+,y)\\
(u,v^-)&\geq& (x^+,y^-).
\end{eqnarray*}
This construction yields a continuous function $g:S^1 \to \D$ whose image is $A(q)$.
Furthermore, $g(t) \geq f(t)$ for all $t \in S^1$, so $A(q) \geq A(p)$ in $\J$.

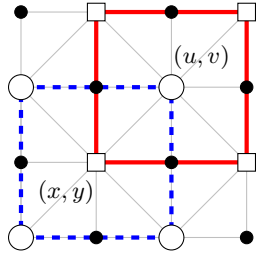
\begin{figure}
    \centering
    \begin{tikzpicture}[yscale=-1]
	\draw[lightgray, thin] (1,1) grid (4,4);
    \draw[lightgray, thin] (1,2)--(3,4)--(4,3)--(2,1)--(1,2)--(1,4)--(4,1);
    \draw[blue, dashed, ultra thick] (1,2)--(3,2)--(3,4)--(1,4)--(1,2);
    \draw[red, ultra thick] (2,1)--(4,1)--(4,3)--(2,3)--(2,1);
    \draw (1,2) node [shape=circle,fill=white,draw]{};
    \draw (1,4) node [shape=circle,fill=white,draw]{};
    \draw (3,2) node [shape=circle,fill=white,draw]{};
    \draw (3,4) node [shape=circle,fill=white,draw]{};
    \draw (2,3) node [shape=rectangle,fill=white,draw]{};
    \draw (2,1) node [shape=rectangle,fill=white,draw]{};
    \draw (4,3) node [shape=rectangle,fill=white,draw]{};
    \draw (4,1) node [shape=rectangle,fill=white,draw]{};
    \draw (2,4) node [shape=circle,fill=black,scale=.5,draw]{};
    \draw (4,4) node [shape=circle,fill=black,scale=.5,draw]{};
    \draw (1,3) node [shape=circle,fill=black,scale=.5,draw]{};
    \draw (3,3) node [shape=circle,fill=black,scale=.5,draw]{};
    \draw (2,2) node [shape=circle,fill=black,scale=.5,draw]{};
    \draw (4,2) node [shape=circle,fill=black,scale=.5,draw]{};
    \draw (1,1) node [shape=circle,fill=black,scale=.5,draw]{};
    \draw (3,1) node [shape=circle,fill=black,scale=.5,draw]{};
    
    \node[label={[shift={(-.4,-.8)}]\small{$(x,y)$}}] at (2,3){};
        \node[label={[shift={(0.4,0)}]\small{$(u,v)$}}] at (3,2){};
\end{tikzpicture}
    \caption{The minimal Jordan curves about two adjacent pure points}
    \label{fig:adjpp}
\end{figure}
\end{enumerate}

Hence, for any two points $p,q$ with $q \in A(p)$, $A(p) \simeq A(q)$.
\end{proof}

It is worth noting that in the proof of Proposition \ref{prop:minconn} above, the homotopy between $A(p)$ and $A(q)$ passes through only those two Jordan curves.
That is, the homotopy is within the subspace of minimal Jordan curves.
This allows us to prove Theorem \ref{main}, which asserts that the space of digital Jordan curves is path-connected.

\begin{proof}[Proof of Theorem \ref{main}.]
Fix some $q\in\D$ such that the Jordan curve $A(q)\subset\D$ as well.
We will prove that there exists a path from any Jordan curve $J \in \J$ to $A(q)$.
Let $J \in J$ be any Jordan curve, and $f$ its parameterization. 
If $J$ is minimal, there exists a path from $J$ to $A(q)$ by Proposition \ref{prop:minconn} and we are done.
If $J$ is not minimal, there exists a pure point $p \in \jint{J}$ by Lemma \ref{lem:nonminpureint}.
Consider Algorithm \ref{alg:min}.
By Theorem \ref{big}, $\textsc{Minimalize}(J,f,p)$ yields a path from $J$ to $A(p)$.
By Proposition \ref{prop:minconn} again, there exists a path from $A(p)$ to $A(q)$.
Hence $\J$ is connected.
\end{proof}

\begin{thm}
\label{thm:DT0}
Given a Khalimsky plane $\D$, $\J(\D)$ is $T_0$.
\end{thm}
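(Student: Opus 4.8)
The plan is to recognize $\J(\D)$ as a finite space whose topology is generated by the preorder $\leq$, and to prove the equivalent statement that this preorder is antisymmetric. By the $T_0$ criterion of Table~\ref{tab:T}, it is enough to show that distinct Jordan curves have distinct down-sets; equivalently, that whenever $J \leq J'$ and $J' \leq J$ in $\J(\D)$, we have $J = J'$. So I would assume both inequalities and work toward $J=J'$ as sets.

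First I would extract the \emph{pure} points. Suppose $J \leq J'$ and fix parameterizations $f$ of $J$ and $f'$ of $J'$ with $f(s) \leq f'(s)$ for all $s \in S^1$. Every pure open point $o$ of $\D$ is minimal in the partial order, so if $o \in J' = \img{f'}$, then choosing $s$ with $f'(s)=o$ gives $f(s) \leq o$, hence $f(s)=o$ and $o \in J$. Dually, every pure closed point is maximal, so if $c \in J = \img{f}$ then picking $s$ with $f(s)=c \leq f'(s)$ forces $f'(s)=c$, whence $c \in J'$. Thus $J \leq J'$ pushes the pure open points of $J'$ into $J$ and the pure closed points of $J$ into $J'$. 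Applying this to both $J \leq J'$ and $J' \leq J$ shows that $J$ and $J'$ have exactly the same pure open points and exactly the same pure closed points. This step is clean precisely because open and closed points sit at the extremes of the order; mixed points, being neither minimal nor maximal, slip through it.

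The remaining task is to match the mixed points, and this is where the real work lies. Suppose for contradiction that a mixed point $m$ lies on $J$ but not on $J'$. A mixed point occurs on a Jordan curve either ``horizontally'' (closed-acting, with its two curve-neighbors the open points below it) or ``vertically'' (open-acting); assume the horizontal case without loss of generality. On the closed parameter interval where $f$ equals $m$, the inequality $f \leq f'$ confines $f'$ to $m^\uparrow = \set{m, c_1, c_2}$, where $c_1,c_2$ are the two pure closed neighbors of $m$; since $m \notin \img{f'}$ and $\set{c_1,c_2}$ is a discrete two-point subspace, continuity forces $f'$ to be constant, equal to a single $c \in \set{c_1,c_2}$, on that interval, with $m < c$ strictly. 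I would then analyze $f$ and $f'$ at the endpoints of this interval, using that a Khalimsky Jordan curve cannot turn at a mixed point (Definition~4.1(iii) of \cite{Khalimsky1990a}, as exploited in Lemma~\ref{lem:nodd}) together with the parity constraints already established, to show this arrangement is incompatible with the reverse inequality $J' \leq J$: that inequality would ultimately demand $c \leq m$, contradicting $m<c$ and the maximality of $c$. Running this for every mixed point would give $M(J)=M(J')$, and combined with the equality of pure points, $J=J'$.

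The step I expect to be the main obstacle is exactly this mixed-point reconciliation, and the difficulty is structural: the definition of $\leq$ permits a \emph{different} pair of parameterizations to witness each of the two inequalities, so one cannot simply invoke the antisymmetry of the pointwise order on $\D^{S^1}$ (which does hold, since $\D$ is $T_0$). Overcoming this requires a rigidity statement — essentially that a Khalimsky Jordan curve is recoverable from its set of pure points, so that once the pure points of $J$ and $J'$ agree, the interposed mixed points (and hence the standard parameterizations of Proposition~\ref{prop:pars1}) are forced to align. Establishing that rigidity from the forbidden $\tfrac{\pi}{4}$-turn condition and the parity of Lemma~\ref{lem:nodd}, thereby collapsing both inequalities onto a common domain where pointwise antisymmetry applies, is the heart of the proof.
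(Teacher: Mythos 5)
Your instinct about where the difficulty lies is exactly right, and it is worth knowing that the paper's own proof is precisely the short argument you declined to make: it takes witnesses $f \leq f'$ for $J \leq J'$, then says ``similarly'' $f'(t) \leq f(t)$ for all $t$ \emph{for the same pair} $(f,f')$, and concludes $f = f'$ from the $T_0$ property of $\D$. Under the definition actually given in the paper --- $J \leq J'$ iff \emph{there exist} parameterizations with $f(s) \leq f'(s)$ pointwise --- the two hypotheses $J \leq J'$ and $J' \leq J$ only supply two possibly unrelated pairs of witnesses, so the pointwise antisymmetry of $\D^{S^1}$ cannot be invoked directly. Your structural objection therefore identifies a genuine gap in the paper's proof, not just in the naive approach; and your pure-point step (pure open points are minimal in $\D$, pure closed points are maximal, so either inequality transfers them between $J$ and $J'$) is correct, rigorous, and already goes beyond what the paper establishes.

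That said, your proposal is not a complete proof either, and you say so yourself. Everything rests on the mixed-point reconciliation, and what you offer there is a plan, not an argument: the claim that the reverse inequality ``would ultimately demand $c \leq m$,'' and the rigidity statement that a Khalimsky Jordan curve is recoverable from its set of pure points, are both asserted rather than proven. The rigidity claim is plausible --- consecutive pure points on a curve are either diagonally adjacent or lie at distance two in a common row or column with the unique mixed midpoint forced between them (since curves cannot turn at mixed points), so one ``only'' needs to show that the pure-point set admits a unique such cyclic arrangement --- but that uniqueness is a combinatorial lemma requiring its own proof, and it is exactly the content that distinguishes a correct proof of Theorem \ref{thm:DT0} from the paper's one-liner. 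So the honest verdict is: your diagnosis is sharper than the paper's treatment, your first half is sound, but the statement remains unproven until the rigidity (or an equivalent local contradiction using both inequalities simultaneously) is actually established.
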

\begin{proof}
Suppose for sake of contradiction that $\J$ is not $T_0$.
Then there exist two Jordan curves $J,J' \in J$ such that $J \leq J'$ and $J' \leq J$ but $J \neq J'$.
If $J \leq J'$, then there exists a parameterization $f$ of $J$ and $f'$ of $J'$ such that $f(t)\leq f'(t)$ for all $t \in S^1$.
Similarly, $f'(t) \leq f(t)$ for all $t \in S^1$.
Since $\D$ is $T_0$, however, $f'(t) \leq f(t)$ and $f(t)\leq f'(t)$ implies $f(t)=f'(t)$ for all $t \in S^1$, so $J = J'$.
\end{proof}

\begin{proof}[Proof of Corollary \ref{cor:weakcontra}]
After each application of Algorithm \ref{alg:shrink} in Algorithm \ref{alg:min}, a point $q$ is removed from $\jint{J}$.
For each $q$ removed, $A(q) \cap J$ is connected, so by Corollary \ref{cor:weak}, $q$ is a weak point of $\jint{J}$.
Consider $(K,g,p) = \textsc{Shrink}(J,f,p)$ such that $q$ is the point that has been deleted from $\jint{J}$.
By Proposition 4.2.4 of \cite{Barmak2012}, the inclusion map
\[\iota:\jint{K} = \jint{J}-\{q\} \hookrightarrow \jint{J}\]
is a weak homotopy equivalence.
Since Algorithm \ref{alg:min} removes weakpoints from $\jint{J}$ until $|\jint{J}|=1$, it follows that $\iota:\{p\} \hookrightarrow\jint{J}$ is a weak homotopy equivalence, so $\jint{J}$ is weakly contractible.
\end{proof}

We suspect that $\jint{J}$ is also contractible, however, the points removed in Algorithm \ref{alg:shrink} are not always beat, so we cannot exhibit a sequence of beat points to remove such that at each step, what remains is still the interior of a Jordan curve.

\begin{thm}\label{thm:parD2} 
For all $J \in \J(\D)$, there exists a continuous surjection 
\[f:D^2 \to J\cup\jint{J}\]
such that $\partial D^2$ is sent to $J$.
\end{thm}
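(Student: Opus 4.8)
The plan is to realize the filled region $J\cup\jint{J}$ as the continuous image of a mapping cone on $S^1$. Concretely, I will produce a homotopy $F\colon S^1\times I\to\D$ whose restriction $F(\cdot,0)$ is the standard parameterization $f_J$ of $J$ (Proposition \ref{prop:pars1}), whose restriction $F(\cdot,1)$ is a constant map, and whose total image is exactly $J\cup\jint{J}$. Since $(S^1\times I)/(S^1\times\{1\})$ is homeomorphic to $D^2$ with $S^1\times\{0\}$ becoming $\partial D^2$, the fact that $F$ is constant on $S^1\times\{1\}$ lets me factor $F$ through the quotient to obtain the desired continuous surjection $f\colon D^2\to J\cup\jint{J}$ with $f(\partial D^2)=f_J(S^1)=J$.

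To build $F$, I will assemble a fence of parameterizations $f_J=g_0,g_1,\dots,g_N$ in which consecutive maps are comparable in the pointwise order and $g_N$ is constant. Each comparable pair $g_i\lessgtr g_{i+1}$ yields, by Proposition 14 of \cite{Stong1966}, an explicit homotopy $H_i$ given by $H_i(s,t)=g_i(s)$ for $t<1$ and $H_i(s,1)=g_{i+1}(s)$ (or the reverse), whose image is precisely $g_i(S^1)\cup g_{i+1}(S^1)$; concatenating the $H_i$ by the pasting lemma produces $F$ with total image $\bigcup_i g_i(S^1)$. For a non-minimal $J$, Lemma \ref{lem:nonminpureint} supplies a pure point $p\in\jint{J}$, and the fence coming from \textsc{Minimalize}$(J,f_J,p)$ (Theorem \ref{big}) runs $f_J$ through the parameterizations of the Jordan curves $K_0=J,K_1,\dots,K_m=A(p)$. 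Because each \textsc{Shrink} step sets $K_{i+1}=(K_i-J'')\cup\{q_i\}$ with $q_i$ the removed interior point and $\jint{K_{i+1}}=\jint{K_i}-\{q_i\}$, the union $\bigcup_i K_i$ equals $J\cup(\jint{J}-\{p\})\cup A(p)=(J\cup\jint{J})-\{p\}$, all of it contained in $J\cup\jint{J}$ by Proposition \ref{prop:jintk}.

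It then remains to cover the single surviving center $p$ (and, for minimal curves, to collapse in the first place). I will append a final collapse of the minimal curve onto its center. If the center $x$ is pure---say closed---then every point of $A(x)=x^\downarrow-\{x\}$ lies below $x$, so the constant map at $x$ dominates $f_{A(x)}$ and a single comparison $f_{A(x)}\le \mathrm{const}_x$ gives a homotopy with image $A(x)\cup\{x\}$, adding exactly $\{x\}$ to the total; this handles both the pure center reached in the non-minimal case and a minimal curve about a pure point. If instead the minimal curve is $J=A(m)$ with $m$ mixed (the only case where one comparison fails, since $A(m)$ has two points below $m$ and two above), I will interpose the map $h$ that fixes the two closed neighbors of $m$ and sends the arcs over the two open neighbors to $m$; then $f_{A(m)}\le h\ge\mathrm{const}_m$ is a valid two-step fence whose image is exactly $A(m)\cup\{m\}=J\cup\jint{J}$, crucially without wandering into an adjacent curve.

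The main obstacle is precisely this surjectivity bookkeeping: the homotopies must sweep out \emph{all} of $J\cup\jint{J}$---including the center point, which is never hit by any Jordan-curve parameterization along the way---while never leaving $J\cup\jint{J}$. This is what forces the parity-sensitive final collapse above (one step for a pure center, two for a mixed center) rather than routing through Proposition \ref{prop:minconn}, which would overshoot into a neighboring curve. Once the fence is in hand, continuity of $F$ follows from the pasting lemma and the continuity of the Stong homotopies into the (non-Hausdorff) target $\D$, and the factorization giving $f$ is immediate from the universal property of the quotient $(S^1\times I)/(S^1\times\{1\})\cong D^2$ together with the corestriction of the image to $J\cup\jint{J}$.
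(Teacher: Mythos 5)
Your proof is correct, but it takes a genuinely different route from the paper's. The paper works directly on the disk: starting from the minimal curve $A(p)\cup\{p\}$ it builds, by induction backwards along the fence produced by Algorithm \ref{alg:min}, explicit parameterizations $\hat{f}_i\colon D^2\to J_i\cup\jint{J_i}$ with $\hat{f}_i|_{S^1}=J_i$, maintaining the invariant that every point-preimage is homeomorphic to a product of two intervals; each inductive step subdivides the rectangle $\hat{f}_i^{-1}(q)$ lying over the removed point $q$ into a band still mapping to $q$ and bands mapping to the arc of $J_{i-1}$ that was collapsed onto $q$, with a separate explicit construction for a minimal curve about a mixed point. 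You instead stay on the circle: you concatenate the one-jump comparison homotopies along the same fence into a single map $F\colon S^1\times I\to\D$, observe that the union of the curves $K_i$ in the fence is exactly $\paren{J\cup\jint{J}}-\{p\}$ (every interior point other than $p$ is removed at some \textsc{Shrink} step and thereby joins the next curve, while Proposition \ref{prop:jintk} keeps everything inside $J\cup\jint{J}$), append a final collapse onto the center (one comparison when the center is pure; two, through your intermediate map $h$, when it is mixed --- and $h$ is indeed continuous with $f_{A(m)}\leq h\geq \mathrm{const}_m$), and then cone off via $\paren{S^1\times I}/\paren{S^1\times\{1\}}\cong D^2$. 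Your route is more economical: it avoids the paper's fiber bookkeeping entirely, and surjectivity becomes a set-theoretic fact about the algorithm rather than an inductive invariant. What the paper's construction buys in exchange is extra structure your cone does not provide: a fence $\hat{f}_n\lessgtr\cdots\lessgtr\hat{f}_0$ of pairwise-comparable (hence homotopic) disk parameterizations, one for each curve in the fence, with controlled rectangular fibers --- but the theorem as stated asks only for existence, so your argument suffices. Two small points to make airtight: in a non-Hausdorff target the direction of each one-jump homotopy matters (the larger of the two comparable maps must occupy the closed end of the time interval), so ``or the reverse'' should be spelled out that way; and your covering argument uses $\jint{K_{i+1}}=\jint{K_i}-\{q_i\}$, which is exactly the fact the paper records in the proof of Corollary \ref{cor:weakcontra}.
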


It is worth noting that $f^{-1}(J) \neq \partial D^2$, necessarily.

\begin{proof}
Given any non-minimal Jordan curve $J_0 \in \J$, Algorithm \ref{alg:min} yields a fence $J_0 \lessgtr J_1 \lessgtr \hdots \lessgtr J_n$, where $J_n=A(p)$ for some pure $p \in \D-B$.
Let $f_0,f_1,\hdots,f_n$ be their parameterizations by $S^1$, respectively.
By construction, in each step from $f_i$ to $f_{i+1}$, one of four moves is performed, up to parity and rotation of the points.
See Figures \ref{fig:7}, \ref{fig:5}, \ref{fig:3b}, and \ref{fig:3a} for visual representations of these moves.
We will show that the fence $f_0 \lessgtr f_1 \lessgtr \hdots \lessgtr f_n$ extends to a fence $\hat{f}_n \lessgtr \hat{f}_{n-1} \lessgtr \hdots \lessgtr \hat{f}_0$ such that $\hat{f}_i:D^2 \to J_i \cup \jint{J_i}$ is a parameterization with $\hat{f}_i|_{S^1} = J_i$.

First, we show that the minimal Jordan curve and its interior, $A(p)\cup\{p\}$, admits a parameterization by $D^2$ such that the boundary is mapped to $A(p)$.
Consider $\paren{0,0}\in D^2 = \set{\paren{x,y}\mid x^2+y^2 \leq 1} \subset \rr^2$.
If $p$ is open, define $\hat{f}_n$ such that
\[\hat{f}_n^{-1}(p):=\set{\paren{x,y}\mid x^2+y^2 < \frac{1}{2}}.\]
Then there exists a homeomorphism $\varphi:S^1\times\cots{\frac{1}{2},1}\to D^2-\hat{f}_n^{-1}(p)$.
Using the parameterization $f_n:S^1\to J_n$ from Theorem \ref{prop:pars1}, take
\[\hat{f}_n^{-1}(j) := \varphi\paren{f_n^{-1}(j)\times\cots{\frac{1}{2},1}},\]
for all $j\in J_n=A(p)$.
Since $p$ is open, the points of $J_n$ are either closed or mixed.
If $j \in J_n$ is closed, then $f_n^{-1}(j)$ is a closed subset of $S^1$,
so $\varphi\paren{f_n^{-1}(j)\times \cots{\frac{1}{2},1}}$ is a closed subset of $D^2$.
Hence, $\hat{f}_n$ is continuous and ${\hat{f}_n}|_{S^1} = J_n$.
Furthermore, notice that for all $j \in J_n$, $\hat{f}_n^{-1}(j)$ is homeomorphic the product of two intervals.
A similar construction works if $p$ is closed.
Hence, $A(p)$ admits a parameterization by $D^2$ for $p$ pure.

Suppose that $J_i\cup\jint{J_i}$ admits a parameterization $\hat{f}_i$ by $D^2$ such that $\hat{f}_i|_{S^1}=J_i$ and $\hat{f}_i^{-1}(x)$ is homeomorphic to the product of two intervals for all $x \in \img{\hat{f}_i}$.
Consider $J_{i-1}$.
The move from $J_{i-1}$ to $J_i$ removes one point from $\jint{J_{i-1}}$ to construct $J_i$; call it $q$.
If $q$ is pure, suppose without loss of generality that $q$ is closed.
The move from $J_{i-1}$ to $J_i$ is one of Figures \ref{fig:7}, \ref{fig:5}, or \ref{fig:3b}.
Because $q \in J_i$, $\hat{f}_i^{-1}(q)$ is a closed subset of $D^2$ that is homeomorphic to $[a,b]\times[\varepsilon,1]$ for some $\cots{a,b} \subset S^1$ and  $0<\varepsilon<1$ by the inductive hypothesis.
Denote the homeomorphism by $\varphi:\cots{a,b}\times \cots{\varepsilon,1} \to \hat{f}_i^{-1}(q)\subset D^2$ such that $\varphi|_{\cots{a,b}\times\set{1}}=\hat{f}_i^{-1}(q) \cap S^1$.
If $q$ is closed, then $f_{i-1}\paren{f_i^{-1}(q)}$ is a closed subset of $J_{i-1}$.
By Lemma \ref{lem:maxcon}, $f_{i-1}\paren{f_i^{-1}(q)}$ is a connected subset of $J_{i-1}$, so it is a COTS-arc by Lemma 5.2(a) of \cite{Khalimsky1990a}.
By Lemma \ref{lem:nodd}, $\abs{f_{i-1}\paren{f_i^{-1}(q)}}$ is odd. 
Denote those points $\cots{j_1,\hdots,j_\ell}\subset J_{i-1}$, and recall that $j_k$ is mixed if $k$ is odd and open if $k$ is even for $j_k \in \cots{j_1,j_\ell}$.
We define $\hat{f}_{i-1}:D^2 \to J_{i-1}\cup\jint{J_{i-1}}$ by:
\[\hat{f}_{i-1}(t)=\left\{\begin{array}{ll}
    j_1, & t \in \varphi\paren{\cots{a,a+\frac{b-a}{\ell}}\times \left(\frac{1+\varepsilon}{2},1\right] } \\
    j_2, & t \in \varphi\paren{\paren{a+\frac{b-a}{\ell},a+\frac{2(b-a)}{\ell}}\times \left(\frac{1+\varepsilon}{2},1\right]} \\
    \vdots&\\
    j_\ell, & t \in \varphi\paren{\cots{a+\frac{(\ell-1)(b-a)}{\ell},b}\times \left(\frac{1+\varepsilon}{2},1\right]} \\
    q, & t \in \varphi\paren{\cots{a,b} \times \cots{\varepsilon,\frac{1+\varepsilon}{2}}}\\
    \hat{f}_i(t),&\textrm{ else.}
\end{array}
\right.\]
First, we check that $\hat{f}_{i-1}$ is continuous.
If $q$ is closed, $\hat{f}_{i-1}^{-1}(q)=\varphi\paren{\cots{a,b} \times \cots{\varepsilon,\frac{1+\varepsilon}{2}}}$, which is a closed subset of $D^2$.
Since $q$ is closed, $j_k \in \cots{j_1,j_\ell}$ is open for $k$ even.
By construction, $\hat{f}_{i-1}^{-1}\paren{j_k} = \varphi\paren{\paren{a+\frac{(k-1)(b-a)}{\ell},a+\frac{k(b-a)}{\ell)}}\times \left(\frac{1+\varepsilon}{2},1\right]}$, which is an open subset of $D^2$ since $\paren{a+\frac{(k-1)(b-a)}{\ell},a+\frac{k(b-a)}{\ell}}\times\set{1} = S^1 \cap \hat{f}_{i-1}^{-1}\paren{j_k}$.

If $q$ is mixed, the move from $J_{i-1}$ to $J_i$ is shown in Figure \ref{fig:3a}, up to parity of the points.
If $J_i$ is the solid Jordan curve as depicted in Figure \ref{fig:3a}, $f_i^{-1}(q)=(a,b)\subset S^1$ is an open subset.
Because $\hat{f}_i^{-1}\paren{q^\uparrow}$ is a closed subset of $D_2$, $\hat{f}_i^{-1}(q)$ must be homeomorphic to something of the form $(a,b) \times [\varepsilon,1]$ for some $0<\varepsilon<1$.
As before, denote the homeomorphism by $\varphi:(a,b)\times [\varepsilon,1] \to \hat{f}_i^{-1}(q) \subset D^2$.
By the inductive hypothesis, this satisfies $\varphi_{(a,b)\times\{1\}} = \hat{f}_i^{-1}(q)\cap S^1$.
If $q$ is as in Figure \ref{fig:3a}, then $f_{i-1}\paren{f_i^{-1}(q)}$ is a single open point of $J_{i-1}$.
We define $\hat{f}_{i-1}:D^2 \to J_{i-1}\cup\jint{J_{i-1}}$ as follows.
\[\hat{f}_{i-1}(t) = \left\{\begin{array}{rl}
   f_{i-1}\paren{f_i^{-1}(q)},  & t \in \varphi\paren{(a,b)\times \left(\frac{1+\varepsilon}{2},1\right]} \\
    q, & t \in \varphi\paren{(a,b)\times \cots{\varepsilon,\frac{1+\varepsilon}{2}}} \\
    \hat{f}_i(t), & \textrm{else.}
\end{array} \right.\]
Since the preimage of $f_{i-1}\paren{f_i^{-1}(q)}$ is open, and since $\hat{f}_{i-1}^{-1} = \hat{f}_i^{-1}$ on all other open points of $J_{i-1}\cup\jint{J_{i-1}}$, $\hat{f}_{i-1}$ is continuous.
Lastly, $\hat{f}_{i-1}\paren{\varphi\paren{(a,b)\times\{1\}}} \supset f_{i-1}\paren{f_i^{-1}(q)} \cap J_{i-1}$.
Hence $\hat{f}_{i-1}|_{S_1}=J_{i-1}$.

Lastly, we show that $\hat{f}_{i-1}\simeq \hat{f}_i$.
Since $\hat{f}_{i-1}(t)=\hat{f}_i(t)$ almost everywhere, we only need to check that $\hat{f}_{i-1}(t) \simeq \hat{f}_i(t)$ for $t \in \hat{f}_{i}^{-1}\paren{q}$.
If $q$ is as shown in Figures \ref{fig:7}, \ref{fig:5}, \ref{fig:3b}, or \ref{fig:3a}, then $r \leq q$ for all $r \in f_{i-1}\paren{f_i^{-1}(q)}$.
Since $\hat{f}_{i-1}\paren{\hat{f}_i^{-1}(q)}= \{q\}\cup A(q)\cap J_{i-1}$, it follows that $\hat{f}_{i-1} \leq \hat{f}_i$.

Given a fence of Jordan curves $J_1 \lessgtr J_{2} \lessgtr \hdots \lessgtr J_n$ as generated by Algorithm \ref{alg:min}, $J_n \neq A(p)$ for $p$ mixed.
It remains to be shown there exists a parameterization $\hat{f}:D^2 \to A(p)\cup\{p\}$.
Let $A(p)\cup\{p\}$ as shown in Figure \ref{fig:amixed}.
If $p=(x,y)$, denote the points of $A(p)$ as $\set{(x,y^+),(x^+,y),(x,y^-),(x^-,y)}$.
Let $\varphi:I \times I \to D^2$ be a homeomorphism such that $\varphi_{\partial(I \times I)}=S^1$.
Then we define a map $\hat{f}:D^2 \to A(p)\cup\{p\}$ as follows.
\[\hat{f}(t)=\left\{\begin{array}{rl}
    (x,y^+), & t \in \varphi\paren{[0,1]\times\cots{\frac{2}{3},1}} \\
    (x^+,y), & t \in \varphi\paren{\left(\frac{2}{3},1\right]\times\paren{\frac{1}{3},\frac{2}{3}}} \\
    (x,y^-), & t \in \varphi\paren{[0,1]\times\cots{1,\frac{1}{3}}} \\
    (x^-,y), & t \in \varphi\paren{\left[0,\frac{1}{3}\right)\times\paren{\frac{1}{3},\frac{2}{3}}} \\
    (x,y), & \textrm{else.}
\end{array}\right.\]
It is easy to check that $\hat{f}$ is continuous and that $\hat{f}|_{S^1}=A(p)$.

\end{proof}

\subsection{Enumerating Jordan Curves}

While explicitly computing the topological complexity of a space of digital Jordan curves may be difficult, we can get estimates by showing a correspondence with other spaces, or by enumerating the Jordan curves and counting the maximal elements in the space's Hasse diagram.

\begin{defn}
Let
\[\J_1(\D) = \left\{J \subset \D \mid |\jint{J}|=1 \right\}\]
denote the \textbf{space of minimal Jordan curves}.
Equivalently, \[\J_1(\D) = \set{J \in \J(\D) \mid J=A(p) \textrm{ for some } p \in \D-B}.\]
\end{defn}

\begin{thm}
$\tc{}{\J_1(\D)}=1$.
\end{thm}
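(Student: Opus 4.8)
The plan is to show that $\J_1(\D)$ is contractible and then read off its topological complexity from the Lusternik--Schnirelmann bounds. For any contractible finite space $X$ one has $\ls{X}=1$, since $X$ itself is a single open set whose inclusion into $X$ is nullhomotopic. Substituting this into the chain $\ls{X}\leq\tc{}{X}\leq\ls{X\times X}\leq\ls{X}^2$ recorded for finite spaces gives $1\leq\tc{}{\J_1(\D)}\leq 1$, hence $\tc{}{\J_1(\D)}=1$, the moment $\J_1(\D)$ is known to be contractible. I emphasize that genuine (Stong) contractibility is needed: weak contractibility would not suffice, because $\tc{}{X}$ may exceed $\tc{}{\greal{X}}$ for a finite space $X$.

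First I would identify $\J_1(\D)$ as a finite space via the assignment $\Phi\colon p\mapsto A(p)$, which is a bijection from $\D-B$ onto $\J_1(\D)$ by the second description of the space of minimal Jordan curves. The claim is that $\Phi$ is an order-reversing homeomorphism, so that $\J_1(\D)\cong\op{(\D-B)}$. Proposition \ref{prop:minconn} supplies the local content: whenever $p,q$ are adjacent with $p<q$ in $\D$, its case analysis exhibits standard parameterizations realizing $A(q)<A(p)$, so the covering relations of $\D-B$ are carried to reversed relations among minimal curves. To upgrade this to an order isomorphism I must also rule out spurious comparabilities. If $A(p)\leq A(q)$ is witnessed by parameterizations $f\leq g$, then $f(t)$ and $g(t)$ are comparable, hence adjacent or equal, for every $t$; since $A(p)$ and $A(q)$ are radius-one curves this forces them to be pointwise within distance one, and therefore $q\in A(p)\cup\{p\}$. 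Thus the partial order on $\J_1(\D)$ is generated exactly by the adjacencies of the centers, and $\Phi$ is the desired order-reversing homeomorphism.

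It then remains to see that $\op{(\D-B)}$ is contractible, for which it is enough that $\D-B$ is. Since $B$ is the (adjusted) border, its interior is the inner rectangular region: writing $\D=X\times Y$ as a product of two COTS, $\D-B=\jint{B}$ is the product $X'\times Y'$ of the sub-COTS obtained by deleting the two extreme points of $X$ and of $Y$. Each sub-COTS is a finite connected ordered space and hence contractible, its core being a single point; a product of contractible finite spaces is contractible, so $\D-B$ is contractible. Dualizing preserves contractibility, because the core of $\op{Z}$ is the dual of the core of $Z$ and a one-point space is self-dual. Hence $\J_1(\D)\cong\op{(\D-B)}$ is contractible, and the first paragraph yields $\tc{}{\J_1(\D)}=1$.

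The hard part will be the identification $\J_1(\D)\cong\op{(\D-B)}$ --- concretely, the distance argument ruling out comparabilities between minimal curves with non-adjacent centers, together with the verification that after the corner adjustments the border's interior really is the product sub-COTS $X'\times Y'$. The product structure is exactly what promotes the weak contractibility of Corollary \ref{cor:weakcontra} to the genuine contractibility this proof requires.
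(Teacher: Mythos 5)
Your proposal follows essentially the same route as the paper's proof: identify $\J_1(\D)$, via $p\mapsto A(p)$, with the order-dual of an inner digital plane, conclude that $\J_1(\D)$ is genuinely (not just weakly) contractible, and read off $\tc{}{\J_1(\D)}=1$; the paper closes by citing Theorem 1 of \cite{Farber2001} rather than the Lusternik--Schnirelmann chain, an immaterial difference. In two places you are actually more careful than the paper: your distance argument ruling out comparabilities $A(p)\leq A(q)$ for non-adjacent centers is only implicit there (the paper asserts that comparable minimal curves fall into the three adjacent-center cases of Proposition \ref{prop:minconn}), and your remark that dualizing preserves cores, hence contractibility, is likewise left unsaid in the paper.

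There is, however, one slip: the index set is not $\D-B$. The adjusted border $B$ omits mixed cornerpoints, so when $\D$ has mixed corners (e.g., the $4\times4$ plane of Figure \ref{fig:4x4}), the set $\D-B$ properly contains the inner product $X'\times Y'$: it also contains those mixed corners. For a mixed corner $p$, $A(p)$ is a two-point set, not a Jordan curve, so your $\Phi$ is not a bijection from $\D-B$ onto $\J_1(\D)$ (the definition of $\J_1(\D)$ quietly filters such $p$ out by requiring $J\in\J(\D)$), and your claim $\D-B=X'\times Y'$ fails: for the $4\times4$ plane, $\abs{\D-B}=6$ while $\abs{\J_1(\D)}=4$. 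The fix is exactly what the paper does: the centers of minimal Jordan curves are the points of the inner $(m-2)\times(n-2)$ digital plane $\D'=X'\times Y'\subseteq \D-B$ (the points with no missing neighbors), and the homeomorphism is $\J_1(\D)\cong\op{\D'}$, with $\op{\D'}$ again a digital plane by Proposition \ref{prop:dualx}. With $\D'$ in place of $\D-B$ throughout, the rest of your argument --- contractibility of $\D'$ as a product of COTS, contractibility of its dual, and the category bounds --- goes through unchanged.
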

\begin{proof} 
The proof sketch is as follows: we show $\J_1(\D)$ is contractible by showing $\J_1(\D)$ is homeomorphic to a digital plane and applying Theorem 1 of \cite{Farber2001}.

First we show $\J_1(\D)$ is contractible.
Given an $m\times n$ digital plane $\D$, if $J \subset \D$ is a minimal Jordan curve, then $\jint{J}$ lies is the $(m-2)\times(n-2)$ digital plane $\D'\subseteq \D-B$.
Clearly, $\D'$ is contractible because it is the product of two COTS.
Consider $\op{\D'}$.
By Proposition \ref{prop:dualx}, $\op{\D'}$ is also a digital plane, whose open and closed points have been swapped.
For example, if $X$ is the digital plane shown in Figure \ref{fig:pure}, then $\op{X}$ is the digital plane shown in Figure \ref{fig:purec}.
By abuse of notation, $\op{p}\in \op{\D'}$ will refer to the point of $\op{\D'}$ that has the same coordinates and neighbors as $p \in \D'$, but with the opposite ordering.

There exists an inclusion map $\iota:\J_1(\D) \hookrightarrow \op{\D'}$ given by $\iota(J) \mapsto \jint{J} \in \op{\D'}$.
That is, $\iota(A(p)) \mapsto \{\op{p}\}$.
To see that $\iota$ is continuous, consider $J \leq K$ in $\J_1(\D)$.
Because $J$ and $K$ are minimal, each are the border of one of Figures \ref{fig:pure}, \ref{fig:purec}, or \ref{fig:amixed}, up to rotation and translation.
The three cases in the proof of Proposition \ref{prop:minconn} demonstrate the three ways two minimal Jordan curves be adjacent to one another.
If $J \leq K$ in $\J_1(\D)$, each are of the form $A(p)$ and $A(q)$, respectively.
Since $\J_1(\D) \subseteq \J(\D)$ is $T_0$, if $J$ and $K$ are distinct, then $J < K$, in fact.
Then there are three cases for $J=A(p) < A(q) = K$:
\begin{enumerate}
    \item $p$ is closed and $q$ is mixed
    \item $p$ is closed and $q$ is open
    \item $p$ is mixed and $q$ is open.
\end{enumerate}
It is easy to see that in each of the three cases above, $p > q$, since closed points are greater than mixed points are greater than open points with respect to the order topology on $\D$.
Hence, if $A(p) < A(q)$ in $\J_1(\D)$, then $p > q \in \D'$, so $\op{p} < \op{q} \in \op{\D'}$, so $\iota(A(p)) < \iota(A(q)) \in \op{\D'}$. 
To see that $\iota$ is surjective, consider a point $\op{p} \in \op{\D'}$.
Since $\D' \subseteq \D-B$, $p$ is a point of $\D'$ such that $A(p) \subset \D$, so $A(p) \in \J_1(\D)$.

Next, we consider an inverse map $\alpha:\op{\D'} \to \D' \to \J_1(\D)$ define by $\alpha(\op{p})=A(p)\in \J_1(\D)$.
Consider $\op{p} < \op{q}$ to be two distinct comparable points in $\op{\D'}$.
By an argument similar to the previous case:
\begin{eqnarray*}
\op{p}&<&\op{q} \in \op{\D'}\\
p&>&q \in \D'\\
A(p)&<&A(q)\in \J_1(\D).
\end{eqnarray*}

This shows that $\alpha$ is order-preserving and therefore continuous.
Then $\alpha \circ \iota (J)=\alpha(\op{\jint{J}})=A(\jint{J})=J$,
and $\iota\circ\alpha(\op{p})=\iota(A(p))=\op{p}$.
Hence $\alpha\circ\iota \simeq 1_{\J_1(\D)}$ and $\iota\circ\alpha \simeq 1_{\D'}$.
Then $\J_1(\D)$ is homeomorphic to a contractible space, and the result follows.
\end{proof}

Enumerating Jordan curves and determining the topology of the resulting space is a straightforward way to explicitly determine the topological complexity.
Just as we showed the space of minimal Jordan curves was homotopy equivalent to a contractible space, we can do the same for Jordan curves in $4 \times 4$ and $5 \times 5$ digital planes.

All COTS $X$ of length 4 are homeomorphic because, up to reflection, they must each start with an open point and end with a closed point.
(Notice that if a COTS $X$ has an odd number of points, then the first and last points must be of the same type, so $X \not\cong X^{op}$.)
Consequently, there is a unique $4\times 4$ digital plane equipped with the Khalimsky topology, up to rotation.

\begin{figure}
    \centering
    \begin{tikzpicture}
	\draw[lightgray, thin] (1,1) grid (4,4);
    \draw[lightgray, thin] (1,2)--(3,4)--(4,3)--(2,1)--(1,2)--(1,4)--(4,1);
    \draw (1,2) node [shape=circle,fill=white,draw]{};
    \draw (1,4) node [shape=circle,fill=white,draw]{};
    \draw (3,2) node [shape=circle,fill=white,draw]{};
    \draw (3,4) node [shape=circle,fill=white,draw]{};
    \draw (2,3) node [shape=rectangle,fill=white,draw]{};
    \draw (2,1) node [shape=rectangle,fill=white,draw]{};
    \draw (4,3) node [shape=rectangle,fill=white,draw]{};
    \draw (4,1) node [shape=rectangle,fill=white,draw]{};
    \draw (2,4) node [shape=circle,fill=black,scale=.5,draw]{};
    \draw (4,4) node [shape=circle,fill=black,scale=.5,draw]{};
    \draw (1,3) node [shape=circle,fill=black,scale=.5,draw]{};
    \draw (3,3) node [shape=circle,fill=black,scale=.5,draw]{};
    \draw (2,2) node [shape=circle,fill=black,scale=.5,draw]{};
    \draw (4,2) node [shape=circle,fill=black,scale=.5,draw]{};
    \draw (1,1) node [shape=circle,fill=black,scale=.5,draw]{};
    \draw (3,1) node [shape=circle,fill=black,scale=.5,draw]{};
\end{tikzpicture}
    \caption{The $4\times 4$ Khalimsky digital plane}
    \label{fig:4x4}
\end{figure}
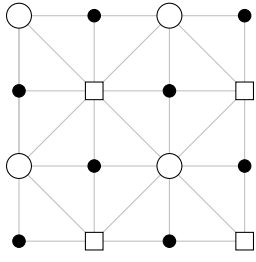

This digital plane $\D_{4\times4}$ shown in Figure \ref{fig:4x4} has four points whose adjacency neighborhoods (i.e., their minimal Jordan curves) are subsets of $\D_{4\times4}$, so $|\J(\D_{4\times4})|\geq 4$.
The adjusted border is also a Jordan curve, whose interior is all four points mentioned above.
All in all, $|\J(\D_{4\times4})|=11$, and we've displayed the Jordan curves in Figure \ref{fig:J4x4}.

\begin{figure}
\centering
\resizebox{!}{7in}{
\begin{tikzpicture}
	\node (a) at (0,0) {
	\begin{tikzpicture}[open/.style={shape=circle,fill=white,scale=.75,draw}, mixed/.style={shape=circle,fill=gray,scale=.5,draw}, closed/.style={shape=rectangle,fill=white,draw}]

	\node[closed] (c1) at (2,3){};

	\node[mixed,above=of c1] (m1){};
	\node[mixed,below=of c1] (m2){};
	\node[mixed,left=of c1] (m3) {};
	\node[mixed,right=of c1] (m4) {};

	\node[open,above left=of c1] (o0) {};
	\node[open,above right=of c1] (o2){};
	\node[open,below left=of c1] (o3){};
	\node[open,below right=of c1] (o1) {};

	\node[closed, above right=of o1] (c2) {};
	\node[closed, below right=of o1] (c4) {};
	\node[closed, below left=of o1] (c3) {};
	\node[mixed,right=of o1] (m5){};
	\node[mixed,right=of c3] (m6){};
	\node[mixed,above=of c2] (m7){};
	\node[mixed,left=of c3] (m8){};

	\draw[lightgray, thin, draw] (o0)--(m3)--(o3)--(m8)--(c3)--(m2)--(c1)--(m1)--(o2)--(m4)--(o1)--(m6)--(c4)--(m5)--(c2)--(m7)--(o2)--(m1)--(o0)--(m3)--(c1)--(m4)--(c2)--(m5)--(o1)--(m2)--(o3)--(m8)--(c3)--(m6)--(c4)--(o1)--(c1)--(o0)--(m1)--(o2)--(c1)--(o3)--(c3)--(o1)--(c2)--(o2);
	\draw[blue,ultra thick] (c1)--(m2)--(c3)--(m6)--(c4)--(m5)--(c2)--(m4)--(c1);
\end{tikzpicture}
};
    \node[below left=of a] (b) {
    \begin{tikzpicture}[open/.style={shape=circle,fill=white,scale=.75,draw}, mixed/.style={shape=circle,fill=gray,scale=.5,draw}, closed/.style={shape=rectangle,fill=white,draw}]

	\node[closed] (c1) at (2,3){};

	\node[mixed,above=of c1] (m1){};
	\node[mixed,below=of c1] (m2){};
	\node[mixed,left=of c1] (m3) {};
	\node[mixed,right=of c1] (m4) {};

	\node[open,above left=of c1] (o0) {};
	\node[open,above right=of c1] (o2){};
	\node[open,below left=of c1] (o3){};
	\node[open,below right=of c1] (o1) {};

	\node[closed, above right=of o1] (c2) {};
	\node[closed, below right=of o1] (c4) {};
	\node[closed, below left=of o1] (c3) {};
	\node[mixed,right=of o1] (m5){};
	\node[mixed,right=of c3] (m6){};
	\node[mixed,above=of c2] (m7){};
	\node[mixed,left=of c3] (m8){};

	\draw[lightgray, thin, draw] (o0)--(m3)--(o3)--(m8)--(c3)--(m2)--(c1)--(m1)--(o2)--(m4)--(o1)--(m6)--(c4)--(m5)--(c2)--(m7)--(o2)--(m1)--(o0)--(m3)--(c1)--(m4)--(c2)--(m5)--(o1)--(m2)--(o3)--(m8)--(c3)--(m6)--(c4)--(o1)--(c1)--(o0)--(m1)--(o2)--(c1)--(o3)--(c3)--(o1)--(c2)--(o2);
	\draw[blue,ultra thick] (o3)--(c3)--(m6)--(c4)--(m5)--(c2)--(m4)--(c1)--(o3);
\end{tikzpicture}};
    \node[below right=of a] (c) {
    \begin{tikzpicture}[open/.style={shape=circle,fill=white,scale=.75,draw}, mixed/.style={shape=circle,fill=gray,scale=.5,draw}, closed/.style={shape=rectangle,fill=white,draw}]

	\node[closed] (c1) at (2,3){};

	\node[mixed,above=of c1] (m1){};
	\node[mixed,below=of c1] (m2){};
	\node[mixed,left=of c1] (m3) {};
	\node[mixed,right=of c1] (m4) {};

	\node[open,above left=of c1] (o0) {};
	\node[open,above right=of c1] (o2){};
	\node[open,below left=of c1] (o3){};
	\node[open,below right=of c1] (o1) {};

	\node[closed, above right=of o1] (c2) {};
	\node[closed, below right=of o1] (c4) {};
	\node[closed, below left=of o1] (c3) {};
	\node[mixed,right=of o1] (m5){};
	\node[mixed,right=of c3] (m6){};
	\node[mixed,above=of c2] (m7){};
	\node[mixed,left=of c3] (m8){};

	\draw[lightgray, thin, draw] (o0)--(m3)--(o3)--(m8)--(c3)--(m2)--(c1)--(m1)--(o2)--(m4)--(o1)--(m6)--(c4)--(m5)--(c2)--(m7)--(o2)--(m1)--(o0)--(m3)--(c1)--(m4)--(c2)--(m5)--(o1)--(m2)--(o3)--(m8)--(c3)--(m6)--(c4)--(o1)--(c1)--(o0)--(m1)--(o2)--(c1)--(o3)--(c3)--(o1)--(c2)--(o2);
	\draw[blue,ultra thick] (c1)--(m2)--(c3)--(m6)--(c4)--(m5)--(c2)--(o2)--(c1);
\end{tikzpicture}};
    \node[below right=of b] (d) {
    \begin{tikzpicture}[open/.style={shape=circle,fill=white,scale=.75,draw}, mixed/.style={shape=circle,fill=gray,scale=.5,draw}, closed/.style={shape=rectangle,fill=white,draw}]

	\node[closed] (c1) at (2,3){};

	\node[mixed,above=of c1] (m1){};
	\node[mixed,below=of c1] (m2){};
	\node[mixed,left=of c1] (m3) {};
	\node[mixed,right=of c1] (m4) {};

	\node[open,above left=of c1] (o0) {};
	\node[open,above right=of c1] (o2){};
	\node[open,below left=of c1] (o3){};
	\node[open,below right=of c1] (o1) {};

	\node[closed, above right=of o1] (c2) {};
	\node[closed, below right=of o1] (c4) {};
	\node[closed, below left=of o1] (c3) {};
	\node[mixed,right=of o1] (m5){};
	\node[mixed,right=of c3] (m6){};
	\node[mixed,above=of c2] (m7){};
	\node[mixed,left=of c3] (m8){};

	\draw[lightgray, thin, draw] (o0)--(m3)--(o3)--(m8)--(c3)--(m2)--(c1)--(m1)--(o2)--(m4)--(o1)--(m6)--(c4)--(m5)--(c2)--(m7)--(o2)--(m1)--(o0)--(m3)--(c1)--(m4)--(c2)--(m5)--(o1)--(m2)--(o3)--(m8)--(c3)--(m6)--(c4)--(o1)--(c1)--(o0)--(m1)--(o2)--(c1)--(o3)--(c3)--(o1)--(c2)--(o2);
	\draw[blue,ultra thick] (c1)--(o3)--(c3)--(m6)--(c4)--(m5)--(c2)--(o2)--(c1);
\end{tikzpicture}};
    \node[below left=of d] (e) {
    \begin{tikzpicture}[open/.style={shape=circle,fill=white,scale=.75,draw}, mixed/.style={shape=circle,fill=gray,scale=.5,draw}, closed/.style={shape=rectangle,fill=white,draw}]

	\node[closed] (c1) at (2,3){};

	\node[mixed,above=of c1] (m1){};
	\node[mixed,below=of c1] (m2){};
	\node[mixed,left=of c1] (m3) {};
	\node[mixed,right=of c1] (m4) {};

	\node[open,above left=of c1] (o0) {};
	\node[open,above right=of c1] (o2){};
	\node[open,below left=of c1] (o3){};
	\node[open,below right=of c1] (o1) {};

	\node[closed, above right=of o1] (c2) {};
	\node[closed, below right=of o1] (c4) {};
	\node[closed, below left=of o1] (c3) {};
	\node[mixed,right=of o1] (m5){};
	\node[mixed,right=of c3] (m6){};
	\node[mixed,above=of c2] (m7){};
	\node[mixed,left=of c3] (m8){};

	\draw[lightgray, thin, draw] (o0)--(m3)--(o3)--(m8)--(c3)--(m2)--(c1)--(m1)--(o2)--(m4)--(o1)--(m6)--(c4)--(m5)--(c2)--(m7)--(o2)--(m1)--(o0)--(m3)--(c1)--(m4)--(c2)--(m5)--(o1)--(m2)--(o3)--(m8)--(c3)--(m6)--(c4)--(o1)--(c1)--(o0)--(m1)--(o2)--(c1)--(o3)--(c3)--(o1)--(c2)--(o2);
	\draw[blue,ultra thick] (c1)--(o3)--(c3)--(o1)--(c1);
\end{tikzpicture}};
    \node[below=of d] (f) {
    \begin{tikzpicture}[open/.style={shape=circle,fill=white,scale=.75,draw}, mixed/.style={shape=circle,fill=gray,scale=.5,draw}, closed/.style={shape=rectangle,fill=white,draw}]

	\node[closed] (c1) at (2,3){};

	\node[mixed,above=of c1] (m1){};
	\node[mixed,below=of c1] (m2){};
	\node[mixed,left=of c1] (m3) {};
	\node[mixed,right=of c1] (m4) {};

	\node[open,above left=of c1] (o0) {};
	\node[open,above right=of c1] (o2){};
	\node[open,below left=of c1] (o3){};
	\node[open,below right=of c1] (o1) {};

	\node[closed, above right=of o1] (c2) {};
	\node[closed, below right=of o1] (c4) {};
	\node[closed, below left=of o1] (c3) {};
	\node[mixed,right=of o1] (m5){};
	\node[mixed,right=of c3] (m6){};
	\node[mixed,above=of c2] (m7){};
	\node[mixed,left=of c3] (m8){};

	\draw[lightgray, thin, draw] (o0)--(m3)--(o3)--(m8)--(c3)--(m2)--(c1)--(m1)--(o2)--(m4)--(o1)--(m6)--(c4)--(m5)--(c2)--(m7)--(o2)--(m1)--(o0)--(m3)--(c1)--(m4)--(c2)--(m5)--(o1)--(m2)--(o3)--(m8)--(c3)--(m6)--(c4)--(o1)--(c1)--(o0)--(m1)--(o2)--(c1)--(o3)--(c3)--(o1)--(c2)--(o2);
	\draw[blue,ultra thick] (o0)--(m3)--(o3)--(c3)--(m6)--(c4)--(m5)--(c2)--(o2)--(m1)--(o0);
\end{tikzpicture}};
    \node[below right=of d] (g) {
    \begin{tikzpicture}[open/.style={shape=circle,fill=white,scale=.75,draw}, mixed/.style={shape=circle,fill=gray,scale=.5,draw}, closed/.style={shape=rectangle,fill=white,draw}]

	\node[closed] (c1) at (2,3){};

	\node[mixed,above=of c1] (m1){};
	\node[mixed,below=of c1] (m2){};
	\node[mixed,left=of c1] (m3) {};
	\node[mixed,right=of c1] (m4) {};

	\node[open,above left=of c1] (o0) {};
	\node[open,above right=of c1] (o2){};
	\node[open,below left=of c1] (o3){};
	\node[open,below right=of c1] (o1) {};

	\node[closed, above right=of o1] (c2) {};
	\node[closed, below right=of o1] (c4) {};
	\node[closed, below left=of o1] (c3) {};
	\node[mixed,right=of o1] (m5){};
	\node[mixed,right=of c3] (m6){};
	\node[mixed,above=of c2] (m7){};
	\node[mixed,left=of c3] (m8){};

	\draw[lightgray, thin, draw] (o0)--(m3)--(o3)--(m8)--(c3)--(m2)--(c1)--(m1)--(o2)--(m4)--(o1)--(m6)--(c4)--(m5)--(c2)--(m7)--(o2)--(m1)--(o0)--(m3)--(c1)--(m4)--(c2)--(m5)--(o1)--(m2)--(o3)--(m8)--(c3)--(m6)--(c4)--(o1)--(c1)--(o0)--(m1)--(o2)--(c1)--(o3)--(c3)--(o1)--(c2)--(o2);
	\draw[blue, ultra thick] (o2)--(c1)--(o1)--(c2)--(o2);
\end{tikzpicture}};
    \node[below=of f] (h) {
    \begin{tikzpicture}[open/.style={shape=circle,fill=white,scale=.75,draw}, mixed/.style={shape=circle,fill=gray,scale=.5,draw}, closed/.style={shape=rectangle,fill=white,draw}]

	\node[closed] (c1) at (2,3){};

	\node[mixed,above=of c1] (m1){};
	\node[mixed,below=of c1] (m2){};
	\node[mixed,left=of c1] (m3) {};
	\node[mixed,right=of c1] (m4) {};

	\node[open,above left=of c1] (o0) {};
	\node[open,above right=of c1] (o2){};
	\node[open,below left=of c1] (o3){};
	\node[open,below right=of c1] (o1) {};

	\node[closed, above right=of o1] (c2) {};
	\node[closed, below right=of o1] (c4) {};
	\node[closed, below left=of o1] (c3) {};
	\node[mixed,right=of o1] (m5){};
	\node[mixed,right=of c3] (m6){};
	\node[mixed,above=of c2] (m7){};
	\node[mixed,left=of c3] (m8){};

	\draw[lightgray, thin, draw] (o0)--(m3)--(o3)--(m8)--(c3)--(m2)--(c1)--(m1)--(o2)--(m4)--(o1)--(m6)--(c4)--(m5)--(c2)--(m7)--(o2)--(m1)--(o0)--(m3)--(c1)--(m4)--(c2)--(m5)--(o1)--(m2)--(o3)--(m8)--(c3)--(m6)--(c4)--(o1)--(c1)--(o0)--(m1)--(o2)--(c1)--(o3)--(c3)--(o1)--(c2)--(o2);
	\draw[blue,ultra thick] (o0)--(m3)--(o3)--(c3)--(o1)--(c2)--(o2)--(m1)--(o0);
\end{tikzpicture}};
    \node[below left=of h] (i) {
    \begin{tikzpicture}[open/.style={shape=circle,fill=white,scale=.75,draw}, mixed/.style={shape=circle,fill=gray,scale=.5,draw}, closed/.style={shape=rectangle,fill=white,draw}]

	\node[closed] (c1) at (2,3){};

	\node[mixed,above=of c1] (m1){};
	\node[mixed,below=of c1] (m2){};
	\node[mixed,left=of c1] (m3) {};
	\node[mixed,right=of c1] (m4) {};

	\node[open,above left=of c1] (o0) {};
	\node[open,above right=of c1] (o2){};
	\node[open,below left=of c1] (o3){};
	\node[open,below right=of c1] (o1) {};

	\node[closed, above right=of o1] (c2) {};
	\node[closed, below right=of o1] (c4) {};
	\node[closed, below left=of o1] (c3) {};
	\node[mixed,right=of o1] (m5){};
	\node[mixed,right=of c3] (m6){};
	\node[mixed,above=of c2] (m7){};
	\node[mixed,left=of c3] (m8){};

	\draw[lightgray, thin, draw] (o0)--(m3)--(o3)--(m8)--(c3)--(m2)--(c1)--(m1)--(o2)--(m4)--(o1)--(m6)--(c4)--(m5)--(c2)--(m7)--(o2)--(m1)--(o0)--(m3)--(c1)--(m4)--(c2)--(m5)--(o1)--(m2)--(o3)--(m8)--(c3)--(m6)--(c4)--(o1)--(c1)--(o0)--(m1)--(o2)--(c1)--(o3)--(c3)--(o1)--(c2)--(o2);
	\draw[blue, ultra thick] (o0)--(m3)--(o3)--(c3)--(o1)--(m4)--(o2)--(m1)--(o0);
\end{tikzpicture}};
    \node[below right=of h] (j) {
    \begin{tikzpicture}[open/.style={shape=circle,fill=white,scale=.75,draw}, mixed/.style={shape=circle,fill=gray,scale=.5,draw}, closed/.style={shape=rectangle,fill=white,draw}]

	\node[closed] (c1) at (2,3){};

	\node[mixed,above=of c1] (m1){};
	\node[mixed,below=of c1] (m2){};
	\node[mixed,left=of c1] (m3) {};
	\node[mixed,right=of c1] (m4) {};

	\node[open,above left=of c1] (o0) {};
	\node[open,above right=of c1] (o2){};
	\node[open,below left=of c1] (o3){};
	\node[open,below right=of c1] (o1) {};

	\node[closed, above right=of o1] (c2) {};
	\node[closed, below right=of o1] (c4) {};
	\node[closed, below left=of o1] (c3) {};
	\node[mixed,right=of o1] (m5){};
	\node[mixed,right=of c3] (m6){};
	\node[mixed,above=of c2] (m7){};
	\node[mixed,left=of c3] (m8){};

	\draw[lightgray, thin, draw] (o0)--(m3)--(o3)--(m8)--(c3)--(m2)--(c1)--(m1)--(o2)--(m4)--(o1)--(m6)--(c4)--(m5)--(c2)--(m7)--(o2)--(m1)--(o0)--(m3)--(c1)--(m4)--(c2)--(m5)--(o1)--(m2)--(o3)--(m8)--(c3)--(m6)--(c4)--(o1)--(c1)--(o0)--(m1)--(o2)--(c1)--(o3)--(c3)--(o1)--(c2)--(o2);
	\draw[blue, ultra thick] (o0)--(m3)--(o3)--(m2)--(o1)--(c2)--(o2)--(m1)--(o0);
\end{tikzpicture}};
    \node[below right=of i] (k) {
    \begin{tikzpicture}[open/.style={shape=circle,fill=white,scale=.75,draw}, mixed/.style={shape=circle,fill=gray,scale=.5,draw}, closed/.style={shape=rectangle,fill=white,draw}]

	\node[closed] (c1) at (2,3){};

	\node[mixed,above=of c1] (m1){};
	\node[mixed,below=of c1] (m2){};
	\node[mixed,left=of c1] (m3) {};
	\node[mixed,right=of c1] (m4) {};

	\node[open,above left=of c1] (o0) {};
	\node[open,above right=of c1] (o2){};
	\node[open,below left=of c1] (o3){};
	\node[open,below right=of c1] (o1) {};

	\node[closed, above right=of o1] (c2) {};
	\node[closed, below right=of o1] (c4) {};
	\node[closed, below left=of o1] (c3) {};
	\node[mixed,right=of o1] (m5){};
	\node[mixed,right=of c3] (m6){};
	\node[mixed,above=of c2] (m7){};
	\node[mixed,left=of c3] (m8){};

	\draw[lightgray, thin, draw] (o0)--(m3)--(o3)--(m8)--(c3)--(m2)--(c1)--(m1)--(o2)--(m4)--(o1)--(m6)--(c4)--(m5)--(c2)--(m7)--(o2)--(m1)--(o0)--(m3)--(c1)--(m4)--(c2)--(m5)--(o1)--(m2)--(o3)--(m8)--(c3)--(m6)--(c4)--(o1)--(c1)--(o0)--(m1)--(o2)--(c1)--(o3)--(c3)--(o1)--(c2)--(o2);
	\draw[blue, ultra thick] (o0)--(m3)--(o3)--(m2)--(o1)--(m4)--(o2)--(m1)--(o0);
\end{tikzpicture}};
    
    \draw (a)--(b)--(d)--(f)--(h)--(i)--(k)--(j)--(h)--(f)--(d)--(c)--(a);
    \draw (b)--(e)--(i);
    \draw (c)--(g)--(j);
\end{tikzpicture}}
    \caption{The Hasse diagram of $\J(\D_{4\times 4})$}
    \label{fig:J4x4}
\end{figure}

Intuitively, we can see how the pointwise order topology dictates the structure of the Hasse diagram in Figure \ref{fig:J4x4}.
If $J < K$ in $\J\paren{\D_{4\times 4}}$, then $J$ either has fewer closed points or more open points than $K$.
In Proposition \ref{prop:closedmax}, we will formalize what the maximal and minimal elements of a space of Jordan curves looks like.

\begin{thm}\label{cor:J41} 
For the $4 \times 4$ digital plane $\D_{4\times4}$, $\tc{}{\J(\D_{4\times4})}=1$.
\end{thm}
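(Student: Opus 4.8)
The plan is to prove $\tc{}{\J(\D_{4\times4})}=1$ by showing that $\J(\D_{4\times4})$ is contractible and then invoking Theorem 1 of \cite{Farber2001}, which gives $\tc{}{X}=1$ exactly when $X$ is contractible; this follows the same template as the proof of $\tc{}{\J_1(\D)}=1$ above. By Theorem \ref{thm:DT0}, $\J(\D_{4\times4})$ is a finite $T_0$ space, and its eleven elements together with all of their order relations are recorded in the Hasse diagram of Figure \ref{fig:J4x4}. Contractibility is therefore a finite, combinatorial question about this poset.

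First I would single out the top vertex of Figure \ref{fig:J4x4}, the maximal Jordan curve $B$ (the adjusted border). The key claim is that $B$ is in fact the \emph{maximum} of the poset, that is, $J \leq B$ for every $J \in \J(\D_{4\times4})$. To verify this I would pass to the transitive closure of the covering relations drawn in Figure \ref{fig:J4x4}: each of the other ten vertices lies on a descending chain ending at $B$, so every Jordan curve is comparable to and below $B$. (Dually, the bottom vertex is a minimum, so either extremum could be used.)

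Once $B$ is identified as the maximum, contractibility is immediate. In the pointwise order on $\J(\D_{4\times4})^{\J(\D_{4\times4})}$ the identity map satisfies $\mathrm{id} \leq c_B$, where $c_B$ denotes the constant map at $B$, precisely because $J \leq B$ for all $J$. By Corollary 3 of \cite{Stong1966}, comparable self-maps of a finite $T_0$ space are homotopic, so $\mathrm{id} \simeq c_B$ and $\J(\D_{4\times4})$ contracts to the point $B$. Applying Theorem 1 of \cite{Farber2001} then gives $\tc{}{\J(\D_{4\times4})}=1$.

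The step I expect to be the main obstacle is the middle one: because $\leq$ on $\J$ is only a partial order, being the geometrically largest curve does not by itself make $B$ comparable to every other curve, so one must genuinely certify the ten comparabilities $J \leq B$. I would discharge this by reading off the relations computed in Figure \ref{fig:J4x4}, or, for a self-contained argument, by producing for each $J$ an explicit parameterization dominated by one of $B$ in the spirit of Proposition \ref{prop:minconn} and Algorithm \ref{alg:shrink}. Should any single comparability fail, the safe fallback is Stong's criterion (Corollary 4 of \cite{Stong1966}): remove beat points from the poset of Figure \ref{fig:J4x4} one at a time and check that the core collapses to a single vertex, which again yields contractibility and hence $\tc{}{\J(\D_{4\times4})}=1$.
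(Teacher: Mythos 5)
Your proof follows the same route as the paper's (one-line) argument: observe from Figure \ref{fig:J4x4} that the poset $\J(\D_{4\times4})$ has a unique maximal element, note that a unique maximal element of a finite poset is automatically a maximum, contract to it (the identity map is pointwise comparable to the constant map at the maximum, hence homotopic to it by Corollary 3 of \cite{Stong1966}), and invoke Theorem 1 of \cite{Farber2001}. In outline this is correct, and your write-up simply makes explicit the steps the paper compresses.

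There is, however, one concrete error that you must fix, because it would sink the ``self-contained'' variant of your argument. The top vertex of Figure \ref{fig:J4x4} is \emph{not} the adjusted border $B$. The order on $\J$ is pointwise domination of parameterizations in the specialization order of $\D$, not containment of interiors, so ``geometrically largest'' does not mean ``largest in the poset.'' By Proposition \ref{prop:closedmax}, the maximal elements of $\J$ are the curves containing \emph{no open points}; in $\D_{4\times4}$ the unique such curve is $A(o)$, the diamond through the four closed points and the four mixed points between them, where $o$ is the central open point --- a curve whose interior is the single point $o$. The adjusted border, by contrast, contains three open points (its open corners) and sits in the \emph{middle} of the Hasse diagram, strictly below $A(o)$. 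Consequently your proposed discharge of the ten comparabilities --- producing for each $J$ a parameterization dominated by one of the adjusted border --- would fail: for instance $A(o)\not\leq B$, and in fact $B < A(o)$. The repair is cosmetic: contract to the actual top vertex $A(o)$ rather than to $B$, after which every step of your argument (including the transitive-closure check, which does terminate at $A(o)$ from every vertex) goes through; your fallback via Stong's core criterion (Corollary 4 of \cite{Stong1966}) is also valid as stated.
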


\begin{proof}
Since the Hasse diagram of $\J(\D_{4\times4})$ has a unique maximal element, $\J(\D_{4\times4})$ is contractible, so $\tc{}{\J(\D_{4\times4})}=1$ by Theorem 1 of \cite{Farber2001}.

\end{proof}

When a finite path-connected space $X$ has a unique maximal element $x_0 \in X$, the motion planner on $X$ sends a pair of start and end points $(a,b) \in X \times X$ to the path from $a$ to $x_0$ to $b$.
While this motion planner is continuous and will work for any space with a unique maximal element, the paths it generates are not necessarily intuitive.
Figure \ref{fig:J4p1} displays a path between two Jordan curves in $\J\paren{\D_{4\times 4}}$ as constructed in Corollary \ref{cor:J41}.
If we label the Jordan curves in Figure \ref{fig:J4p1} from left to right as $J$, $K$, and $L$, notice that $\abs{J \cap K}=\abs{K \cap L}=2$, despite the fact that $\abs{J \cap L}=6$.\footnote{These are intersections are as subsets of $\D_{4\times 4}$, not as singletons in $\J\paren{\D_{4 \times 4}}$.}
A more intuitive path from $J$ to $L$ might be the one shown in Figure \ref{fig:J4p2}.
Notice in that path that $\abs{J\cap K}=\abs{K \cap L} =7$.
In \cite{Blaszczyk2018}, they describe \textbf{efficient topological complexity}, for which the length of the motion planner is taken into account.
Although efficient topological complexity is defined for only smooth compact orientable Riemannian manifolds, it may be of interest to define an efficient notion of combinatorial complexity that minimizes the height traveled by the motion planner in the Hasse diagram.
For example, if $X$ is a $T_0$ space with associated Hasse diagram $\mathcal{H}$ and height function $h:\mathcal{H}\to \zz_{\geq 0}$, let $U \subseteq X\times X$ admit a motion planner $s$.
An ideal motion planner would minimize $\abs{h\paren{s(a,b)(t_1)}-h\paren{s(a,b)(t_2)}}$ for all $(a,b)\in U$ and $t_1,t_2 \in S^1$.

\begin{figure}
    \centering
    \begin{tikzpicture}
        \node (left) at (0,0) {
    \begin{tikzpicture}[open/.style={shape=circle,fill=white,scale=.75,draw}, mixed/.style={shape=circle,fill=gray,scale=.5,draw}, closed/.style={shape=rectangle,fill=white,draw}]

	\node[closed] (c1) at (2,3){};

	\node[mixed,above=of c1] (m1){};
	\node[mixed,below=of c1] (m2){};
	\node[mixed,left=of c1] (m3) {};
	\node[mixed,right=of c1] (m4) {};

	\node[open,above left=of c1] (o0) {};
	\node[open,above right=of c1] (o2){};
	\node[open,below left=of c1] (o3){};
	\node[open,below right=of c1] (o1) {};

	\node[closed, above right=of o1] (c2) {};
	\node[closed, below right=of o1] (c4) {};
	\node[closed, below left=of o1] (c3) {};
	\node[mixed,right=of o1] (m5){};
	\node[mixed,right=of c3] (m6){};
	\node[mixed,above=of c2] (m7){};
	\node[mixed,left=of c3] (m8){};

	\draw[lightgray, thin, draw] (o0)--(m3)--(o3)--(m8)--(c3)--(m2)--(c1)--(m1)--(o2)--(m4)--(o1)--(m6)--(c4)--(m5)--(c2)--(m7)--(o2)--(m1)--(o0)--(m3)--(c1)--(m4)--(c2)--(m5)--(o1)--(m2)--(o3)--(m8)--(c3)--(m6)--(c4)--(o1)--(c1)--(o0)--(m1)--(o2)--(c1)--(o3)--(c3)--(o1)--(c2)--(o2);
	\draw[blue, ultra thick] (o0)--(m3)--(o3)--(c3)--(o1)--(m4)--(o2)--(m1)--(o0);
\end{tikzpicture}};
        \node (mid) at (5,0) {
        \begin{tikzpicture}[open/.style={shape=circle,fill=white,scale=.75,draw}, mixed/.style={shape=circle,fill=gray,scale=.5,draw}, closed/.style={shape=rectangle,fill=white,draw}]

	\node[closed] (c1) at (2,3){};

	\node[mixed,above=of c1] (m1){};
	\node[mixed,below=of c1] (m2){};
	\node[mixed,left=of c1] (m3) {};
	\node[mixed,right=of c1] (m4) {};

	\node[open,above left=of c1] (o0) {};
	\node[open,above right=of c1] (o2){};
	\node[open,below left=of c1] (o3){};
	\node[open,below right=of c1] (o1) {};

	\node[closed, above right=of o1] (c2) {};
	\node[closed, below right=of o1] (c4) {};
	\node[closed, below left=of o1] (c3) {};
	\node[mixed,right=of o1] (m5){};
	\node[mixed,right=of c3] (m6){};
	\node[mixed,above=of c2] (m7){};
	\node[mixed,left=of c3] (m8){};

	\draw[lightgray, thin, draw] (o0)--(m3)--(o3)--(m8)--(c3)--(m2)--(c1)--(m1)--(o2)--(m4)--(o1)--(m6)--(c4)--(m5)--(c2)--(m7)--(o2)--(m1)--(o0)--(m3)--(c1)--(m4)--(c2)--(m5)--(o1)--(m2)--(o3)--(m8)--(c3)--(m6)--(c4)--(o1)--(c1)--(o0)--(m1)--(o2)--(c1)--(o3)--(c3)--(o1)--(c2)--(o2);
	\draw[blue,ultra thick] (c1)--(m2)--(c3)--(m6)--(c4)--(m5)--(c2)--(m4)--(c1);
\end{tikzpicture}};
    \node (right) at (10,0) {
    \begin{tikzpicture}[open/.style={shape=circle,fill=white,scale=.75,draw}, mixed/.style={shape=circle,fill=gray,scale=.5,draw}, closed/.style={shape=rectangle,fill=white,draw}]

	\node[closed] (c1) at (2,3){};

	\node[mixed,above=of c1] (m1){};
	\node[mixed,below=of c1] (m2){};
	\node[mixed,left=of c1] (m3) {};
	\node[mixed,right=of c1] (m4) {};

	\node[open,above left=of c1] (o0) {};
	\node[open,above right=of c1] (o2){};
	\node[open,below left=of c1] (o3){};
	\node[open,below right=of c1] (o1) {};

	\node[closed, above right=of o1] (c2) {};
	\node[closed, below right=of o1] (c4) {};
	\node[closed, below left=of o1] (c3) {};
	\node[mixed,right=of o1] (m5){};
	\node[mixed,right=of c3] (m6){};
	\node[mixed,above=of c2] (m7){};
	\node[mixed,left=of c3] (m8){};

	\draw[lightgray, thin, draw] (o0)--(m3)--(o3)--(m8)--(c3)--(m2)--(c1)--(m1)--(o2)--(m4)--(o1)--(m6)--(c4)--(m5)--(c2)--(m7)--(o2)--(m1)--(o0)--(m3)--(c1)--(m4)--(c2)--(m5)--(o1)--(m2)--(o3)--(m8)--(c3)--(m6)--(c4)--(o1)--(c1)--(o0)--(m1)--(o2)--(c1)--(o3)--(c3)--(o1)--(c2)--(o2);
	\draw[blue, ultra thick] (o0)--(m3)--(o3)--(m2)--(o1)--(c2)--(o2)--(m1)--(o0);
\end{tikzpicture}}; 
    \draw[->, ultra thick] (left) -- (mid);
    \draw[->, ultra thick] (mid) -- (right);
    \end{tikzpicture}
    \caption{A continuous path between two Jordan curves given by the motion planner on a space with a maximal element}
    \label{fig:J4p1}
\end{figure}
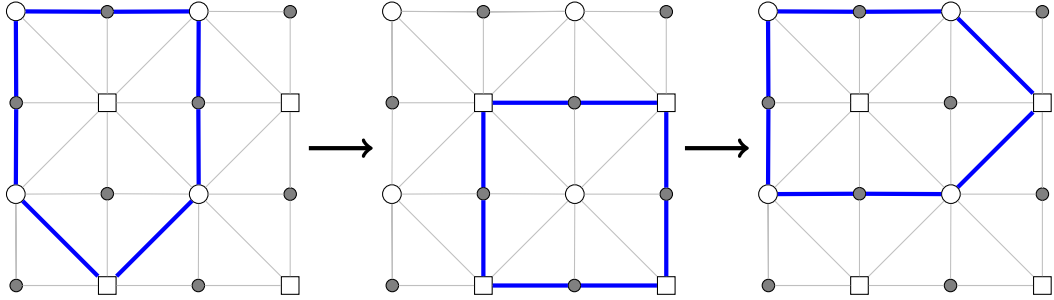

\begin{figure}
    \centering
    \begin{tikzpicture}
        \node (left) at (0,0) {
    \begin{tikzpicture}[open/.style={shape=circle,fill=white,scale=.75,draw}, mixed/.style={shape=circle,fill=gray,scale=.5,draw}, closed/.style={shape=rectangle,fill=white,draw}]

	\node[closed] (c1) at (2,3){};

	\node[mixed,above=of c1] (m1){};
	\node[mixed,below=of c1] (m2){};
	\node[mixed,left=of c1] (m3) {};
	\node[mixed,right=of c1] (m4) {};

	\node[open,above left=of c1] (o0) {};
	\node[open,above right=of c1] (o2){};
	\node[open,below left=of c1] (o3){};
	\node[open,below right=of c1] (o1) {};

	\node[closed, above right=of o1] (c2) {};
	\node[closed, below right=of o1] (c4) {};
	\node[closed, below left=of o1] (c3) {};
	\node[mixed,right=of o1] (m5){};
	\node[mixed,right=of c3] (m6){};
	\node[mixed,above=of c2] (m7){};
	\node[mixed,left=of c3] (m8){};

	\draw[lightgray, thin, draw] (o0)--(m3)--(o3)--(m8)--(c3)--(m2)--(c1)--(m1)--(o2)--(m4)--(o1)--(m6)--(c4)--(m5)--(c2)--(m7)--(o2)--(m1)--(o0)--(m3)--(c1)--(m4)--(c2)--(m5)--(o1)--(m2)--(o3)--(m8)--(c3)--(m6)--(c4)--(o1)--(c1)--(o0)--(m1)--(o2)--(c1)--(o3)--(c3)--(o1)--(c2)--(o2);
	\draw[blue, ultra thick] (o0)--(m3)--(o3)--(c3)--(o1)--(m4)--(o2)--(m1)--(o0);
\end{tikzpicture}};
        \node (mid) at (5,0) {
        \begin{tikzpicture}[open/.style={shape=circle,fill=white,scale=.75,draw}, mixed/.style={shape=circle,fill=gray,scale=.5,draw}, closed/.style={shape=rectangle,fill=white,draw}]

	\node[closed] (c1) at (2,3){};

	\node[mixed,above=of c1] (m1){};
	\node[mixed,below=of c1] (m2){};
	\node[mixed,left=of c1] (m3) {};
	\node[mixed,right=of c1] (m4) {};

	\node[open,above left=of c1] (o0) {};
	\node[open,above right=of c1] (o2){};
	\node[open,below left=of c1] (o3){};
	\node[open,below right=of c1] (o1) {};

	\node[closed, above right=of o1] (c2) {};
	\node[closed, below right=of o1] (c4) {};
	\node[closed, below left=of o1] (c3) {};
	\node[mixed,right=of o1] (m5){};
	\node[mixed,right=of c3] (m6){};
	\node[mixed,above=of c2] (m7){};
	\node[mixed,left=of c3] (m8){};

	\draw[lightgray, thin, draw] (o0)--(m3)--(o3)--(m8)--(c3)--(m2)--(c1)--(m1)--(o2)--(m4)--(o1)--(m6)--(c4)--(m5)--(c2)--(m7)--(o2)--(m1)--(o0)--(m3)--(c1)--(m4)--(c2)--(m5)--(o1)--(m2)--(o3)--(m8)--(c3)--(m6)--(c4)--(o1)--(c1)--(o0)--(m1)--(o2)--(c1)--(o3)--(c3)--(o1)--(c2)--(o2);
	\draw[blue, ultra thick] (o0)--(m3)--(o3)--(m2)--(o1)--(m4)--(o2)--(m1)--(o0);
\end{tikzpicture}};
    \node (right) at (10,0) {
    \begin{tikzpicture}[open/.style={shape=circle,fill=white,scale=.75,draw}, mixed/.style={shape=circle,fill=gray,scale=.5,draw}, closed/.style={shape=rectangle,fill=white,draw}]

	\node[closed] (c1) at (2,3){};

	\node[mixed,above=of c1] (m1){};
	\node[mixed,below=of c1] (m2){};
	\node[mixed,left=of c1] (m3) {};
	\node[mixed,right=of c1] (m4) {};

	\node[open,above left=of c1] (o0) {};
	\node[open,above right=of c1] (o2){};
	\node[open,below left=of c1] (o3){};
	\node[open,below right=of c1] (o1) {};

	\node[closed, above right=of o1] (c2) {};
	\node[closed, below right=of o1] (c4) {};
	\node[closed, below left=of o1] (c3) {};
	\node[mixed,right=of o1] (m5){};
	\node[mixed,right=of c3] (m6){};
	\node[mixed,above=of c2] (m7){};
	\node[mixed,left=of c3] (m8){};

	\draw[lightgray, thin, draw] (o0)--(m3)--(o3)--(m8)--(c3)--(m2)--(c1)--(m1)--(o2)--(m4)--(o1)--(m6)--(c4)--(m5)--(c2)--(m7)--(o2)--(m1)--(o0)--(m3)--(c1)--(m4)--(c2)--(m5)--(o1)--(m2)--(o3)--(m8)--(c3)--(m6)--(c4)--(o1)--(c1)--(o0)--(m1)--(o2)--(c1)--(o3)--(c3)--(o1)--(c2)--(o2);
	\draw[blue, ultra thick] (o0)--(m3)--(o3)--(m2)--(o1)--(c2)--(o2)--(m1)--(o0);
\end{tikzpicture}}; 
    \draw[->, ultra thick] (left) -- (mid);
    \draw[->, ultra thick] (mid) -- (right);
    \end{tikzpicture}
    \caption{A more intuitive path than the one given in Figure \ref{fig:J4p1}}
    \label{fig:J4p2}
\end{figure}

We have also counted all of the Jordan curves that can exist in a $5\times5$ digital plane with an open border.
In Figures \ref{fig:87-1} through \ref{fig:87-4}, we display all 87 Jordan curves in $\J\paren{\D_{5\times5}}$, hand-drawn.
The adjacency lines are not drawn in, however, they remain the same as in Figure \ref{fig:plane}.
The Jordan curves are roughly in order from the maximal elements to the minimal elements.
The maximal element of $\J\paren{\D_{5\times5}}$ (the top-left Jordan curve in Figure \ref{fig:87-1}) is the Jordan curve comprising closed and mixed points, which is the adjacency set of the central open point of $\D_{5\times 5}$.
We can formalize these ideas with the following.

\begin{prop}\label{prop:closedmax} 
If $J \in \J$ is a Jordan curve containing no open points of $\D$, then $J$ is a maximal element of $\J$.
If $J$ contains no closed points, then it is a minimal element of $\J$.
\end{prop}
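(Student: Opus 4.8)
The plan is to exploit the order on the digital plane $\D$, where (as noted earlier) the open points are exactly the minimal elements, the closed points are exactly the maximal elements, and each mixed point $m$ sits strictly below its two closed neighbors and strictly above its two open neighbors (Figure \ref{fig:amixed}). In particular $\set{z \in \D \mid z \geq m}$ is $m$ together with its two closed neighbors, and dually for the points below $m$. The whole argument rests on one elementary observation that I would record first: no Jordan curve is a proper subset of another. This follows because if $K \subseteq J$ are both Jordan curves, then for each $k \in K$ the inclusion $A(k)\cap K \subseteq A(k)\cap J$ is an inclusion of two-element sets and hence an equality, so $K$ is closed under passing to $J$-neighbors; connectedness of $J$ then forces $K = J$.

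For the maximality statement I would take any $K \in \J$ with $J \leq K$ and fix witnessing parameterizations $f$ of $J$ and $g$ of $K$ with $f(s) \leq g(s)$ for all $s \in S^1$. The key step is to show $\img{g} \subseteq J$ one point at a time. If $f(s)$ is closed it is maximal in $\D$, so $g(s) = f(s) \in J$. If $f(s)$ is mixed, then $g(s) \geq f(s)$ forces $g(s)$ to be $f(s)$ itself or one of its two closed neighbors. Here I would use the hypothesis that $J$ has no open points: since $A(f(s))$ consists of two open and two closed points while $\abs{A(f(s)) \cap J} = 2$, the two neighbors of $f(s)$ in $J$ must be exactly its two closed neighbors, so every candidate value of $g(s)$ already lies in $J$. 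Combining the cases gives $K = \img{g} \subseteq J$, and the subset observation forces $K = J$; as $K$ was arbitrary, $J$ is maximal.

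The minimality statement is handled by the dual argument, reading the order upside down: if $K \leq J$ with $J$ containing no closed point, then each $f(s)$ is either open (hence minimal, forcing $g(s) = f(s)$) or mixed, in which case $g(s) \leq f(s)$ confines $g(s)$ to $f(s)$ and its two open neighbors, and those are precisely the two $J$-neighbors of $f(s)$ because $J$ has no closed points and $\abs{A(f(s)) \cap J} = 2$. Again $\img{g} = K \subseteq J$ and hence $K = J$, so $J$ is minimal. I expect the main obstacle to be purely the bookkeeping: checking cleanly that, for a mixed point, the elements of $\D$ lying above it are exactly itself and its two closed neighbors (and dually below), and that the hypothesis on $J$ genuinely pins down which two members of $A(f(s))$ serve as the curve's neighbors. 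Once those local order computations are settled the global conclusion is immediate from the subset observation, so no delicate analysis of how $f$ and $g$ behave on individual arcs of $S^1$ is required.
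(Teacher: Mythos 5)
Your proof is correct, and its core is the same as the paper's: fix parameterizations $f \leq g$, use the maximality of closed points in $\D$ to force $g=f$ on their preimages, and use the order structure at a mixed point $m$ to confine the remaining values of $g$ to $m^\uparrow = \set{m,c^-,c^+} \subseteq J$, so that $\img{g} \subseteq J$; the minimal case is dual. Where you diverge is the finishing step. The paper rules out a proper containment $K \subsetneq J$ by arguing that a mixed point $m$ missing from $\img{g}$ would leave $c^-,c^+ \in K$ without the two distinct COTS-arcs guaranteed by Lemma 5.2(c) of \cite{Khalimsky1990a}; you instead prove the standalone lemma that no Jordan curve is a proper subset of another, by observing that the two-element sets $A(k)\cap K \subseteq A(k)\cap J$ must coincide and then walking around $J$ by connectedness. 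Your lemma is self-contained, reusable, and makes the conclusion immediate, whereas the paper's route is shorter on the page but leans on an external citation and is terse about how the contradiction arises. A second small difference: at a mixed point you identify the curve's neighbors as the two closed neighbors using only the no-open-points hypothesis together with $\abs{A(m)\cap J}=2$, while the paper additionally invokes the fact that Jordan curves cannot turn at mixed points; your bookkeeping shows that property is not actually needed here.
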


It is worth nothing that in Proposition \ref{prop:closedmax}, ``minimal'' refers to the $J$ having height 0 in the Hasse diagram of $\J$, and not as a member of $\J_1(\D)$.

\begin{proof}
Let $\D$ be a Khalimsky digital plane and $\J:=\J\paren{\D}$ its space of digital Jordan curves.
Let $J \in \J$ with parameterization $f$ such that $J$ has no open points.
Suppose there exists a Jordan curve $K \in \J$ with parameterization $g$ such that $K \geq J$ in $\J$.
Then $g(t)\geq f(t)$ for all $t \in S^1$.
Since $J$ has no open points, it comprises only mixed points and closed points.
Because the closed points $c_i \in J$ are maximal in $\D$, $g \geq f$ implies $g\paren{f^{-1}\paren{c_i}}\geq c_i$, so $g(t)=f(t)$ for $t \in f^{-1}\paren{c_i}$.
Then we must have $g(t)\geq f(t)$ for $t \in f^{-1}\paren{m}$ for some mixed point $m \in J$, that is, $g\paren{f^{-1}\paren{m}} \in m^\uparrow$.
Because Jordan curves cannot turn at mixed points, $m^\uparrow =:\set{c^-,m,c^+} \subset J$ as well.
Then $g \geq f$ implies $g\paren{f^{-1}\paren{m}} \in \set{c^-,m,c^+}$.
If $m \not\in \textrm{im}(g)$, then $\set{c^-,c^+}$ does not determine two unique COTS-arcs belonging to $K$, contradicting Lemma 5.2(c) of \cite{Khalimsky1990a}.

A similar argument shows that the Jordan curves containing no closed points are minimal elements of $\J$.
\end{proof}

We conjecture that the converse of Proposition \ref{prop:closedmax} is true, however, this has yet to be shown.
The shape of the maximal and minimal elements described above in fact show a correspondence to polyominoes, or cycles in a square graph.
In \cite{Golomb1954}, they define an {\bf $n$-omino} (or polyomino) to be a simply connected set of $n$ squares of a chessboard that are ``rook-wise connected.''
Viewing $\D$ as a subset of $\zz\times\zz$, the closed points of $\D$ are the lattice points of $2\zz \times 2\zz$.
\v{S}lapal calls this the ``square graph of type 2,'' and proves in \cite{Slapal2006} that any cycle in this graph is a Jordan curve.
The number of cycles in an $n\times n$ grid, which we will denote $c(n)$, is shown in Table \ref{tab:polyn} (see \cite{OEISA140517} for the table up through $n=26$).
By Proposition \ref{prop:closedmax}, if a digital plane $\D$ contains an $n \times n$ lattice of closed points, then $\J(\D)$ has at least $c(n-1)$ maximal elements.
Similarly, if $\D$ contains an $n \times n$ lattice of open points, then $\J(\D)$ has at least $c(n-1)$ minimal elements.
We conjecture that these inequalities are, in fact, equalities.
If that is the case, then the following will hold:
\[\tc{}{\J(\D)} \leq \ls{\J(\D)}^2 \leq c(n)^2.\]

\begin{table}[]
    \centering
    \begin{tabular}{r|l}
       $n$  & $c(n)$ \\ \hline
        0 &0\\
        1 & 1\\
        2 & 13\\
        3 & 213\\
        4 & 9349\\
        5 & 1222363\\
        6 & 487150371\\
        7 & 603841648931\\
        8 & 2318527339461265\\
        9 & 27359264067916806101\\
        10 & 988808811046283595068099
    \end{tabular}
    \caption{The number of cycles in an $n\times n$ grid for $n\in \set{0,1,\hdots,10}$}
    \label{tab:polyn}
\end{table}

There is a $2\times 2$ lattice of closed points in $\D_{5 \times 5}$ (see Figure \ref{fig:plane}), and so the maximal elements of $\J\paren{\D_{5\times5}}$ correspond to the simply connected polyominoes in a $1\times1$ grid, of which there is only one.
There is a $3\times 3$ lattice of open points in $\D_{5\times 5}$, which corresponds to a $2\times2$ grid with nine vertices.
By \cite{OEISA118797}, a polyomino that is not simply-connected must contain at least seven tiles.
Consequently, every polyomino in a $2\times 2$ grid corresponds to a Jordan curve in $\D'_{5\times 5}$, of which there are thirteen.
If $\D_{n \times n}$ is a digital plane with open cornerpoints and $n$ odd, then by Table \ref{tab:polyn}, $\J\paren{\D_{n\times n}}$ has at least $c\paren{\frac{n-1}{2}}$ maximal elements and at least $c\paren{\frac{n+1}{2}}$ minimal elements.
As noted in \cite{Tanaka2018}, all known motion planners on finite spaces are defined on categorical sets.

The four elements following the maximal element in Figure \ref{fig:87-1} complete the top two rows of the Hasse diagram of $\J\paren{\D_{5\times5}}$, which is shown in Figure \ref{fig:5x5top}.
The thirteen minimal elements are those at the ends of Figures \ref{fig:87-3} and \ref{fig:87-4}.

\begin{figure}
    \centering
    \includegraphics[page=1, clip, ,trim=.7in .25in 5.25in 0in , width=4in,angle=0]{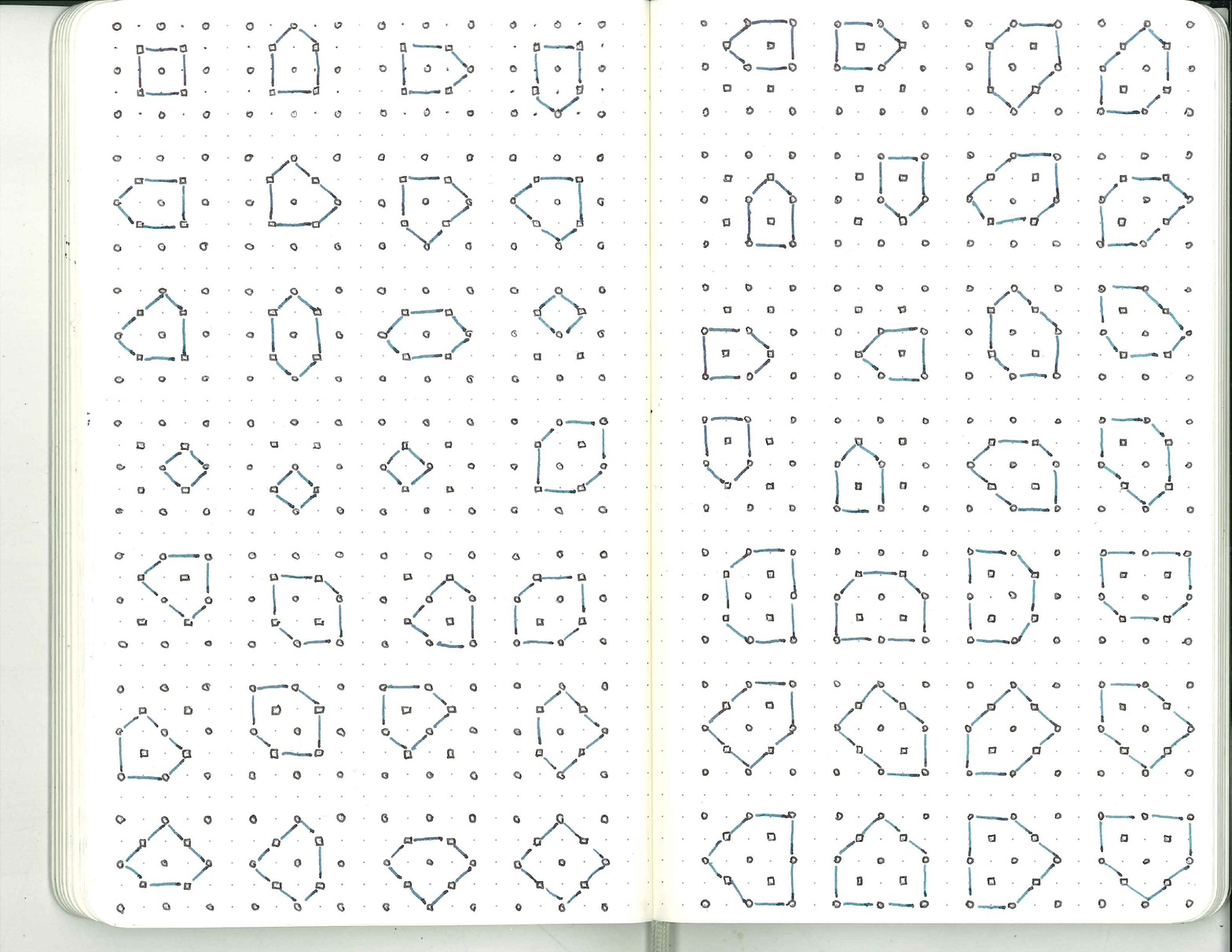}
    \caption{Jordan curves in $\J\paren{\D_{5\times5}}$ (Figure 1 of 4)}
    \label{fig:87-1}
\end{figure}

\begin{figure}
    \centering
    \includegraphics[page=1, clip, trim = 6in .3in .3in 0in,angle=0, width=4in]{5x5-87.pdf}
    \caption{Jordan curves in $\J\paren{\D_{5\times5}}$ (Figure 2 of 4)}
    \label{fig:87-2}
\end{figure}

\begin{figure}
    \centering
    \includegraphics[page=2, clip, trim=.6in .25in 5.5in .1in , width=4in,angle=-1]{5x5-87.pdf}
    \caption{Jordan curves in $\J\paren{\D_{5\times5}}$ (Figure 3 of 4)}
    \label{fig:87-3}
\end{figure}

\begin{figure}
    \centering
    \includegraphics[page=2, clip, trim = 5.8in .3in 1.5in 7in,angle=-1, width=4in]{5x5-87.pdf}
    \caption{Jordan curves in $\J\paren{\D_{5\times5}}$ (Figure 4 of 4)}
    \label{fig:87-4}
\end{figure}

\begin{figure}
    \centering
    \resizebox{5in}{!}{
\begin{tikzpicture}
\node (l0) at (0,0){\begin{tikzpicture}
\five
\draw[blue, ultra thick] (2,2)--(4,2)--(4,4)--(2,4)--(2,2);
\end{tikzpicture}};

\node (l1a) at (-9,-10){\begin{tikzpicture}
\five
\draw[blue, ultra thick] (2,2)--(4,2)--(4,4)--(3,5)--(2,4)--(2,2);
\end{tikzpicture}};

\node (l1b) at (-3,-10){\begin{tikzpicture}
\five
\draw[blue, ultra thick] (2,2)--(4,2)--(4,4)--(2,4)--(1,3)--(2,2);
\end{tikzpicture}};

\node (l1c) at (3,-10){\begin{tikzpicture}
\five
\draw[blue, ultra thick] (2,2)--(4,2)--(5,3)--(4,4)--(2,4)--(2,2);
\end{tikzpicture}};

\node (l1d) at (9,-10){\begin{tikzpicture}
\five
\draw[blue, ultra thick] (2,2)--(3,1)--(4,2)--(4,4)--(2,4)--(2,2);
\end{tikzpicture}};

\draw[black, draw] (l0)--(l1a);\draw (l0)--(l1b);\draw (l0)--(l1c);\draw (l0)--(l1d);
\end{tikzpicture}}
    \caption{The top two rows of the Hasse diagram of $\J\paren{\D_{5\times 5}}$}
    \label{fig:5x5top}
\end{figure}

Even if we cannot yet visualize the Hasse diagram of a given space of digital Jordan curves, we can sometimes enumerate its elements to help understand the structure.

\begin{thm}\label{thm:J3n}
If $\D_{3\times n}$ is a $3 \times n$ digital plane, $\left|\J(\D_{3\times n})\right|=\frac{(n-1)(n-2)}{2}$.
\end{thm}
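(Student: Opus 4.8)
The plan is to set up a bijection between Jordan curves in $\D_{3\times n}$ and the connected subsets of the single column of non-border points, and then to count those subsets. The first ingredient is that the interior is a faithful invariant of the curve. Given $J \in \J(\D_{3\times n})$, the sets $\jint{J}$ and $\jext{J}$ lie in different components of $\D - J$, so no point of $\jext{J}$ is adjacent to a point of $\jint{J}$; hence every point of $A(\jint{J}) - \jint{J}$ must lie in $J$. Conversely, by Lemma 5.2(a) of \cite{Khalimsky1990a} each $j \in J$ has a neighbor in $\jint{J}$, so $J \subseteq A(\jint{J})$. Together these give
\[ J = A(\jint{J}) - \jint{J}, \]
so the assignment $J \mapsto \jint{J}$ is injective: the interior determines the curve.

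Next I would identify the possible interiors. Writing points as integer coordinates $(i,k)$ with $0 \le i \le 2$ and $0 \le k \le n-1$, the only points not on the outer frame of $\D_{3\times n}$ are those in the central column, $P := \set{(1,k) : 1 \le k \le n-2}$; consecutive points of $P$ are adjacent and alternate in parity, so $P$ is a COTS of length $n-2$. I would show $\jint{J} \subseteq P$: any frame point of $\D_{3\times n}$ not lying on $J$ is joined to the border $B$ through the frame and therefore lies in $\jext{J}$, so no frame point can be interior. By Corollary \ref{cor:weakcontra} the interior $\jint{J}$ is weakly contractible, in particular connected, and a connected subset of the COTS $P$ is exactly a subinterval $I_{a,b} := \set{(1,k) : a \le k \le b}$ with $1 \le a \le b \le n-2$ (Theorem 3.2(b) of \cite{Khalimsky1990a}).

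For surjectivity I would check that every subinterval $I_{a,b}$ really does occur as an interior. Since $1 \le a \le b \le n-2$, each $(1,k) \in I_{a,b}$ has its full adjacency set inside $\D_{3\times n}$, so each $A((1,k))$ is a minimal Jordan curve; I would then verify directly that $K_{a,b} := A(I_{a,b}) - I_{a,b}$ is a Jordan curve with $\jint{K_{a,b}} = I_{a,b}$, by checking the local condition $\abs{A(p) \cap K_{a,b}} = 2$ for every $p \in K_{a,b}$. Because $I_{a,b}$ is a vertical segment in the middle column, $K_{a,b}$ is a ``capsule'' built from the left- and right-column neighbors together with the top and bottom caps, so this reduces to a finite check over a handful of local shapes (side edge, corner, cap), entirely analogous to the moves of Algorithm \ref{alg:shrink}.

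Finally I would count. By the injectivity of $J \mapsto \jint{J}$ together with the realizability of every $I_{a,b}$, this map is a bijection onto the subintervals of the $(n-2)$-point COTS $P$, and there are
\[ \abs{\J(\D_{3\times n})} = \binom{n-2}{2} + (n-2) = \binom{n-1}{2} = \frac{(n-1)(n-2)}{2}. \]
The main obstacle will be the bookkeeping of the middle two steps rather than the final enumeration: one must confirm that $\jint{J}$ never escapes the central column (the frame-connectivity argument) and that each capsule $K_{a,b}$ is a genuine Jordan curve whose interior is precisely $I_{a,b}$. Once those are in hand, the count is immediate.
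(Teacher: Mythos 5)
Your proposal is correct and takes essentially the same route as the paper's proof: both identify a Jordan curve in $\D_{3\times n}$ with its interior, show these interiors are exactly the connected subintervals of the central $(n-2)$-point COTS, realize each subinterval $I_{a,b}$ as the interior of $A\left(I_{a,b}\right)-I_{a,b}$, and count. The only differences are in bookkeeping: your explicit injectivity step ($J = A\left(\jint{J}\right)-\jint{J}$, via Lemma 5.2(a) of \cite{Khalimsky1990a}) is left implicit in the paper, while for surjectivity the paper avoids your local degree check by noting that the capsule is the adjusted border of a $3\times(b-a+3)$ sub-plane and invoking Lemma 5.2(b) of \cite{Khalimsky1990a}.
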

\begin{proof}
For any Jordan curve $J \in \J(\D_{3\times n})$, $\jint{J}$ is a subset of the $1 \times (n-2)$ COTS nested inside of $\D_{3\times n}$; call it $C_{n-2} := \cots{c_1,c_2,\hdots,c_{n-2}}$.
The connected subsets of $C_{n-2}$ are in bijection with subsets of consecutive integers of $\{1,2,\hdots,n-2\}$, of which there are $\frac{(n-1)(n-2)}{2}$ (see \cite{OEIS000217}).
We will show that any connected subset of $C_{n-2}$ determines a unique Jordan curve in $\J(\D_{3\times n})$.

Let $C_{i,j}=\cots{c_i,c_j} \subseteq C_{n-2}$ for some $1\leq i < j \leq n-2$.
We will show that 
\[J:=\left(\bigcup_{c \in C_{i,j}} A(c)\right)-C_{i,j}\]
is a Jordan curve in $\J(\D_{3\times n})$.
Notice that $J$ is a subset of a $3 \times (j-i+3)$ digital plane inside $\D_{3\times n}$.
By Lemma 5.2(b) of \cite{Khalimsky1990a}, the adjusted border of $\D_{3 \times (j-i+3)}$ is a Jordan curve whose interior is $C_{i,j}$.
\end{proof}

Theorem \ref{thm:J3n} paves the way for establishing a lowerbound for the number of Jordan curves in a $3m \times n$ plane.
A $6\times 6$ digital plane, for example, has at least $2\cdot 5 \cdot 4 = 40$ Jordan curves.
Notice that this gives a better lowerbound than $c(n)$, which only tells us that $\D_{6\times6}$ has $c(2)=13$ minimal elements and $c(2)=13$ maximal elements.
This is a gross underestimate, however, as we have shown above that $\abs{\J\paren{\D_{5\times5}}}=87$.

\section{Concluding Remarks}
\label{chap:app}

For the first time in literature, this article goes beyond the practice of studying properties of digital images and instead explores properties of collections of digital images as a whole.
By approaching digital topology from this angle, we hope to establish a correspondence between image processing algorithms and paths in spaces of digital images.
Paths in a space of digital images represent a sequence of images to pass through in navigating from one image to another image.
In this section we justify our approach to solving these problems, and look into some applications of these results beyond topological complexity.

\subsection{Behavior Under Different Topologies}
\label{sec:diftop}

Here, we explore how the results of Sections \ref{chap:COTS} and \ref{chap:DigJC} behave under different digital topologies.
In Section \ref{sec:ottop}, we presented three topologies on $\zz^2$ that are not the Khalimsky topology.
Those are the Marcus-Wyse topology of \cite{Marcus1970}, and the topologies $(\zz^2,w)$ and $(\zz^2,\hat{w})$ from \cite{Slapal2006}.
In \cite{Slapal2006}, they show that all three of those topologies are $T_\frac{1}{2}$.
Recall that a finite space $X$ is \textbf{$T_\frac{1}{2}$} if and only if for all $x \in X$, either $x^\uparrow=\{x\}$ or $x^\downarrow=\{x\}$.
That is, every point of $x$ is either open or closed.

\begin{proof}[Proof of Theorem \ref{thm:onlychoice}]
Let $\D \subset \zz^2$ be a sufficiently large finite rectangular lattice (i.e., there are at least two Jordan curves $J,J'\subset \D$).
Let $\tau$ be one of the topologies mentioned in Section \ref{sec:ottop} that is not the Khalimsky topology.
That is, $\tau$ is either the Marcus-Wyse topology from \cite{Marcus1970}, or $w$ or $\hat{w}$ from \cite{Slapal2006}.
See Figures \ref{fig:marcus}, \ref{fig:Zw}, and \ref{fig:Zw'}, respectively, for tiles of these planes.
As proven in \cite{Slapal2006}, $(\D,\tau)$ is $T_\frac{1}{2}$.
For the remainder of this proof, we will take $\D:=(\D,\tau)$, and $\J:=\J((\D,\tau))$.

If every point of $\D$ is either open or closed, then the Hasse diagram of $\D$ is of height one.
Furthermore, every point of $\D$ has at least two $\tau$-adjacent neighbors, so $x^\downarrow-\set{x}$ and $x^\uparrow-\set{x}$ are either empty or discrete for all $x \in \D$.
Hence, there are no beat points in $\D$, so it is a minimal finite space, and in particular, it is not contractible by Corollary 4 of \cite{Stong1966}.
Then by Remark 3.3.1 of \cite{Barmak2012}, $\greal{\D}$ is weakly homotopy equivalent to a wedge of $n=\chi(\D)-1$ circles, where, by abuse of notation,\footnote{Not to be confused with $\chi(K)$, which is the poset of simplices of a simplicial complex $K$, ordered by inclusion. The Euler characteristic of a finite $T_0$ space $X$ is given by the number of points in $X$ minus the number of edges in the Hasse diagram of $X$.} $\chi(\D)$ is the Euler characteristic of $\D$.
We will denote this space $\bigvee_n S^1 \stackrel{we}{\simeq} \D$, where $x_0 \in \bigvee_n S^1$ is the basepoint of the wedge.
Because $\bigvee_n S^1 \stackrel{we}{\simeq} \D$, there exists an isomorphism $\pi_1\paren{\greal{\D},x_0} \cong \pi_1\paren{\D,x_0}$.

Let $J,J' \in \J$ be two distinct Jordan curves with parameterizations $f,f':S^1 \to \D$, respectively.
For the remainder of this proof, we will refer to $J$ and $J'$ by their parameterizations $f$ and $f'$.
Consider a spanning tree $T \subset \D$.
Since $T \subset \D$ is contractible, $\img{f}\not\subset T$ and $\img{f'}\not\subset T$.
Then there exist 1-chains $\set{j_1,j_2} \subset \img{f}-T$ and $\set{j_1,j_2} \subset \img{f'}-T$ such that $\set{j_1,j_2} \neq \set{j_1',j_2'}$.
Since $\set{j_1,j_2} \neq \set{j_1',j_2'}$, $\greal{\set{j_1,j_2}} \neq \greal{\set{j_1',j_2'}}$ in $\bigvee_n S^1$.
Then $\greal{f}$ and $\greal{f'}$ are in different homotopy equivalence classes of $\pi_1(\greal{D},x_0)$, so $\greal{f}\not\simeq \greal{f'}$.
Since $\pi_1\paren{\greal{\D},x_0} \cong \pi_1\paren{\D,x_0}$, $f\not\simeq f'$ as well.
Then there exists no path between $J$ and $J'$ in $\J$.

\end{proof}

In particular, the proof of Theorem \ref{thm:onlychoice} will work for any digital topology that treats the digital plane as a graph and digital Jordan curves as cycles in that graph.
\footnote{It is worth noting that we are not considering any digital planes equipped with only a pretopology, or with the discrete topology.}
This agrees with our intuition that an appropriate digital plane should be weakly homotopy equivalent to a rectangle in $\rr^2$.
The geometric realization of a finite Khalimsky plane is shown in Figure \ref{fig:grealD}, and the geometric realization of a tile of $\paren{\zz^2,w}$ is shown in Figures \ref{fig:grealZw} and \ref{fig:SgrealZw}.
It becomes apparent that any appropriate digital plane must have all three of open, closed, and mixed points.
The trait that prevents Jordan curves from turning at acute angles is in fact necessary!
\begin{cor}
Let $\D$ be a finite topological space modeling a digital plane.
If $\J(\D)$ is connected, then the height of the Hasse diagram of $\D$ is at least two.
\end{cor}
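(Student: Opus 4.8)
The plan is to establish the contrapositive: if the Hasse diagram of $\D$ has height at most one, then $\J(\D)$ is disconnected. First I would translate the height hypothesis into a separation axiom. In a finite $T_0$ space, a chain of three elements $a < m < b$ occurs in the Hasse diagram precisely when some point $m$ is neither maximal nor minimal, i.e. when $m^\downarrow \neq \{m\}$ and $m^\uparrow \neq \{m\}$; such an $m$ is exactly a mixed point. Thus the Hasse diagram has height at least two if and only if $\D$ has a mixed point, and---since a digital plane is connected with nonempty adjacencies, forcing height at least one---the hypothesis ``height at most one'' is equivalent to ``$\D$ is $T_\frac{1}{2}$,'' meaning every point of $\D$ is open or closed (cf. Table \ref{tab:T}).

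Second, I would import the argument from the proof of Theorem \ref{thm:onlychoice}. Assuming $\D$ is $T_\frac{1}{2}$, the space $\D$ has no beat points, so it is a minimal finite space and hence is not contractible by Corollary 4 of \cite{Stong1966}; by Remark 3.3.1 of \cite{Barmak2012}, $\greal{\D}$ is then weakly homotopy equivalent to a nontrivial wedge of circles $\bigvee_n S^1$ with $n \geq 1$ (the cycles of $\D$, i.e. its Jordan curves, guarantee $n \geq 1$). Given two distinct Jordan curves $J, J' \in \J(\D)$ with parameterizations $f, f'$, a spanning-tree argument produces an edge of $\img{f}$ and an edge of $\img{f'}$ lying outside the tree and distinct from one another, so $\greal{f}$ and $\greal{f'}$ represent different classes of $\pi_1(\greal{\D})$; via the isomorphism $\pi_1(\greal{\D}) \cong \pi_1(\D)$, the maps $f$ and $f'$ are not homotopic, and therefore no path joins $J$ to $J'$ in $\J(\D)$. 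This shows $\J(\D)$ is disconnected and completes the contrapositive.

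The hard part will be justifying that the cited proof applies in this generality: Theorem \ref{thm:onlychoice} is stated for the three concrete planes of Section \ref{sec:ottop}, while here $\D$ is an arbitrary $T_\frac{1}{2}$ model. I would lean on the remark immediately following that theorem, which asserts that its argument goes through for any digital topology treating the plane as a graph with Jordan curves as its cycles. The check amounts to verifying that the spanning-tree and $\pi_1$ step uses only that $\D$ is finite, connected, and a non-contractible minimal space---exactly the data supplied by the height-one hypothesis---so nothing specific to the Marcus-Wyse, $w$, or $\hat{w}$ topologies is needed. I expect this to be bookkeeping rather than a genuine difficulty, making the corollary essentially a restatement of Theorem \ref{thm:onlychoice} through the height/$T_\frac{1}{2}$ dictionary.
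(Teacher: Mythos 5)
Your proposal matches the paper's own treatment: the corollary is presented there as an immediate consequence of the proof of Theorem \ref{thm:onlychoice} together with the remark that that argument works for any digital topology treating the plane as a graph whose Jordan curves are cycles, which is precisely your contrapositive-plus-$T_{\frac{1}{2}}$ dictionary followed by the minimal-space, wedge-of-circles, spanning-tree, $\pi_1$ argument. One small caution for the write-up: height at most one by itself does not rule out beat points (a COTS is $T_{\frac{1}{2}}$ yet its endpoints are beat), so the ``no beat points, hence minimal and non-contractible'' step must be charged to the ``modeling a digital plane'' hypothesis (every point having at least two neighbors, and at least two Jordan curves existing), exactly as in the paper's proof, rather than to the height hypothesis alone.
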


\begin{figure}
    \centering
    \begin{tikzpicture}
    \draw[black, fill=gray] (1,1)--(5,1)--(5,5)--(1,5)--(1,1);
    \foreach \x in {1,2,3,4,5}
	{
		\draw[black] (\x,1)--(\x,5);
		\draw[black] (1,\x)--(5,\x);
	}
\foreach \x in {1,3,5}
	{
		\draw[black] (\x,1)--(5,6-\x);
		\draw[black] (\x,5)--(5,\x);
		\draw[black] (1,\x)--(6-\x,5);
		\draw[black] (\x,1)--(1,\x);
	}
    \end{tikzpicture}
    \caption{The geometric realization of Figure \ref{fig:plane}}
    \label{fig:grealD}
\end{figure}
\begin{figure}
    \centering
    \begin{tikzpicture}
    \draw[black] (0,0)--(0,4)--(4,4)--(4,0)--(0,0)--(1,1)--(3,1)--(3,3)--(1,3)--(1,1)--(4,4);
    \draw[black] (0,4)--(4,0);
    \draw[black] (0,1)--(1,2)--(0,3);
    \draw[black] (4,1)--(3,2)--(4,3);
    \draw[black] (1,4)--(2,3)--(3,4);
    \draw[black] (1,0)--(2,1)--(3,0);
    
    \draw[green, ultra thick] (1,2)--(0,1)--(0,0)--(4,4)--(4,3);
    \draw[green, ultra thick] (0,3)--(0,4)--(4,0)--(4,1)--(3,2);
    \draw[green, ultra thick] (1,4)--(2,3)--(3,4)--(4,4);
    \draw[green, ultra thick] (0,0)--(1,0)--(2,1)--(3,0);
    \end{tikzpicture}
    \caption{The geometric realization of Figure \ref{fig:Zw} with a spanning tree highlighted in green}
    \label{fig:grealZw}
\end{figure}

\begin{figure}
    \centering
    \begin{tikzpicture}
   \begin{polaraxis}[grid=none, axis lines=none]
     \addplot[mark=none,domain=0:360,samples=500] {cos(x*8)};
   \end{polaraxis}
   \end{tikzpicture}
    \caption{A wedge of 16 circles homotopy equivalent to Figure \ref{fig:grealZw} in which we've quotiented by the green spanning tree}
    \label{fig:SgrealZw}
\end{figure}
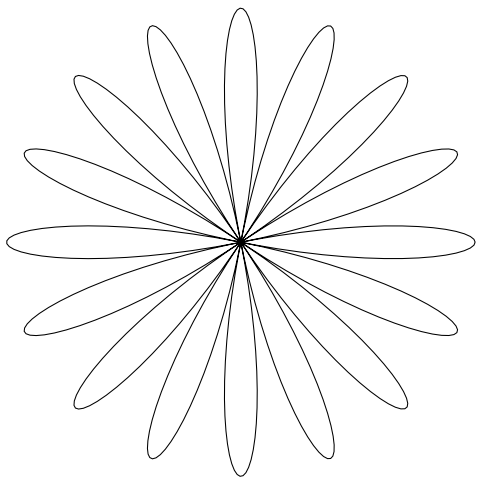

\subsection{Grayscale Images}

Fuzzy topology was first defined in \cite{Zadeh1965} as a way of measuring the ``degree of membership'' of a point in a set.
In \cite{Rosenfeld1979}, Rosenfeld expands on this idea by defining fuzzy topology for a digital plane.
This yields a means of describing grayscale images, as opposed to simple black-and-white ones.
In standard digital topology, an image is a subset $A$ of a digital plane $\D$ such that the points of $A$ are considered black, and the points of $\D-A$ are considered white.
Equivalently, there exists a function $\sigma:\D \to \{0,1\}$ such that $\sigma(a)=1$ for all $a \in A$, and $\sigma(a')=0$ for all $a' \in \D-A$.
To account for a range of shades of gray, Rosenfeld defines a fuzzy image to be a function $\sigma:\D \to [0,1]$, where points of $\D$ are mapped to some value of gray in $[0,1]$.
Introducing such a grayscale on $\J$, however, would trivialize the space.

To see this, consider the following.
Let $\mathcal{I}:=\set{\sigma:\D\to[0,1]}$ be the set of fuzzy images in a finite Khalimsky plane $\D$.
Because the images $\sigma$ do not necessarily need to be continuous, we may consider $\mathcal{I} = [0,1]^{|\D|}$, as opposed to $[0,1]^\D$, which would force all open points to be the same color as their closure.
Since $\D$ is countable, there exists an ordering $p_1,p_2,\hdots,p_{|\D|}$ of the points of $\D$.
Then there exists a bijection $\varphi$ from $\mathcal{I} \to [0,1]^{|\D|}$ given by $\varphi(\sigma) = \paren{\sigma(p_1),\sigma(p_2),\hdots,\sigma\paren{p_{|\D|}}} \in [0,1]^{|\D|}$.
In this way, $\mathcal{I}$ is contractible, and there exists a path from any image $\sigma:\D \to [0,1]$ to the all-white image  $\omega =(0,0,\hdots,0)\in [0,1]^{|\D|}$.
Explicitly, we have $H:\mathcal{I}\times[0,1] \to (0,0,\hdots,0)$ given by $H(\sigma,t)=(1-t)\sigma + t\omega$.
At $t=0$, $H(\sigma,0)=\sigma=1_{\mathcal{I}}$.
At $t=1$, $H(\sigma,1)=\omega$ is the constant map.

In section 4.1 of \cite{Rosenfeld1979}, they define {\bf plateaus} to be maximal connected subsets of $\D$ on which $\sigma$ has a constant value.
In future work, we could expand on this concept by restricting ourselves to plateaus that can be written in the form $J \cup \jint{J}$ for some $J \subset \D$.
This would ultimately allow us to characterize the space of more complicated digital images, rather than just a space of digital Jordan curves.
To account for the finite shades of gray that may appear on a computer screen, we would also like to consider finite models of the grayscale.
For example, we may take $C_n$ to be a COTS of length $n$ representing $n$ shades of gray, and a function space $\set{\sigma:\D \to C_n}$.
Since $\D$ is the product of two COTS, we may be able to consider digital grayscale images as subsets of product of three COTS.

\subsection{Digital 3-Space}

Studying objects in digital 3-space is just as prevalent, if not more so, than studying objects in a digital plane.
The same interests still apply: feature detection, region segmentation, etc.
In Section 4 of \cite{Eckhardt2003}, they describe five topologies on $\zz^3$ that may be used for digital topology; there are two obvious topologies that come to mind.
In the spirit of Khalimsky, digital 3-space could be interpreted as the product of three COTS, and it would inherit the product topology.
We get another topology from the argument of \cite{Marcus1970}, which extends to $\zz^d$ for all $d \geq 1$.
Digital 3-space is currently of interest in medical imaging.
In \cite{Hajri2015}, they use digital 3-space in screening for tumors. 
In \cite{Saha2013}, they combine digital 3-space with fuzzy topology for use in medical imaging.

\subsection{Character Recognition}

A character in a 12-point font typically lies inside a $16\times16$ pixel box.
We can associate this to a $33\times33$ Khalimsky digital plane, $\D_{33\times33}$ whose four cornerpoints are closed.
In this way, the closed points represent vertices of pixels; the mixed points represent edges of pixels; the open points represent the interiors of pixels.
Using that interpretation, Figure \ref{fig:char7} shows $\D_{33\times33}$ in which the contractible character ``7'' has been drawn.
In solid red lines, we have outlined every pixel that contains a portion of the ``7,'' joining two pixels if they have an edge in common.
It is easy to observe that the bold lines do not outline a Jordan curve.
We have added dashed red lines to include additional pixels that will leave us with a Jordan curve.
Using the language of \cite{Golomb1954}, this resulting Jordan curve would be a $27$-omino.

\begin{figure}
    \centering
    \begin{tikzpicture}[scale=.5]
    \draw[blue] (5.5,2.5)--(9.5,13.5)--(4.5,13.5)--(3.5,12.5);
    \draw[blue] (5.5,8.5)--(10.5,8.5);
    \draw[gray, thin] (0,0) grid (16,16);
    \draw[red, ultra thick] (5,2)--(6,2)--(6,3)--(6,4)--(7,4)--(7,5)--(7,6)--(8,6)--(8,7)--(8,8)--(11,8)--(11,9)--(9,9)--(9,12)--(10,12)--(10,14)--(4,14)--(4,13)--(3,13)--(3,12)--(4,12)--(4,13)--(9,13)--(9,12)--(8,12)--(8,9)--(5,9)--(5,8)--(7,8)--(7,7)--(6,7)--(6,6)--(6,4)--(5,4)--(5,2);
    \draw[red, ultra thick, dashed] (6,3)--(7,3)--(7,4);
    \draw[red, ultra thick, dashed] (8,13)--(8,12);
    \draw[red, ultra thick, dashed] (3,13)--(3,14)--(4,14);
    \end{tikzpicture}
    \caption{A pixelated character ``7''}
    \label{fig:char7}
\end{figure}

After adjusting the red outline such that it is a Jordan curve, we are left with a character that does not resemble a ``7'' as well as before.
For this reason, it is easy to see the benefits of the many topologies on $\zz^2$ described by \v{S}lapal, which allow for Jordan curves that turn at sharper angles.
The space $\J\paren{\D_{33\times33}}$ contains all Jordan curves in a $16\times16$ pixel plane, some of which we might recognize as characters.
Declare Figure \ref{fig:char7} to be the archetypal character ``7,'' $J_7$, and consider $J \in \J\paren{\D_{33\times33}}$ to be any other candidate ``7.''
Given a motion planner $s:\paren{\J\paren{\D_{33\times33}}} \longrightarrow \paren{\J\paren{\D_{33\times33}}}^{[0,1]}$, we may consider $\abs{\img{{s\paren{J_7,J}}}}$ to be the {\bf distance} between $J_7$ and $J$ in $\J\paren{\D_{33\times33}}$.
In this way, we may measure the similarity of two characters in a given space of digital Jordan curves.
An obvious shortcoming of this approach is that, as of now, it only applies to characters that are contractible in the plane.
For example, the character ``6'' cannot be written as the union of a Jordan curve and its interior.
In \cite{Khalimsky1990}, they consider {\bf robust scenes} that are partitions of the Khalimsky digital plane into regions separated by COTS-arcs and Jordan curves.
Although this would allow for characters that are not contractible, the digital Jordan curves in that paper must also be closed sets of the digital plane, preventing the space from being connected.
In \cite{Frank1996}, they discuss the use of pretopologies for character recognition, which we may also choose to incorporate in our approach.


\begin{thebibliography}{10}

\bibitem{Hajri2015}
M~Al~Hajri, Karim Belaid, and Lamia Jaafar, \emph{On khalimsky topology and
  applications on the digital image segmentation}, Applied Mathematical
  Sciences \textbf{9} (2015), 3687--3701.

\bibitem{Alexandroff1937}
P.S. Alexandroff, \emph{Diskrete ra\"{u}me}, Rec. Math. [Mat. Sbornik] N.S.
  \textbf{2(44)} (1937), no.~3, 501---519.

\bibitem{Barmak2012}
Jonathan~A. Barmak, \emph{Algebraic topology of finite topological spaces and
  applications}, Lecture Notes in Mathematics, vol. 2032, Springer, Heidelberg,
  2011. \MR{3024764}

\bibitem{Berg1975}
Gordon~O. Berg, W.~Julian, R.~Mines, and F.~Richman, \emph{The constructive
  {J}ordan curve theorem}, Rocky Mountain J. Math. \textbf{5} (1975), 225--236.
  \MR{0410701}

\bibitem{Blaszczyk2018}
Zbigniew B\l~aszczyk and Jos\'{e}~Gabriel Carrasquel-Vera, \emph{Topological
  complexity and efficiency of motion planning algorithms}, Rev. Mat. Iberoam.
  \textbf{34} (2018), no.~4, 1679--1684. \MR{3896245}

\bibitem{Boxer1999}
Laurence Boxer, \emph{A classical construction for the digital fundamental
  group}, J. Math. Imaging Vision \textbf{10} (1999), no.~1, 51--62.
  \MR{1692842}

\bibitem{Boxer2005}
\bysame, \emph{Properties of digital homotopy}, J. Math. Imaging Vision
  \textbf{22} (2005), no.~1, 19--26. \MR{2138582}

\bibitem{Boxer2017a}
Laurence Boxer and P~Staecker, \emph{Homotopy relations for digital images},
  Note di Matematica \textbf{37} (2017), 99--126.

\bibitem{Chassery1979}
Jean-Marc Chassery, \emph{Connectivity and consecutivity in digital pictures},
  Computer Graphics and Image Processing \textbf{9} (1979), no.~3, 294 -- 300.

\bibitem{Eckhardt1994}
Ulrich Eckhardt and Longin~Jan Latecki, \emph{Digital topology}, Digital
  Topology, 1994.

\bibitem{Eckhardt2003}
\bysame, \emph{Topologies for the digital spaces $\mathbb{Z}^2$ and
  $\mathbb{Z}^3$}, Computer Vision and Image Understanding \textbf{90} (2003),
  no.~3, 295 -- 312.

\bibitem{El-Atik2002}
A.~El-Fattah El-Atik, M.~E. Abd El-Monsef, and E.~I. Lashin, \emph{On finite
  {$T_0$} topological spaces}, Proceedings of the {N}inth {P}rague
  {T}opological {S}ymposium (2001), Topol. Atlas, North Bay, ON, 2002,
  pp.~75--90. \MR{1906830}

\bibitem{Farber2001}
Michael Farber, \emph{Topological complexity of motion planning}, Discrete
  Comput. Geom. \textbf{29} (2003), no.~2, 211--221. \MR{1957228}

\bibitem{Fernandez-Ternero2018}
D.~Fern\'{a}ndez-Ternero, E.~Mac\'{i}as-Virg\'{o}s, E.~Minuz, and J.~A.
  Vilches, \emph{Discrete topological complexity}, Proc. Amer. Math. Soc.
  \textbf{146} (2018), no.~10, 4535--4548. \MR{3834677}

\bibitem{Frank1996}
L.~{Frank} and E.~{Hubert}, \emph{Pretopological approach for supervised
  learning}, Proceedings of 13th International Conference on Pattern
  Recognition, vol.~4, Aug 1996, pp.~256--260 vol.4.

\bibitem{Golomb1954}
S.~W. Golomb, \emph{Checker boards and polyominoes}, Amer. Math. Monthly
  \textbf{61} (1954), 675--682. \MR{0067055}

\bibitem{Gonzalez2017}
Jes\'{u}s Gonz\'{a}lez, \emph{Simplicial complexity: piecewise linear motion
  planning in robotics}, New York J. Math. \textbf{24} (2018), 279--292.
  \MR{3778506}

\bibitem{Gonzalez2010}
Jes\'{u}s Gonz\'{a}lez, B\'{a}rbara Guti\'{e}rrez, and Sergey Yuzvinsky,
  \emph{Higher topological complexity of subcomplexes of products of spheres
  and related polyhedral product spaces}, Topol. Methods Nonlinear Anal.
  \textbf{48} (2016), no.~2, 419--451. \MR{3642766}

\bibitem{Hatcher2002}
Allen Hatcher, \emph{Algebraic topology}, Cambridge University Press,
  Cambridge, 2002. \MR{1867354}

\bibitem{OEISA140517}
OEIS~Foundation Inc., \emph{The on-line encyclopedia of integer sequences},
  http://oeis.org/A140517, 2019.

\bibitem{OEISA118797}
\bysame, \emph{The on-line encyclopedia of integer sequences},
  http://oeis.org/A118797, 2019.

\bibitem{OEIS000217}
\bysame, \emph{The on-line encyclopedia of integer sequences},
  http://oeis.org/A000217, 2019.

\bibitem{Iwase2012}
Norio Iwase and Michihiro Sakai, \emph{Topological complexity is a fibrewise
  {L}-{S} category}, Topology Appl. \textbf{157} (2010), no.~1, 10--21.
  \MR{2556074}

\bibitem{Kandola2018}
Shelley {Kandola}, \emph{{The Topological Complexity of Finite Models of
  Spheres}}, arXiv e-prints (2018), arXiv:1812.07604.

\bibitem{Karaca2018}
\.{I}smet Karaca and Melih \.{I}s, \emph{Digital topological complexity
  numbers}, Turkish J. Math. \textbf{42} (2018), no.~6, 3173--3181.
  \MR{3885444}

\bibitem{Khalimsky1990}
Efim Khalimsky, Ralph Kopperman, and Paul~R. Meyer, \emph{Boundaries in digital
  planes}, J. Appl. Math. Stochastic Anal. \textbf{3} (1990), no.~1, 27--55.
  \MR{1051772}

\bibitem{Khalimsky1990a}
\bysame, \emph{Computer graphics and connected topologies on finite ordered
  sets}, Topology Appl. \textbf{36} (1990), no.~1, 1--17. \MR{1062180}

\bibitem{Kiselman2000}
Christer Kiselman, \emph{Digital jordan curve theorems}, Digital Jordan Curve
  Theorems, 09 2000.

\bibitem{Kong1992}
T.~Y. Kong, A.~W. Roscoe, and A.~Rosenfeld, \emph{Concepts of digital
  topology}, Topology Appl. \textbf{46} (1992), no.~3, 219--262, Special issue
  on digital topology. \MR{1198732}

\bibitem{Kovalevsky1989}
V.A Kovalevsky, \emph{Finite topology as applied to image analysis}, Computer
  Vision, Graphics, and Image Processing \textbf{45} (1989), no.~2, 266.

\bibitem{LS34}
L.~Lusternik and L.~Schnirelmann, \emph{M\'{e}thodes topologiques dans les
  probl\`{e}mmes variationnels}, Actualit\'{e}s scientifiques et industrielles
  ; 188, Hermann, Paris, 1934 (fre).

\bibitem{Marcus1970}
Dan Marcus and Cleveland State University Problem~Solving Group, \emph{5712},
  The American Mathematical Monthly \textbf{77} (1970), no.~10, 1119--1119.

\bibitem{McCord1966}
Michael~C. McCord, \emph{Singular homology groups and homotopy groups of finite
  topological spaces}, Duke Math. J. \textbf{33} (1966), 465--474. \MR{0196744}

\bibitem{Ptak1997}
Pavel Ptak, Helmut Kofler, and Walter Kropatsch, \emph{Digital topologies
  revisited: An approach based on the topological point-neighbourhood},
  Discrete Geometry for Computer Imagery (Berlin, Heidelberg) (Ehoud Ahronovitz
  and Christophe Fiorio, eds.), Springer Berlin Heidelberg, 1997, pp.~151--159.

\bibitem{Rosenfeld2007}
Azriel Rosenfeld, \emph{Digital topology}, Amer. Math. Monthly \textbf{86}
  (1979), no.~8, 621--630. \MR{546174}

\bibitem{Rosenfeld1979}
Azriel Rosenfeld, \emph{Fuzzy digital topology}, Information and Control
  \textbf{40} (1979), no.~1, 76 -- 87.

\bibitem{Rudyak2010}
Yuli~B. Rudyak, \emph{On higher analogs of topological complexity}, Topology
  Appl. \textbf{157} (2010), no.~5, 916--920. \MR{2593704}

\bibitem{Saha2013}
Punam~K. Saha, \emph{Fuzzy digital topology and geometry and their applications
  to medical imaging}, Pattern Recognition and Machine Intelligence (Berlin,
  Heidelberg) (Pradipta Maji, Ashish Ghosh, M.~Narasimha Murty, Kuntal Ghosh,
  and Sankar~K. Pal, eds.), Springer Berlin Heidelberg, 2013, pp.~13--29.

\bibitem{Stong1966}
R.~E. Stong, \emph{Finite topological spaces}, 1966, pp.~325--340. \MR{0195042}

\bibitem{Tanaka2018}
Kohei Tanaka, \emph{A combinatorial description of topological complexity for
  finite spaces}, Algebr. Geom. Topol. \textbf{18} (2018), no.~2, 779--796.
  \MR{3773738}

\bibitem{Slapal2006}
Josef \v{S}lapal, \emph{Digital {J}ordan curves}, Topology Appl. \textbf{153}
  (2006), no.~17, 3255--3264. \MR{2260583}

\bibitem{Slapal2009}
\bysame, \emph{Jordan curve theorems with respect to certain pretopologies on
  $\zz^2$}, Discrete Geometry for Computer Imagery (Berlin, Heidelberg)
  (Sre{\v{c}}ko Brlek, Christophe Reutenauer, and Xavier Proven{\c{c}}al,
  eds.), Springer Berlin Heidelberg, 2009, pp.~252--262.

\bibitem{Schwarz1958}
A.~S. \v{S}varc, \emph{The genus of a fiber space}, Dokl. Akad. Nauk SSSR
  (N.S.) \textbf{119} (1958), 219--222. \MR{0102812}

\bibitem{Zadeh1965}
L.A. Zadeh, \emph{Fuzzy sets}, Information and Control \textbf{8} (1965),
  no.~3, 338 -- 353.

\end{thebibliography}

\end{document}